\setlist[enumerate]{itemsep=0mm}
\def\titlerunning#1{\gdef\titrun{#1}}
\def\author#1{\gdef\autrun{\def\and{\unskip, }#1}\gdef\@author{#1}}
\def\keywords#1{\par\medskip
	\noindent\textbf{Keywords.} #1}
\def\MSC#1{\par\medskip
	\noindent\textbf{MSC.} #1}
\definecolor{persianblue}{rgb}{0.11, 0.22, 0.73}
\definecolor{persiangreen}{rgb}{0.0, 0.65, 0.58}
\Crefname{theorem}{Theorem}{Theorems}
\Crefname{lemma}{Lemma}{Lemmas}
\crefname{claim}{Claim}{Claims}
\newcommand{\ORD}{{\rm ORD}}
\newtheorem{theorem}{Theorem}[section]
\newtheorem*{theorem*}{Theorem}
\newtheorem*{mainthm*}{Main Theorem}
\newtheorem{lemma}[theorem]{Lemma}
\newtheorem{proposition}[theorem]{Proposition}
\newtheorem{corollary}[theorem]{Corollary}
\newtheorem{problem}[theorem]{Problem}
\newtheorem{problem*}{Problem}[]
\newtheorem{fact}[theorem]{Fact}
\newtheorem{definition}[theorem]{Definition}
\newtheorem{question}[theorem]{Question}
\newtheorem{notation}[theorem]{Notation}
\numberwithin{equation}{section}
\newtheorem{convention}[theorem]{Convention}
\newcommand{\cof}{{\rm cof}}
\newcommand{\rest}{\! \upharpoonright \!}
\newcommand\gesm{\emph{gesm}}
\newcommand\gesms{\emph{gesms}}
\newtheorem{imp-remark}[theorem]{\textbf{Important Remark}}
\numberwithin{equation}{section}
\theoremstyle{remark}
\let\qed@empty\openbox 
\def\@begintheorem#1#2[#3]{%
	\deferred@thm@head{%
		\the\thm@headfont\thm@indent
		\@ifempty{#1}
		{\let\thmname\@gobble}
		{\let\thmname\@iden}%
		\@ifempty{#2}
		{\let\thmnumber\@gobble\global\let\qed@current\qed@empty}
		{\let\thmnumber\@iden\xdef\qed@current{#2}}%
		\@ifempty{#3}
		{\let\thmnote\@gobble}
		{\let\thmnote\@iden}%
		\thm@swap\swappedhead
		\thmhead{#1}{#2}{#3}%
		\the\thm@headpunct\thmheadnl\hskip\thm@headsep
	}\ignorespaces
}
\renewcommand{\qedsymbol}{%
	\ifx\qed@thiscurrent\qed@empty
	\qed@empty
	\else
	\fbox{\scriptsize\qed@thiscurrent}%
	\fi
}
\renewcommand{\proofname}{%
	Proof%
	\ifx\qed@thiscurrent\qed@empty
	\else
	\ of \qed@thiscurrent
	\fi
}
\xpretocmd{\proof}{\let\qed@thiscurrent\qed@current}{}{}
\newenvironment{proof*}[1]
{\def\qed@thiscurrent{\ref{#1}}\proof}
{\endproof}
\let\blx@rerun@biber\relax
\begin{document}
	
	
	\baselineskip=17pt
	

	\titlerunning{A Road To Compactness Through Guessing Models}
	
	\title{A Road To Compactness Through\\
		Guessing Models}
	
	\author{Rahman Mohammadpour\bigskip\\
		\small{\textit{Institut für Diskrete Mathematik und Geometrie,}}\\
		\small{\textit{Technische Universität Wien,}}\\
		\small{\textit{Wiedner Hauptstrasse 8-10/104, 1040 Vienna, Austria}}\\
			\vspace{1cm}	\small{\href{mailto:rahmanmohammadpour@gmail.com}{rahmanmohammadpour@gmail.com}}}
	\date{}
	
	\maketitle
	

	
	
	\begin{abstract} 
		
		The compactness phenomenon is one of the featured aspects of structuralism in mathematics. In simple and broad words, a compactness property holds in a structure if a related property is satisfied by sufficiently many substructures of that structure. With this phenomenon and its twin sibling "reflection", modern set theory has settled many mathematical statements left undecided by the conventionally accepted formalism of mathematics, $\rm ZFC$. An ongoing broad  research programme investigates whether a concept of compactness can universe-widely emerge without running into contradictions. 
		
		These notes are a survey of notions of guessing models whose existence provides intriguing compactness phenomena. Most of the results in the manuscript are well-known. We shall reformulate, generalise and expand some of them. We also present some known applications of guessing models and state some open problems.
		
		\keywords{Compactness, Guessing Model, Proper Forcing Axiom (PFA),  Reflection}
		\MSC{03E05, 03E35, 03E57, 03E65}
		
	\end{abstract}
	
	

	\section{Introduction}
I left my home the day before my 17th birthday anniversary in 7104 to become an ultraset theorist. I was young and, like many youths, so enthusiastic about devoting my entire life to a meaningful answer to the ancient question of the Continuum Hypothesis. I so well remember the moment I was about to step into the history class. It was my first day, how can I forget it?The classroom was approachable from a corridor on the second floor surrounded by numerous old but familiar portraits of foundationers hanging on the walls. Although, I was so impressed by them not notice the class had started. Closer to the door, was a paled shadow dancing over the portraits drowned in the soft edges of the light. Running through them, they caressed my face; what a pleasant moment remained unique forever. I was late, of course! So late to a pretty long classroom gone deep into perfect silence. It was breathtaking. The professor or, hmm, better to say the silhouette in the tender light of the dawning sun, momentarily paused by my presence at the door frame. Time halted in my head. I sweated and did feel deep inside that she was staring at me in anger.  I moved inside by her nod while my heavy heart was still swinging. Luckily enough, I was to find a sit on the second row. That silhouette now I could recognise was an expert in collapse magic. She continued her lesson: ``A marvellous aspect of the sierra of large cardinals is that the power of many vigorous dragons conceal behind the climax of inaccessibility. They, though, appeared from time to time, would rapidly fly up to the scary condensed clouds of the death world. Let me tell you an astonishing epic that narrates a significant two-person adventure by Alfred the Leader and Paul the King in the history of sets. They revealed the weakly compact dragons hiding behind the branches of Aronszajn trees in the vastly frozen forests on the summit of inaccessibility.   That persistently stimulated the young generations to pursue their dreams and seek familiar-looking visages of dragons around the world of large cardinals. Alas, it was not always a victorious path; many failed, many unreturned, and many lost. That's sad, I do know! An astonishing but perilous approach that wilful people would nevertheless take was to burn a feather of Simurgh to assist them in hunting a dragon and bringing it down to an accessible summit. It could be very well-respected and praised. These are the celebrated stories. Once upon a time, when our world was a greener and larger place to live, two renowned and experienced explorers independently decided to discover the unattainable levels of strong compactness and supercompactness.   They were Thomas (`the author of the Green Book, excuse me?', I interrupted the professor, but she was not unhappy this time) and Menachem, the logician. These self-sacrificing people unhesitatingly took long foot journeys on the cruel rocks of enormous mountains while the furious wind heartlessly moved them back under the order of dragons. Destiny has no place there! It's the kingdom of dragonish chaos. Of course, so committed they were to identify those dragons near the enormous branches of some pseudo-tree entities living there, again on unimaginably high inaccessible peaks. They accomplished! We will discuss those entities later, but be aware that they were never as tame as Aronszajn trees. Yet, somewhat later, Christoph, a junior from Deutschland, pulled those dragons down to accessible mountains. An accomplishment that finally resulted in a phenomenal collaborative journey with Matteo di Torino in the discovery of a new extraordinary creature. A unique type of baby dragon they named a guessing model!''  

	\section{Basics and Notation}
	For a cardinal $\theta$, $H_{\theta}=(H_\theta,\in)$ denotes the collection  
	of sets with hereditary size less than $\theta$.
	For a set $x$, we denote  the power-set of $x$ by  $\mathcal P(x)$, and
	for a cardinal $\kappa$, we let $\mathcal P_\kappa(x)\coloneqq\{y\subseteq x: |y|<\kappa\}$. A set $\mathcal S\subseteq\mathcal P_\kappa(x)$ is called \emph{stationary} if for every function $F:\mathcal P_{\omega}(x)\rightarrow\mathcal P_\kappa(x)$, there is $A\in\mathcal S$ such that $A$ is closed under $F$, i.e. $F(a)\subseteq A$, for every $a\in \mathcal P_\omega(A)$.

	By a \textbf{model} $M$, we mean a set or class such that $(M,\in)$ satisfies a sufficiently strong fragment of $\rm ZFC$. 
	We use $\overline{M}$ for the transitive collapse of $(M,\in)$, where the transitive collapse map is often denoted by $\pi_M$. A set $x$ is \textbf{bounded} in $M$, if there is $y\in M$ with $x\subseteq y$.  By a \textbf{powerful model} $\mathcal H$, we mean that $\mathcal H$ is a transitive  model which contains every set bounded  in $\mathcal H$.
	Note that  $H_\theta$ is a powerful model.

	\begin{definition}
		Let $M$ be a model.
		\begin{enumerate}
			\item 
			$M$ is \textbf{$\bm \delta$-close} to an ordinal $\gamma$ if
			$\cof(\sup(M\cap\gamma))=\delta$.
			\item $M$ has \textbf{bounded uniform cofinality $\bm\delta$} if for every $\gamma\in M\cap \ORD$, if $M\models ``\cof(\gamma)\geq\delta"$, then $M$ is $\delta$-close to $\gamma$, and 
			\item $M$ has \textbf{uniform cofinality $\bm \delta$}, if, moreover, $\cof(\sup(M\cap\ORD))=\delta$.
			
		\end{enumerate}
		\begin{definition}
			Assume $M\subseteq N$ are models. Let $\delta\in M$ be an $N$-cardinal. 
			The pair $(M,N)$ has the \textbf{$\bm \delta$-covering property}  if for every  set $x\in N$ with $|x|^N\leq\delta$  which is bounded in $M$, there is $x'\in M$ with $|x'|^M\leq\delta$ such that $x\cap M\subseteq x'$.
		\end{definition}

	\end{definition}
	\begin{notation}
		For a set $X$, we  let $\kappa_X$ be the least ordinal $\alpha\in X$ such that $\alpha\nsubseteq X$, we leave $\kappa_X$  undefined if such an $\alpha$ does not exist.
	\end{notation}
	
	\section{Guessing Models}

	A guessing (elementary) submodel is, roughly speaking,  a set model in the universe which, to some extent and at some cost, is correct about the powerset operation.
	This paper is devoted to the compactness of the following intelligible property. 
	
	\begin{definition}
		Let $X$ be a set. 
		A set $x$ is said to be \textbf{guessed} in $X$, if there exists $x^*\in X$, such that
		$x^*\cap X=x\cap X$.
	\end{definition}
	
	Obviously, every set inside $X$ is guessed in $X$, and  that a set $x$ is guessed in $X$ if and only if $x\cap X$ is guessed in $X$. 
	\subsection{Equivalent definitions}
	A set $x$ is \textbf{$\bm \delta$-approximated in} a model $M$, for an $M$-cardinal $\delta$, if
	$x\cap a\in M$, for every $a\in M$ with $|a|^M<\delta$.

	\begin{definition}[Viale \cite{V2012}, Viale--Wei\ss~\cite{VW2011}]
		Suppose $M\subseteq N$ are  models.
		Assume that $\delta\in M$ is a cardinal with $\delta\leq\kappa_M$.
		We say $M$  has the \textbf{$\bm\delta$-guessing property in} $N$ if  every $x\in N$  bounded and  $\delta$-approximated in $M$ is guessed in $M$.
	\end{definition}
	
	Working in a model $N$,
	observe that if $\delta\leq\delta'$, then every $\delta$-guessing model  in $N$ is also $\delta'$-guessing in $N$. For the approximation side, if $x\subseteq X\in M$, then it is enough to show that for every $a\in\mathcal P^N(X)\cap M$ with $|a|^M<\delta$,
	$a\cap x\in M$ holds.
	There is a less useful reformulation of the above definition  which points more compactness out, that is  $x$ is guessed in $M$ if and only if for every $a\in M$ of size less than $\delta$, $x\cap a$ is guessed in $M$. This trivial reformulation follows from the fact that if
	$x \subseteq M$  is guessed in $M$ with  $|x|\in M\cap \kappa_M$, then $x\in M$!
	
	Observe that if $M$ is transitive, then $x$ is guessed in $M$ if and only if $x\in M$.
	In fact, one can now reformulate Hamkins' approximation property in terms of guessing property.
	
	\begin{definition}[Hamkins \cite{Hamkins2003}]
		Let $(M,N)$ be a pair of transitive models with $M\subseteq N$.
		Let $\delta\in M$ be a regular cardinal in $N$.
		Then $(M,N)$ has the $\delta$-approximation property if and only if $M$ is $\delta$-guessing in $N$.
	\end{definition}

	\begin{definition}
		We say that $M$ is \textbf{$\bm \delta$-guessing}  or has \textbf{the $\bm\delta$-guessing property} if and only if
		$M$ has the $\delta$-guessing property in $V$.
	\end{definition}
	
	The following is well-known.
	
	\begin{lemma}\label{transitivity under covering}
		Assume that $M_i$ has  the $\delta$-guessing property in $M_{i+1}$, for $i=0,1$.
		Suppose that $(M_0,M_1)$ has the $\delta$-covering property, then $M_0$ has the $\delta$-guessing property in $M_2$. 
	\end{lemma}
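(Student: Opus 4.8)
The plan is to guess $x$ in two stages, descending from $M_2$ to $M_1$ and then to $M_0$, using the two intermediate guessing properties, and to invoke the covering property only to transport the approximation hypothesis one level up. Fix $x\in M_2$ that is bounded and $\delta$-approximated in $M_0$, and fix $X\in M_0$ with $x\subseteq X$. Since being guessed in $M_0$ depends only on $x\cap M_0$, the elements of $x$ that matter all lie in $M_0$, so I will argue with $x$ replaced by its trace $x\cap M_0$; formally this requires $x\cap M_0\in M_2$ (automatic when, say, $M_0\in M_2$), and I treat $x$ as a subset of $M_0$ throughout.

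First I would show that $x$ is $\delta$-approximated in $M_1$. Let $a\in M_1$ with $|a|^{M_1}<\delta$; replacing $a$ by $a\cap X$, I may assume $a$ is bounded in $M_0$. By the $\delta$-covering property of $(M_0,M_1)$ there is $x'\in M_0$ with $|x'|^{M_0}\le\delta$ and $a\cap M_0\subseteq x'$. Since $\delta\le\kappa_{M_0}$ we may take $x'\subseteq M_0$, and once $|x'|^{M_0}<\delta$ the $\delta$-approximation of $x$ in $M_0$ gives $x\cap x'\in M_0\subseteq M_1$. The crux is then the identity $x\cap a=(x\cap x')\cap a$: the inclusion $\supseteq$ is trivial, while $\subseteq$ holds because $x\subseteq M_0$ forces $x\cap a\subseteq a\cap M_0\subseteq x'$. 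As $x\cap x'\in M_1$ and $a\in M_1$, this yields $x\cap a\in M_1$.

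With approximation secured at the middle level, I would apply the $\delta$-guessing property of $M_1$ in $M_2$ to obtain $y\in M_1$ with $y\cap M_1=x\cap M_1$; intersecting with $X$ I may take $y\subseteq X$, so $y$ is bounded. Because $M_0\subseteq M_1$ we get $y\cap M_0=x\cap M_0$, and from this together with the approximation of $x$ in $M_0$ one checks that $y$ is $\delta$-approximated in $M_0$: for small $a\in M_0$ one has $a\subseteq M_0\subseteq M_1$, whence $y\cap a=(y\cap M_1)\cap a=(x\cap M_1)\cap a=x\cap a\in M_0$. Finally I would apply the $\delta$-guessing property of $M_0$ in $M_1$ to the bounded, $\delta$-approximated set $y\in M_1$, producing $x^*\in M_0$ with $x^*\cap M_0=y\cap M_0=x\cap M_0$. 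Thus $x^*$ guesses $x$ in $M_0$, so $M_0$ is $\delta$-guessing in $M_2$.

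The main obstacle is the first step, transporting $\delta$-approximation from $M_0$ up to $M_1$, and this is precisely where the covering property is indispensable: without it there is no way to capture the trace $a\cap M_0$ of an $M_1$-small set $a$ by a single small member $x'$ of $M_0$ on which approximation in $M_0$ can be exercised. The remaining delicate points are bookkeeping: one must reconcile the $\le\delta$ appearing in the covering property with the strict $<\delta$ in the approximation property (using $|a|^{M_1}<\delta$ and $\delta\le\kappa_{M_0}$ to arrange $x'\subseteq M_0$ of $M_0$-size below $\delta$), and one must justify treating $x$ as a subset of $M_0$; both are routine in the intended setting but are the only places where the precise form of the definitions genuinely interacts.
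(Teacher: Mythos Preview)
Your argument is the same two-stage descent as the paper's: use the $\delta$-covering of $(M_0,M_1)$ to show $x$ is $\delta$-approximated in $M_1$, apply the guessing of $M_1$ in $M_2$ to obtain $x_1\in M_1$ with $x_1\cap M_1=x\cap M_1$, observe that $x_1$ is $\delta$-approximated in $M_0$ (since small $a\in M_0$ satisfy $a\subseteq M_0$, whence $a\cap x_1=a\cap x\in M_0$), and finish by guessing $x_1$ in $M_0$. The one place you diverge is cosmetic: the paper simply takes the cover $b\in M_0$ with $a\subseteq b$ and writes $a\cap x=a\cap(b\cap x)$ directly, whereas you work with the literal covering conclusion $a\cap M_0\subseteq x'$ and therefore first pass to $x\cap M_0$---your handling is more scrupulous about the stated definition, but the detour through ``$x\cap M_0\in M_2$'' is not needed and the underlying strategy is identical.
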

	\begin{proof}
		Assume that $x\in M_2$ is bounded and $\delta$-approximated in $M_0$.
		We first show that $x$ is $\delta$-approximated in $M_1$. Suppose $a\in M_1$ is of $M_1$-cardinality less than $\delta$. By the $\delta$-covering property, there is $b\in M_0$ with $a\subseteq b$ and $|b|^{M_0}<\delta$. Now $b\cap x\in M_0\subseteq M_1$, and hence 
		\[
		a\cap x=a\cap (b\cap x)\in M_1.
		\]
		Since $M_1$ is $\delta$-guessing in $M_2$, there is $x_1\in M_1$ with $x_1\cap M_1=x\cap M_2$. Observe that $x_1$ is bounded in $M_0$. It is enough to show that $x_1$ is $\delta$-approximated in $M_0$. Thus pick any $a\in M_0$ with $|a|^{M_0}<\delta$.
		Since $\delta\leq\kappa_{M_0}$,
		We have 
		\[
		a\cap x_1=a\cap x_2\in M_0.
		\]
	\end{proof}
	\begin{corollary}
		$M$ is a $\delta$-guessing if and only if it is $\delta$-guessing in some powerful model.
	\end{corollary}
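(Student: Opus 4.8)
The statement is a biconditional, so the plan is to treat the two implications separately; the forward direction is routine and the backward direction is where the work lies.

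For the forward direction, assume $M$ is $\delta$-guessing, i.e.\ $\delta$-guessing in $V$. I would simply exhibit a concrete powerful model witnessing the right-hand side, namely $H_\theta$ for $\theta$ large enough that $M\in H_\theta$; recall the excerpt records that each $H_\theta$ is powerful. Since $H_\theta$ is transitive and contains $M$, we have $M\subseteq H_\theta\subseteq V$. The key observation is that all the relevant predicates on a potential witness $x$ --- being bounded in $M$, being $\delta$-approximated in $M$, and being guessed in $M$ --- refer only to $M$ and $\delta$ and are therefore insensitive to the ambient model. Hence any $x\in H_\theta$ that is bounded and $\delta$-approximated in $M$ is also such an $x$ in $V$, so it is guessed in $M$; thus $M$ is $\delta$-guessing in $H_\theta$. (More generally, guessing in a class passes to any subclass containing $M$, since witnesses only shrink.)

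For the backward direction I would assume $M$ is $\delta$-guessing in some powerful model $\mathcal H$, so $M\subseteq\mathcal H$ with $\mathcal H$ transitive and containing every set bounded in it, and take an arbitrary $x\in V$ that is bounded and $\delta$-approximated in $M$; the goal is that $x$ is guessed in $M$. The main obstacle is that $x$ need not lie in $\mathcal H$, so the guessing property of $M$ in $\mathcal H$ cannot be applied to $x$ directly. The decisive move is to pass to $x'\coloneqq x\cap M$ and show $x'\in\mathcal H$: since $x$ is bounded in $M$, fix $y\in M$ with $x\subseteq y$; then $x'=x\cap M\subseteq y$ with $y\in M\subseteq\mathcal H$, so $x'$ is bounded in $\mathcal H$, and powerfulness of $\mathcal H$ yields $x'\in\mathcal H$. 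This use of powerfulness is exactly the heart of the argument.

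It then remains to verify that $x'$ still satisfies the hypotheses relative to $M$, so that the guessing property applies to it inside $\mathcal H$. Boundedness is immediate from $x'\subseteq y\in M$. For $\delta$-approximation the crucial small lemma is that, because $\delta\leq\kappa_M$, every $a\in M$ with $|a|^M<\delta$ is in fact a subset of $M$: picking a bijection in $M$ from $|a|^M<\kappa_M$ onto $a$ and using that every ordinal below $\kappa_M$ lies in $M$, each element of $a$ is in $M$. Granting this, for such $a$ we get $x'\cap a=(x\cap M)\cap a=x\cap a\in M$, so $x'$ is $\delta$-approximated in $M$. The guessing property of $M$ in $\mathcal H$ now produces $x^*\in M$ with $x^*\cap M=x'\cap M=x\cap M$, which says precisely that $x$ is guessed in $M$, completing the proof. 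I expect the only delicate points to be the reduction to $x\cap M$ together with its membership in $\mathcal H$ via powerfulness, and the $\kappa_M$-argument, which is what makes the approximation condition transfer between $x$ and $x\cap M$.
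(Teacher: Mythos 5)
The paper leaves this corollary unproved (its proof environment is empty), so there is no written argument to compare yours against; judged on its own, your proof is correct in structure, and your forward direction is exactly right: take $\mathcal H=H_\theta$ with $M\in H_\theta$ (hence $M\subseteq H_\theta$ by transitivity), and note that the conditions on a candidate $x$ refer only to $M$ and $\delta$, so the collection of sets that must be guessed only shrinks when passing from $V$ to $H_\theta$.

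Two remarks on the backward direction. First, your detour through $x'=x\cap M$ is unnecessary: if $x\in V$ is bounded in $M$, then the bound $y\in M\subseteq\mathcal H$ shows that $x$ itself is bounded in $\mathcal H$, and powerfulness gives $x\in\mathcal H$ outright. So the guessing property of $M$ in $\mathcal H$ applies to $x$ directly, and the whole transfer of $\delta$-approximation from $x$ to $x'$ — the only part of your argument needing the $\kappa_M$ lemma — can be skipped. Second, the justification you give for that lemma is misstated: \emph{every ordinal below $\kappa_M$ lies in $M$} is false in general; for a countable $M\prec H_\theta$ one has $\kappa_M=\omega_1$, yet $M$ omits most countable ordinals. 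What the definition of $\kappa_M$ actually gives (by minimality) is that every ordinal \emph{belonging to} $M$ and below $\kappa_M$ is a \emph{subset} of $M$. Applying this to the ordinal $|a|^M\in M\cap\kappa_M$ yields $|a|^M\subseteq M$, and then your bijection argument does show $a\subseteq M$, so $x'\cap a=x\cap a\in M$ as you claim. With that one repair — or with the shortcut above, which avoids the issue entirely — your proof is complete.
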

	\begin{proof}[\nopunct]
	\end{proof}
	
	\begin{definition}
		Let $M$ be a model with a well-defined $\kappa_M$. Assume that $\delta\leq\kappa_M$ is a regular cardinal in $M$.
		We say $M$ is a \textbf{$\bm\delta$-guessing elementary submodel ($\bm\delta$-\gesm)} if $M$ is an elementary submodel of a powerful model $\mathcal H$, and that $M$ has the $\delta$-guessing property.
	\end{definition}
	Notice that if $\mathcal H'\prec \mathcal H$ are powerful models, and $M\prec \mathcal H$   is a $\delta$-\gesm~with $\delta\in\mathcal H'\in M$, then $M\cap\mathcal H'$ is a $\delta$-\gesm.

	One can also reformulate guessing models  in terms of functions, we leave the proof to the interested reader.
	However, we may use it without mentioning.
	
	\begin{proposition}\label{functional-def}
		Suppose $\mathcal H$ is a powerful model. Then a model $M\prec H$ is a $\delta$-\gesm~  if and only if for every  uncountable cardinal $\gamma\in M$ and every $f:\gamma\rightarrow 2$, if for all $x\in M\cap\mathcal P_{\delta}(\gamma)$, $f\rest_x\in M$, then there is a function $f^*\in M$ with ${\rm dom}(f^*)=\gamma$ such that $f^*\rest_M=f\rest_M$.
	\end{proposition}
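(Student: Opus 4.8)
The plan is to exploit the dictionary between subsets of an ordinal and their characteristic functions, under which ``$x$ is guessed in $M$'' becomes ``$f$ admits a guessing extension in $M$'' and ``$x$ is $\delta$-approximated in $M$'' becomes the restriction hypothesis on $f$. Concretely, to a function $f:\gamma\to 2$ I associate $A=\{\alpha<\gamma : f(\alpha)=1\}$, and conversely to a subset $A^*\subseteq\gamma$ its characteristic function $\chi_{A^*}$. Since $\gamma\in M$, for any $a\in M$ with $a\subseteq\gamma$ the membership $f\rest_a\in M$ is equivalent to $A\cap a\in M$ (each of $f\rest_a$ and $A\cap a$ is definable in $M$ from the other together with $a$); and for any $f^*$ with $\dom{f^*}=\gamma$, writing $A^*=\{\alpha<\gamma : f^*(\alpha)=1\}$, the equality $f^*\rest_M=f\rest_M$ is equivalent to $A^*\cap M=A\cap M$, because for $\alpha\in\gamma\cap M$ one has $\alpha\in A^*\iff f^*(\alpha)=1\iff f(\alpha)=1\iff\alpha\in A$.

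For the forward direction I assume $M$ is a $\delta$-\gesm{} and fix an uncountable cardinal $\gamma\in M$ and $f:\gamma\to 2$ with $f\rest_a\in M$ for every $a\in M\cap\mathcal P_\delta(\gamma)$. By the dictionary the associated $A\subseteq\gamma$ satisfies $A\cap a\in M$ for all such $a$; invoking the remark after the definition of the guessing property, which reduces $\delta$-approximation of a subset of $\gamma$ to sets $a\in\mathcal P(\gamma)\cap M$ of $M$-size $<\delta$, I conclude that $A$ is bounded and $\delta$-approximated in $M$. The guessing property then yields $A^*\in M$ with $A^*\cap M=A\cap M$, and replacing $A^*$ by $A^*\cap\gamma$ I may assume $A^*\subseteq\gamma$; its characteristic function $f^*=\chi_{A^*}\in M$ has domain $\gamma$ and, by the dictionary, satisfies $f^*\rest_M=f\rest_M$.

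The backward direction carries the real work. I assume the functional condition and take an arbitrary $x$ bounded and $\delta$-approximated in $M$, aiming to guess it; since $x$ is guessed iff $x\cap M$ is, I fix $y\in M$ with $x\subseteq y$. In the degenerate cases where $y$ may be chosen with $|y|^{M}<\delta$ (which, as $\delta$ is uncountable, includes every $y$ with $|y|^{\mathcal H}$ at most countable), taking $a=y$ in the approximation hypothesis already gives $x=x\cap y\in M$, so $x$ is guessed trivially. Otherwise, using elementarity and that the powerful model $\mathcal H$ satisfies choice, I fix a bijection $e:\gamma\to y$ in $M$ with $\gamma=|y|^{\mathcal H}$ an uncountable cardinal in $M$, and I set $A=e^{-1}[x]\subseteq\gamma$ with characteristic function $f$.

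The crux is to transfer the approximation property along $e$. For $a\in M\cap\mathcal P_\delta(\gamma)$, injectivity of $e$ gives $e[a]\in M$ with $|e[a]|^{M}=|a|^{M}<\delta$, so $x\cap e[a]\in M$ by $\delta$-approximation of $x$; since $A\cap a=\{\alpha\in a : e(\alpha)\in x\cap e[a]\}$ is computed inside $M$ from $a$, $e$, and $x\cap e[a]$, the dictionary yields $f\rest_a\in M$. The functional hypothesis then supplies $f^*\in M$ with $\dom{f^*}=\gamma$ and $f^*\rest_M=f\rest_M$; putting $A^*=\{\alpha<\gamma : f^*(\alpha)=1\}\in M$ gives $A^*\cap M=A\cap M$, and I set $x^*=e[A^*]\in M$. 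Finally, since $e$ and $e^{-1}$ lie in $M$, any $t\in M$ of the form $t=e(\alpha)$ forces $\alpha=e^{-1}(t)\in M$, whence $\alpha\in A\iff\alpha\in A^*$; tracing this through both inclusions gives $x^*\cap M=x\cap M$, so $x$ is guessed in $M$. The step I expect to demand the most care is precisely this transfer of $\delta$-approximation through the coding bijection, together with the routine but fiddly bookkeeping that guessing, $\delta$-approximation, and the functional hypothesis are all insensitive to whether one works with $x$, with $x\cap M$, or with its image $A$ below $\gamma$.
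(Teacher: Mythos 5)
Your proof is correct: both directions go through, and the coding of subsets of $\gamma$ by characteristic functions, together with the transfer of $\delta$-approximation along a bijection $e:\gamma\to y$ chosen in $M$ (plus the degenerate case $|y|^{M}<\delta$, where approximation alone gives $x\in M$), is exactly the standard argument this proposition calls for; the paper itself leaves the proof to the reader, so there is nothing to compare against beyond noting that yours is the intended route. The one implicit assumption you make is that $\delta$ is uncountable (needed so that $|y|^{\mathcal H}\geq\delta$ forces $\gamma=|y|^{\mathcal H}$ to be an uncountable cardinal), which is consistent with the paper's usage and with the fact that the right-hand side quantifies only over uncountable $\gamma$; if one insisted on $\delta=\omega$, your case split would leave countably infinite $y$ uncovered, but this is repaired by replacing the bijection with an injection of $y$ into any uncountable cardinal of $M$.
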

	\begin{proof}[\nopunct]
	\end{proof}
	There is still a rather useful reformulation of a guessing model. 
	
	\begin{proposition}[Cox--Krueger \cite{CK-Quotient}]\label{CK-formulation}
		Assume that $M$ is an elementary submodel of a powerful model. Then $M$ is a $\delta$-\gesm~
		if and only if $\overline{M}$ is $\pi_M(\delta)$-guessing.
	\end{proposition}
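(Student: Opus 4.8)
The plan is to transport everything across the transitive collapse isomorphism $\pi_M\colon (M,\in)\to(\overline M,\in)$, exploiting that both $\delta$-approximation and guessing are determined by how a set meets the \emph{small} elements of the model, and are therefore preserved by an $\in$-isomorphism. Throughout I write $\pi=\pi_M$ and $\bar\delta=\pi(\delta)$; since $\pi$ is an $\in$-isomorphism, $\bar\delta$ is a regular cardinal of $\overline M$, and because $\pi(\kappa_M)=\ORD\cap\overline M$ it plays for the transitive model $\overline M$ exactly the role $\delta$ plays for $M$. The first step is a normalization: a set is guessed in a model iff its trace on the model is, and (as every $a\in M$ with $|a|^M<\delta$ is a subset of $M$) the $\delta$-approximation of $x$ depends only on $x\cap M$. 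Hence it suffices to treat $x\subseteq M$ on the left and $\bar x\subseteq\overline M$ on the right, and the assignment $x\mapsto\pi[x]$ is a bijection $\mathcal P(M)\to\mathcal P(\overline M)$ with inverse $\bar x\mapsto\pi^{-1}[\bar x]$.

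Next I would set up the dictionary on small sets. If $a\in M$ with $|a|^M<\delta$, then $a\subseteq M$: by elementarity in the powerful model $M$ carries a surjection $\gamma\to a$ with $\gamma=|a|^M<\delta\le\kappa_M$, and every ordinal below $\kappa_M$ is a subset of $M$, so the range lands in $M$. For such $a$ one has $\pi(a)=\pi[a]=:\bar a$ with $|\bar a|^{\overline M}<\bar\delta$, and conversely every $\bar a\in\overline M$ of $\overline M$-size $<\bar\delta$ arises in this way from $a=\pi^{-1}(\bar a)$; so the quantifiers over small sets match up under $\pi$. With this in hand I would prove two transfer equivalences for $x\subseteq M$ and $\bar x=\pi[x]$. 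The guessing equivalence reads: $x$ is guessed in $M$ iff $\bar x\in\overline M$, hence (as $\overline M$ is transitive) iff $\bar x$ is guessed in $\overline M$; this is immediate from $\pi(x^*)=\pi[x^*\cap M]$ for $x^*\in M$ together with the injectivity of $\pi$. The approximation equivalence reads: $x$ is $\delta$-approximated in $M$ iff $\bar x$ is $\bar\delta$-approximated in $\overline M$, using $\pi[x\cap a]=\bar x\cap\bar a$.

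Granting the two equivalences the proposition falls out symmetrically. For the forward direction, given $\bar x$ bounded and $\bar\delta$-approximated in $\overline M$, I pass to $x=\pi^{-1}[\bar x]$, check it is bounded and $\delta$-approximated in $M$, invoke the $\delta$-guessing property of $M$, and push the resulting guess back through $\pi$; the backward direction is the mirror image. The one step needing care, and the place I expect the only real delicacy, is the backward half of the approximation equivalence: from $\bar x\cap\bar a\in\overline M$ I must conclude $x\cap a\in M$. Here $\pi^{-1}$ only returns some $z\in M$ with $z\cap M=x\cap a$, and $z\cap M$ need not itself belong to $M$. The fix is to intersect with $a$: since $a\subseteq M$ one has $x\cap a=z\cap a$, and $z\cap a\in M$ because $M$ is closed under intersecting two of its members. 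Keeping straight the distinction between ``$\in M$'' and ``$\subseteq M$'' at each such step is the whole subtlety; everything else is bookkeeping through the isomorphism.
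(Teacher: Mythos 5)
Your proof is correct in all of its essential content, and since the paper offers no proof of this proposition (it is simply quoted from Cox--Krueger), it can only be judged on its own terms: the collapse-transfer route you take --- normalising to subsets of the model, matching up the small sets of $M$ and of $\overline{M}$ under $\pi_M$, and transporting boundedness, approximation and guessing across the isomorphism --- is the natural argument and essentially the one in the cited source. You also correctly isolate the one delicate step. From $\bar x\cap\bar a\in\overline{M}$ the preimage $z\coloneqq\pi_M^{-1}(\bar x\cap\bar a)$ only satisfies $z\cap M=x\cap a$, which need not belong to $M$, and your repair $x\cap a=z\cap a\in M$ (valid because $a\subseteq M$ and $z,a\in M$, with $\mathcal H$ closed under intersections since $u\cap v$ is bounded in $\mathcal H$ by $u$) is exactly what is needed. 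Even more directly: $\pi_M(z)\subseteq\pi_M(a)$ gives $z\subseteq a$ by absoluteness of $\subseteq$ for the transitive models $\overline{M}$ and $\mathcal H$, whence $z\subseteq M$ and $x\cap a=z\in M$ outright.

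One incidental assertion is false and should be deleted: it is not true that $\pi_M(\kappa_M)=\ORD\cap\overline{M}$. Since every ordinal \emph{of} $M$ below $\kappa_M$ is a subset of $M$, the map $\pi_M$ is the identity on $M\cap\kappa_M$, so $\pi_M(\kappa_M)$ equals the ordinal $M\cap\kappa_M$; on the other hand $\ORD\cap\overline{M}$ is the order type of $M\cap\ORD$, which is strictly larger because $\kappa_M$ itself lies in $M$. Fortunately nothing in your argument rests on this claim: the hypothesis $\delta\le\kappa_M$ is used only on the $M$ side, to guarantee that every $a\in M$ with $|a|^M<\delta$ is a subset of $M$, and the corresponding fact on the $\overline{M}$ side holds simply because $\overline{M}$ is transitive (indeed $\kappa_{\overline{M}}$ is not even defined for a transitive model, so no such clause constrains $\pi_M(\delta)$ there). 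In the same vein, your phrase ``every ordinal below $\kappa_M$ is a subset of $M$'' should read ``every ordinal of $M$ below $\kappa_M$''; the ordinal $\gamma=|a|^M$ to which you apply it does lie in $M$, so the application itself is sound. With that sentence corrected, the proof stands.
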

	\begin{proof}[\nopunct]
	\end{proof}

	The definition of a \gesm~demonstrates that though $M$ may not be correct about the power-set, it still does not compute it as badly as we can imagine. As we shall see, if $x$ is a set with some additional structure and is guessed in $M$, say
	$x\cap M=x^*\cap M$ for some $x^*\in M$, then $x^*$ can be roughly regarded as a structure similar to that of $x$, while containing $x\cap M$ as a substructure!
	
	\subsection{The geometry of \gesms}
	
	What does a \gesm~look like?  In this subsection, we intend to try to find a reasonable answer to this question. In other words, how does being a \gesm~impact the structure itself.

	Intuitively, it would not seem easy to get more sets guessed with a small amount of approximation.
	To look at an extreme case, let us consider  $\delta=0$.
	This means that if $x\subseteq M$ is bounded in $M$, then there is $x^*\in M$ such that $x^*\cap M=x$. This itself implies that $M$ is really enormous. One can  find such a model $M$, that is if $\overline{M}=V_{\bar\gamma}$, for some limit ordinal $\bar{\gamma}$.
	
	\begin{theorem}[Viale \cite{V2012}]
		$M$ is a $0$-\gesm~if and only if for some limit ordinal $\bar{\gamma}$, $\overline{M}=V_{\bar{\gamma}}$.
	\end{theorem}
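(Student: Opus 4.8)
The first move is to unwind what the $0$-guessing property actually asks for. Since there is no set $a$ with $|a|^M<0$, the approximation hypothesis is vacuous: every bounded set is automatically $0$-approximated in $M$. Hence $M$ is a $0$-\gesm~exactly when $M\prec\mathcal H$ for a powerful $\mathcal H$ and every $x\subseteq M$ bounded in $M$ is guessed in $M$. I would then pass to the transitive collapse. By \Cref{CK-formulation} (with $\pi_M(0)=0$) this is equivalent to $\overline M$ being $0$-guessing, and, because $\overline M$ is transitive, a set is guessed in $\overline M$ precisely when it belongs to $\overline M$. Combining these two observations, $M$ is a $0$-\gesm~if and only if $\overline M$ is closed under subsets, i.e. $\mathcal P(y)\subseteq\overline M$ for every $y\in\overline M$. (One can also check this closure directly: if $\bar z\subseteq\bar y=\pi_M(y)$, then $z\coloneqq\pi_M^{-1}[\bar z]$ is a bounded subset of $M$, so a witness $z^*\in M$ for $z$ being guessed collapses to $\pi_M(z^*)=\bar z\in\overline M$.)

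The entire content is therefore the set-theoretic fact that a transitive model $T$ of a sufficiently strong fragment of $\rm ZFC$ is closed under subsets if and only if $T=V_{\bar\gamma}$ for some limit ordinal $\bar\gamma$. The direction $(\Leftarrow)$ is immediate: if $y\in V_{\bar\gamma}$ and $z\subseteq y$ then $\operatorname{rank}(z)\leq\operatorname{rank}(y)<\bar\gamma$, so $z\in V_{\bar\gamma}$; this settles the converse of the theorem, since then $\overline M=V_{\bar\gamma}$ is $0$-guessing and $M\prec\mathcal H$, whence $M$ is a $0$-\gesm~by \Cref{CK-formulation}. For $(\Rightarrow)$ I would set $\bar\gamma\coloneqq\ORD^T=T\cap\ORD$ and prove $T=V_{\bar\gamma}$ by establishing both inclusions. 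The inclusion $T\subseteq V_{\bar\gamma}$ is the easy half: rank is absolute for $T$, so every $x\in T$ has $\operatorname{rank}(x)\in\bar\gamma$.

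The harder inclusion $V_{\bar\gamma}\subseteq T$ I would obtain by transfinite induction, proving $V_\alpha\in T$ for every $\alpha<\bar\gamma$. The successor step uses closure under subsets together with the power-set axiom in $T$: from $V_\beta\in T$ one gets $\mathcal P^T(V_\beta)\in T$, and closure under subsets forces $\mathcal P^T(V_\beta)=\mathcal P(V_\beta)=V_{\beta+1}$. The limit step is where the strength of the fragment is genuinely needed: given $V_\beta\in T$ for all $\beta<\lambda$ (with $\lambda\in T$), I would apply Replacement to the definable map $\beta\mapsto V_\beta$ to place $\{V_\beta:\beta<\lambda\}$, and hence $V_\lambda=\bigcup_{\beta<\lambda}V_\beta$, into $T$. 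This gives $V_\alpha\subseteq T$ for all $\alpha<\bar\gamma$, whence $V_{\bar\gamma}\subseteq T$. Finally, to see that $\bar\gamma$ is a limit: were $\bar\gamma=\beta+1$, then $V_\beta\in T$ would give $\mathcal P^T(V_\beta)=V_{\beta+1}=V_{\bar\gamma}=T\in T$, contradicting Foundation.

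I expect the main obstacle to be exactly this limit step of the induction. It is the only place where one must invoke Replacement, so the standing hypothesis that $T$ models a sufficiently strong fragment of $\rm ZFC$ is doing real work there; an argument using only closure under subsets and the power-set axiom would stall at limit stages. The successor step and the argument that $\bar\gamma$ is a limit are the two further places where the power-set axiom is essential, but they are routine once the inductive framework is set up.
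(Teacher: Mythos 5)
Your proof is correct, and there is in fact nothing in the paper to compare it against: the survey states this theorem with an empty proof environment, deferring entirely to Viale's paper, so your argument supplies what the text omits. The route you take is the standard (and surely the intended) one: pass to the transitive collapse via \cref{CK-formulation} or via the direct computation you give in parentheses, observe that for a transitive set guessing is membership and $0$-approximation is vacuous, so being $0$-guessing is exactly closure under subsets, and then recover $V_{\bar{\gamma}}$ by rank induction. Your closing remark deserves emphasis rather than apology: Replacement genuinely cannot be dropped at limit stages, since for instance $T=\bigcup_{n<\omega}\mathcal P^n(\omega_1)$ is transitive, closed under subsets, and satisfies Zermelo set theory with choice, yet $V_\omega\notin T$, so $T$ is not a rank initial segment even though $T\cap\mathrm{ORD}=\omega_1+\omega$ is a limit ordinal. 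Power Set is equally essential: a powerful model such as $H_{\omega_1}$ is transitive and closed under subsets, hence $0$-guessing and elementary in itself, without being any $V_{\bar{\gamma}}$; such degenerate examples are excluded only because the definition of a \gesm{} demands a well-defined $\kappa_M$ and because ``a sufficiently strong fragment of $\mathrm{ZFC}$'' must be read as including Power Set and Replacement --- precisely the two axioms whose use your induction makes explicit.
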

	\begin{proof}[\nopunct]
	\end{proof}

	Le us not forget that we are interested in elementary submodels of $H_\theta$ with the guessing property. Alas, finding many models of the above form and of size less than a given uncountable regular cardinal roughly needs a supercompact cardinal.
	This was proved by Viale  \cite{V2012} using the  the above theorem and a characterization of supercompactness due to Magidor  \cite{MA1971}. 
	Viale's paper \cite{V2012} contains   characterizations of other large cardinals in terms of $0$-\gesms. Since our focus in this paper is  on accessible cardinals, we encourage the interested reader to consult \cite{V2012} for more results of above sort.
	
	\begin{theorem}
		Suppose  $M$ is a $\delta$-\gesm. Let  $\theta\in M$ be a cardinal with ${\rm cof}(\theta)\geq\delta$. Assume that $M$ is closed under sequences of length ${<}{\rm cof}({\rm sup}(M\cap\theta))$ which are bounded in $M$.
		Then $ |M|\geq 2^{{\rm cof}({\rm sup}(M\cap\theta))}$.
	\end{theorem}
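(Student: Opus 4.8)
The plan is to manufacture $2^{\lambda}$ distinct elements of $M$, where $\lambda=\cof(\sup(M\cap\theta))$, by guessing a family of cofinal subsets indexed by $\mathcal P(\lambda)$. Write $\gamma=\sup(M\cap\theta)$ and fix a strictly increasing sequence $\langle\gamma_\xi:\xi<\lambda\rangle$ of elements of $M\cap\theta$ that is cofinal in $\gamma$. For each $A\subseteq\lambda$ set $x_A=\{\gamma_\xi:\xi\in A\}\subseteq\theta$. Since every $\gamma_\xi$ lies in $M$, we have $x_A\subseteq M$, so $x_A\cap M=x_A$, and the assignment $A\mapsto x_A$ is injective because the $\gamma_\xi$ are pairwise distinct. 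Each $x_A$ is bounded in $M$ (by $\theta\in M$), so once I show that every $x_A$ is $\delta$-approximated in $M$, the guessing property yields $x_A^{*}\in M$ with $x_A^{*}\cap M=x_A$; distinct $A$ then produce distinct values $x_A^{*}\cap M$, hence distinct witnesses $x_A^{*}\in M$, and therefore $|M|\ge 2^{\lambda}$.

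The heart of the argument is the verification of $\delta$-approximation: given $a\in M$ with $|a|^{M}<\delta$, I must check that $x_A\cap a\in M$. First I would observe that $a\subseteq M$: since $\delta\le\kappa_M$, the $M$-cardinality $\beta=|a|^{M}$ is an ordinal of $M$ below $\kappa_M$, so $\beta\subseteq M$, and a bijection $f\colon\beta\to a$ lying in $M$ then has every value $f(i)$ (for $i<\beta\subseteq M$) in $M$, whence $a\subseteq M$. Consequently $a\cap\theta\subseteq M\cap\theta$, and because $\sup(a\cap\theta)\in M$ while $\gamma\notin M$ (elementarity forbids $\gamma$ from being in $M\cap\theta$), one gets $\sup(a\cap\theta)<\gamma$. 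Cofinality of the $\gamma_\xi$ now provides $\eta<\lambda$ with $\gamma_\eta>\sup(a\cap\theta)$, so no $\gamma_\xi$ with $\xi\ge\eta$ lies in $a$, and the intersection is governed entirely by the initial segment. Here the closure hypothesis enters: the sequence $\langle\gamma_\xi:\xi<\eta\rangle$ has length $\eta<\lambda$, has all entries in $M$, and is bounded in $M$, so it belongs to $M$; from it and $a$ the index set $J=\{\xi<\eta:\gamma_\xi\in a\}$ is definable inside $M$, giving $J\in M$ with $\mathrm{otp}(J)<\lambda$. Finally $x_A\cap a=\{\gamma_\xi:\xi\in A\cap J\}$, so it remains to see $A\cap J\in M$; being a bounded subset of $M$ of order type $<\lambda$, its increasing enumeration is once more a bounded ${<}\lambda$-sequence of elements of $M$, hence lies in $M$ by the closure hypothesis, and restricting $\langle\gamma_\xi\rangle$ to it places $x_A\cap a$ in $M$.

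The step I expect to be the main obstacle is precisely this interaction between the \emph{arbitrary} set $A$ and the small approximating set $a$: a priori $A\cap J$ need not belong to $M$, and one is tempted to fear that the argument requires $\delta\le\lambda$. The realization that dissolves this worry is that $a\subseteq M$ forces $a\cap\theta$ to be bounded strictly below $\gamma$, so the relevant index set $J$ automatically has order type $<\lambda$ no matter how large $a$ is; this is exactly what allows the closure hypothesis to absorb $A\cap J$. With $\delta$-approximation established, the guessing property and the injectivity of $A\mapsto x_A^{*}$ complete the proof without further difficulty.
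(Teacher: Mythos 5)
Your construction is exactly the paper's: fix an increasing sequence in $M\cap\theta$ cofinal in $\gamma=\sup(M\cap\theta)$ of length $\lambda=\cof(\gamma)$, code each $A\subseteq\lambda$ by $x_A$, verify that $x_A$ is $\delta$-approximated in $M$ using the closure hypothesis, guess it, and observe that $A\mapsto x_A^*$ is injective. Your bookkeeping with $J$ and $A\cap J$ is a slightly longer route to what the paper does in one step (apply closure directly to the enumeration of $x_A\cap a$, which is a bounded ${<}\lambda$-sequence of elements of $M$), but it is sound.

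There is, however, one genuine gap. You derive $\sup(a\cap\theta)<\gamma$ from the claim ``$\gamma\notin M$'', justified by the parenthetical that elementarity forbids $\gamma\in M\cap\theta$. The parenthetical is correct, but it only yields $\gamma\notin M\cap\theta$, not $\gamma\notin M$: the two differ precisely when $\gamma=\theta$, i.e.\ when $M\cap\theta$ is cofinal in $\theta$, and then $\gamma=\theta\in M$ by hypothesis. This case is not marginal --- it is the central one in applications of this theorem (e.g.\ $\theta=\omega_1\subseteq M$, which is how one extracts the bounds on the size of guessing models such as $2^\gamma>\gamma^+$); so the proof cannot simply ignore it. The tell-tale sign of the gap is that your argument never invokes the hypothesis $\cof(\theta)\geq\delta$, which is exactly what handles this case: since $a\subseteq M$ and $|a|^M<\delta$, the set $a\cap\theta$ has size ${<}\delta\leq\cof(\theta)$ and therefore cannot be cofinal in $\theta$, so $\sup(a\cap\theta)<\theta=\gamma$ after all. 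This is how the paper argues: if $a\cap c[z]$ were unbounded in $\sup(M\cap\theta)$, then $\sup(a)=\sup(M\cap\theta)\in M$, which by elementarity forces $\cof(\theta)<\delta$, a contradiction. With that one-line repair --- split into the case $\gamma<\theta$, where your argument works, and $\gamma=\theta$, where the cofinality hypothesis does the work --- your proof is complete and coincides with the paper's.
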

	\begin{proof}
		Let $\lambda={\rm cof}({\rm sup}(M\cap\theta))$, and let
		$c=\{\theta_\xi:\xi<\lambda\}\subseteq M\cap\theta$  be increasing and cofinal in ${\rm sup}(M\cap\theta)$.
		For every $z\subseteq\lambda$, let $c[z]=\{\theta_\xi:\xi\in z\}$.
		Note that $c[z]$ is bounded in $M$, as witnessed by $\theta\in M$. Note that also $c[z]\subseteq M$.
		We shall show that $c[z]$ is $\delta$-approximated in $M$.
		For every  subset $a\in\mathcal P_\delta(\theta)\cap M$, $a\cap c[z]$ is a bounded sequence of length less than $\lambda$, since otherwise $\sup(a)=\sup(M\cap\theta)\in M$, and hence 
		by elementarity, the cofinality of $\theta$ is less than $\delta$, which is a contradiction. Thus, by the assumption on the closure property, $a\cap c[z]\in M$.  Thus $c[z]$ is $\delta$-approximated in $M$.
		Therefore, there is $c^*[z]\in M$ with $c^*[z]\cap M=c[z]\cap M=c[z]$.
		Observe that, by elementarity, $c^*[z]$ is unique.
		It is easily seen, since $c$ is one-to-one, that if $z\neq z'$, then $c^*[z]\neq c^*[z']$, and hence
		the function $z\rightarrow c^*[z]$ is an injection from $\mathcal P(\lambda)$ into $M$. 
	\end{proof}
	
	Note that for every $n\in\omega$, being  $n$-guessing is equivalent to being $\omega$-guessing. Thus every $\delta$-\gesm~is uncountable.

	\begin{corollary}[Cox--Krueger \cite{CK2017}]
		If $M$ is an $\omega_1$-\gesm~of size ${<}2^{\aleph_0}$, then $\omega_1\subseteq M$, and for every   cardinal  $\theta\in M$ of uncountable cofinality, ${\rm cof}(\sup(M\cap\theta))$ is uncountable. In particular, if $M$ is of size $\omega_1$, then  ${\rm cof}(M\cap\theta)=\omega_1$ and $M\cap\omega_2\in \omega_2$. 
	\end{corollary}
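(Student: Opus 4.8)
The plan is to use the preceding theorem purely in contrapositive form, with $\delta=\omega_1$, as a device that forbids countable cofinalities. Observe first that for any cardinal $\theta\in M$ of uncountable cofinality, $M\cap\theta$ has no largest element: if $\mu=\max(M\cap\theta)$ then by elementarity $\mu+1\in M$ and $\mu+1<\theta$ (as $\theta$ is a limit ordinal), a contradiction. Hence $\sup(M\cap\theta)$ is a limit ordinal and $\cof(\sup(M\cap\theta))$ is infinite, so "countable" can only mean $\cof(\sup(M\cap\theta))=\omega$. But in that case the theorem's closure hypothesis asks only for closure under sequences of length ${<}\omega$, i.e. finite sequences, which holds automatically since an elementary submodel contains every finite subset of itself; the theorem would then yield $|M|\geq 2^{\cof(\sup(M\cap\theta))}=2^{\aleph_0}$, contradicting $|M|<2^{\aleph_0}$. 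This already gives the second assertion: $\cof(\sup(M\cap\theta))$ is uncountable for every cardinal $\theta\in M$ of uncountable cofinality.

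Next I would deduce $\omega_1\subseteq M$. Apply the previous paragraph to $\theta=\omega_1$ itself (legitimate since $\omega_1=\delta\in M$ and $\cof(\omega_1)=\omega_1\geq\delta$): $\cof(\sup(M\cap\omega_1))$ is uncountable. As $\sup(M\cap\omega_1)\leq\omega_1$ and every ordinal strictly below $\omega_1$ has countable cofinality, this forces $\sup(M\cap\omega_1)=\omega_1$, i.e. $M\cap\omega_1$ is unbounded in $\omega_1$. To upgrade unboundedness to containment I use elementarity: $\omega\subseteq M$, and any $\beta\in M\cap\omega_1$ is countable in $M$, so $M$ contains a surjection $g\colon\omega\to\beta$; since $\omega\subseteq M$, the range $\beta=\{g(n):n<\omega\}$ is contained in $M$. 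Thus every $\beta\in M\cap\omega_1$ satisfies $\beta\subseteq M$, and by unboundedness every $\alpha<\omega_1$ lies below some such $\beta$, giving $\alpha\in\beta\subseteq M$. Hence $\omega_1\subseteq M$.

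Finally, for the "in particular" clause assume $|M|=\omega_1$. For the cofinality claim, note that $\cof(\sup(M\cap\theta))\leq|M\cap\theta|\leq|M|=\omega_1$ while the second assertion makes it uncountable, so it equals $\omega_1$. For $M\cap\omega_2\in\omega_2$ I would repeat the surjection argument one cardinal higher: since $\omega_1\subseteq M$, any $\beta\in M\cap\omega_2$ carries a surjection $\omega_1\to\beta$ in $M$ whose range lies in $M$, so $\beta\subseteq M$; therefore $M\cap\omega_2$ is downward closed, hence an ordinal, and it has cardinality $\leq|M|=\omega_1<\omega_2$, so $M\cap\omega_2\in\omega_2$.

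The main obstacle I anticipate is conceptual rather than computational: the theorem only delivers that $M\cap\theta$ is unbounded (through its cofinality), not that ordinals actually belong to $M$, and one must notice that elementarity alone bridges this gap, because a surjection from $\omega$ (resp. $\omega_1$) onto a small ordinal, living inside $M$, drags that whole ordinal into $M$ once $\omega$ (resp. $\omega_1$) is contained in $M$. A secondary point to keep straight is that the theorem is invoked only in its contrapositive and only in the countable-cofinality case, precisely because that is the unique regime in which its sequence-closure hypothesis comes for free.
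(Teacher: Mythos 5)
Your proof is correct and is exactly the derivation the paper intends: the paper leaves this corollary unproved as an immediate consequence of the preceding theorem, and your contrapositive application of that theorem (noting that for cofinality $\omega$ the closure hypothesis is automatic, since elementary submodels contain all their finite sequences) together with the standard elementarity facts (surjections from $\omega$, resp.\ $\omega_1$, pulling small ordinals into $M$) fills in the intended details. No gaps.
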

	\begin{proof}[\nopunct]
	\end{proof}
	
	It is straightforward to generalise the  above corollary to $\delta\geq\omega_1$, when $M$ is sufficiently closed: 
	If $M$ is a ${<}\delta$-closed $\delta^+$-\gesm~size $\delta^+$, then $M\cap\delta^{++}\in\delta^{++}$ and for every cardinal $\theta\in M$ with ${\rm cof}(\theta)>\delta$, ${\rm cof}({\rm sup}(M\cap\theta))=\delta^{+}$. In particular, $\delta^+\subseteq M$.

	\begin{proposition}[Viale \cite{V2012}]\label{0-g}
		Suppose that $M$ is a $\delta$-\gesm. If $2^{<\delta}< \kappa_M$, then $M$ is a $0$-\gesm, and thus both $\sup(M\cap\kappa_M)$ and $\kappa_M$ are inaccessible.
	\end{proposition}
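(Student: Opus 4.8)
The plan has two parts: first upgrade $\delta$-guessing to $0$-guessing, and then read off inaccessibility from the transitive collapse. I would first record the elementary principle that if $S\in M$ and $|S|^M<\kappa_M$, then $S\subseteq M$: writing $\mu=|S|^M\in M$, minimality of $\kappa_M$ gives $\mu\subseteq M$, and closing a bijection $e\colon\mu\to S$ (with $e\in M$) under application places every element of $S$ in $M$. Granting this, the main assertion is quick. Since $0$-approximation is a vacuous condition, I must show every $x$ bounded in $M$ is guessed in $M$, and by the $\delta$-guessing hypothesis it is enough to check that $x$ is $\delta$-approximated in $M$. Fix $a\in M$ with $|a|^M<\delta$. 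As $\mathcal H$ is powerful, $\mathcal P(a)=\mathcal P(a)^{\mathcal H}\in M$ and $|\mathcal P(a)|^M\le 2^{<\delta}<\kappa_M$, so the principle above yields $\mathcal P(a)\subseteq M$. Since $x\cap a$ is a genuine subset of $a$ it lies in $\mathcal P(a)$, so $x\cap a\in M$. Thus $x$ is $\delta$-approximated, hence guessed, and $M$ is a $0$-\gesm.

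For inaccessibility I would apply Viale's characterization of $0$-guessing models to write $\overline M=V_{\bar\gamma}$ for a limit $\bar\gamma$, and set $\rho:=\pi_M(\kappa_M)$. A short analysis shows $\kappa_M$ is a limit ordinal, that $\pi_M$ fixes every ordinal in $M\cap\kappa_M$, and that $M\cap\kappa_M$ is exactly the ordinal $\rho=\sup(M\cap\kappa_M)$. Regularity of $\kappa_M$ is then immediate: a cofinal map $g\colon\mu\to\kappa_M$ lying in $M$ with $\mu<\kappa_M$ would take all its values inside $M\cap\kappa_M=\rho<\kappa_M$, contradicting cofinality.

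The strong-limit behaviour is where $0$-guessing is used again. For $\lambda\in M\cap\kappa_M$ we have $\lambda\subseteq M$, so any $X\subseteq\lambda$ is bounded in $M$ and hence guessed, say by $X^*\in M$; then $X=X^*\cap\lambda\in M$, so $\mathcal P(\lambda)\subseteq M$. Running a bijection $h\colon 2^\lambda\to\mathcal P(\lambda)$ in $M$ backwards shows $2^\lambda\subseteq M$, which forces $2^\lambda<\kappa_M$ (otherwise $\kappa_M\subseteq M$) and in fact $2^\lambda<\rho$. Taking $\lambda=\omega$ already gives $\omega_1\subseteq M$, so $\rho$ is uncountable; letting $\lambda$ range below $\rho$ shows $\rho$ is a strong limit.

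It remains to assemble and transfer. Since $\kappa_M$ is regular in $V$, it is regular in $\mathcal H$, so $M\models$ ``$\kappa_M$ is regular'' and therefore $V_{\bar\gamma}=\overline M\models$ ``$\rho$ is regular''; as $\bar\gamma$ is a limit above $\rho$, every short cofinal map into $\rho$ already belongs to $V_{\bar\gamma}$, so $\rho$ is really regular. With the strong-limit and uncountability facts this makes $\rho$ genuinely inaccessible, and running the elementarity in reverse — inaccessibility of $\rho$ holds in $\overline M$, hence $M\models$ ``$\kappa_M$ is inaccessible'', hence $\mathcal H$ models the same, hence $\kappa_M$ really is inaccessible — completes the argument. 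The one real obstacle is exactly the gap between $\rho=\sup(M\cap\kappa_M)$ and $\kappa_M$: because $M\cap\kappa_M$ is bounded below $\kappa_M$, neither the strong-limit nor the regularity condition can be verified for all $\lambda<\kappa_M$ by a direct computation in $V$; one must establish inaccessibility for the collapsed point $\rho$ inside $V_{\bar\gamma}$ and then push it back across $\pi_M$, and it is precisely here that the identity $\overline M=V_{\bar\gamma}$ is indispensable.
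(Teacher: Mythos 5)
Your proof of the main claim --- that $2^{<\delta}<\kappa_M$ upgrades $\delta$-guessing to $0$-guessing --- is essentially identical to the paper's: both reduce to showing that every set bounded in $M$ is automatically $\delta$-approximated, via $|\mathcal P(a)|\le 2^{<\delta}<\kappa_M$ together with the principle that any element of $M$ of $M$-cardinality below $\kappa_M$ is a subset of $M$ (you prove this principle explicitly; the paper invokes it tacitly). The difference is one of coverage: the paper's written proof stops there and disposes of the inaccessibility clause with ``thus'', leaning implicitly on Viale's characterization $\overline{M}=V_{\bar\gamma}$ stated just before, whereas you actually carry out that derivation, and your chain of transfers (direct regularity of $\kappa_M$; strong limitness of $\rho=\sup(M\cap\kappa_M)$ by guessing subsets of each $\lambda\in M\cap\kappa_M$; regularity of $\rho$ pushed down the collapse into $V_{\bar\gamma}$ and recovered by rank-absoluteness; then elementarity back up to $\kappa_M$) is correct. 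One point is overstated, though: the identity $\overline{M}=V_{\bar\gamma}$ is not ``indispensable''. Regularity of $\rho$ follows directly from $0$-guessing by the same device as the paper's tree-branch lemmas: a putative cofinal $g:\mu\to\rho$ with $\mu\in M\cap\kappa_M$ satisfies $g\subseteq M$ and is bounded in $M$ (witnessed by $\mu\times\kappa_M\in M$), hence is guessed by some $g^*\in M$; elementarity forces $g^*$ to be a function with domain $\mu$ agreeing with $g$ on $M$, so $g^*=g$, whence $\rho=\sup(\ran(g))\in M$, contradicting $\rho\notin M$. Likewise, the inaccessibility of $\kappa_M$ itself needs only elementarity of $M$ in $\mathcal H$ plus the correctness of the powerful model $\mathcal H$ about power sets and cofinalities of its elements --- no collapse required. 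So Viale's theorem is a legitimate shortcut in your write-up, not a necessity.
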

	\begin{proof}
		If $2^{<\delta}<\kappa_M$, then $2^{<\delta}\subseteq M$.
		It is enough to  show that every set $x$ bounded in $M$ is  $\delta$-approximated in $M$. Fix such an $x$, and let $a\in M$ be of size less than $\delta$. We have
		$|\mathcal P(a)|\leq 2^{<\delta}$, and hence
		$x\cap a\in \mathcal P(a)\subseteq M$.
	\end{proof}
	
	This  above proposition was also observed by Hachtman and Sinapova \cite{Hachtman-Sinapova2020}.

	\begin{lemma}[Krueger \cite{Krueger2020}]\label{K-nice-result}
		Assume that  $M$ is a $\delta^+$-guessing model with $\delta^+\subseteq M$. Suppose that $\mu\leq\delta$ is a regular cardinal and $M^{<\mu}\subseteq M$. Then, for every  $x\subseteq [M]^\mu$ which is bounded in $M$, there is  $y\in M\cap [M]^{\leq\delta}$ with
		$x\subseteq y$.
	\end{lemma}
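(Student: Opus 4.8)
The plan is to treat $x$ as a single $\mu$-sized subset of $M$ and to manufacture the cover by guessing a \emph{cofinal selector}. First I would dispose of the reduction. Fix $B\in M$ with $x\subseteq B$ and, by elementarity, fix in $M$ an injection of $B$ into a cardinal; replacing $x$ by its pointwise image I may assume $x\subseteq\lambda$ for an ordinal $\lambda\in M$, since any cover of the image of $M$-size $\le\delta$ pulls back through the injection to a member of $M\cap[M]^{\le\delta}$ covering $x$. I would then argue by induction on the $M$-cardinality $\nu$ of the bound. If $\nu\le\delta$ there is nothing to prove: the bound $B$ is itself the desired $y$. So assume $\nu>\delta$, and pass to a regular cardinal $\lambda$ of $V$ with $\lambda>\delta$ lying in $M$ (injecting once more); this is the device that tames the intersections below, since every $a\in M$ with $|a|^M\le\delta$ then satisfies $\sup a<\lambda$, whence $\sup a\in M\cap\lambda$.

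The crux is to show $\sup x<s$, where $s:=\sup(M\cap\lambda)$. Granting this I pick $\beta\in M\cap\lambda$ above $\sup x$, so $x\subseteq\beta$ with $|\beta|^M<\nu$, and the inductive hypothesis finishes. Suppose instead $\sup x=s$. Then $x$ is cofinal in $s$ and $|x|=\mu$, so $\cof(s)\le\mu$; in fact $\cof(s)=\mu$, for a cofinal subset of size ${<}\mu$ would be a ${<}\mu$-subset of $M$, hence an element of $M$ by $M^{<\mu}\subseteq M$, and then $s$ would be the supremum of an element of $M$, forcing $s\in M$ — impossible, as $s=\sup(M\cap\lambda)\notin M$. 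Fix a cofinal $c\subseteq x$ of order type $\mu$.

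Next I would verify that $c$ is $\delta^+$-approximated in $M$, which is the heart of the matter. Given $a\in M$ with $|a|^M\le\delta$, regularity of $\lambda$ and $|a|^M\le\delta<\lambda$ give $\sup a<\lambda$, so $\sup a\in M\cap\lambda$ and therefore $\sup a<s$ (here $s\notin M$ is used). Hence $a\cap c$ is contained in the initial part $\{\xi\in c:\xi\le\sup a\}$, which is bounded below $s$ and so has size $<\cof(s)=\mu$; being a ${<}\mu$-subset of $M$, it lies in $M$ by the closure hypothesis. Thus $a\cap c\in M$ for every such $a$, and $c$ is bounded by $\lambda\in M$, so the hypotheses of the $\delta^+$-guessing property are met. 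This is precisely where the a priori danger — that $a\cap c$ has full size $\mu$ and escapes $M$ — is defused, and it is defused \emph{only} because $\lambda$ was taken regular above $\delta$ and $\cof(s)=\mu$.

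Finally I would exploit the guess. The $\delta^+$-guessing property yields $c^*\in M$ with $c^*\cap M=c$, and intersecting with $\lambda$ I may take $c^*\subseteq\lambda$. Two cardinality computations now collide. Since $\delta^+\subseteq M$, any member of $M$ of $M$-cardinality $\ge\delta^+$ meets $M$ in at least $\delta^+$ points (push $\delta^+$ through an injection lying in $M$); as $|c^*\cap M|=|c|=\mu\le\delta$, this forces $|c^*|^M\le\delta$. On the other hand $c\subseteq c^*$ is cofinal in $s$, so $\sup c^*\ge s$, while $\sup c^*\in M$: an element of $M\cap\lambda$ cannot equal $s\notin M$, and $\sup c^*=\lambda$ would give $|c^*|^M\ge\cof^M(\lambda)=\lambda>\delta$. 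Either alternative contradicts $|c^*|^M\le\delta$. Hence $\sup x=s$ is impossible, the induction goes through, and $x$ is covered by a member of $M\cap[M]^{\le\delta}$. The main obstacle throughout is the approximation step together with the bookkeeping it forces — keeping the ambient bound regular in $V$, guaranteeing $s\notin M$, and arranging that the inductive measure genuinely drops (the $V$-singular bounds being the fiddly point); once approximation is secured, guessing plus the $\delta^+\subseteq M$ count does the rest.
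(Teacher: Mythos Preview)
Your route is genuinely different from the paper's. The paper does \emph{not} induct on the size of the bound: it fixes a bijection $f:\mu\to x$, sets $\mathfrak X=\{f[\zeta]:\zeta<\mu\}$ (an increasing $\mu$-chain of ${<}\mu$-sized subsets of the bound, all in $M$ by closure), and runs a single dichotomy. If $\mathfrak X$ is $\delta^+$-approximated, guessing gives $\mathfrak X^*\in M$ with $\mathfrak X^*\cap M=\mathfrak X$; the $\delta^+\subseteq M$ count forces $\mathfrak X^*=\mathfrak X$, so $x=\bigcup\mathfrak X\in M$. If $\mathfrak X$ is \emph{not} approximated, the witness $Y\in M$ of $M$-size $\le\delta$ with $Y\cap\mathfrak X\notin M$ must (by ${<}\mu$-closure) meet $\mathfrak X$ in $\mu$-many, hence cofinally many, initial segments, and then $y=\bigcup Y\in M\cap[M]^{\le\delta}$ covers $x$. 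No recursion, and the bound never needs to be a regular cardinal.

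Your argument, by contrast, has a real gap at singular bounds --- exactly the ``fiddly point'' you flag but do not resolve. Your core step (for $\lambda\in M$ regular in $V$ with $\lambda>\delta$: approximate a cofinal $c\subseteq x$, guess $c^*$, and derive a size/cofinality contradiction) is correct and shows $\sup x<\sup(M\cap\lambda)$. But the induction does not close. You assert that the new bound $\beta\in M\cap\lambda$ satisfies $|\beta|^M<\nu$; this follows only when $\lambda=\nu$, i.e., when $\nu$ itself is regular. If $\nu>\delta$ is singular you cannot take $\lambda=\nu$, and taking $\lambda>\nu$ gives only $|\beta|^M<\lambda$, which does not drop the inductive measure. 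Nor does your approximation argument transfer to singular $\nu$ directly: an $a\in M$ with $|a|^M\le\delta$ can be cofinal in $\nu$ (e.g., an $M$-cofinal sequence of length $\cof(\nu)\le\delta$), so $a\cap c$ may well have size $\mu$ and escape $M$. The problematic residual case is $\nu$ singular with $\cof(\nu)=\mu$ and $x$ cofinal in $\nu$; handling it seems to require something like the paper's dichotomy on the chain of initial segments rather than on $x$ itself. (Two minor points in the part that does work: your ${<}\mu$-cofinal set should be chosen \emph{inside} $x$ to guarantee it lies in $M$; and the clause ``$s\notin M$'' is only available once you have observed that $\sup x=s$ already forces $s<\lambda$.)
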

	\begin{proof}
		Pick $X\in M$ with $x\subseteq X$.
		Fix a bijection $f: \mu\rightarrow x$, and let
		$x_\zeta=f[\zeta]$.
		By the closure property of $M$, we have $\mathfrak{X}\coloneqq\{x_\zeta:\zeta<\mu\}\subseteq M$. Note that 
		$\mathfrak{X}$ is bounded in $M$, as witnessed by
		$\mathfrak X\subseteq [X]^{<\mu}$.
		If $\mathfrak{X}$ is $\delta^+$-approximated in $M$, then it is guessed in $M$, and hence there is $\mathfrak{X}^*\in M$, with $\mathfrak{X}^*\cap M=\mathfrak{X}\cap M=\mathfrak X$. But then
		$x=\bigcup \mathfrak X^*\in M$. To see this, observe that if $\mathfrak X^*\nsubseteq M$, then $\delta^+\leq|\mathfrak{X}^*\cap M|=|\mathfrak{X}|=\mu$, which is a contradiction. Thus $\mathfrak{X}^*\subseteq M$, and hence $\mathfrak{X}^*=\mathfrak X$, which in turn implies that $x=\bigcup\mathfrak{X}^*\in M$.
		On the other hand, if $\mathfrak X$ is not $\delta^+$-approximated in $M$, then there is $Y\in M$ of size $\leq\delta$ such that $Y\cap \mathfrak{X}\notin M$. We may assume that
		$Y\subseteq [X]^{<\mu}$. By the closure property of $M$, we have $|Y\cap \mathfrak{X}|=\mu$. As $\mu$ is regular and $\mathfrak X$ is increasing, we have  $x\subseteq y= \bigcup Y$. It is easily seen that $|y|\leq\delta$.
	\end{proof}

	\begin{corollary}\label{Kruegerlemma}
		Assume that  $M$ is a $\delta^+$-guessing model with $\delta^+\subseteq M$. Suppose that $\delta$ is a regular cardinal and $M^{<\delta}\subseteq M$.  Then $(M,V)$ has the $\delta$-covering property. In particular,
		\begin{enumerate}
			\item if $M$ is an $\omega_1$-guessing of size $\omega_1$, then $M$ is internally unbounded.
			\item if $M$ is $\omega_1$-guessing in $M_1$ and $M_1$ is $\omega_1$-guessing in $M_2$, then
			$M$ is an $\omega_1$-guessing in $M_2$.
		\end{enumerate}
	\end{corollary}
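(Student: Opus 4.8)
The main assertion is just Lemma~\ref{K-nice-result} read at the single scale $\mu=\delta$, so the plan is to check the $\delta$-covering property of $(M,V)$ straight from the definition. Fix $x$ with $|x|\le\delta$ that is bounded in $M$; its trace $x\cap M\subseteq M$ has size at most $\delta$. If $|x\cap M|<\delta$, then $x\cap M$, being a subset of $M$ of size ${<}\delta$, is the range of a sequence lying in $M^{<\delta}\subseteq M$ and hence belongs to $M$, so $x'\coloneqq x\cap M$ works. If $|x\cap M|=\delta$, then $x\cap M\in[M]^{\delta}$ is bounded in $M$, and since $\delta$ is regular with $M^{<\delta}\subseteq M$, Lemma~\ref{K-nice-result} with $\mu=\delta$ supplies $y\in M\cap[M]^{\le\delta}$ with $x\cap M\subseteq y$; take $x'\coloneqq y$. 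In both cases $x'\in M$, $|x'|^{M}\le\delta$ and $x\cap M\subseteq x'$, which is the $\delta$-covering property.

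For~(1) I specialise to $\delta=\omega$, so $\delta^{+}=\omega_{1}$ and the closure $M^{<\omega}\subseteq M$ is automatic; recall that $\omega_1\subseteq M$ is part of the standing hypothesis. The main assertion then gives that $(M,V)$ has the $\omega$-covering property: every countable set bounded in $M$ has its trace covered by a countable member of $M$. To deduce internal unboundedness I take an arbitrary $a\in[M]^{\omega}$ and first observe that $a$ is bounded in $M$ --- this is the one place where $|M|=\omega_1$ is used. Passing to ranks sends $a$ to a countable set of ordinals of $M$, and by the Cox--Krueger corollary $\cof(\sup(M\cap\theta))=\omega_1$ for the relevant cardinals $\theta\in M$, so this set of ordinals cannot be cofinal in $\sup(M\cap\theta)$ and $a$ is bounded. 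The $\omega$-covering property now yields a countable $b\in M$ with $a\subseteq b$; as $b\in M$ is countable and $\omega\subseteq M$, a surjection $\omega\to b$ lying in $M$ shows $b\subseteq M$, i.e. $b\in M\cap[M]^{\omega}$. Hence $M\cap[M]^{\omega}$ is cofinal in $[M]^{\omega}$, which is internal unboundedness.

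For~(2) I run the same argument relative to $M_1$. The proof of the main assertion (and of Lemma~\ref{K-nice-result}) refers to the ambient model only through the words ``bounded'', ``approximated'' and ``guessed'', so from $M$ being $\omega_1$-guessing \emph{in} $M_1$, together with $\omega_1\subseteq M$ and $M^{<\omega}\subseteq M$, I obtain that $(M,M_1)$ has the $\omega$-covering property. This is exactly the covering hypothesis consumed in the proof of Lemma~\ref{transitivity under covering} at the scale $\delta=\omega_1$: there only sets of $M_1$-cardinality ${<}\omega_1$, that is countable sets, are covered, and they must be covered by countable members of $M$ --- precisely what the $\omega$-covering property delivers. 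Feeding this covering together with the two given $\omega_1$-guessing hypotheses into Lemma~\ref{transitivity under covering} yields that $M$ is $\omega_1$-guessing in $M_2$.

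The main statement is immediate; the delicate point throughout is the scale bookkeeping. The covering extracted from a $\delta^{+}$-guessing model lives one scale below, at $\delta$, so in~(2) one must notice that the transitivity lemma at scale $\omega_1$ is in fact driven only by the covering of \emph{countable} sets, which the $\omega$-covering property supplies; silently identifying the two scales would leave a genuine gap. In~(1) the corresponding subtlety is that the covering property speaks only about sets bounded in $M$, so the crux is the auxiliary observation that for a model of size $\omega_1$ every countable subset is already bounded in $M$.
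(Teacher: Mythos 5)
Your proof of the main assertion is exactly the intended one: the paper offers no written proof and clearly means the corollary to be immediate from Lemma~\ref{K-nice-result} (read at $\mu=\delta$, with the case $|x\cap M|<\delta$ absorbed by the closure hypothesis $M^{<\delta}\subseteq M$) together with Lemma~\ref{transitivity under covering} for item~(2). Your bookkeeping for item~(2) --- that the transitivity lemma at scale $\omega_1$ only consumes covering of countable sets by countable members of $M$, which is what the $\delta=\omega$ instance of the main assertion provides --- is also the right observation.

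There is, however, a genuine gap in item~(1), at the very step you single out as the crux: the claim that every $a\in[M]^{\omega}$ is bounded in $M$. You justify it by the Cox--Krueger corollary, but that corollary controls $\cof(\sup(M\cap\theta'))$ only for cardinals $\theta'\in M$. The ranks of the elements of an arbitrary countable $a\subseteq M$ are bounded only by the \emph{ambient} cardinal $\theta$ with $M\prec H_\theta$, and $\theta\notin M$; nothing you cite rules out that these ranks are cofinal in $\sup(M\cap\theta)$, which is exactly the situation when $\cof(\sup(M\cap\theta))=\omega$. Moreover, the Cox--Krueger argument cannot simply be rerun at the ambient $\theta$: their proof applies the guessing property to the trace of a putative cofinal $\omega$-sequence, and that trace is bounded in $M$ precisely because the witnessing cardinal lies in $M$; a countable set cofinal in $\sup(M\cap\theta)$ is \emph{unbounded} in $M$ (for any $y\in M$, $\sup(y\cap\ORD)$ is an element of $M\cap\theta$, hence lies strictly below $\sup(M\cap\theta)$), and neither the guessing property nor the covering property says anything about unbounded sets. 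So as written the step fails. The repair is either to read ``internally unbounded'' in the localized form --- every countable subset of $M\cap X$ with $X\in M$ is contained in a countable member of $M$ --- which follows from the covering property with no rank argument at all and is what the SCH application actually uses, or, for the full cofinal-in-$[M]^{\omega}$ sense, to supply separately the missing fact $\cof(\sup(M\cap\theta))=\omega_1$, which your proof does not do. A smaller instance of the same over-quickness occurs in item~(2): the guessing property of $M$ \emph{in} $M_1$ applies only to sets that are members of $M_1$, whereas the auxiliary family of finite initial segments of $x\cap M$ constructed in the proof of Lemma~\ref{K-nice-result} need not belong to $M_1$ (only the non-approximated half of the dichotomy is free of this issue), so ``the proof refers to the ambient model only through bounded, approximated and guessed'' is an assertion that itself needs an argument.
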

	\begin{proof}[\nopunct]
	\end{proof}
	The last item above implies that having the   $\omega_1$-approximation for pairs of transitive models is a transitive relation.

	\subsubsection*{Viale's isomorphism theorem}
	It is not hard to prove the following interesting theorem for $0$-\gesms. It shows how much \gesms~behave like elementary submodels in Magidor's characterization of supercompactness \cite{MA1971}. Thus, in the following theorem, $\kappa$ has a compactness  property similar to that of a supercompact cardinal.

	\begin{theorem}[Viale \cite{V2012}]
		Assume  $M_i\prec H_{\theta_i}$, for $i=0,1$, are $\delta$-guessing. Suppose that
		\begin{enumerate}
			\item $\kappa:=\kappa_{M_0}=\kappa_{M_1}$ and $M_0\cap\kappa=M_1\cap\kappa$,
			\item $\mathcal P(\delta)\cap M_0=\mathcal P(\delta)\cap M_1$, and
			\item ${\rm o.t}({\rm Card}_{M_0})={\rm o.t}({\rm Card}_{M_1})$.
		\end{enumerate}
		Then, $(M_0,\in)\cong(M_1,\in)$.
	\end{theorem}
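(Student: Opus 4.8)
The plan is to reduce the statement to the equality of transitive collapses. Write $\pi_i=\pi_{M_i}$ and $N_i=\overline{M_i}$; the collapse furnishes isomorphisms $\pi_i\colon (M_i,\in)\cong (N_i,\in)$, so as soon as we know $N_0=N_1$ the composite $\sigma=\pi_1^{-1}\circ\pi_0\colon M_0\to N_0=N_1\to M_1$ is the desired isomorphism. Thus the entire problem becomes: \emph{the three hypotheses force $\overline{M_0}=\overline{M_1}$.}

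First I would push the hypotheses through the collapses. Since $\delta\leq\kappa$ and $M_0\cap\kappa=M_1\cap\kappa$, the two collapses agree on ordinals $\leq\kappa$; in particular $\bar\delta:=\pi_0(\delta)=\pi_1(\delta)$ and $\bar\kappa:=\pi_0(\kappa)=\pi_1(\kappa)$. Condition (2) then reads $\mathcal P(\bar\delta)\cap N_0=\mathcal P(\bar\delta)\cap N_1$, because $\pi_i$ maps $\mathcal P(\delta)\cap M_i$ bijectively onto $\mathcal P(\bar\delta)\cap N_i$ and agrees with $\pi_{1-i}$ on these codes. By Cox--Krueger (\autoref{CK-formulation}) each $N_i$ is $\bar\delta$-guessing, and since $M_i\prec H_{\theta_i}$ each $N_i$ is a transitive model of a large fragment of $\ZFC$, so every element of $N_i$ is coded by a set of ordinals. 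A first consequence: any set of $N_i$-hereditary size $<\bar\delta$ is coded by the Mostowski collapse of a well-founded extensional relation on some $\nu<\bar\delta$, i.e.\ by an element of $\mathcal P(\bar\delta)\cap N_i$, and decoding is absolute; hence the two models already share all their hereditarily small sets.

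The core is an induction along the cardinals, where condition (3) supplies an order-isomorphism between $\mathrm{Card}^{N_0}$ and $\mathrm{Card}^{N_1}$ fixing everything below $\bar\kappa$. I would prove by simultaneous induction on a cardinal $\mu$ that (a) the $\mu$-th cardinals of $N_0$ and $N_1$ coincide, (b) $\mathcal P_{\bar\delta}(\mu)\cap N_0=\mathcal P_{\bar\delta}(\mu)\cap N_1$, and (c) $\mathcal P(\mu)\cap N_0=\mathcal P(\mu)\cap N_1$. The engine is the guessing property, which turns (b) into (c) at each level: given $A\in\mathcal P(\mu)\cap N_0$, to get $A\in N_1$ it suffices that $A$ be $\bar\delta$-approximated in $N_1$; but any $a\in N_1$ with $a\subseteq\mu$ and $|a|^{N_1}<\bar\delta$ lies in $N_0$ too by (b), so $A\cap a\in N_0$, and $A\cap a$ is again a $<\bar\delta$-sized subset of $\mu$, hence belongs to $N_1$ by (b); now $\bar\delta$-guessing in $N_1$ yields $A\in N_1$, and symmetrically. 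Clause (a) propagates from (c) at lower levels by coding well-orders of $\mu$ as elements of $\mathcal P(\mu)$ via a fixed pairing, so both models compute the same $\mu^{+}$. Finally, agreement of $H_\mu$ at every cardinal $\mu\le\bar\theta$ together with equal ordinal height gives $N_0=N_1$.

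The main obstacle is clause (b). When $\cof(\mu)\geq\bar\delta$ every $<\bar\delta$-sized subset of $\mu$ is bounded and (b) follows at once from the induction hypothesis; the delicate case is a singular $\mu$ with $\cof(\mu)<\bar\delta$, where such a subset can be cofinal and is no longer controlled by a single earlier level. Here I expect to need a \emph{shared} cofinal sequence of length $\cof(\mu)$ in both models --- itself recovered via the guessing property applied to the $<\bar\delta$-approximated set coding it --- so that an arbitrary small subset splits into boundedly many pieces each governed by the induction hypothesis. Making this scaffolding genuinely common to $N_0$ and $N_1$, rather than merely present in each, is the crux. A secondary technical point is to verify that condition (3) really forces equal \emph{ordinal} height and not just equal cardinal order type; I would extract this from the elementarity of the $M_i$ in $H_{\theta_i}$ together with the power-set agreement already established.
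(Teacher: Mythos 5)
The paper itself gives no proof of this theorem---it is stated as a black box with an empty proof environment and a citation to Viale \cite{V2012}---so your proposal has to be judged on its own completeness, and it is not complete. The sound parts: the reduction to showing $\overline{M_0}=\overline{M_1}$, the transfer of hypotheses (1) and (2) through the collapses, the use of \cref{CK-formulation} to make each $N_i=\overline{M_i}$ a $\bar\delta$-guessing transitive model, and the approximate-then-guess engine converting agreement on $\mathcal P_{\bar\delta}(\mu)$ into agreement on $\mathcal P(\mu)$; these do dispose of the base case, of successor cardinals (which are regular in the $N_i$, so all $<\bar\delta$-sized subsets are bounded), and of limit cardinals of cofinality at least $\bar\delta$. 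But the remaining case---a limit cardinal $\mu$ of the collapses with $\cof(\mu)<\bar\delta$, where a $<\bar\delta$-sized subset may be cofinal in $\mu$---is exactly where you stop proving and start hoping: you say you ``expect to need'' a cofinal sequence common to $N_0$ and $N_1$, ``recovered via the guessing property applied to the $<\bar\delta$-approximated set coding it'', and you yourself call making this scaffolding common ``the crux''. That case \emph{is} the theorem's real content, and the recovery you gesture at is circular: to apply the $\bar\delta$-guessing property of $N_0$ to a set $c\in N_1$ coding a cofinal sequence, you must check that $c$ is $\bar\delta$-approximated in $N_0$, i.e.\ that $c\cap b\in N_0$ for \emph{every} $<\bar\delta$-sized $b\in N_0$; such a $b$ may itself be cofinal in $\mu$, so $c\cap b$ is again an unbounded small set assembled from bounded shared pieces---precisely the kind of set whose transfer is in question. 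The natural escape routes all loop back: coding $c$ via G\"odel pairing produces another unbounded small subset of $\mu$; coding $c\cap b$ as a subset of ${\rm otp}(b)<\bar\delta$ and invoking hypothesis (2) fails because hypothesis (2) only moves sets already known to lie in one of the two models, and $c\cap b$ is defined from mixed data ($c\in N_1$, $b\in N_0$); passing to the shared club of $N_i$-cardinals below $\mu$ breaks down at fixed points of the cardinal enumeration.

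So the singular-cofinality step needs a genuinely new ingredient that your sketch does not contain; this is where the finer structural theory of guessing models is expected to enter (covering and internal unboundedness in the sense of \cref{K-nice-result} and \cref{Kruegerlemma}, and the cofinality constraints of Cox--Krueger), none of which your outline invokes---and note that even the statement of which collapsed cardinals have small cofinality must first be shown to be common to $N_0$, $N_1$ and $V$. Your ``secondary technical point'' (that clause (3) yields equal ordinal height) is needed but is the easy part: every ordinal $\gamma\in M_i$ is coded by a well-ordering of the $M_i$-cardinal $|\gamma|^{M_i}$, so height agreement follows from the matched cardinal enumerations together with the power-set agreement you are already carrying through the induction. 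As it stands, the proposal is a correct plan whose hardest step is explicitly left open, not a proof.
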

	\begin{proof}[\nopunct]
	\end{proof}
	
	It follows from Viale's proof (see \cite[Remark 5.4]{V2012} ) that, under  $T\coloneqq\rm PFA+(\ast)$, the third clause can be replaced by $\omega_1\subseteq M_0\cap M_1$. The consistency of  $T$ follows form the recent breakthrough  \cite{Asp-Sch} by Aspero and Schindler.

	\subsection{Open problems}
	In the definition of a $\delta$-\gesm, we  assumed that $\delta\leq\kappa_M$ is regular. 
	On the other hand, if we let $\delta\leq\kappa_M$  be a strong limit singular cardinal of countable cofinality, then \cref{0-g} implies that a $\delta$-\gesm~with $\delta\subseteq M$ must be a $0$-\gesm~and $2^{\delta}\leq|M|$. 
	
	Two imprecise questions come to my mind:
	
	\begin{question}
		Is it possible to introduce a useful notion of a $\delta$-\gesm~for $\delta>\kappa_M$ in a nontrivial way? What will be the relation between $\delta$ and the size of the model?
	\end{question}

	\begin{question}
		Is it possible to include singular cardinals (in particular non-strong limit cardinals) in the definition of a \gesm~and prove interesting results about them?
	\end{question}

	\section{GMP}

	As we shall see the existence of a single ``nice'' \gesm~has local impacts on the universe. The existence of a stationary set of \gesms~has even  global impacts on the entire universe. 
	We are now ready to introduce the simplest principle in terms of \gesms.
	
	For a powerful model $\mathcal H$, a regular cardinal $\delta\in\mathcal H$ and a cardinal $\kappa$, we let 
	
	\[
	\mathfrak G_{\kappa,\delta}(\mathcal H) = \{ M\in {\mathcal P}_\kappa(\mathcal H): M\prec \mathcal H  \mbox{ is a $\delta$-\gesm} \}. 
	\]

	\begin{definition}\label{GMP}
		${\rm GMP}(\kappa,\delta,\mathcal H)$ is the statement that  
		${\mathfrak G}_{\kappa,\delta}(\mathcal H)$ is stationary in ${\mathcal P}_{\kappa}(\mathcal H)$, and
		${\rm GMP}(\kappa,\delta)$ is the statement that ${\rm GMP}(\kappa,\delta, H_\theta)$ holds,
		for all sufficiently large regular $\theta$.\footnote{$\rm GMP$ stands for Guessing Model Principle.
		}
	\end{definition}

	Therefore  ${\rm GMP}(\kappa, \delta)$ implies ${\rm GMP}(\kappa, \delta')$ whenever  $\delta\leq \delta'$.
	Note that  if we have a  powerful filtration of the universe, i.e., a $\subseteq$-increasing sequence $\langle\mathcal H(\alpha):\alpha\in {\rm ORD}\rangle$ of powerful  set models with $V=\bigcup\{\mathcal H(\alpha):\alpha\in \ORD\}$, then ${\rm GMP}(\kappa,\delta)$ holds if and only if for all sufficiently large ordinals $\alpha$, ${\rm GMP}(\kappa,\delta,\mathcal H(\alpha))$ holds.
	
	\begin{convention}
		
		For a property $\Phi$, let us say that $\rm GMP(\kappa,\delta)$ is witnessed by $\Phi$-models or by models with  $\Phi$  if the models in $\mathfrak G_{\kappa,\gamma}(\mathcal H)$ with the property $\Phi$ forms a stationary set in
		$\mathcal P_\kappa(\mathcal H)$.
	\end{convention}

	\subsection{Consequences}
	
	\subsubsection*{Trees}
	For  a tree $T=(T,<_T)$ and a node $t\in T$, we let $b_t\coloneqq\{s\in T:s<_T t\}$. 
	Let $\kappa$ be an infinite cardinal. Recall that a tree $T$ of height  $\kappa$ is called a $\kappa$-tree if for every $\alpha<{\rm ht}(T)$, $|T_\alpha|<\kappa$.   A $\kappa$-Aronszajn tree is a $\kappa$-tree  without cofinal branches.  A cardinal $\kappa$ has the tree property (${\rm TP}(\kappa)$) if there is no $\kappa$-Aronszajn tree. A $\kappa$-Kurepa tree is a $\kappa$-tree with more than $\kappa$ cofinal branches. A weak $\kappa$-Kurepa tree is a tree of height and size $\kappa$ with more than $\kappa$ cofinal branches. The weak Kurepa Hypothesis at $\kappa$ ($w{\rm KH}(\kappa)$) states that there is a weak $\kappa$-Kurepa tree.

	\begin{lemma}\label{wKHlemma}
		Suppose $M$ is a $\delta$-\gesm. Assume that
		$T\in M$ is a tree of height $\delta$ with  $|T|< \kappa_M$. Then every cofinal branch through $T$ is in $M$. 
	\end{lemma}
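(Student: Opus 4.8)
The plan is to first reduce to the situation where \emph{all} nodes of $T$ belong to $M$, and then to show that an arbitrary cofinal branch is bounded and $\delta$-approximated in $M$, so that the $\delta$-guessing property can be applied and the branch recovered as an element of $M$.

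First I would observe that $T\subseteq M$. Since $T\in M$, its cardinality $|T|$ is computed correctly by $M$ and satisfies $|T|\in M\cap\kappa_M$; by the minimality in the definition of $\kappa_M$ this forces $|T|\subseteq M$, i.e.\ every ordinal below $|T|$ lies in $M$. Fixing a bijection $g\colon |T|\to T$ with $g\in M$ (available by elementarity) then yields $T=\ran(g)\subseteq M$. Moreover a tree of height $\delta$ has at least $\delta$ nodes, so $\delta\leq|T|<\kappa_M$; since also $\delta={\rm ht}(T)\in M$, I conclude $\delta\in M\cap\kappa_M$ and hence $\delta\subseteq M$. These two remarks — $T\subseteq M$ and $\delta$ regular in $M$ with $\delta\subseteq M$ — are what make the approximation step go through.

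Now fix a cofinal branch $b$ through $T$. Then $b\subseteq T\in M$, so $b$ is bounded in $M$, and $b\subseteq T\subseteq M$. The main step is to verify that $b$ is $\delta$-approximated in $M$: given $a\in M$ with $|a|^M<\delta$, I must show $a\cap b\in M$. The set of heights $H\coloneqq\{{\rm ht}(t):t\in a\cap T\}$ lies in $M$ and has $M$-cardinality $<\delta$; as $\delta$ is regular in $M$, this gives $\gamma\coloneqq\sup(H)+1<\delta$. Let $t_\gamma$ be the unique node of $b$ at level $\gamma$, which exists because $b$ is cofinal and $\gamma<{\rm ht}(T)$. Since $T\subseteq M$ we have $t_\gamma\in M$, hence $b_{t_\gamma}=\{s\in T:s<_T t_\gamma\}\in M$. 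Every element of $a\cap b$ has height $\leq\sup(H)<\gamma$ and lies in $b$, so it lies in $b_{t_\gamma}$; conversely $b_{t_\gamma}\subseteq b$, whence $a\cap b=a\cap b_{t_\gamma}\in M$. This is exactly the crux of the argument: a small piece of the branch is always an initial segment $b_{t_\gamma}$ cut off by a single node which, by the preliminary reduction, already belongs to $M$.

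Finally, the $\delta$-guessing property applied to the bounded, $\delta$-approximated set $b$ produces $b^*\in M$ with $b^*\cap M=b\cap M$. Since $b\subseteq M$ we have $b\cap M=b$, and $|b|=\delta\in M\cap\kappa_M$; by the reformulation recorded just after the guessing definition — a subset of $M$ that is guessed and whose size lies in $M\cap\kappa_M$ is itself in $M$ — I conclude $b\in M$, as desired. The one genuinely delicate point is the $\delta$-approximation of $b$, and its resolution depends entirely on the two opening observations that $T\subseteq M$ and that $\delta$ is regular in $M$.
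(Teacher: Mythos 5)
Your proof is correct and takes essentially the same route as the paper's: show $T\subseteq M$ (and hence $\delta\subseteq M$), check that a cofinal branch $b$ is bounded and $\delta$-approximated in $M$ by cutting it at a node $t\in b$ of $T$ whose height exceeds the boundedly many heights met by a given small $a\in M$, so that $a\cap b=a\cap\{s\in T:s<_T t\}\in M$, and then apply the $\delta$-guessing property. Your closing appeal to the fact that a guessed subset of $M$ whose size lies in $M\cap\kappa_M$ belongs to $M$ is just the paper's observation that the guess $b^*$ satisfies $b^*\subseteq T\subseteq M$ (hence $b^*=b^*\cap M=b$), so the two arguments coincide.
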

	\begin{proof}
		Observe that  $T\subseteq M$, as $|T|<\kappa_M$.A   cofinal branch $b$  through $T$  is bounded in $M$ and satisfies $b\subseteq M$.
		If $a\in M\cap \mathcal P_\delta(M)$, then for some ordinal $\delta'<\delta$, $a\cap T\subseteq T_{<\delta'}$, i.e., every node in $a\cap T$ has height below $\delta'$.
		Pick $t\in b$ of height $\delta'$. Thus $t\in M$ and 
		\[
		b\cap a=\{s\in T: s<_Tt\}\cap a\in M.
		\]
		
		Thus if $b\subseteq T$ is a cofinal branch, then $b$ is $\delta$-approximated in $M$, and hence there is $b^*\in M$ such that 
		$b^*\cap M=b\cap M=b$. 
		Note that $b^*\subseteq M$. Therefore, $b=b^*\in M$.
	\end{proof}
	
	The above proof  is essentially a proof of the following theorem.
	
	\begin{theorem}[Cox--Krueger \cite{CK2017}]
		Assume  $\rm GMP(\delta^+,\delta)$. Then the weak Kurepa Hypothesis fails at $\delta$.
	\end{theorem}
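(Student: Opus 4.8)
The plan is to argue by contradiction, feeding a single well-chosen guessing model into \cref{wKHlemma}. Suppose that the weak Kurepa Hypothesis holds at $\delta$, and fix a weak $\delta$-Kurepa tree $T$; thus $T$ has height $\delta$, $|T|=\delta$, and its set $B$ of cofinal branches satisfies $|B|>\delta$. Choose a regular cardinal $\theta$ large enough that $T\in H_\theta$ and ${\rm GMP}(\delta^+,\delta,H_\theta)$ holds.

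Next I would locate one $\delta$-\gesm~that contains $T$ and is wide enough below $\delta$ to trigger \cref{wKHlemma}. Concretely, define $F\colon\mathcal P_\omega(H_\theta)\to\mathcal P_{\delta^+}(H_\theta)$ by $F(a)=\delta\cup\{\delta,T\}$ for every $a$; this is well defined since $|\delta\cup\{\delta,T\}|=\delta<\delta^+$. Because $\mathfrak G_{\delta^+,\delta}(H_\theta)$ is stationary in $\mathcal P_{\delta^+}(H_\theta)$, there is $M\in\mathfrak G_{\delta^+,\delta}(H_\theta)$ closed under $F$ (equivalently, the stationary set $\mathfrak G_{\delta^+,\delta}(H_\theta)$ meets the club $\{M:\delta\cup\{\delta,T\}\subseteq M\}$). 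Such an $M$ is a $\delta$-\gesm~with $M\prec H_\theta$, $|M|\leq\delta$, $T\in M$, $\delta\in M$, and $\delta\subseteq M$.

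The key observation is that $\delta\subseteq M$ forces $\delta<\kappa_M$: every ordinal $\alpha\in M$ with $\alpha\leq\delta$ satisfies $\alpha\subseteq\delta\subseteq M$, so $\delta$ is not the least ordinal of $M$ failing to be a subset of $M$, while $\delta^+\in M$ (by elementarity, as $\delta\in M$) together with $\delta^+\nsubseteq M$ (as $|M|\leq\delta$) guarantees that $\kappa_M$ is defined and $\delta<\kappa_M\leq\delta^+$. Hence $|T|=\delta<\kappa_M$, and $T\in M$ is a tree of height $\delta$, so \cref{wKHlemma} applies and yields that every cofinal branch of $T$ is an element of $M$. Since there are more than $\delta$ such branches but $|M|\leq\delta$, this is impossible, and the contradiction completes the argument.

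The only genuinely delicate point is the second step: one must secure $\delta<\kappa_M$ so that the size hypothesis $|T|<\kappa_M$ of \cref{wKHlemma} is met, since a priori a $\delta$-\gesm~guarantees only $\delta\leq\kappa_M$. This is precisely why the parameter $\kappa=\delta^+$ in ${\rm GMP}(\delta^+,\delta)$ is the correct one: the witnessing models have size at most $\delta$, which is small enough both to permit $\delta\subseteq M$ (arranged via $F$/stationarity) and to be unable to accommodate the more than $\delta$ cofinal branches that \cref{wKHlemma} would otherwise place inside $M$. Everything else is routine bookkeeping with the closure function $F$ and the definition of a $\delta$-\gesm.
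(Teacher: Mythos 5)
Your proof is correct and takes essentially the same route as the paper, which obtains the theorem directly from \cref{wKHlemma}: pick a $\delta$-\gesm~$M$ of size at most $\delta$ with $T,\delta\in M$ and $\delta\subseteq M$, conclude that all of the more than $\delta$ cofinal branches of $T$ lie in $M$, and contradict $|M|\leq\delta$. Your explicit verification that $\delta<\kappa_M$ (so that $|T|<\kappa_M$ holds) and the stationarity bookkeeping via the closure function $F$ are precisely the details the paper leaves implicit.
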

	\begin{proof}[\nopunct]
	\end{proof}
	
	One can prove a slightly stronger result that is, under  $\rm GMP(\kappa,\delta)$ with $\kappa>\delta$,  every tree of height $\delta$ and size ${<}\kappa$ has less than $\kappa$ cofinal branches.
	Observe that  ${\rm GMP}(\omega_2,\omega_1)$ implies the failure of the weak Kurepa Hypothesis that was known to be a consequence of $\rm PFA$, and that   implies the failure of $\rm CH$.
	
	\begin{lemma}\label{TP}
		Suppose $M$ is a $\kappa_M$-\gesm. Then there is no $\kappa_M$-Aronszajn tree in $M$.
	\end{lemma}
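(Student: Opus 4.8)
The plan is to argue by contradiction: suppose $T\in M$ is a $\kappa_M$-Aronszajn tree and write $\kappa:=\kappa_M$. Two structural observations drive everything. First, if $a\in M$ and $|a|^M<\kappa$, then $a\subseteq M$: the $M$-cardinal $\lambda:=|a|^M$ lies below $\kappa_M$, hence $\lambda\subseteq M$ by minimality of $\kappa_M$, and applying an $M$-bijection $\lambda\to a$ to the elements of $\lambda\subseteq M$ yields $a\subseteq M$. Second, $M\cap\kappa$ is downward closed (again by minimality of $\kappa_M$), hence an ordinal $\eta$; since $\kappa\nsubseteq M$ we have $\eta<\kappa$, and $\eta\notin M$ forces $\eta$ to be a limit (a successor $\gamma+1=\eta$ would put $\eta\in M$). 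Consequently each level $T_\alpha$ with $\alpha<\eta$ satisfies $T_\alpha\in M$ and $|T_\alpha|^M<\kappa$, so $T_\alpha\subseteq M$. Note that, in contrast to \cref{wKHlemma}, here $|T|=\kappa=\kappa_M$, so $T\nsubseteq M$ and that lemma does not apply directly; the work is to manufacture a cofinal branch by hand.

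Next I would build the branch to be guessed. Since $M\models$ ``$\mathrm{ht}(T)=\kappa$'', elementarity together with the correctness of the powerful model gives $T_\alpha\neq\emptyset$ for every $\alpha<\kappa$, in particular $T_\eta\neq\emptyset$. Fix $t\in T_\eta$ and set $b:=b_t=\{s\in T:s<_T t\}$, a $<_T$-chain of length $\eta$. Because every node of $b$ sits at a level $<\eta$, we get $b\subseteq\bigcup_{\alpha<\eta}T_\alpha\subseteq M$, and $b$ is bounded in $M$ (by $T$). The key step is to check that $b$ is $\kappa$-approximated in $M$. Given $a\in M$ with $|a|^M<\kappa$, I may assume $a\subseteq T$, and then $a\subseteq M$ by the first observation, so every $s\in a$ has $\mathrm{ht}(s)\in M\cap\kappa=\eta$. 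The ordinal $\delta':=\sup\{\mathrm{ht}(s):s\in a\}+1$ is definable from $a$, hence lies in $M\cap\kappa=\eta$, so $\delta'<\eta$; thus the predecessor $t\up\delta'\in T_{\delta'}\subseteq M$ belongs to $M$. Since all nodes of $a$ live below level $\delta'$, we have $s<_T t\iff s<_T(t\up\delta')$ for $s\in a$, whence $b\cap a=\{s\in a:s<_T(t\up\delta')\}\in M$. This is exactly the manoeuvre of \cref{wKHlemma}, except that the node $t\up\delta'$ providing membership in $M$ comes from a low level of the branch rather than from $T\subseteq M$.

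Finally I would feed $b$ to the guessing property. As $b$ is bounded and $\kappa$-approximated in $M$, it is guessed: there is $b^*\in M$ with $b^*\cap M=b\cap M=b$, and replacing $b^*$ by $b^*\cap T$ I may assume $b^*\subseteq T$. Two elementarity arguments finish the proof. If $M\models$ ``$b^*$ is not a $<_T$-chain'', there would be incomparable $x,y\in b^*\cap M=b$, contradicting that $b$ is a chain; so $M\models$ ``$b^*$ is a chain'', and hence $b^*$ truly is one. Likewise the $M$-computed supremum $\rho$ of the levels of $b^*$ cannot be $<\kappa$: otherwise $\rho\in M\cap\kappa=\eta$, yet $b=b^*\cap M$ already contains nodes at every level below $\eta$, i.e.\ at levels $\geq\rho$, which are visible in $M$ and contradict $M\models$ ``the supremum of the levels of $b^*$ equals $\rho$''. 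Therefore $M\models$ ``$b^*$ is a $<_T$-chain cofinal in $T$'', so by elementarity $b^*$ is genuinely cofinal, and its downward closure is a cofinal branch of $T$ --- contradicting that $T$ is $\kappa_M$-Aronszajn.

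The main obstacle is the $\kappa$-approximation step, because the natural witness for membership of $b\cap a$ in $M$ seems to require the node $t$, which is not in $M$. The resolution is the observation that, although neither $t$ nor the whole branch $b$ lies in $M$, each initial segment $t\up\delta'$ with $\delta'<\eta$ does, since those low levels are small sets in $M$; choosing $\delta'$ above all levels occurring in $a$ then replaces $t$ by an element of $M$ without changing $b\cap a$. Two minor points also warrant care: verifying $T_\eta\neq\emptyset$ (via elementarity and the correctness of the powerful model) and that $\eta$ is a limit ordinal, both of which are needed for the construction to make sense.
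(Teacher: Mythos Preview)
Your proof is correct and follows essentially the same route as the paper. Both arguments pick a node $t$ at level $\eta=\sup(M\cap\kappa_M)$ (which, as you observe, equals $M\cap\kappa_M$ since the latter is an ordinal), show that $b_t$ is $\kappa_M$-approximated in $M$ by bounding any small $a\in M$ below some level $\delta'<\eta$ and using that the restriction of the branch to that level already lies in $M$, and then conclude via guessing and elementarity that $b^*$ is a cofinal branch. You simply spell out more of the details the paper leaves implicit: that $M\cap\kappa_M$ is an ordinal and a limit, that $T_\eta\neq\emptyset$, and the elementarity verification that $b^*$ is a cofinal chain. One small omission worth flagging is that the bound $\delta'<\kappa$ in your approximation step tacitly uses that $\kappa_M$ is a regular cardinal; the paper records this fact (``Note that $\kappa$ is a regular cardinal'') but does not prove it either, and it follows easily from the definition of $\kappa_M$ together with elementarity.
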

	\begin{proof}
		Let $\kappa=\kappa_M$ and $\gamma={\rm sup}(M\cap\kappa_M)$.
		Note that $\kappa$ is a regular cardinal.
		Suppose that $T\in M$ is a $\kappa$-tree, and fix $t\in T$ of height $\gamma$. Consider  $b_t=\{s\in T: s<_T t\}$, which is bounded in $M$. We claim that $b_t$ is $\kappa$-approximated in $M$.
		Suppose $a\in M$ is of size less than $\kappa$, then, as in \cref{wKHlemma}, there   $\eta<\gamma$ with $a\subseteq T_{<\eta}$.
		Now pick $t\in  T_{\eta}\cap b$.  Observe that $t\in M$, since $T$ is a $\kappa_M$-tree in $M$. We have 
		\[
		b\cap a=a \cap \{s\in T:s<_T t\}\in M.
		\]
		Thus there is $b^*\in M$ with $b^*\cap M=b\cap M$. By elementarity, one can easily show that $b^*$ is a cofinal branch through $T$. Therefore, $T$ is not a $\kappa_M$-Aronszajn tree.
	\end{proof}
	
	The following is immediate.
	\begin{theorem}
		Assume  $\rm GMP(\delta,\delta)$. Then $\delta$ has the tree property.
	\end{theorem}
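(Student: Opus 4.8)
The plan is to argue by contradiction: I would use $\rm GMP(\delta,\delta)$ to manufacture a single $\delta$-\gesm~$M$ with $\kappa_M=\delta$ that contains a putative $\delta$-Aronszajn tree, and then quote \cref{TP} to produce a cofinal branch, contradicting Aronszajn-ness. Since the theorem is advertised as ``immediate'', the real content has already been packaged into \cref{TP}; what remains is to set up a model of the right shape.

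First I would fix, toward a contradiction, a $\delta$-Aronszajn tree $T$, and choose a regular cardinal $\theta$ large enough that $T\in H_\theta$ and that $\rm GMP(\delta,\delta,H_\theta)$ holds (legitimate, since $\rm GMP(\delta,\delta)$ asserts this for all sufficiently large regular $\theta$). By definition this means that $\mathfrak G_{\delta,\delta}(H_\theta)$ is stationary in $\mathcal P_\delta(H_\theta)$. To put $T$ into the model, I would apply the definition of stationarity to the constant function $a\mapsto\{T\}$: any $A$ closed under it satisfies $\{T\}\subseteq A$, so there is a $\delta$-\gesm~$M\prec H_\theta$ with $T\in M$ and, crucially, $M\in\mathcal P_\delta(H_\theta)$, i.e. $|M|<\delta$.

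Next I would verify $\kappa_M=\delta$, which is the only step needing any care. Being a $\delta$-\gesm, $M$ has $\delta\in M$, $\delta$ regular in $M$, and $\delta\le\kappa_M$. Conversely $|M\cap\delta|\le|M|<\delta$ and $\delta$ is regular, so $\sup(M\cap\delta)<\delta$ and in particular $\delta\nsubseteq M$; since $\delta\in M$, the definition of $\kappa_M$ as the least ordinal in $M$ not contained in $M$ gives $\kappa_M\le\delta$. Hence $\kappa_M=\delta$, and it is exactly here that the smallness built into $\mathcal P_\delta(H_\theta)$ is used.

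Finally, $T\in M$ is a tree of height $\delta=\kappa_M$ whose levels all have size $<\delta=\kappa_M$, i.e. a $\kappa_M$-tree lying in $M$. By \cref{TP} there is no $\kappa_M$-Aronszajn tree in $M$, so the argument of that lemma furnishes a cofinal branch $b^*\in M$ through $T$. This contradicts the assumption that $T$ is $\delta$-Aronszajn. Therefore no $\delta$-Aronszajn tree exists, and $\delta$ has the tree property. As anticipated, the deduction is routine once \cref{TP} is in hand; the main (and minor) obstacle is the identification $\kappa_M=\delta$ for the models supplied by $\rm GMP(\delta,\delta)$.
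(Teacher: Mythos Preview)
Your proposal is correct and follows exactly the approach the paper intends: the paper leaves the proof blank, signalling that it is immediate from \cref{TP}, and you have spelled out precisely the routine verification needed, namely that any $M\in\mathfrak G_{\delta,\delta}(H_\theta)$ satisfies $\kappa_M=\delta$ (from $\delta\le\kappa_M$ in the definition of a $\delta$-\gesm\ together with $|M|<\delta$), so that \cref{TP} applies to any such $M$ containing a putative $\delta$-Aronszajn tree.
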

	\begin{proof}[\nopunct]
	\end{proof}
	
	Thus ${\rm GMP}(\delta^+,\delta)$ implies the tree property at $\delta^+$. In particular, ${\rm GMP}(\omega_2,\omega_1)$ implies the tree property at $\omega_2$, a consequence of $\rm PFA$.
	
	Recall that a  a weak ascent path of width a nonzero ordinal $\gamma $ through a tree $(T,<_T)$ is a sequence  $\langle\vec{t}^\alpha:\alpha<{\rm ht}(T)\rangle$ such that:
	\begin{itemize}
		\item For every $\alpha<{\rm ht}(T)$, $\vec{t}^\alpha:\gamma\rightarrow T_\alpha$ is a function, and
		\item for every $\alpha<\beta<{\rm ht}(T)$, there are $\zeta,\eta<\gamma$ such that $\vec{t}^\alpha(\zeta)<_T\vec{t}^\beta(\eta)$.
	\end{itemize}

	\begin{theorem}[Lambie-Hanson \cite{Lambie-Hanson-Square} / Lücke \cite{Luck-Ascend}]
		Assume ${\rm GMP}(\delta^+,\delta^+)$ is witnessed by  models which are $\delta$-close to $\delta^+$. Suppose $T$ is a tree of height $\delta^+$. Then $T$ has no weak ascent path of length ${<}\delta$ if and only if $T$ has no cofinal branches.
	\end{theorem}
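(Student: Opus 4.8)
The plan is to dispose of the easy direction by hand and to reduce the substantial direction to a single guessing argument patterned on \cref{TP}. For the direction that is essentially trivial, observe that if $T$ has a cofinal branch $c$, then setting $\vec{t}^\alpha(0)$ to be the node of $c$ at level $\alpha$ produces a weak ascent path of width $1<\delta$. Contrapositively, if $T$ has no weak ascent path of width ${<}\delta$, then $T$ has no cofinal branch. So the entire content of the theorem lies in showing that a weak ascent path $\langle \vec{t}^\alpha:\alpha<\delta^+\rangle$ of some width $\gamma<\delta$ forces a cofinal branch.

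To this end I would fix such a path and, invoking ${\rm GMP}(\delta^+,\delta^+)$ witnessed by models that are $\delta$-close to $\delta^+$, choose a $\delta^+$-\gesm~$M\prec H_\theta$ which is $\delta$-close to $\delta^+$ and satisfies $T,\vec{t},\gamma,\delta\in M$; such an $M$ exists since a stationary family meets any fixed finite list of parameters. Write $\gamma_M=\sup(M\cap\delta^+)$, so that $\cof(\gamma_M)=\delta$. Since $M\cap\delta^+$ has no maximum, $\gamma_M$ is a limit ordinal with $\gamma_M\notin M$, while $\gamma<\delta\le\kappa_M$ together with $\gamma\in M$ gives $\gamma\subseteq M$.

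The hard part is to single out one node $u$ at level $\gamma_M$ whose predecessor--branch $b_u=\{s:s<_T u\}$ is $\delta^+$-approximated in $M$, even though $u\notin M$. Here I would use the ascent-path condition as a colouring: for each $\alpha\in M\cap\gamma_M$, the condition between levels $\alpha$ and $\gamma_M$ yields $\zeta_\alpha,\eta_\alpha<\gamma$ with $\vec{t}^\alpha(\zeta_\alpha)<_T\vec{t}^{\gamma_M}(\eta_\alpha)$. Because $\gamma<\delta=\cof(\gamma_M)$ and $\delta$ is regular, the colour classes cannot all be bounded, so there is $\eta^*<\gamma$ with $\{\alpha\in M\cap\gamma_M:\eta_\alpha=\eta^*\}$ cofinal in $\gamma_M$. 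Setting $u=\vec{t}^{\gamma_M}(\eta^*)$, we obtain cofinally many $\alpha\in M\cap\gamma_M$ each carrying an ascent node $w=\vec{t}^\alpha(\zeta_\alpha)\in M$ with $w<_T u$. This selection is exactly where the width bound $\gamma<\delta$ and the $\delta$-closeness are both indispensable, and it is the crux of the argument.

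Granting this, the approximation of $b_u$ proceeds as in \cref{TP}. Given $a\in\mathcal P(T)\cap M$ with $|a|^M\le\delta$, the set of levels appearing in $a$ belongs to $M$ and has size ${\le}\delta$, hence its supremum lies in $M\cap\delta^+$ and so is some $\eta_0<\gamma_M$. Picking $\alpha>\eta_0$ from the cofinal class above and the associated $w\in M$ with $w<_T u$, every $s\in a$ satisfies $s<_T u\iff s<_T w$ (comparability of $s$ and $w$ below $u$, together with ${\rm ht}(s)<{\rm ht}(w)$), whence $b_u\cap a=b_w\cap a\in M$ because $b_w\in M$. Thus $b_u$ is bounded and $\delta^+$-approximated in $M$, and the $\delta^+$-guessing property yields $b^*\in M$ with $b^*\cap M=b_u\cap M$. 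Since $b_u\cap M$ is a chain meeting cofinally many levels below $\gamma_M$, elementarity forces $M\models$``$b^*$ is an unbounded chain of $T$'': any $M$-witness to a bound of $b^*$, or to a pair of incomparable elements of $b^*$, would lie in $b_u\cap M\subseteq b_u$ and contradict that $b_u$ is a chain of height $\gamma_M$. Hence $b^*$ is a cofinal branch of $T$, completing the nontrivial direction.
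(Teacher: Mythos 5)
Your proof is correct and follows essentially the same route as the paper's: fix a $\delta^+$-\gesm~$M$ with $\sup(M\cap\delta^+)$ of cofinality $\delta$, pigeonhole the ascent-path indices over the cofinally many levels in $M$ to isolate a node $u$ at level $\sup(M\cap\delta^+)$, show $b_u$ is $\delta^+$-approximated in $M$, guess it, and extract a cofinal branch by elementarity. The only difference is one of detail: where the paper appeals to ``a standard counting argument'' and ``as in \cref{TP}'', you spell out the pigeonhole step and the comparability argument showing $b_u\cap a=b_w\cap a$ for a witness $w\in M$ below $u$ --- a worthwhile elaboration, since the levels of $T$ need not be small and the literal \cref{TP} argument does not apply verbatim.
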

	\begin{proof}
		The ``only if'' part is trivial. Let us assume that $f=\langle\vec{t}^\alpha:\alpha<\delta^+\rangle$ is an ascent path of width $\gamma<\delta$. We shall produce a cofinal branch through $T$.
		Pick a ${<}\delta$-closed model with $T,f\in M\prec H_\theta$. Observe that by \cref{K-nice-result}, $\alpha=M\cap\delta^+$ is of cofinality $\delta$, and that $\gamma\in M$.
		Thus, by a standard counting argument, there is $\zeta<\gamma$ such that $b=\{s\in T: s<_T t^\alpha(\zeta)\}\subseteq M$. As in \cref{TP}, $b$ is $\delta^+$-approximated in $M$, and hence guessed. Let $b^*\cap M=b$. By elementarity, $b^*$ is a cofinal branch through $T$.
	\end{proof}
	Recall that a set of size $\omega_1$ is  internally club  if it is the union of a $\subseteq$-continuous $\in$-sequence of countable sets of length $\omega_1$.
	Under $\rm PFA$, ${\rm GMP}(\omega_2,\omega_1)$ is witnessed by  internally club models (I.C.) models, and hence they have correct cofinality. Thus, under $\rm PFA$, every tree of height $\omega_2$ has a cofinal branch if and only if it has a weak ascent path of width $\omega$.
	The author \cite{sp-finite} has shown that if 
	${\rm GMP}(\omega_2,\omega_1)$ is witnessed by  I.C. models (and hence under $\rm PFA$), and  $T$  is a branchless tree of height $\omega_2$, then there is a proper and $\omega_2$-preserving property forcing notion $\mathbb P_T$ with the $\omega_1$-approximation property  that  specializes $T$.

	\subsubsection*{Cardinal arithmetic}
	The binary tree $T=2^{<\delta}$ has $2^{\delta}$  cofinal branches. Thus by applying   \cref{wKHlemma} to $T$,
	we obtain the following corollary.
	\begin{theorem}\label{card1}
		If there is a $\delta$-\gesm~$M$ of size $\kappa<2^{\delta}$ with $\kappa\subseteq M$. Then $\kappa< 2^{<\delta}$. In particular, if there is a $\gamma^+$-\gesm~$M$ of size $\gamma^+$ with $\gamma^+\subseteq M$ then $2^\gamma>\gamma^+$.
	\end{theorem}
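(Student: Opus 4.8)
The plan is to apply \cref{wKHlemma} to the full binary tree $T=2^{<\delta}$, whose height is $\delta$ and whose cofinal branches are exactly the $2^\delta$ maps $s\in 2^\delta$, each giving the branch $\{s\up\alpha:\alpha<\delta\}$. So the whole statement is a single invocation of that lemma, run as a contradiction against the size bound $\kappa<2^\delta$.

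I would argue by contradiction and assume that $\kappa\geq 2^{<\delta}$. First I would pin down the two hypotheses of \cref{wKHlemma}. Since $\delta$ is a regular cardinal in $M$, we have $\delta\in M$, so by elementarity in the ambient powerful model the tree $T=2^{<\delta}$ belongs to $M$. Next, because $\kappa\subseteq M$, every ordinal $\leq\kappa$ is a subset of $M$; as $\kappa_M$ is by definition the least ordinal of $M$ that is not contained in $M$, this forces $\kappa_M>\kappa$. Recalling the standard computation that the underlying set of $T$ has cardinality $|2^{<\delta}|=2^{<\delta}$, the assumption $\kappa\geq 2^{<\delta}$ yields $|T|=2^{<\delta}\leq\kappa<\kappa_M$, which is exactly the size hypothesis $|T|<\kappa_M$.

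With both hypotheses in place, \cref{wKHlemma} tells me that every cofinal branch through $T$ lies in $M$. Since distinct $s\in 2^\delta$ produce distinct branches, $M$ then contains at least $2^\delta$ many elements, so $|M|\geq 2^\delta$, i.e. $\kappa\geq 2^\delta$. This contradicts the standing hypothesis $\kappa<2^\delta$, and hence $\kappa<2^{<\delta}$, proving the first assertion. For the ``in particular'' clause I would simply instantiate $\delta=\kappa=\gamma^+$: Cantor's theorem gives $\gamma^+<2^{\gamma^+}$, so the hypothesis $\kappa<2^\delta$ is automatically met, and the first part delivers $\gamma^+<2^{<\gamma^+}$. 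Since $\gamma$ is the largest cardinal below $\gamma^+$ we have $2^{<\gamma^+}=2^\gamma$, so this reads $2^\gamma>\gamma^+$, as desired.

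The argument is essentially a direct application of \cref{wKHlemma}, so the only point requiring genuine care is the verification of the size hypothesis $|T|<\kappa_M$. The crux there is the observation that $\kappa\subseteq M$ forces $\kappa_M>\kappa$, together with the routine fact that $|2^{<\delta}|=2^{<\delta}$; everything else is the branch-counting bookkeeping distinguishing $2^{<\delta}$ from $2^{\delta}$.
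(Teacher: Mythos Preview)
Your argument is correct and follows exactly the approach indicated in the paper: apply \cref{wKHlemma} to the full binary tree $T=2^{<\delta}$ and count branches. The paper states this in one line before the theorem and leaves the proof empty; you have simply filled in the details (checking $T\in M$, verifying $|T|<\kappa_M$ via $\kappa\subseteq M\Rightarrow\kappa_M>\kappa$, and unpacking the ``in particular'' clause), all of which are routine and correctly handled.
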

	\begin{proof}[\nopunct]
	\end{proof}
	
	In fact, the existence of \gesms~is in favour of pushing up the relevant values of the continuum function. It is a theorem of Cox and Krueger \cite{CK-Quotient} that ${\rm GMP}(\omega_2,\omega_1)$ is consistent with the continuum being arbitrarily large.
	
	Recall that the Singular Cardinal Hypothesis ($\rm SCH$) states that if $\kappa$ is a singular cardinal with $2^{{\rm cof}(\kappa)}<\kappa$, then $\kappa^{{\rm cof}(\kappa)}=\kappa^+$.
	In \cite{V2012}, Vaile proved if ${\rm GMP}(\omega_2,\omega_1)$ is witnessed by internally unbounded models, then $\rm SCH$ holds. Later, Krueger proved (see \cref{Kruegerlemma} ) that every $\omega_1$-\gesm~of size $\omega_1$ is internally unbounded. 
	
	\begin{theorem}[Krueger \cite{Krueger2020}+Viale \cite{V2012}]
		${\rm GMP}(\omega_2,\omega_1)$ implies $\rm SCH$.
	\end{theorem}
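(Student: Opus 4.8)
The plan is to show that ${\rm GMP}(\omega_2,\omega_1)$ is \emph{witnessed by internally unbounded models} and then to feed this into Viale's implication \cite{V2012} that internally unbounded witnesses force ${\rm SCH}$; the theorem is precisely the conjunction of Krueger's geometric input (\cref{Kruegerlemma}) with Viale's arithmetic output.

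First I would carry out the reduction to internally unbounded witnesses. Fix a large regular $\theta$, so that $\mathfrak G_{\omega_2,\omega_1}(H_\theta)$ is stationary in $\mathcal P_{\omega_2}(H_\theta)$. Each $M\in\mathfrak G_{\omega_2,\omega_1}(H_\theta)$ has $|M|<\omega_2$, and since every $\delta$-\gesm~is uncountable, in fact $|M|=\omega_1$. As shown earlier, ${\rm GMP}(\omega_2,\omega_1)$ refutes $\CH$, so $\omega_1<2^{\aleph_0}$; hence the Cox--Krueger corollary \cite{CK2017} applies to each such $M$ and gives $\omega_1\subseteq M$. Since $M\prec H_\theta$ is closed under finite sequences, the hypotheses of \cref{Kruegerlemma}(1) (with $\delta=\omega$) are met, and each such $M$ is internally unbounded. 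Thus the internally unbounded members of $\mathfrak G_{\omega_2,\omega_1}(H_\theta)$ already form a stationary set, i.e. ${\rm GMP}(\omega_2,\omega_1)$ is witnessed by internally unbounded models.

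It then remains to recall, in outline, Viale's arithmetic step that this witnessing yields ${\rm SCH}$. By Silver's theorem the least singular cardinal at which ${\rm SCH}$ could fail has cofinality $\omega$, so it suffices to prove $\lambda^{\aleph_0}=\lambda^+$ for every singular $\lambda$ with $\cof(\lambda)=\omega$ and $2^{\aleph_0}<\lambda$. Via the identity $\lambda^{\aleph_0}=\cof([\lambda]^{\aleph_0})\cdot 2^{\aleph_0}$ and $2^{\aleph_0}<\lambda$, this reduces to $\cof([\lambda]^{\aleph_0})=\lambda^+$ (the bound $\geq\lambda^+$ being automatic for singulars of countable cofinality). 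Treating $\lambda$ as a least counterexample, so that ${\rm SCH}$ holds below $\lambda$, one gets $\cof([\mu]^{\aleph_0})\leq\mu^+\leq\lambda$ for all $\mu<\lambda$, so the \emph{bounded} countable subsets of $\lambda$ admit a cofinal family of size $\leq\lambda$ and the entire difficulty lives in the countable subsets cofinal in $\lambda$.

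Finally I would locate where guessing enters. Fix $\langle\lambda_n:n<\omega\rangle$ increasing and cofinal in $\lambda$ with $\lambda_0>\omega_1$. Modulo the bounded part above and the factor $\prod_n[\lambda_n]^{\aleph_0}$, the covering number $\cof([\lambda]^{\aleph_0})$ is governed by $\cof(\prod_n\lambda_n,<^*)$, where $<^*$ is eventual domination; so it suffices to produce a $<^*$-cofinal family in $\prod_n\lambda_n$ of size $\lambda^+$. To an internally unbounded $\omega_1$-\gesm~$M$ of size $\omega_1$ with $\langle\lambda_n\rangle\in M$ assign $f_M(n)=\sup(M\cap\lambda_n)$; since $\omega_1<\cof(\lambda_n)$ one has $f_M(n)<\lambda_n$, and every $g\in\prod_n\lambda_n$ with $g\in M$ is dominated everywhere by $f_M$. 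The plan is then to show, using the guessing property to pin down the traces $M\cap\lambda_n$ by elements of $M$, that a suitably coherent family of only $\lambda^+$ such models yields $<^*$-cofinally many $f_M$ — equivalently $\mathrm{pp}(\lambda)=\lambda^+$ — whence $\cof([\lambda]^{\aleph_0})=\lambda^+$ and ${\rm SCH}$ follows. I expect this last PCF/covering step, carried out in Viale's proof \cite{V2012}, to be the main obstacle; by contrast the verification that the witnesses are internally unbounded is now routine, being exactly \cref{Kruegerlemma}(1).
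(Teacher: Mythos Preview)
Your proposal is correct and follows exactly the route the paper indicates: the paper's proof is deliberately empty, the surrounding text simply recording that Viale showed ${\rm GMP}(\omega_2,\omega_1)$ witnessed by internally unbounded models implies ${\rm SCH}$, and that Krueger's \cref{Kruegerlemma}(1) supplies the internal unboundedness of every $\omega_1$-\gesm~of size $\omega_1$. You have reproduced this two-step reduction faithfully and even gone beyond the paper by sketching the PCF/covering mechanism behind Viale's half, which the survey does not attempt.
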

	\begin{proof}[\nopunct]
	\end{proof}
	
	Thus, another consequence of $\rm PFA$ follows from
	${\rm GMP}(\omega_2,\omega_1)$.
	More generally, Krueger proved the following.

	\begin{theorem}[Krueger \cite{Krueger2020}] Assume ${\rm GMP}(\kappa,\omega_1)$. The Singular Cardinal Hypothesis holds above $\kappa$.
	\end{theorem}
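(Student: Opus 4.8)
The plan is to reduce $\SCH$ above $\kappa$ to a statement about powers/$\mathrm{pp}$, and then to run the argument of the previous theorem (the case $\kappa=\omega_2$) in the general setting, with \cref{Kruegerlemma} supplying the needed internal unboundedness. Fix a singular $\mu>\kappa$ with $\lambda=\cof(\mu)$ and $2^\lambda<\mu$; since $\mu^\lambda\geq\mu^+$ is automatic, it suffices to show $\mu^\lambda\leq\mu^+$. Recall that when $2^\lambda<\mu$ one has $\mu^\lambda=\cof([\mu]^\lambda,\subseteq)$, and for $\cof(\mu)=\omega$ one has $\mu^\omega=\mathrm{pp}(\mu)$, so that $\SCH$ at $\mu$ is equivalent to $\mathrm{pp}(\mu)=\mu^+$. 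Arguing by induction on singular cardinals above $\kappa$, the case of uncountable $\lambda$ follows from Silver's theorem applied to smaller singulars, all of which can be chosen cofinally in the interval $(\kappa,\mu)$; hence it suffices to treat $\lambda=\omega$, i.e.\ to prove $\mathrm{pp}(\mu)=\mu^+$.

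Next I would extract the right models. By $\mathrm{GMP}(\kappa,\omega_1)$, for all sufficiently large regular $\theta$ there are stationarily many $\omega_1$-\gesm{} $M\in\mathcal P_\kappa(H_\theta)$ with $M\prec H_\theta$. Since $\kappa>\omega_1$, the set $\{M:\omega_1\subseteq M\}$ is club in $\mathcal P_\kappa(H_\theta)$, so stationarily many guessing $M$ also satisfy $\omega_1\subseteq M$; by \cref{Kruegerlemma} these are internally unbounded, i.e.\ every countable $a\subseteq M$ is contained in some countable $c\in M$. Moreover one may arrange $M$ to be a continuous union of an $\omega_1$-chain, so that $\cof(\sup(M\cap\gamma))=\omega_1$ for the relevant regular $\gamma\in M$ with $\gamma>|M|$.

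Now suppose toward a contradiction that $\mathrm{pp}(\mu)\geq\mu^{++}$, and fix an increasing sequence $\langle\mu_n:n<\omega\rangle$ of regular cardinals in $(\kappa,\mu)$ cofinal in $\mu$, together with a $<^{*}$-increasing sequence $\langle f_\xi:\xi<\mu^{++}\rangle$ in $\prod_{n}\mu_n$ witnessing it. Choose $M$ as above, of size $\omega_1$, containing all these parameters and with $\cof(\sup(M\cap\mu^{++}))=\omega_1$, and set $g(n)=\sup(M\cap\mu_n)<\mu_n$. Using internal unboundedness one checks that $g$ is an exact upper bound of $\langle f_\xi:\xi\in M\cap\mu^{++}\rangle$ with $\cof(g(n))=\omega_1$ for each $n$. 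The heart of the proof is then to encode this configuration as a set (or $0/1$-function) on $\mu^{++}$ that is bounded and $\omega_1$-approximated in $M$, so that the guessing property applies exactly as in the cofinal-branch argument of \cref{wKHlemma}: the resulting guess produces an object in $M$ which elementarity forces to be cofinal in $\mu^{++}$, while its trace on $M$ is bounded below $\sup(M\cap\mu^{++})$, a contradiction. This caps the length of the scale and yields $\mathrm{pp}(\mu)=\mu^+$.

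The main obstacle is precisely this last encoding step. Internal unboundedness (equivalently, approachability of $\sup(M\cap\mu^{++})$) is by itself consistent with the failure of $\SCH$, so it cannot produce the contradiction; what does the work is that the exact upper bound must be recognised \emph{inside} $M$ via the guessing property, which is genuinely stronger. The delicate point is to define the approximated object so that its guess is incompatible with $\cof(\sup(M\cap\mu^{++}))=\omega_1$ together with the scale having length $\mu^{++}$. Secondary care is needed to guarantee stationarily many guessing models with the required closure ($\omega_1\subseteq M$ and the $\omega_1$-chain structure) and to verify the exact-upper-bound computation from internal unboundedness; these are routine given \cref{Kruegerlemma} and \cref{K-nice-result}, but they must be in place before the guessing step can be invoked.
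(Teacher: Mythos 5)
There is a genuine gap, and you flag it yourself: what you call ``the main obstacle'' --- producing a bounded, $\omega_1$-approximated set from the scale whose guess is contradictory --- is not a deferrable detail but the entire content of the theorem. (For comparison: the paper gives no proof of this statement, deferring to Krueger; the route in the literature combines Viale's theorem that ${\rm GMP}$ witnessed by \emph{internally unbounded} models implies $\rm SCH$, proved via his covering property for covering matrices, with \cref{Kruegerlemma}; your frame is in that spirit.) The gap is fatal because the natural candidates for the object to be guessed all collapse: the characteristic function $g(n)=\sup(M\cap\mu_n)$ satisfies $g\cap M=\emptyset$ (its values, and hence the pairs $(n,g(n))$, are not in $M$), so it is guessed trivially by $\emptyset$; the trace $M\cap\mu^{++}$ is guessed trivially by $\mu^{++}$; and the restricted scale $\{f_\xi:\xi\in M\cap\mu^{++}\}$ is guessed trivially by the scale itself. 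A contradiction along the template of \cref{MP from FS } requires a set that is $\omega_1$-approximated in $M$ but \emph{not} of the form $z\cap M$ for any $z\in M$, and the proposal never constructs one; as you yourself observe, internal unboundedness alone is consistent with $\neg\rm SCH$, so this missing construction (which in Viale's and Krueger's arguments comes from the coherence properties of a covering matrix) is where all the work lies.

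Beyond this, the models you work with need not exist under ${\rm GMP}(\kappa,\omega_1)$. For $\kappa\geq\omega_3$ the collection of $M\in\mathcal P_\kappa(H_\theta)$ of size $\omega_1$ is nonstationary (take $F$ constantly equal to $\omega_2$ in the paper's definition of stationarity), and ${\rm GMP}(\kappa,\omega_1)$ is not known to imply ${\rm GMP}(\omega_2,\omega_1)$, so ``choose $M$ of size $\omega_1$'' is unavailable; the argument must handle guessing models of arbitrary size below $\kappa$. Similarly, ``one may arrange $M$ to be a continuous union of an $\omega_1$-chain'' asserts internal clubness, which ${\rm GMP}$ is not known to provide: \cref{Kruegerlemma} yields internal unboundedness only, and removing exactly such structural hypotheses was Krueger's contribution. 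Consequently the cofinality facts your contradiction leans on, $\cof(\sup(M\cap\mu^{++}))=\omega_1$ and $\cof(g(n))=\omega_1$, are unjustified: by the paper's bound $|M|\geq 2^{\cof(\sup(M\cap\theta))}$ and the Cox--Krueger corollary, uncountable cofinality of these suprema comes for free only for models of size ${<}2^{\aleph_0}$, and cofinality exactly $\omega_1$ only for models of size $\omega_1$, while the models supplied by ${\rm GMP}(\kappa,\omega_1)$ can be of any size below $\kappa$. So both the heart of the argument and the properties of $M$ that feed into it are missing.
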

	\begin{proof}[\nopunct]
	\end{proof}

	\subsubsection*{Squares}
	
	Let us recall the two-cardinal square principle.
	\begin{definition}
		Assume that $\kappa>0$ is a cardinal and $\lambda$ is an uncountable cardinal.
		Suppose that $S\subseteq\lambda$ is stationary.
		A sequence $\langle\mathcal C_\alpha: \alpha\in S \rangle$ is called a $\square(\kappa,\lambda,S)$-sequence if the following hold.
		\begin{enumerate}
			\item $\forall \alpha\in S$, $0<|\mathcal C_\alpha|<\kappa$,
			\item $\forall \alpha\in S$ and $\forall C\in\mathcal C_\alpha$, $C$ is a club in $\alpha$,
			\item $\forall \alpha\in S$, $\forall C\in\mathcal C_\alpha$, and $\forall \beta\in {\rm Lim}(C)$, $C\cap\beta\in \mathcal C_\beta$, and
			\item there is no club $D\subseteq\lambda$ so that for every $\alpha\in {\rm Lim}(D)\cap S$, $D\cap\alpha\in \mathcal C_\alpha$.
		\end{enumerate}
	\end{definition}
	
	Let $S^\lambda_\kappa\coloneqq\{\alpha\in\lambda:{\rm cof}(\alpha)=\kappa\}$. The set $S^\lambda_{<\kappa}$ is defined naturally.
	
	\begin{theorem}[Wei\ss ~\cite{W2010, W2012}, Vaile \cite{V2012}] 
		Assume ${\rm GMP}(\kappa,\kappa)$. Then  $\square(\kappa,\lambda,S^\lambda_{<\kappa})$ fails, for every $\lambda$ with ${\rm cof}(\lambda)\geq\kappa$.
	\end{theorem}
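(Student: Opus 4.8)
The plan is to show that no $\square(\kappa,\lambda,S^\lambda_{<\kappa})$-sequence can exist, by producing a \emph{thread} and contradicting clause~(4) of the square definition. Fix $\lambda$ with $\mathrm{cof}(\lambda)\geq\kappa$ and suppose toward a contradiction that $\vec{\mathcal C}=\langle\mathcal C_\alpha:\alpha\in S\rangle$, $S=S^\lambda_{<\kappa}$, satisfies (1)--(3). The natural object to analyse is the \emph{tree of coherent clubs} $T=\bigcup_{\alpha\in S}\mathcal C_\alpha$, ordered by end-extension: $c<_T c'$ iff $c=c'\cap\sup(c)$. Coherence (clause~(3)) is exactly what makes $<_T$ a tree order, while thinness of the levels is immediate from clause~(1), since the $\alpha$-th level is $\mathcal C_\alpha$, of size $<\kappa$. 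The key reduction is that a cofinal branch of $T$ yields a thread: if $B\subseteq T$ is a chain with $\{\sup(c):c\in B\}$ cofinal in $\lambda$, then $D:=\bigcup B$ is club in $\lambda$, and for every $\alpha\in\mathrm{Lim}(D)\cap S$ one has $D\cap\alpha\in\mathcal C_\alpha$ (either $\alpha=\sup(c)$ for some $c\in B$, or $\alpha\in\mathrm{Lim}(c)$ for a larger $c\in B$ and coherence applies). So it suffices to find a cofinal branch of $T$.

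To find the branch I would imitate the proof of \cref{TP}. Using $\mathrm{GMP}(\kappa,\kappa)$, fix a $\kappa$-\gesm $M\prec H_\theta$ with $\vec{\mathcal C},\lambda,T\in M$ and $|M|<\kappa$, and set $\gamma=\sup(M\cap\lambda)$. Since $|M|<\kappa\leq\mathrm{cof}(\lambda)$, the model is not cofinal in $\lambda$, so $\gamma<\lambda$, and $\mathrm{cof}(\gamma)\leq|M|<\kappa$; hence $\gamma\in S^\lambda_{<\kappa}$ and $\mathcal C_\gamma\neq\varnothing$. Fix $C\in\mathcal C_\gamma$. Intuitively $C$ is a node sitting above everything $M$ sees, and its set of predecessors $b=\{s\in T:s<_T C\}$ should be the branch we want. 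Note $b\subseteq T\in M$ is bounded in $M$, and for each $\beta\in\mathrm{Lim}(C)\cap S\cap M$ we have $C\cap\beta\in\mathcal C_\beta$; since $|\mathcal C_\beta|^M<\kappa\leq\kappa_M$ and $\mathcal C_\beta\in M$, the whole level satisfies $\mathcal C_\beta\subseteq M$, so these predecessors lie in $M$.

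The crux is to verify that $b$ is $\kappa$-approximated in $M$, after which the guessing property delivers $b^*\in M$ with $b^*\cap M=b\cap M$; by elementarity $b^*$ determines a cofinal branch of $T$ in $V$, and the reduction above then produces a thread, contradicting clause~(4). For the approximation, take $a\in M$ with $|a|^M<\kappa$; as in \cref{wKHlemma} and \cref{TP} one has $a\subseteq M$, and the ordinals $\{\sup(c):c\in a\cap T\}$ form a set in $M$ of size $<\kappa$, hence (again using $\mathrm{cof}(\lambda)\geq\kappa$) are bounded by some $\eta\in M\cap\gamma$. If one can find $\beta\in\mathrm{Lim}(C)\cap M$ with $\eta<\beta$, then every $c\in a$ with $c<_T C$ already satisfies $c<_T(C\cap\beta)$, so $b\cap a=a\cap\{s\in T:s<_T(C\cap\beta)\}\in M$ because $C\cap\beta\in\mathcal C_\beta\subseteq M$.

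The main obstacle is precisely finding such a $\beta$: for an \emph{arbitrary} $C\in\mathcal C_\gamma$ the set $\mathrm{Lim}(C)\cap M$ of coherent predecessors visible to $M$ need not be cofinal in $\gamma$ — the club $\mathrm{Lim}(C)$ can avoid the merely cofinal set $M\cap\gamma$ — and then the branch of $C$ is not approximated. This is where the argument genuinely needs more than a single arbitrary model. Following Wei\ss, one exploits the \emph{stationarity} of $\mathfrak G_{\kappa,\kappa}(H_\theta)$ furnished by $\mathrm{GMP}(\kappa,\kappa)$ to pass to an ineffable branch — equivalently, to a club $D$ that is guessed on stationarily many such $M$ — so that the coherence conditions accumulate on a stationary, hence cofinal, set and $D$ genuinely threads $\vec{\mathcal C}$. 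Alternatively, if $\mathrm{GMP}(\kappa,\kappa)$ is witnessed by models with $M\cap\gamma$ closed in $\gamma$, as in the internally club situation of \cref{K-nice-result}, then every club $C$ in $\gamma$ has $\mathrm{Lim}(C)\cap M$ cofinal and the single-model computation above goes through verbatim. In all cases the resulting thread contradicts clause~(4), so $\square(\kappa,\lambda,S^\lambda_{<\kappa})$ fails.
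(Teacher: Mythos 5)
Your setup is essentially the paper's configuration transposed to a tree: the paper also fixes a single $\kappa$-\gesm~$M\prec H_\theta$ containing the square sequence, sets $\gamma=\sup(M\cap\lambda)$, picks $C\in\mathcal C_\gamma$, and aims to show $C$ is $\kappa$-approximated in $M$, then guesses and threads. The genuine gap is that you stop exactly at the crux: you concede you cannot verify the approximation for an arbitrary model and an arbitrary $C$, and you replace the verification by two deferrals. The first (``exploit stationarity to pass to an ineffable branch'') is not an argument --- extracting a thread from stationarily many guesses is precisely the content of the theorem, not a quotable step --- and the second (internally club witnesses) strengthens the hypothesis, since ${\rm GMP}(\kappa,\kappa)$ provides no such witnesses. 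So as written the proposal does not prove the statement.

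Moreover, the obstacle you flag is not actually an obstacle for the approximation step, and the paper's proof removes it with a purely local splitting that needs only one arbitrary model. Fix $a\in M$ with $|a|^M<\kappa$, so $a\subseteq M$, and suppose $C\cap a$ is infinite (otherwise it is a finite subset of $M$, hence an element of $M$). Let $\alpha$ be the \emph{largest} limit point of $C\cap a$. Two automatic facts finish the proof. First, $\alpha\in M$: since a $\kappa$-\gesm~of size ${<}\kappa$ satisfies $M\cap\kappa\in\kappa$, the increasing enumeration $\langle\alpha_\xi:\xi<\kappa'\rangle$ of $a$ lies in $M$ with $\kappa'\subseteq M$, and $\alpha=\sup\{\alpha_\eta:\eta<\xi\}$ for some limit $\xi\leq\kappa'$; thus \emph{every} limit point of $a$, in particular $\alpha$, lies in $M$. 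Hence $\mathcal C_\alpha\in M$, $|\mathcal C_\alpha|<\kappa$ gives $\mathcal C_\alpha\subseteq M$, and coherence gives $C\cap\alpha\in\mathcal C_\alpha\subseteq M$. Second, $C\cap a\setminus\alpha$ is finite (an infinite tail would have a limit point above $\alpha$), and a finite subset of $M$ is an element of $M$. Therefore
\[
C\cap a=\bigl(a\cap(C\cap\alpha)\bigr)\cup\bigl(C\cap a\setminus\alpha\bigr)\in M ,
\]
so $C$ is $\kappa$-approximated in $M$ --- no cofinality of ${\rm Lim}(C)\cap M$ in $\gamma$ is ever needed; your insistence on a single $\beta\in{\rm Lim}(C)\cap M$ dominating all of $a$ at once is what created the spurious requirement. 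The same splitting transposes verbatim to your tree formulation, applied to the set $Y=\{\sup(c):c\in a\cap T\}\in M$: either the chain $b\cap a$ has a $<_T$-largest element $c_0\in M$, or its sups accumulate at some $\beta\in{\rm Lim}(Y)\cap{\rm Lim}(C)\subseteq M$ and $b\cap a=\{c\in a:c<_T C\cap\beta\}\in M$. (A residue of your worry does resurface in the endgame, when one must argue that the guessed $C^*$ really yields a thread; the paper deals with this by passing to the closure of $C^*$ and using elementarity of $M$, again with no appeal to stationarity.)
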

	\begin{proof}
		Assume that there exists a $\square(\kappa,\lambda,S^\lambda_{<\kappa})$-sequence $\mathscr C=\langle\mathcal C_\alpha:\alpha\in S^\lambda_{<\kappa}\rangle$. We shall find a contradiction.  Pick a $\kappa$-\gesm~$M\prec H_\theta$, for some large regular $\theta$, with $\mathscr C,\kappa,\lambda\in M$ and $M\cap\kappa\in\kappa$. Let $\delta={\rm sup}(M\cap\lambda)$.
		Thus $\delta\in S^\lambda_{<\kappa}$.
		
		Fix $C\in\mathcal C_\delta$.
		We claim that $C$ is $\kappa$-approximated in $M$.
		Assume that $a\in M$ is of size less than $\kappa$. Note that $a\subseteq M$.
		We may assume that $a\subseteq\lambda$. Since the cofinality of $\lambda$ is  at least $\kappa$, we have ${\rm sup}(a)<\lambda$.
		Let $\langle\alpha_\xi:\xi<\kappa'<\kappa\rangle$ be the increasing enumeration of $a$. We may assume that $C\cap a$ is infinite.
		Let $\alpha$ be the largest element of $C$ that is a limit point of  $C\cap\alpha$. Observe that  $C\cap a\setminus\alpha$ is finite, and that $\alpha<\delta$. Thus it is enough to show that $C\cap a\cap\alpha$ is in $M$.
		There is $\xi<\kappa'$ such that $\alpha={\rm sup}\{\alpha_\eta:\eta<\xi\}$. Thus $\alpha\in M$.
		Now, $\alpha$ is a limit point of $C$, and hence $C\cap\alpha\in \mathcal C_\alpha$. Since $M\cap\kappa$ is an ordinal, $\alpha\in M$,  and $|\mathcal C_\alpha|<\kappa$, we have $\mathcal C_\alpha\subseteq M$, and thus $C\cap\alpha\in M$.
		
		Since $M$ is $\kappa$-guessing, there is $C^*\in M$ such that
		$C^*\cap M=C\cap M$. By elementarity, $C^*$  is a club relative to $S^\lambda_{<\kappa}$. Observe that if $\gamma$ is a limit point of $C^*$ of cofinality ${<}\kappa$, then 
		\[
		C^*\cap\gamma\cap M=C\cap\gamma\cap M.
		\]
		On the other hand, there is $F\in\mathcal C_\gamma$ such that $F=C\cap\gamma$.
		Now $F\in M$ and $C^*\cap M\cap\gamma=F\cap M$. By elementarity, $C^*\cap\alpha=F$, and hence $C^*\cap\gamma\in\mathcal C_\gamma$. 
		Let $D$ be the closure of $C^*$. Then $D\in M$.
		Now if $\gamma\in M$ if of cofinality ${<}\kappa$, and is a limit point of $D$, we then have $\gamma\in C^*$ and $D\cap\gamma=C^*\cap\gamma\in\mathcal C_\gamma$. This is a contradiction as $\mathscr C$ is a $\square(\kappa,\lambda,S^\lambda_{<{\rm cof}(\kappa)})$-sequence.
		
	\end{proof}

	A similar proof shows that following.
	\begin{theorem}[Wei\ss~\cite{W2010, W2012}] 
		Assume ${\rm GMP}(\kappa,\kappa^+)$ is witnessed by models which have  bounded uniform cofinality $\delta$. Then  $\square(\kappa,\lambda,S^\lambda_\delta)$ fails, for every $\lambda$ with ${\rm cof}(\lambda)\geq\kappa$.
	\end{theorem}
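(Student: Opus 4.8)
The plan is to run the argument of the preceding theorem with the family of cofinalities below $\kappa$ collapsed to the single value $\delta$ pinned down by the witnessing models. Suppose, toward a contradiction, that $\mathscr C=\langle\mathcal C_\alpha:\alpha\in S^\lambda_\delta\rangle$ is a $\square(\kappa,\lambda,S^\lambda_\delta)$-sequence. Using the hypothesis I fix a witnessing $\kappa^+$-\gesm\ $M\prec H_\theta$, for some large regular $\theta$, with $\mathscr C,\kappa,\lambda\in M$ and with bounded uniform cofinality $\delta$. For such a model the guessing level equals $\kappa_M$, i.e. $\kappa_M=\kappa^+$ and $\kappa\subseteq M$; consequently every $a\in M$ with $|a|^M\leq\kappa$ satisfies $a\subseteq M$, a fact I will lean on repeatedly.

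The first observation places $\mu:=\sup(M\cap\lambda)$ inside $S^\lambda_\delta$. Since $\cof(\lambda)\geq\kappa>\delta$, elementarity gives $M\models\cof(\lambda)\geq\delta$, so bounded uniform cofinality forces $\cof(\mu)=\delta$. In particular $\mu<\lambda$: were $\mu=\lambda$ we would get $\cof(\lambda)=\delta<\kappa$, against $\cof(\lambda)\geq\kappa$. Hence $\mu\in S^\lambda_\delta$ and $\mathcal C_\mu$ is defined; I fix any $C\in\mathcal C_\mu$, a club in $\mu$ with $C\subseteq M$ that is bounded in $M$ (as witnessed by $\lambda\in M$).

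The heart of the proof, and the step I expect to cost the most, is to verify that $C$ is $\kappa^+$-approximated in $M$. Given $a\in M$ with $|a|^M\leq\kappa$, I may assume $a\subseteq\lambda$, and then $a\subseteq M$ gives $\sup(a)\leq\mu$; assuming as we may that $C\cap a$ is infinite (a finite subset of $M$ lies in $M$), let $\alpha$ be the largest limit point of $C$ that is at most $\sup(C\cap a)$, so that $C\cap a\setminus\alpha$ is finite and $\alpha<\mu$. Writing $\alpha$ as the supremum of a proper initial segment $e\rest\xi$ of the increasing enumeration $e$ of $a$, and noting that $e,\xi\in M$ (here $\xi<|a|^M\leq\kappa<\kappa_M$, so $\xi\in M$), I obtain $\alpha\in M$. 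Coherence of $\mathscr C$ then yields $C\cap\alpha\in\mathcal C_\alpha$, and since $|\mathcal C_\alpha|<\kappa$ and $\alpha\in M$ we have $\mathcal C_\alpha\subseteq M$, whence $C\cap\alpha\in M$ and finally $C\cap a=(C\cap\alpha\cap a)\cup(C\cap a\setminus\alpha)\in M$. The one point needing care is that $\alpha$ be genuinely captured by $M$, and this is exactly where $\kappa_M=\kappa^+$ (rather than $\kappa$) is used, to keep an index $\xi\leq\kappa$ inside $M$.

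With the approximation in hand, the $\kappa^+$-guessing property produces $C^*\in M$ with $C^*\cap M=C\cap M$, and by elementarity $C^*$ is club in $\lambda$ relative to $S^\lambda_\delta$. As in the preceding proof, for every limit point $\gamma$ of $C^*$ with $\cof(\gamma)=\delta$ one shows $C^*\cap\gamma\in\mathcal C_\gamma$: the sets $C^*\cap\gamma$ and $C\cap\gamma$ agree on $M$, the latter equals some $F\in\mathcal C_\gamma\cap M$ by coherence, and elementarity promotes agreement on $M$ to the equality $C^*\cap\gamma=F$. Taking $D\in M$ to be the closure of $C^*$, the coherence just established shows that $D$ threads $\mathscr C$ through every $\gamma\in{\rm Lim}(D)\cap S^\lambda_\delta$, contradicting clause~(4) in the definition of a $\square(\kappa,\lambda,S^\lambda_\delta)$-sequence. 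The only genuinely new ingredient over the preceding theorem is the use of bounded uniform cofinality to force the relevant limit ordinal $\mu$, and hence the threaded limit points, to have cofinality exactly $\delta$; the remainder is the cofinality-$\delta$ specialisation of the earlier computation.
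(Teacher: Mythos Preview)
Your argument is correct and is exactly the adaptation the paper has in mind: the paper's own proof is the single sentence ``A similar proof shows the following,'' and you have carried out that similar proof, using the bounded-uniform-cofinality hypothesis to pin $\cof(\sup(M\cap\lambda))$ to the value $\delta$ rather than merely below $\kappa$. One harmless slip worth correcting: the fixed $C\in\mathcal C_\mu$ need not satisfy $C\subseteq M$ (since $\mu\notin M$, a club in $\mu$ will typically contain ordinals outside $M$), but you never actually use that claim, only that $C$ is bounded in $M$.
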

	\begin{proof}[\nopunct]
	\end{proof}
	
	\begin{corollary}
		Assume ${\rm GMP}(\kappa,\delta)$ is witnessed by models ${<}\delta$-closed models. Then  $\square_{E_{\delta}}(\kappa,\lambda)$ fails, for every $\lambda$ with ${\rm cof}(\lambda)\geq\kappa$.
	\end{corollary}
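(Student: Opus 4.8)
The plan is to deduce the statement from the preceding theorem of Wei\ss, which asserts that $\square(\kappa,\lambda,S^\lambda_\delta)$ fails whenever ${\rm GMP}(\kappa,\kappa^+)$ is witnessed by models of bounded uniform cofinality $\delta$. Here I read $\square_{E_\delta}(\kappa,\lambda)$ as $\square(\kappa,\lambda,S^\lambda_\delta)$, with $E_\delta=S^\lambda_\delta$ the set of points of $\lambda$ of cofinality $\delta$. Thus it suffices to show that the present hypothesis, ``${\rm GMP}(\kappa,\delta)$ witnessed by ${<}\delta$-closed models,'' is at least as strong as ``${\rm GMP}(\kappa,\kappa^+)$ witnessed by models of bounded uniform cofinality $\delta$.'' Since both principles quantify over the same ambient elementary submodels $M\prec\mathcal H$ in $\mathcal P_\kappa(\mathcal H)$, it is enough to verify a pointwise implication between the witnessing classes: every ${<}\delta$-closed $\delta$-\gesm~$M$ (of the relevant size) is a $\kappa^+$-\gesm~with bounded uniform cofinality $\delta$. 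Stationarity then transfers verbatim, and Wei\ss's theorem yields the conclusion.

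The easy half of the reduction records that the guessing parameter only weakens. Because $\delta\le\kappa\le\kappa^+$ and, as observed just after the definition of the guessing property, every $\delta$-guessing model is $\delta'$-guessing for $\delta\le\delta'$, each witnessing model is automatically a $\kappa^+$-\gesm. Hence the class of ${<}\delta$-closed $\delta$-\gesms~is contained in the class of $\kappa^+$-\gesms, and it remains only to control the cofinality spectrum of these models.

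The heart of the matter is the claim that a ${<}\delta$-closed $\delta$-\gesm~$M$ has bounded uniform cofinality $\delta$; that is, $\cof(\sup(M\cap\gamma))=\delta$ for every $\gamma\in M$ with $M\models\cof(\gamma)\ge\delta$. For the lower bound I would argue by contradiction: if $\cof(\sup(M\cap\gamma))=\mu<\delta$, choose an increasing $\langle\gamma_i:i<\mu\rangle\subseteq M\cap\gamma$ cofinal in $\sup(M\cap\gamma)$; by ${<}\delta$-closure this sequence lies in $M$, so by elementarity its supremum $s=\sup(M\cap\gamma)$ belongs to $M$. If $s<\gamma$, then $s+1\in M\cap\gamma$ lies above $\sup(M\cap\gamma)$, a contradiction; if $s=\gamma$, then $\langle\gamma_i:i<\mu\rangle$ witnesses $\cof(\gamma)\le\mu<\delta$, contradicting $M\models\cof(\gamma)\ge\delta$ together with the correctness of the powerful model $\mathcal H$ about cofinalities. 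Hence $\cof(\sup(M\cap\gamma))\ge\delta$. For the upper bound I would use the size of $M$, namely $\cof(\sup(M\cap\gamma))\le|M\cap\gamma|\le|M|$, so that $|M|=\delta$ forces $\cof(\sup(M\cap\gamma))\le\delta$. Combining the two bounds gives $\cof(\sup(M\cap\gamma))=\delta$; applying this with $\gamma=\lambda$, which is legitimate since $\cof(\lambda)\ge\kappa>\delta$, shows in particular that $\sup(M\cap\lambda)\in S^\lambda_\delta$, exactly as the square-failure argument of Wei\ss~requires.

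I expect the delicate point to be precisely this exact cofinality computation, where one must secure both bounds simultaneously: the lower bound is a robust use of ${<}\delta$-closure and elementarity, but the upper bound hinges on $|M|=\delta$. Note that ${<}\delta$-closure itself forces $|M|\ge\delta$, so $\delta$ is the least possible size of such a model; when $\kappa=\delta^+$ the size constraint $|M|<\kappa$ already pins $|M|=\delta$, and in general one restricts attention to the witnessing ${<}\delta$-closed $\delta$-\gesms~of size exactly $\delta$, which is where the argument is cleanest. Once bounded uniform cofinality $\delta$ is established for these models, the failure of $\square_{E_\delta}(\kappa,\lambda)=\square(\kappa,\lambda,S^\lambda_\delta)$ is immediate from Wei\ss's theorem, with no further square-sequence bookkeeping beyond what that proof already carries out.
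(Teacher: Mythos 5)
Your overall route---reading $\square_{E_\delta}(\kappa,\lambda)$ as $\square(\kappa,\lambda,S^\lambda_\delta)$ and reducing to the preceding theorem of Wei\ss\ by showing that the witnessing models have bounded uniform cofinality $\delta$---is presumably what the paper intends (it leaves this corollary without proof), and your lower bound $\cof(\sup(M\cap\gamma))\geq\delta$ for ${<}\delta$-closed models is proved correctly. The gap is in the upper bound, and your fallback of ``restricting attention to witnessing models of size exactly $\delta$'' is vacuous: \emph{there are no} ${<}\delta$-closed $\delta$-\gesms~of size $\delta$. Indeed, if $M$ is ${<}\delta$-closed, then every ${<}\delta$-sized subset of $M$ is an element of $M$, so \emph{every} bounded $x\subseteq M$ is automatically $\delta$-approximated in $M$; feeding this observation into the injection argument of the paper's theorem on the geometry of \gesms~(the one concluding $|M|\geq 2^{\cof(\sup(M\cap\theta))}$, whose approximation step needs exactly this), together with your own lower bound $\cof(\sup(M\cap\theta))\geq\delta$ applied to any $\theta\in M$ of cofinality $\geq\delta$, yields $|M|\geq 2^{\delta}>\delta$ for every ${<}\delta$-closed $\delta$-\gesm. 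Consequently, in your ``cleanest'' case $\kappa=\delta^+$ the hypothesis of the corollary is vacuously false, and in the only nonvacuous cases ($\kappa>2^\delta$) every witnessing model has size at least $2^\delta$, so your estimate $\cof(\sup(M\cap\gamma))\leq|M|$ never produces the bound $\delta$.

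What is genuinely missing, then, is any reason why some witnessing model has $\sup(M\cap\lambda)$ of cofinality \emph{exactly} $\delta$: ${<}\delta$-closure gives only ``bounded uniform cofinality at least $\delta$''. The pointwise claim at the heart of your reduction is in fact false in general: a ${<}\delta^+$-closed $\delta$-\gesm~(such models exist, e.g., from supercompactness reflections) is ${<}\delta$-closed, yet all of its suprema at points of cofinality $\geq\delta^+$ have cofinality $\geq\delta^+$, hence never land in $S^\lambda_\delta$; and nothing in the hypothesis ``${\rm GMP}(\kappa,\delta)$ witnessed by ${<}\delta$-closed models'' rules out that all witnesses are of this kind. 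Repairing the argument needs a new idea, not bookkeeping: for instance, a union of a $\delta$-chain of witnesses is still ${<}\delta$-closed and has suprema of cofinality $\delta$, but $\delta$-guessing is not known to survive such unions (this preservation problem is precisely what the strongly guessing models of the ${\rm GMP}^+$ section are designed to get around). Alternatively, your argument together with the paper's displayed square-killing proof does go through verbatim if $E_\delta$ is read as the set of points of cofinality $\geq\delta$ (and $<\kappa$), or if the witnesses are assumed outright to have bounded uniform cofinality $\delta$---but the latter is just the preceding theorem, not the stated corollary.
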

	\begin{proof}[\nopunct]
	\end{proof}
	
	\subsubsection*{The approachability ideal}
	\begin{definition}\label{appseq} Let $\lambda$ be a regular cardinal. A $\lambda$-{\em approaching sequence} is
		a $\lambda$-sequence of bounded subsets of $\lambda$. If $\bar a = (a_\xi: \xi < \lambda)$ is a $\lambda$-approaching sequence, 
		we let $B(\bar a)$ denote the set of all $\delta < \lambda$ such that 
		there is a cofinal subset $c\subseteq \delta$ such that:
		\begin{enumerate}
			\item ${\rm otp}(c) < \delta$, in particular $\delta$ is singular,  and
			\item for all $\gamma < \delta$, there exists $\eta < \delta$ such that $c \cap \gamma = a_\eta$.
		\end{enumerate}
	\end{definition}

	\begin{definition}\label{AppId}
		Suppose $\lambda$ is a regular cardinal. Let $I[\lambda]$ be the ideal generated by the sets $B(\bar a)$, 
		for all $\lambda$-approaching sequences $\bar a$,
		and the non stationary ideal ${\rm NS}_\lambda$.  
	\end{definition}
	
	\begin{definition}
		We say that the approachability holds at $\delta$ if $\delta^+\in I[\delta^+]$. We denote this principle by ${\rm AP}(\delta)$.
	\end{definition}

	\begin{definition}
		For a regular infinite cardinal $\kappa$, we let ${\rm MP}(\kappa^+)$ denote the following statement.  
		\[
		I[\kappa^+]\restriction S^{\kappa^+}_{\kappa}={\rm NS}_{\kappa^+}\rest S^{\kappa^+}_{\kappa}.
		\]
	\end{definition}
	
	\begin{theorem}[Wei\ss~\cite{W2010}]
		$\rm GMP(\delta^{+},\delta)$ implies $\neg {\rm AP}(\delta)$.
	\end{theorem}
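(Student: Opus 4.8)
The plan is to assume ${\rm AP}(\delta)$, i.e. $\delta^+\in I[\delta^+]$, and to contradict ${\rm GMP}(\delta^+,\delta)$ by showing that the trace on $\delta^+$ of a sufficiently nice $\delta$-\gesm~can never be approachable. First I would unpack $\delta^+\in I[\delta^+]$: since $I[\delta^+]$ is generated by the sets $B(\bar a)$ together with ${\rm NS}_{\delta^+}$ under finite unions, membership of $\delta^+$ means $\delta^+=B(\bar a_0)\cup\dots\cup B(\bar a_{n-1})\cup N$ for some nonstationary $N$ and finitely many approaching sequences $\bar a_i$. Interleaving these into a single approaching sequence $\bar a$ (routine bookkeeping, producing a club of closure points where the indices match up) gives one sequence with $\delta^+\setminus B(\bar a)$ nonstationary; fix a club $E$ with $E\cap S^{\delta^+}_\delta\subseteq B(\bar a)$.

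Next I would fix a large regular $\theta$ and use ${\rm GMP}(\delta^+,\delta)$ to pick a $\delta$-\gesm~$M\prec H_\theta$ with $\bar a,E\in M$ that is moreover \emph{nice}, in the sense that $\delta\subseteq M$ and $\alpha:=M\cap\delta^+$ is an ordinal of cofinality $\delta$. That such models are stationarily many is exactly the content of the generalisation of the Cox--Krueger corollary stated after \cref{Kruegerlemma} (for $\delta=\omega_1$ it is that corollary verbatim, giving $M\cap\omega_2\in\omega_2$ of cofinality $\omega_1$). Because $E\in M$ is club and $M\cap\delta^+=\alpha$, the ordinal $\alpha$ is a limit point of $E$, hence $\alpha\in E\cap S^{\delta^+}_\delta\subseteq B(\bar a)$.

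Now fix a witness $c\subseteq\alpha$ for $\alpha\in B(\bar a)$: $c$ is cofinal in $\alpha$, ${\rm otp}(c)<\alpha$, and for every $\gamma<\alpha$ there is $\eta<\alpha$ with $c\cap\gamma=a_\eta$. The decisive observation is that every proper initial segment of $c$ already lies in $M$: for $\gamma<\alpha$ the index $\eta<\alpha=M\cap\delta^+$ is an element of $M$, so $a_\eta=c\cap\gamma\in M$. From this I would verify that $c$ is $\delta$-approximated in $M$, just as in \cref{wKHlemma}: given $x\in M$ with $|x|^M<\delta$ one has $x\subseteq M$ (since $\delta\subseteq M$), so $x\cap\delta^+\subseteq\alpha$ is bounded below $\alpha$, and with $\gamma=\sup(x\cap\delta^+)+1\in M$ we get $c\cap x=(c\cap\gamma)\cap x=a_\eta\cap x\in M$. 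Hence $c$ is guessed in $M$, say by $c^*\in M$ with $c^*\subseteq\delta^+$ and $c^*\cap M=c\cap M=c$; as $M\cap\delta^+=\alpha$ this says precisely $c^*\cap\alpha=c$.

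The contradiction, which I regard as the crux, exploits the singularity clause ${\rm otp}(c)<\alpha$. Setting $\rho={\rm otp}(c)$ we have $\rho<\alpha$, hence $\rho\in M$. Let $e^*\in M$ be the increasing enumeration of $c^*$ (available by elementarity); since every element of $c^*$ below $\alpha$ precedes every element $\geq\alpha$, the elements below $\alpha$ are exactly the first $\rho$ many, so $c=c^*\cap\alpha=e^*[\rho]$. But $e^*,\rho\in M$ force $e^*[\rho]\in M$, that is $c\in M$, which is absurd: $\sup(c)=\alpha=M\cap\delta^+\notin M$. Thus no nice $\delta$-\gesm~sup can be approachable, contradicting $\alpha\in B(\bar a)$. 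I expect the guessing step to be entirely routine; the genuine obstacles are the two structural inputs, namely the reduction of $\delta^+\in I[\delta^+]$ to a single approaching sequence and, more importantly, securing that ${\rm GMP}(\delta^+,\delta)$ furnishes stationarily many guessing models with $M\cap\delta^+$ an ordinal of cofinality $\delta$.
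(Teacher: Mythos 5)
Your core argument is exactly the paper's. The paper proves this theorem by pointing to the proof of \cref{MP from FS }, and that proof is, almost word for word, your ``crux'': every proper initial segment of the witness $c$ lies in $M$ because its index lies below $\alpha=M\cap\delta^+$, hence $c$ is $\delta$-approximated and guessed by some $c^*\in M$; then, with $\mu={\rm otp}(c)\in M$, the $\mu$-th element $\rho$ of $c^*$ satisfies $c=c^*\cap\rho\in M$, so $\sup(c)=\alpha\in M$, a contradiction. Your extra bookkeeping (merging finitely many approaching sequences into one modulo a club) is correct and is genuinely needed to unpack $\delta^+\in I[\delta^+]$, which the paper leaves implicit.

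There is, however, a gap in your model-selection step, and it is precisely the point you flagged as a ``structural input''. For $\delta>\omega_1$ the citation does not deliver what you need: the generalisation stated after \cref{Kruegerlemma} is about ${<}\delta$-closed $\delta^+$-guessing models of size $\delta^+$ (models relevant to ${\rm GMP}(\delta^{++},\delta^+)$), and in any case its closure hypothesis is an \emph{extra} assumption that ${\rm GMP}(\delta^+,\delta)$ does not supply. The Cox--Krueger cofinality argument really uses that bounded subsets of an $\omega$-sequence are finite (hence automatically in $M$); for uncountable cofinalities this fails without closure. Since $\{M:{\rm cof}(\sup(M\cap\delta^+))=\delta\}$ is stationary but not club in $\mathcal P_{\delta^+}(H_\theta)$, it may a priori be disjoint from the stationary set of guessing models, so ``stationarily many guessing models with ${\rm cof}(M\cap\delta^+)=\delta$'' is unjustified for $\delta>\omega_1$. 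Fortunately the requirement is unnecessary. Your merging actually yields a club $E\subseteq B(\bar a)$, not merely $E\cap S^{\delta^+}_\delta\subseteq B(\bar a)$ (the regular ordinals below $\delta^+$ form a bounded, hence nonstationary, set), and $\{M\in\mathcal P_{\delta^+}(H_\theta):M\cap\delta^+\in\delta^+\}$ is a club, so ${\rm GMP}(\delta^+,\delta)$ gives a guessing model $M$ with $\bar a,E\in M$ and $\alpha=M\cap\delta^+\in\delta^+$; then $\alpha$ is a limit point of $E$, so $\alpha\in E\subseteq B(\bar a)$. Nothing in your approximation/guessing/order-type argument uses ${\rm cof}(\alpha)=\delta$: the only thing needed is that $\sup(x\cap\delta^+)\in M\cap\delta^+=\alpha$ for each $x\in M$ of $M$-size ${<}\delta$, which you already establish from $\alpha\notin M$. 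With that repair your proof coincides with the paper's.
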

	\begin{proof}
		See \cref{MP from FS }.
	\end{proof}

	\subsubsection*{Laver diamonds}

	The following beautiful theorem deserves more attention.

	\begin{theorem}[Viale \cite{V2012}]
		Assume  $\rm PFA$. Suppose there is a proper class of Woodin cardinals.
		There is a function $f:\omega_2\rightarrow H_{\omega_{2}}$ such that for every $\theta\geq\omega_2$ and every $x\in H_\theta$,
		\[
		\{M\in\mathfrak G_{\omega_2,\omega_1}(H_\theta): x\in M \mbox{ and } \pi_M(x)=f(M\cap\omega_2)\}
		\]
		is stationary in $\mathcal P_{\omega_2}(H_\theta)$.
	\end{theorem}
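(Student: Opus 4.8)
The statement is the guessing-model incarnation of the existence of a Laver function for a supercompact cardinal, with $\omega_2$ in the role of the supercompact and $\delta$-\gesms{} in the role of supercompactness embeddings. First I would check that it even type-checks. Since $\PFA$ gives $\mathrm{GMP}(\omega_2,\omega_1)$ (witnessed by internally club models) and $2^{\aleph_0}=\omega_2$, the set $\mathfrak G_{\omega_2,\omega_1}(H_\theta)$ is stationary for every regular $\theta\geq\omega_2$; and for $M\in\mathfrak G_{\omega_2,\omega_1}(H_\theta)$ the Cox–Krueger corollary gives $\alpha:=M\cap\omega_2\in\omega_2$, while $|\overline M|=\omega_1$ forces $\pi_M(x)\in H_{\omega_2}$ and $\pi_M(\omega_2)=\mathrm{o.t.}(M\cap\omega_2)=\alpha$. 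Thus the inverse collapse $\pi_M^{-1}\colon\overline M\to H_\theta$ is elementary with critical point $\alpha$ and $\pi_M^{-1}(\alpha)=\omega_2$, and the demanded equality $f(M\cap\omega_2)=\pi_M(x)$ is literally the Laver condition $j(f)(\mathrm{crit}\, j)=\pi_M(x)$ for $j=\pi_M^{-1}$, with the full $f$ living in the target $H_\theta$ and the witness $\pi_M(x)$ in the source $\overline M$. A routine reduction lets me build $f$ witnessing the conclusion only for all sufficiently large regular $\theta$: for $\theta'<\theta$ with $H_{\theta'}\in M\prec H_\theta$ guessing, $M\cap H_{\theta'}$ is again guessing with the same $M\cap\omega_2$ and the same collapse of any $x\in H_{\theta'}$, and stationarity projects down; so I may assume $f\in H_\theta$.

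With the dictionary fixed, I would construct $f$ exactly as one builds a Laver function, by a recursion on $\alpha<\omega_2$ that kills minimal counterexamples, carrying along a fixed wellordering $\lhd$ of $H_{\omega_2}$. Given $f\restriction\alpha$, I would let $f(\alpha)\in H_{\omega_2}$ be the collapse of the $\lhd$-least set that is a \emph{stage-$\alpha$ obstruction} for $f\restriction\alpha$ — intuitively, the collapse $\pi_M(x)$, computed in a guessing $M$ with $M\cap\omega_2=\alpha$, of the $\lhd$-least $x$ witnessing that the functions built so far cannot be correctly extended — and $f(\alpha)=\emptyset$ if there is none. As in Laver's construction the apparent circularity is harmless, since the value at $\alpha$ is read off $f\restriction\alpha$ alone. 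Two points must be checked and I would fold them into the main obstacle below: that the obstruction predicate is absolute enough to make $f(\alpha)$ well defined independently of the auxiliary $M$, and that it is computed correctly in the collapses.

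The single substantive input beyond $\PFA$ is the proper class of Woodin cardinals, which I would use exactly where Laver uses the elementarity and closure of his ultrapower. In Laver's argument the target model computes ``the $\lhd$-least counterexample'' in agreement with $V$ by elementarity and closure; here the analogue is that the transitive collapse $\overline M$ must compute the obstruction predicate in agreement with $V$. But being an obstruction at stage $\alpha$ quantifies over clubs of guessing models in $\mathcal P_{\omega_2}(H_\theta)$, a statement well above the complexity that a collapse of size $\omega_1$ decides outright. A proper class of Woodin cardinals supplies the required generic absoluteness: by Woodin's theorems the relevant (second-order, $\Sigma^2_1$-type) theory over $H_{\omega_2}$ is invariant under set forcing, and this is what renders the obstruction predicate absolute between $V$ and the collapses $\overline M$, so that the recursion defining $f$ is reflected correctly. (Alternatively one reads the same data off the generic embeddings produced by the stationary tower at the Woodins.) This reflection is the step I expect to be genuinely delicate, and it explains why both hypotheses appear: $\PFA$ to have stationarily many guessing models, the Woodins to make the anticipation robust under collapsing.

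The contradiction then follows the Laver pattern. Suppose $f$ fails: choose the least regular $\theta$, the $\lhd$-least $x\in H_\theta$, and a club $\mathcal C\subseteq\mathcal P_{\omega_2}(H_\theta)$ such that no guessing $M\in\mathcal C$ with $x\in M$ satisfies $\pi_M(x)=f(M\cap\omega_2)$. Since $\mathfrak G_{\omega_2,\omega_1}(H_\theta)$ is stationary, pick a guessing $M\in\mathcal C$ with $x,f,\mathcal C,\lhd\in M$ and set $\alpha=M\cap\omega_2$. Collapsing, $\overline M$ sees $\pi_M(f)$ as the canonically built function and $\pi_M(x)$ as the $\pi_M(\lhd)$-least stage-$\alpha$ obstruction to it; applying $\pi_M^{-1}$ together with the generic absoluteness above, the $V$-recursion at stage $\alpha$ selects precisely the collapse of that obstruction, i.e.\ $f(\alpha)=\pi_M(x)$, contradicting $M\in\mathcal C$. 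The crux, and the main obstacle, is exactly this matching claim $f(\alpha)=\pi_M(x)$: it demands (a) that ``being a minimal obstruction'' transfer across $\pi_M^{-1}$, using elementarity together with the guessing property so that $\overline M$'s internal obstructions are genuine; (b) that the obstruction predicate (and hence the $\lhd$-least one, with $\lhd$ carried as a parameter) be absolute between $\overline M$ and $V$, which is the Woodin-cardinal input; and (c) that the witnessing models be correct about $\omega_1$ — e.g.\ internally club, as $\PFA$ provides — so that $\pi_M$ does not distort the $H_{\omega_2}$-level data on which the recursion depends.
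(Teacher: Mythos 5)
The paper itself gives no proof of this theorem (it is quoted from Viale \cite{V2012} without argument), so your proposal must stand on its own, and it has a genuine gap at its centre: the recursion defining $f$ is not well defined. You set $f(\alpha)$ to be ``the collapse $\pi_M(x)$, computed in a guessing $M$ with $M\cap\omega_2=\alpha$, of the $\lhd$-least stage-$\alpha$ obstruction $x$''; but $\pi_M(x)$ is computed from the whole $\in$-structure of $M$ (for instance $\pi_M(\omega_3)={\rm o.t.}(M\cap\omega_3)$), not from $M\cap\omega_2$, and nothing in your argument shows the value is independent of the auxiliary $M$. Your own dictionary makes the difficulty visible: for $j=\pi_M^{-1}$ the Laver function lives in the \emph{target} $H_\theta$ and must anticipate $j^{-1}(x)=\pi_M(x)$, an embedding-dependent object, whereas in Laver's construction $f$ lives in the source and anticipates $x$, which does not depend on the chosen measure. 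Absoluteness of an obstruction \emph{predicate} --- even if it were available --- cannot convert a value that depends on $M$ into one depending only on $M\cap\omega_2$; so both the well-definedness of the recursion and the matching claim $f(\alpha)=\pi_M(x)$, which you flag as points (a)--(c) but do not prove, remain unestablished, and they are the entire content of the theorem.

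The absoluteness you invoke is, moreover, not a theorem: a proper class of Woodin cardinals yields generic absoluteness of the theory of $L(\mathbb R)$ (and $\Sigma^2_1$-absoluteness only conditioned on $\CH$, which fails under $\PFA$); it does not make statements about stationarity of subsets of $\mathcal P_{\omega_2}(H_\theta)$ invariant under set forcing, nor correctly computable inside transitive sets of size $\omega_1$. What the Woodin cardinals actually supply --- and you mention this only parenthetically, as an alternative source of ``the same data'' --- is the stationary tower, and its role is structural rather than a substitute for absoluteness: forcing with the tower $\mathbb P_{<\delta}$ below the condition $\mathfrak G_{\omega_2,\omega_1}(H_\theta)$ (stationary by $\PFA$) yields a generic $j:V\rightarrow N$ with ${\rm crit}(j)=\omega_2$ such that $M=j''H_\theta\in j(\mathfrak G_{\omega_2,\omega_1}(H_\theta))$ and $\pi_M=(j\rest H_\theta)^{-1}$, hence $\pi_M(j(x))=x$. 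In this format the object to be anticipated is $x$ itself --- an element of $V$, independent of any model --- and the Laver condition becomes $j(f)(\omega_2)=x$ evaluated in $N$, exactly as in the classical argument; the recursion then takes the classical form ($f(\alpha)$ is the witness of the minimal counterexample when it lies in $H_{\omega_2}$, and $\emptyset$ otherwise), with the agreement between $N$'s and $V$'s computation of minimal counterexamples coming from the closure $N^{<\delta}\subseteq N$ in $V[G]$ and the theory of the tower, not from $\Sigma^2_1$-absoluteness. Rebuilding your recursion and reflection step in that format is not a routine repair of your text; it is the missing core of the proof.
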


	\subsection{Consistency results}

	\begin{theorem}[Vaile--Wei{\ss} \cite{VW2011}]
		$\rm PFA$ implies that ${\rm GMP}(\omega_2,\omega_1)$ is witnessed by internally club models.
	\end{theorem}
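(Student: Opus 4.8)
The plan is to prove the statement directly in the form ``stationary many witnesses exist.'' Fix a sufficiently large regular $\theta$ and equip $H_\theta$ with a well-order $<_w$ and the induced Skolem functions, so that being an elementary submodel of $H_\theta$ is the same as being closed under a single function $F$. It then suffices, given any such $F$ (equivalently, any algebra on $H_\theta$), to produce a set $M\subseteq H_\theta$ of size $\omega_1$ with $\omega_1\subseteq M$, closed under $F$, internally club, and possessing the $\omega_1$-guessing property. The governing idea is that a ``generic'' elementary submodel is typically not guessing, so I will manufacture $M$ as the trace of a generic object for a carefully chosen proper forcing and then invoke $\PFA$ to pull that object back into $V$, where it will serve as the desired witness.

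First I would set up the forcing $\mathbb Q$ that shoots a continuous $\in$-chain of countable models. A condition is a countable, $\subseteq$-continuous, $\in$-increasing sequence $p=\langle N^p_\xi:\xi\le\alpha_p\rangle$ of countable elementary submodels of $H_\theta$, each containing $F$ and the relevant parameters, with $N^p_\eta\in N^p_\xi$ for $\eta<\xi$ and $N^p_\lambda=\bigcup_{\xi<\lambda}N^p_\xi$ at limits; the order is end-extension. The pure chain part is $\sigma$-closed (a decreasing $\omega$-sequence of conditions has a lower bound obtained by appending the union of the top models), hence proper, and for each $\xi<\omega_1$ the conditions of length ${>}\xi$ are dense. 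Meeting these $\omega_1$-many dense sets yields a chain $\langle M_\xi:\xi<\omega_1\rangle$ whose union $M:=\bigcup_{\xi<\omega_1}M_\xi$ is, by construction, an internally club elementary submodel of $H_\theta$ of size $\omega_1$ with $\omega_1\subseteq M$ and $M$ closed under $F$; the filtration $\langle M_\xi:\xi<\omega_1\rangle$ is the I.C.\ witness, and each $M_\xi$ is a countable member of $M$.

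The heart of the matter, and the step I expect to be the main obstacle, is securing the $\omega_1$-guessing property of $M$, because whether a set $x$ is bounded and $\omega_1$-approximated \emph{in $M$} depends on $M$ itself, so one cannot list one dense set per potential $x$ in advance. Here I would exploit the I.C.\ filtration together with a branch/reflection argument. Given a candidate $x\subseteq M$ bounded in $M$ (say $x\subseteq X\in M$) and $\omega_1$-approximated in $M$, each $M_\xi$ is countable and lies in $M$, so $x\cap M_\xi\in M$; hence $\langle x\cap M_\xi:\xi<\omega_1\rangle$ is a $\subseteq$-continuous increasing chain of elements of $M$, i.e.\ a cofinal branch through the tree of initial pieces of $x$ along the filtration. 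The $\omega_1$-approximation hypothesis forces this tree to be thin (few guesses at each countable level), and it is this thinness that lets $\PFA$ enter: the key lemma of Viale--Wei\ss~\cite{VW2011} is that the natural forcing adding a cofinal branch to such a thin tree is proper. Building this requirement into $\mathbb Q$ as a side condition that commits the generic chain to cohere with the thin lists it meets, the generic then supplies a single $x^*\in M$ with $x^*\cap M=x\cap M=x$, so that $x$ is guessed.

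To make this uniform rather than set-by-set, one argues that the augmented poset $\mathbb Q$ remains proper, by verifying the standard extension property: below any countable $N\prec H_\chi$ (for $\chi\gg\theta$) containing $\mathbb Q$ one can find an $(N,\mathbb Q)$-generic condition, continuing the chain inside $N$ and using the thinness of the committed lists to keep the branch commitments consistent at the generic limit. Granting properness, $\PFA$ furnishes a filter $G\in V$ meeting the $\omega_1$-many dense sets that guarantee length $\omega_1$, elementarity, closure under $F$, and the guessing commitments; the resulting $M\in V$ is then an internally club $\omega_1$-\gesm~$\prec H_\theta$ closed under $F$. Since $F$ was arbitrary, the internally club $\omega_1$-\gesms~are stationary in $\mathcal P_{\omega_2}(H_\theta)$ for every large regular $\theta$, which is precisely $\mathrm{GMP}(\omega_2,\omega_1)$ witnessed by internally club models. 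The two delicate points to watch are the properness verification for the augmented forcing and the thinness claim that legitimizes the branch argument for all potential counterexamples $x$ simultaneously.
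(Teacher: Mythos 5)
Your first half is fine, and it matches the general shape of the argument in the literature (the survey itself gives no proof of this theorem, deferring to Theorem 4.4 of \cite{V2012}): the chain forcing is indeed $\sigma$-closed, hence proper, and an application of $\PFA$ with $\omega_1$-many dense sets produces an internally club $M\prec H_\theta$ of size $\omega_1$ with $\omega_1\subseteq M$, closed under a prescribed algebra $F$; since stationarity means producing such an $M$ for every $F$, this is the right frame. Everything that is actually at stake, however, lies in your third and fourth paragraphs, and there the proposal is not a proof. Concretely: (i) the ``thinness'' claim is unjustified and in general false --- the tree of potential traces $y\cap M_\xi$ with $y\in M$ has levels of size up to $|M|=\omega_1$, and the hypothesis that one particular $x$ is $\omega_1$-approximated in $M$ says nothing about the number of candidates at a level; if you shrink the levels to $\mathcal P(X\cap M_\xi)\cap M_{\xi+1}$ to make them countable, the trace of a counterexample $x$ need not pass through them at all, since one only knows $x\cap M_\xi\in M_\eta$ for some $\eta$ depending on $x$. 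In Viale--Wei\ss, slenderness is a hypothesis on a list $\langle d_a\rangle$ \emph{given in advance in $V$} (that is the content of $\rm ISP$), not something extracted from an unknown $x$. (ii) The augmented poset is never defined and its properness never argued; ``side conditions committing the generic chain to cohere with the thin lists it meets'' has no precise content. (iii) Most fundamentally, the circularity you yourself flag is never resolved: the guessing property of the final $M\in V$ quantifies over \emph{all} $x\in V$ that are bounded and $\omega_1$-approximated in $M$ --- up to $2^{\omega_1}$ many sets, defined only relative to $M$ --- whereas $\PFA$ hands you a filter meeting $\omega_1$-many dense sets fixed beforehand. Your phrase ``the generic then supplies a single $x^*\in M$'' conflates the two sides of the application: there is no generic object in $V$; once the filter is extracted, whether $M$ guesses a given $x\in V$ is a fact about $M$ and $V$ that no forcing extension can retroactively arrange, so the dense sets would have had to anticipate every $x$, which they cannot.

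The published argument breaks this circularity in a structurally different way, and that difference is exactly what your sketch is missing. Viale--Wei\ss~prove $\PFA\Rightarrow{\rm ISP}(\omega_2,\lambda)$ \emph{one slender list at a time}: the list $D$ is a parameter available in $V$ when the forcing and its dense sets are designed (this is what makes a per-object $\PFA$ application legitimate, and it is there that a properness lemma exploiting slenderness of $D$ at the countable models does real work); the stationarity of internally club $\omega_1$-guessing models is then recovered from the list principle by a ZFC elementarity/pressing-down argument --- the equivalence the survey records in \cref{secISP} (Propositions 3.2 and 3.3 of \cite{VW2011}). In short, the quantifier over potential counterexamples $x$ is absorbed into a quantifier over slender lists, each handled by its own $\PFA$ application, rather than into commitments inside a single model-adding forcing. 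Note also that your quoted ``key lemma'' cannot be invoked in the form you use it: under $\PFA$ every branchless tree of height and size $\omega_1$ is special, so no $\omega_1$-preserving forcing whatsoever adds a cofinal branch to such a tree; what survives in Viale--Wei\ss~is a statement about slender lists and $\sigma$-closed collapses, applied to an object fixed in advance, not to the trace tree of a generically constructed $M$. To turn your plan into a proof you would either have to carry out this reduction through $\rm ISP$, or define your augmented poset precisely and prove its properness together with an argument that meeting $\omega_1$-many dense sets certifies guessing against all of $V$ --- and the latter is precisely what fails.
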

	\begin{proof}
		See \cite[Theorem 4.4]{V2012}.
	\end{proof}
	Let us also mention that in \cite{T2016} Trang showed the consistency of ${\rm GMP}(\omega_3,\omega_2)$
	assuming the existence of a supercompact cardinal.  In his model the Continuum Hypothesis holds.
	Thus $\rm GMP(\omega_2,\omega_1)$ fails by \cref{card1}.
	
	The following is  well-known.
	
	\begin{theorem}
		Assume the $\kappa$ is a supercompact cardinal. Suppose that $\delta<\kappa$ is a regular cardinal. Then in a generic extension ${\rm GMP}(\delta^{++},\delta^{+})$ holds.
	\end{theorem}
	\begin{proof}[Proof (sketch)]
		We may assume without loss of generality that $\delta^{<\delta}=\delta$.
		We define two forcings either of which works equally.  Let  $\mathbb P_0$ be  Neeman's forcing  with sequences of models of length less than $\delta$, where transitive models are $V_\alpha\prec V_\kappa$ with ${\rm cof}(\alpha)\geq\delta$ and the nontransitive models are  ${<}\delta$-closed elementary substructures of $V_\kappa$ of size $\delta$ containing $\delta$ as an element, and let $\mathbb P_1$ be Veličković's forcing whose conditions are ${<}\delta$-sized sets of   ${<}\delta$-closed virtual models of size $\delta$ in the structure $(V_\kappa,\in,\delta)$.
		
		Let $\mathbb P$ be either of these two forcings.
		Then, $\mathbb P$ is a ${<}\delta$-closed and $\kappa$-c.c.  forcing which is strongly proper for models under consideration. Since $\kappa$ is supercompact, by Magidor's characterization \cite{MA1971}, there are stationary many  $0$-gesms  $M\prec V_\lambda$ of size less than $\kappa$, for every limit ordinal $\lambda$. Pick such a model in  $V_\lambda$ and assume that ${\rm cof}(\lambda)\geq\delta$.
		One has to show that if $G$ is a $V$-generic filter, then $M[G]$ is a $\delta$-\gesm~of size $\delta^+$.
		Of course, $\mathbb P$ forces $\kappa$ to be $\delta^{++}$, and hence $M[G]$ is forced to be of size $\delta^{+}$. 
		
		Let
		$\overline{M}=V_{\gamma}$.
		It is proved that $1_{\mathbb P}$ is strongly $(M,\mathbb P)$-generic and hence we have 
		\[
		\overline{M[G]} =V_\gamma^{V[G\cap M]}.
		\]
		Then, one needs to show that $\mathbb P/G\cap M$ is strongly proper for a stationary sets of ${<}\delta$-closed models of size $\delta$.
		and hence $(V[G\cap M],V[G])$ has the $\delta^+$-approximation property. 
		Applying \cref{CK-formulation,transitivity under covering}, we have $M[G]$ is a $\delta^+$-\gesm~in $V[G]$. 
	\end{proof}

	There is a surprising result around singular cardinals.
	
	\begin{theorem}[Hachtman--Sinapova \cite{Hachtman-Sinapova2019}]
		If $\delta$ is a countable limit of supercompact cardinals, then
		${\rm GMP}(\delta^+,\delta^+)$ holds.
	\end{theorem}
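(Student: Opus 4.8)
The plan is to show that for all sufficiently large regular $\theta$ the family $\mathfrak G_{\delta^+,\delta^+}(H_\theta)$ is stationary in $\mathcal P_{\delta^+}(H_\theta)$. Fix such a $\theta$ and expand $H_\theta$ to a structure $\mathfrak A=(H_\theta,\in,\vartriangleleft,\dots)$ with a well-order and a full set of Skolem functions, so that stationarity reduces to producing, for an arbitrary $\mathfrak A$, a single elementary $M\prec\mathfrak A$ with $|M|\le\delta$ that is a $\delta^+$-\gesm~(closure under any prescribed algebra is subsumed by elementarity in $\mathfrak A$). Writing $\delta=\sup_n\kappa_n$ with $\langle\kappa_n:n<\omega\rangle$ increasing and each $\kappa_n$ supercompact, I note that the definition forces two structural demands at once: since $\delta^+\le\kappa_M$ and $|M|\le\delta$, we must have $\kappa_M=\delta^+$, which means $M\cap\delta^+$ is an \emph{ordinal} $\beta<\delta^+$ (of size $\delta$, hence $\delta\subseteq M$); and $M$ must have the $\delta^+$-guessing property. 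The tension between these two demands is the heart of the matter.

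For the building blocks I would use, for each $n$, a normal fine measure on $\mathcal P_{\kappa_n}(H_\theta)$ witnessing $\theta$-supercompactness of $\kappa_n$, yielding $j_n\colon V\to N_n$ with $\mathrm{crit}(j_n)=\kappa_n$, $j_n(\kappa_n)>\theta$ and $N_n$ closed under $\theta$-sequences. By Magidor's characterization in the $0$-\gesm~form recorded above (Viale), $j_n[H_\theta]$ certifies that there are stationarily many $0$-\gesms~$P\prec\mathfrak A$ of size ${<}\kappa_n$ with $\kappa_P=\kappa_n$; the $\theta$-closure of $N_n$ is precisely what guarantees the guessing property for such $P$, since every bounded subset of $P$ lies in $N_n$ and is therefore captured there.

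Next I would assemble $M$ as a continuous increasing chain that simultaneously accumulates these guessing witnesses and fills up an ordinal below $\delta^+$. Concretely, I would recursively choose $0$-\gesms~$P_n$ of size ${<}\kappa_n$ with $\kappa_{P_n}=\kappa_n$, drawn from the $n$-th measure and closed under the Skolem functions of $\mathfrak A$, with $\delta,\delta^+,\langle P_m:m<n\rangle\in P_{n+1}$, while interleaving ordinals so that, setting $\beta_n=\sup(P_n\cap\delta^+)$, the union $M=\bigcup_n\mathrm{Hull}^{\mathfrak A}(P_n\cup\beta_n)$ satisfies $M\cap\delta^+=\beta=\sup_n\beta_n\in\ORD$. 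Arranging that $M\cap\delta^+$ be a genuine ordinal while preserving both elementarity and the guessing witnesses inherited from the $P_n$ is the delicate bookkeeping, and it is exactly where the $\omega$-cofinality of $\delta$ is exploited; it must be done so that $\delta\subseteq M$, $|M|=\delta$ and $\kappa_M=\delta^+$.

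Finally, to verify $\delta^+$-guessing I would pass to the functional reformulation (\cref{functional-def}): given a cardinal $\gamma\in M$ and $f\colon\gamma\to2$ with $f\rest a\in M$ for every $a\in M$ of size $\le\delta$, I must find $f^*\in M$ agreeing with $f$ on $M\cap\gamma$. For cardinals $\gamma\le\delta$ this is immediate as $\gamma\subseteq M$, and for $\delta<\gamma<\delta^+$ it is trivial because taking $a=\gamma$ (of size $\delta$) already gives $f=f\rest\gamma\in M$. The substantive case is $\gamma\ge\delta^+$, where $|\gamma|>\delta$ and no single $a\in M$ of size $\le\delta$ covers $M\cap\gamma$. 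Here, since each $P_n\in M$ has size ${<}\delta$, approximation yields $f\rest(P_n\cap\gamma)\in M$, and the $0$-guessing of $P_n$ produces local guesses $g_n\in P_n$ agreeing with $f$ on $P_n\cap\gamma$ and cohering on smaller $P_m$; for $\gamma=\delta^+$ this amounts to a coherent sequence of initial segments $f\rest\beta_n\in M$ converging to $f\rest\beta$. The main obstacle is precisely to amalgamate these $\omega$-many coherent pieces into one function $f^*\in M$ on all of $\gamma$: the sequence $\langle g_n:n<\omega\rangle$ escapes $M$, so this is a genuine branch-guessing problem at the ordinal $\beta=M\cap\delta^+$ of cofinality $\omega$, and the covering shortcut available below $\delta^+$ is unavailable. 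I expect to resolve it by invoking the supercompact embeddings once more — reflecting the coherent system of local guesses through the $j_n$ (using the $\theta$-closure of the $N_n$ to locate the full branch $f\rest\beta$ and then reflecting it back into $M$) — and this reflection, which genuinely requires cofinally many supercompacts below $\delta$ rather than a single large cardinal, is where the bulk of the argument lies.
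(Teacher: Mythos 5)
The paper itself gives no proof of this theorem (it is quoted as a black box from Hachtman--Sinapova), so your proposal must be judged on its own terms, and its architecture provably cannot work. Building $M$ as an increasing $\omega$-chain forces ${\rm cof}(M\cap\delta^+)=\omega$: since $\delta,\delta^+,\langle P_m:m<n\rangle\in P_{n+1}$ and $P_{n+1}\prec\mathfrak A$, the supremum of each stage's intersection with $\delta^+$ is an element of the next stage, so the stage suprema strictly increase and $\beta=M\cap\delta^+$ has cofinality $\omega$. But no $\delta^+$-\gesm~of size $\delta$ can have this feature. Indeed, let $x=\{\gamma_n:n<\omega\}$ be increasing and cofinal in $\beta$; each $\gamma_n\in M$ because $\beta=M\cap\delta^+$ is an ordinal, and $x$ is bounded in $M$ by $\delta^+$. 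For any $a\in M$ with $|a|^M\leq\delta$, the set $a\cap\delta^+$ lies in $M$ and, by the regularity of $\delta^+$ and elementarity, $\sup(a\cap\delta^+)\in M\cap\delta^+=\beta$; hence $a\cap x$ is finite and so belongs to $M$. Thus $x$ is $\delta^+$-approximated in $M$. Yet $x$ cannot be guessed: if $x^*\in M$, $x^*\subseteq\delta^+$ and $x^*\cap M=x$, then $x^*\cap\beta=x$ (because $\beta\subseteq M$), and $\sup(x^*)$, being definable from $x^*$, lies in $M$, which forces $\sup(x^*)=\delta^+$; but then $x^*$ has an $\omega$-th element, namely $\min(x^*\setminus\beta)$, which is definable from $x^*$ alone, hence lies in $M\cap\delta^+=\beta$ while being $\geq\beta$ --- a contradiction. (In your functional formulation via \cref{functional-def}, the characteristic function $\chi_x:\delta^+\rightarrow 2$ is approximated but unguessable, so your ``substantive case $\gamma\geq\delta^+$'' provably has counterexamples for any $M$ of your shape.) This is exactly the cofinality constraint of Cox--Krueger recorded in the paper for $\omega_1$-\gesms, transposed to $\delta^+$; it shows that the ``branch-guessing problem at $\beta$ of cofinality $\omega$'' you flag at the end is not merely the hard step but is unsolvable, no matter how the embeddings $j_n$ are invoked.

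Independently of this obstruction, note that your sketch stops exactly where the theorem begins: the amalgamation of the $\omega$ many local guesses is deferred to ``invoking the supercompact embeddings once more,'' and that amalgamation is the entire content of the result. The actual Hachtman--Sinapova argument does not construct a guessing model directly at all: it works with the equivalent principle ${\rm ISP}_{\delta^+}(\delta^+)$ of \cref{secISP}, fixes a $\delta^+$-slender $\mathcal P_{\delta^+}(\lambda)$-list, uses for each $n$ a $\lambda$-supercompactness embedding with critical point $\kappa_n$ to extract a candidate branch, and then runs a Magidor--Shelah style coherence and pigeonhole argument over the $\omega$ many candidates to show that one of them is ineffable. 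The stationary family of $\delta^+$-\gesms~then falls out of the Viale--Wei{\ss} equivalence rather than from a chain construction, and the models so obtained necessarily have $M\cap\delta^+$ of uncountable cofinality --- which is precisely why no direct $\omega$-length union can reach them.
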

	\begin{proof}[\nopunct]
	\end{proof}
	\subsection{Gesms and ISP}\label{secISP}
	
	The statement ${\rm GMP}(\kappa,\omega_1)$ is a reformulation of the principle ${\rm ISP}(\kappa)$ introduced
	by  C. Wei{\ss} in \cite{W2010}.
	The equivalence between ${\rm GMP}(\kappa,\omega_1)$ and  ${\rm ISP}(\kappa)$ was established
	in \cite{VW2011}.
	
	Let $\delta\leq \kappa\leq\lambda$ be infinite cardinals with $\delta$ regular.
	Recall that  a $\mathcal P_\kappa(\lambda)$-list is a sequence
	$\langle d_a:a\in\mathcal P_\kappa(\lambda)\rangle$ such that $d_a\subseteq a$, for every  $a\in\mathcal P_\kappa(\lambda)$.
	
	\begin{definition}
		A list $\langle d_a:a\in\mathcal P_\kappa(\lambda)\rangle$  is called $\delta$-slender if for every sufficiently large  $\theta$, there is a club $C\subseteq\mathcal P_\kappa(H_\theta)$ such that for every $M\in C$ and every $a\in M$ with $a\in M\cap\mathcal P_\kappa(\lambda)$, $a\cap d_{M\cap\lambda}\in M$.
	\end{definition}
	
	\begin{definition}
		A set $d\subseteq \lambda$ is  an ineffable branch through a $\mathcal P_\kappa(\lambda)$-list $\langle d_a:a\in\mathcal P_\kappa(\lambda)\rangle$ if there is a stationary set $S\subseteq\mathcal P_\kappa(\lambda)$ such that for every $a\in S$, $a\cap d=d_a$
	\end{definition}
	
	\begin{definition}
		The principle ${\rm ISP}_\delta(\kappa,\lambda)$ states that every $\delta$-slender $\mathcal P_\kappa(\lambda)$-list has an ineffable branch.
		Let also ${\rm ISP}_\delta(\kappa)$ state that ${\rm ISP}_\delta(\kappa,\lambda)$ holds, for every $\lambda\geq\kappa$.
	\end{definition}
	
	Note that ${\rm ISP}_{\omega_1}(\kappa,\lambda)$ is the same as the well-known ${\rm ISP}(\kappa,\lambda)$. 
	
	\begin{proposition}[Viale--Weiss \cite{VW2011}]
		${\rm ISP}_\delta(\kappa)$ holds if and only if ${\rm GMP}(\kappa,\delta)$ holds.
	\end{proposition}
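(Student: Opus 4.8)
The plan is to prove the two implications separately, with $\mathrm{GMP}(\kappa,\delta)\Rightarrow\mathrm{ISP}_\delta(\kappa)$ as the softer direction and the converse as the substantial one. For the forward direction I would fix $\lambda\geq\kappa$ and a $\delta$-slender $\mathcal P_\kappa(\lambda)$-list $\langle d_a:a\in\mathcal P_\kappa(\lambda)\rangle$, choose a large regular $\theta$ with the list in $H_\theta$, let $C\subseteq\mathcal P_\kappa(H_\theta)$ witness slenderness, and use $\mathrm{GMP}(\kappa,\delta,H_\theta)$ together with the fact that a stationary set meets a club to obtain a $\delta$-\gesm~$M\prec H_\theta$ with $M\in C$ and the list in $M$. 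Writing $a_M=M\cap\lambda$, the entry $d_{a_M}\subseteq a_M\subseteq M$ is bounded in $M$, and slenderness applied to $M\in C$ says precisely that $d_{a_M}$ is $\delta$-approximated in $M$; hence the guessing property yields $d^*\in M$ with $d^*\cap M=d_{a_M}$, and one may take $d^*\subseteq\lambda$.

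It then remains to see that $d^*$ is an ineffable branch, i.e. that $T\coloneqq\{a\in\mathcal P_\kappa(\lambda):a\cap d^*=d_a\}$ is stationary. Here $T\in M$ since it is definable from $d^*$ and the list, and $a_M\in T$ because $d^*\cap a_M=d^*\cap M=d_{a_M}$. If $T$ were nonstationary, then by elementarity there would be a flattening function $F\colon\mathcal P_\omega(\lambda)\to\mathcal P_\kappa(\lambda)$ in $M$ witnessing this; but $a_M=M\cap\lambda$ is closed under every such $F\in M$ (finite subsets of $M\cap\lambda$ lie in $M$, and their $F$-images, having size $<\kappa\leq\kappa_M$, are contained in $M\cap\lambda$), so $a_M$ would be a point of $T$ closed under $F$, a contradiction. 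Thus $T$ is stationary and $\mathrm{ISP}_\delta(\kappa,\lambda)$ holds; as $\lambda$ was arbitrary, $\mathrm{ISP}_\delta(\kappa)$ follows.

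For the reverse direction I would argue by contraposition, and this is where the real work lies. Fix a large regular $\theta$, put $\lambda=|H_\theta|$, and fix a bijection $e\colon H_\theta\to\lambda$ which I fold into the ambient structure, so that any elementary $M\prec(H_\theta,\in,e,\dots)$ is recovered from $a=M\cap\lambda$ via $M=e^{-1}[a]$. Assuming $\mathfrak G_{\kappa,\delta}(H_\theta)$ is nonstationary, the elementary submodels $M$ with $\delta\subseteq M$ that fail to be $\delta$-\gesms~form a club $D$, and each such $M$ carries a witness $x_M\subseteq\lambda$ (coded via $e$) that is bounded and $\delta$-approximated in $M$ but not guessed. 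I would define the list by $d_a=x_{M_a}\cap a$ for $a$ in the (club-sized) projection of $D$, where $M_a=e^{-1}[a]$, and $d_a=\emptyset$ otherwise. The $\delta$-approximation of each $x_M$ is exactly what makes this list $\delta$-slender: for a club of $N$ with $M\coloneqq N\cap H_\theta\in D$ and $b\in N\cap\mathcal P_\delta(\lambda)$ one has $b\cap d_{N\cap\lambda}=b\cap x_M\in M\subseteq N$.

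Finally I would show the list has no ineffable branch, contradicting $\mathrm{ISP}_\delta(\kappa,\lambda)$. Suppose $d\subseteq\lambda$ and a stationary $S\subseteq\mathcal P_\kappa(\lambda)$ satisfy $a\cap d=d_a$ for $a\in S$; shrinking $S$ one may assume it lies in the projection of $D$. Lifting $S$, I would find $N\prec H_{\theta'}$ (for some larger $\theta'$, containing $d,e,D,S$ and all parameters, of size $<\kappa$) with $N\cap\lambda\in S$ and $N\cap H_\theta\in D$. Setting $M=N\cap H_\theta$ and $a=M\cap\lambda\in S$, one gets $M_a=M$, so $d\cap M=d\cap a=a\cap d=d_a=x_M\cap M$; since $d\in M$ (as $d\subseteq\lambda$ and $d\in N\cap H_\theta$), this exhibits $d$ as a guess for $x_M$ in $M$, contradicting the choice of $x_M$. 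The main obstacle will be precisely this reverse construction: arranging the coding $e$ so that $M\cap\lambda$ determines $M$, checking that the $\delta$-approximation of the $x_M$ translates into $\delta$-slenderness (so that the relevant approximation index is $\mathcal P_\delta(\lambda)$), and performing the stationary lifting of $S$ to $\mathcal P_\kappa(H_{\theta'})$ needed to locate $N$.
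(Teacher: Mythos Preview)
Your forward direction is clean and correct: the slenderness of the list is exactly the $\delta$-approximation of $d_{M\cap\lambda}$ in $M$, the guessing property produces $d^*\in M$, and the elementarity argument showing $T=\{a:a\cap d^*=d_a\}$ is stationary is the right one.

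In the reverse direction, however, the final step has a genuine gap. You assert that $d\in M$ ``as $d\subseteq\lambda$ and $d\in N\cap H_\theta$'', but this is where the argument breaks: you chose $\lambda=|H_\theta|$, and since $\theta\subseteq H_\theta$ we have $\lambda\geq\theta$, so a subset $d\subseteq\lambda$ can have $|{\rm TC}(d)|\geq\theta$ and hence $d\notin H_\theta$. Thus $d\in N$ does not give $d\in N\cap H_\theta=M$, and you have no element of $M$ that guesses $x_M$. (A similar issue lurks in your slenderness verification: for $b\in N\cap\mathcal P_\delta(\lambda)$ you need $b\cap x_M\in M$, but $b$ itself need not lie in $H_\theta$; this can be repaired by passing through $e[b]\in H_\theta$, though you do not say so.)

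The standard repair for the branch step is a pressing-down argument. Each witness $x_{M_a}$ is bounded in $M_a$, say $x_{M_a}\subseteq z_a\in M_a$; the map $a\mapsto z_a$ is regressive on the stationary set $S\cap D'$, so there is a fixed $z\in H_\theta$ and stationary $S'$ with $z_a=z$ for $a\in S'$. Then $d\cap a=x_{M_a}\cap a\subseteq z$ for all $a\in S'$, so $d\cap z\in H_\theta$ and $(d\cap z)\cap a=x_{M_a}\cap a$. Now for the club-many $a\in S'$ with $e^{-1}(d\cap z)\in a$ one has $d\cap z\in M_a$, and $d\cap z$ guesses $x_{M_a}$ in $M_a$, the desired contradiction. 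Your overall strategy is the one used in \cite{VW2011}; it is only this last reduction that is missing.
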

	\begin{proof}
		
		See \cite[Propositions 3.2 and 3.3]{VW2011}.

	\end{proof}
	
	The following is due to Magidor \cite{MA1974}.
	
	\begin{theorem}
		Let $\kappa$ be an inaccessible cardinal. Assume that ${\rm ISP_{\delta}}(\kappa)$ holds, for some regular $\delta<\kappa$. Then $\kappa$ is supercompact.
	\end{theorem}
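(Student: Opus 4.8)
The plan is to combine the Viale--Weiss equivalence with the upgrade-to-$0$-guessing afforded by inaccessibility, and then read off Magidor's embedding characterization of supercompactness. First I would replace $\mathrm{ISP}_\delta(\kappa)$ by $\mathrm{GMP}(\kappa,\delta)$: for cofinally many limit ordinals $\lambda>\kappa$, the set $\mathfrak G_{\kappa,\delta}(V_\lambda)$ is stationary in $\mathcal P_\kappa(V_\lambda)$ (here I use that $V_\lambda$ is powerful). Since $\kappa$ is inaccessible and $\delta<\kappa$, we have $2^{<\delta}<\kappa$. The target is to produce, for cofinally many such $\lambda$ and relative to an arbitrary club, a $\delta$-\gesm $M\prec V_\lambda$ with $\kappa\in M$ and $M\cap\kappa\in\kappa$, i.e. $\kappa_M=\kappa$. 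Granting this, the hypothesis $2^{<\delta}<\kappa=\kappa_M$ lets me apply \cref{0-g} to conclude that $M$ is in fact a $0$-\gesm, and then Viale's characterization of $0$-\gesms gives $\overline{M}=V_{\bar\gamma}$ for some limit ordinal $\bar\gamma$.

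The heart of the argument, and where I expect the main obstacle, is arranging $M\cap\kappa\in\kappa$ while keeping $M$ a \gesm: note that $\{M:M\cap\kappa\in\kappa\}$ is stationary but \emph{not} club, so one cannot merely intersect it with $\mathfrak G_{\kappa,\delta}(V_\lambda)$. My plan is a tail-catching construction. Fixing a function $F$ presenting an arbitrary club of $\mathcal P_\kappa(V_\lambda)$, and using that $\mathfrak G_{\kappa,\delta}(V_\lambda)$ is stationary and hence cofinal above every element of $\mathcal P_\kappa(V_\lambda)$, I would build a $\subseteq$- and $\in$-increasing chain $\langle M_\xi:\xi<\delta\rangle$ of $F$-closed $\delta$-\gesms with $\kappa\in M_0$ such that at each successor step every ordinal below $\sup(M_\xi\cap\kappa)$ is thrown into $M_{\xi+1}$. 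Setting $M=\bigcup_{\xi<\delta}M_\xi$, the tail-catching forces $M\cap\kappa=\sup_{\xi<\delta}\sup(M_\xi\cap\kappa)$ to be an ordinal below $\kappa$, so $\kappa_M=\kappa$, and $M$ is $F$-closed. The delicate point is that a union of \gesms need not be a \gesm; to control this I would take each $M_\xi$ to be ${<}\delta$-closed (the witnessing models of $\mathrm{GMP}$ may be taken so, and ${<}\delta$-closure is inherited along an $\in$-increasing chain of length $\delta$) and exploit the covering and transitivity machinery of \cref{K-nice-result,Kruegerlemma,transitivity under covering}: the covering property of sufficiently closed \gesms should let one localise a $\delta$-approximation of a bounded set in $M$ to a single stage $M_\xi$, using the regularity of $\delta$, apply the guessing property of $M_\xi$ there, and then lift the guess back to $M$. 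Verifying this transfer cleanly is the technical core, and the place I would expect the most work.

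Finally I would extract Magidor's embedding. With $M$ as above, let $\pi_M\colon M\to\overline{M}=V_{\bar\gamma}$ be the transitive collapse and put $\mu:=M\cap\kappa=\pi_M(\kappa)<\kappa$; since $|M|<\kappa$ and $\kappa$ is inaccessible, $\bar\gamma<\kappa$. Then $j:=\pi_M^{-1}\colon V_{\bar\gamma}\to V_\lambda$ is elementary with $\mathrm{crit}(j)=\mu$ and $j(\mu)=\kappa$, its domain being a rank-initial segment $V_{\bar\gamma}$ with $\bar\gamma<\kappa$. Because the construction was carried out relative to an arbitrary club, these $0$-\gesms are in fact \emph{stationary} in each $V_\lambda$; the existence of stationarily many $0$-\gesms of size ${<}\kappa$ in $V_\lambda$, for cofinally many $\lambda$, is precisely Magidor's characterization of supercompactness \cite{MA1971} (equivalently, the existence of the embeddings $j$ above). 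As $\lambda$-supercompactness for cofinally many $\lambda$ yields full supercompactness, it suffices to run the construction for the cofinally many $\lambda$ at which $\mathrm{GMP}(\kappa,\delta,V_\lambda)$ is guaranteed, and we conclude that $\kappa$ is supercompact.
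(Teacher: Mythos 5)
Your overall architecture is the right one and, as far as one can tell, the intended one: the paper gives no proof of this theorem (it is quoted from Magidor), but its surrounding discussion — that Viale derived the large-cardinal strength of $0$-\gesms~from Magidor's characterization of supercompactness — describes exactly the route you take: replace ${\rm ISP}_\delta(\kappa)$ by ${\rm GMP}(\kappa,\delta)$ via Viale--Wei{\ss}, use inaccessibility to get $2^{<\delta}<\kappa=\kappa_M$ and upgrade $\delta$-guessing to $0$-guessing by \cref{0-g}, invoke Viale's theorem that $\overline{M}=V_{\bar\gamma}$, and read off Magidor's embeddings $j=\pi_M^{-1}\colon V_{\bar\gamma}\to V_\lambda$ with ${\rm crit}(j)=M\cap\kappa$ and $j(M\cap\kappa)=\kappa$.

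However, the step you single out as the heart of the argument rests on a false premise and, as written, contains a genuine gap. The premise first: for a \emph{regular} (in particular inaccessible) $\kappa$, the set $\{M\in\mathcal P_\kappa(H_\theta):M\cap\kappa\in\kappa\}$ is not ``stationary but not club'' — it contains a club. It is closed under unions of $\subseteq$-increasing chains of length ${<}\kappa$, and it is cofinal: given $M_0$, repeatedly replace $M_n$ by the Skolem hull of $M_n\cup\sup(M_n\cap\kappa)$ (regularity of $\kappa$ keeps the supremum below $\kappa$, and $\kappa$ being a cardinal keeps the hull of size ${<}\kappa$); after $\omega$ steps the union meets $\kappa$ in an ordinal. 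So you may simply intersect $\mathfrak G_{\kappa,\delta}(V_\lambda)$ with this club, together with the club of $M$ containing $\kappa$ and any prescribed target $\lambda'\in V_\lambda$, and stationarily many $\delta$-\gesms~with $\kappa_M=\kappa$ come for free. This matters because the tail-catching chain you propose instead would not survive scrutiny: the union $M=\bigcup_{\xi<\delta}M_\xi$ of an $\in$-increasing $\delta$-chain of $\delta$-\gesms~need not be $\delta$-guessing. If $x$ is bounded and $\delta$-approximated in $M$, then each trace $x\cap a$ (for $a\in M$ of size ${<}\delta$) lands in some stage $M_{\eta(a)}$, but there are up to $|M|\geq\delta$ many relevant $a$, so no single stage need $\delta$-approximate $x$; and even when some $M_\xi$ guesses $x\cap M_\xi$ by $x^*\in M_\xi$, nothing forces $x^*\cap M=x\cap M$, since the guess can be wrong on $M\setminus M_\xi$. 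This is precisely why the paper introduces strongly guessing models (\cref{Strong guessing}) as a separate notion, and why \cref{strong guessing is guessing} works only for chains of length $\kappa^+$ of $\kappa$-sized models, continuous at points of cofinality $\kappa$, where Fodor's lemma is available — none of which applies to your $\delta$-chains. Once you delete that construction and use the club fact, the rest of your argument (the collapse, the critical point computation, and the appeal to Magidor for all sufficiently large $\lambda$, which is what ${\rm GMP}(\kappa,\delta)$ actually provides) is correct.
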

	\begin{proof}[\nopunct]
	\end{proof}
	
	\subsection{Open problems}

	\begin{problem}[Viale \cite{V2012}]
		Is it consistent to have $\omega_2$-\gesms~of size $\omega_1$, which are not $\omega_1$-\gesm?
	\end{problem}
	We ask the following more general question.
	
	\begin{problem}
		Is it consistent to have $\delta$-\gesms~of size less than $\delta$ which are not $\gamma$-\gesm, for all $\gamma\leq|M|$?
	\end{problem}
	
	\begin{problem}
		Given $m,n\in\omega$ with $m\geq 3$ and $n\geq 1$. Is   ${\rm GMP}(\omega_{m},\omega_n)$ consistent? Of particular interest is 
		${\rm GMP}(\omega_{m},\omega_1)$.
	\end{problem}
	
	\begin{problem}
		It is consistent to have ${\rm GMP}(\omega_{n+1},\omega_n)$, for every $n\geq 1$?
	\end{problem}
	
	Of course, one can also ask the above questions about ${\rm GMP}(\delta^{+m},\delta^{+n})$.
	
	\begin{problem}
		Assume that $\delta>\omega$ is a  weakly, but not strongly, inaccessible cardinal. Is
		${\rm GMP}(\delta,\delta)$ consistent? 
	\end{problem}

	\[
	\adjustbox{scale=1.06,center}{
		\begin{tikzcd}
			& & \neg {\rm CH}&    & &  \\
			{\rm TP}(\omega_2) &   \neg \square(\omega_2,\lambda)& \neg w{\rm KH} \arrow[u] & \rm SCH &\neg {\rm AP}(\omega_1) &  \\
			& & & &  \\
			& & & &\\
			& &  & &\\
			&  &  {\rm GMP}(\omega_2,\omega_1)\arrow{uuuu}[sloped]{Cox-Krueger}\arrow{uuuul}[sloped]{ Weiss} \arrow{uuuur}[sloped]{ Krueger+Viale}\arrow{uuuurr}[sloped]{ Viale-Weiss}\arrow{uuuull} [sloped]{Weiss} &  & \\
			& & & &  \\
			& & & &  \\
			& & & &  \\
			& &  {\rm PFA} \arrow{uuuu}[sloped]{Viale-Weiss}   & & \\
			& & & &  \\
			& &  \exists~{\mbox{1 supercompact}} \arrow[sloped,blue,dashed]{uu}   & & 
		\end{tikzcd}
	}
	\]
	
	\clearpage
	
	\[
	\adjustbox{scale=1.1,center}{
		\begin{tikzcd}
			& & & 2^{\delta}>\delta^+   & &  \\
			{\rm TP}(\delta^{++}) &   \neg \square(\delta^{++},\lambda)& &\neg w{\rm KH}(\delta^{+}) \arrow[u]  &\neg {\rm AP}(\delta^+) &  \\
			& & & &  \\
			& &  & &\\
			&  &  {\rm GMP}(\delta^{++},\delta^{+})\arrow{uuur}[sloped]{Cox-Krueger}\arrow{uuul}[sloped]{ Weiss}\arrow{uuurr}[sloped]{ Viale-Weiss}\arrow{uuull} [sloped]{Weiss} &  & \\
			& & & &  \\
			& &\exists~{\mbox{1 supercompact}} \arrow[blue,dashed]{uu}  & & 
		\end{tikzcd}
	}
	\]
	
	\section{\texorpdfstring{${\mbox{\textbf{GMP}}}^{\bm{+}}$}{}}
	As we have seen before, one of the consequence of $\rm GMP(\omega_2,\omega_1)$ was the failure of the approachability  property at $\omega_1$. However, $\rm GMP(\omega_2,\omega_1)$ does not imply $\rm MP(\omega_2)$ since
	$\rm GMP(\omega_2,\omega_1)$ is consistent with $2^{\aleph_0}=\aleph_2$, but $\rm MP(\omega_2)$ implies $2^{\aleph_0}\geq\aleph_3$.
	We shall introduce a certain  strengthening  of ${\rm GMP}(\omega_2,\omega_1)$ that implies 
	$\rm MP(\omega_2)$.
	
	\subsection{Strongly guessing models}
	
	We are now about to define a stronger version of guessing models by imposing constraints on their structure. 
	
	\begin{definition}[Mohammadpour--Veličković \cite{MV}]\label{Strong guessing}
		Let   $\delta \leq \kappa$  be regular uncountable cardinals. A model  $M$ of cardinality $\kappa^+$
		is called \textbf{strongly $\bm{\delta}$-\gesm} if it is the union of an $\in$-increasing chain $\langle M_\xi : \xi <\kappa^+\rangle$
		of $\delta$-\gesms~of cardinality $\kappa$ with $M_\xi= \bigcup \{ M_\eta : \eta < \xi\}$, for every $\xi$ of cofinality $\kappa$. 
	\end{definition}
	
	\begin{lemma}\label{strong guessing is guessing}
		Every strongly $\delta$-\gesm~is $\delta$-guessing. 
	\end{lemma}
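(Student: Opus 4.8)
The plan is to show directly that if $M = \bigcup_{\xi < \kappa^+} M_\xi$ is strongly $\delta$-\gesm, then $M$ satisfies the $\delta$-guessing property, i.e., every $x$ bounded and $\delta$-approximated in $M$ is guessed in $M$. First I would unwind the definitions: fix $x$ with $x \subseteq X$ for some $X \in M$, and suppose $x \cap a \in M$ for every $a \in M$ with $|a|^M < \delta$. Since the chain is $\in$-increasing and $M_\xi = \bigcup_{\eta<\xi} M_\eta$ at points of cofinality $\kappa$, I would note that $X \in M_{\xi_0}$ for some $\xi_0$, and that every small $a \in M$ already lives in some $M_\xi$ (as $|a| < \delta \leq \kappa$ and each $M_\xi$ has size $\kappa$); this is where the continuity and the size bookkeeping of the filtration are needed.

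\textbf{The main step} is to locate a single $M_\xi$ in the chain that both contains $X$ and relative to which $x$ remains $\delta$-approximated, so that the guessing property of that individual $M_\xi$ can be invoked. The natural candidate is to choose $\xi$ of cofinality $\kappa$ large enough that $X \in M_\xi$. The hope is that $x$ is then $\delta$-approximated in $M_\xi$: given $a \in M_\xi$ with $|a|^{M_\xi} < \delta$, we have $a \in M$ and $|a|^M < \delta$, so $x \cap a \in M$ by hypothesis; the issue is that $x \cap a$ may land in $M$ only via some later $M_\eta$, not in $M_\xi$ itself. To close this gap I would exploit the continuity: since $\operatorname{cof}(\xi) = \kappa > \delta$ (noting $\delta \leq \kappa$) and $M_\xi = \bigcup_{\eta<\xi} M_\eta$, the set $a \cap x$, being of size $< \delta < \kappa = \operatorname{cof}(\xi)$ and contained in $M_\xi$, must already appear in some $M_\eta$ with $\eta < \xi$, hence $a \cap x \in M_\xi$. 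This cofinality-versus-size argument is the crux.

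\textbf{Once} $x$ is shown to be $\delta$-approximated in $M_\xi$, and since $M_\xi$ is a genuine $\delta$-\gesm~with $x$ bounded in it (via $X \in M_\xi$), the guessing property of $M_\xi$ yields $x^* \in M_\xi \subseteq M$ with $x^* \cap M_\xi = x \cap M_\xi$. The final task is to upgrade this from agreement on $M_\xi$ to agreement on all of $M$, i.e., to show $x^* \cap M = x \cap M$. I would argue that for any $y \in x \cap M$ we can find $\eta$ with $y \in M_\eta$ and $X, x^* \in M_\eta$, then use a further small-approximation together with elementarity to transfer the local agreement upward along the chain; alternatively, one reduces to checking agreement on each bounded piece and appeals to the fact (noted in the excerpt) that a guessed subset of size $<\kappa_M$ actually belongs to $M$.

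\textbf{The hard part} will be the bookkeeping in this last upgrade step: ensuring that the witness $x^*$ obtained locally in $M_\xi$ is in fact correct on the entire union $M$, rather than merely on the single stage $M_\xi$. I expect this to require either choosing $\xi$ via a closing-off argument (so that $M_\xi$ reflects enough of the chain) or running the approximation argument for each $a \in M$ against the appropriate stage and patching the results together using continuity at cofinality-$\kappa$ points. The regularity of $\delta$ and $\kappa$ and the strict inequality $\delta \leq \kappa < \kappa^+ = |M|$ are exactly what make this patching go through.
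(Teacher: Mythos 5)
Your proposal has two gaps; the first is repairable, but the second is fatal and sits exactly where the real content of the lemma lies. The repairable one: your ``crux'' step confuses subsets with elements. From $|a \cap x| < \delta \leq \kappa = {\rm cof}(\xi)$ and $M_\xi = \bigcup_{\eta<\xi} M_\eta$ you may conclude only that $a \cap x \subseteq M_\eta$ for some $\eta < \xi$; containment in a stage never yields membership $a \cap x \in M_\xi$ (if it did, every small bounded subset of a model would automatically be an element, and the whole notion of guessing would trivialize). What is true, and what the paper proves, is that the set of $\xi$ at which $x$ is $\delta$-approximated in $M_\xi$ contains a club relative to the cofinality-$\kappa$ points: for each $\eta$ let $F(\eta)$ be least $\zeta$ such that $x \cap a \in M_\zeta$ for every relevant $a \in M_\eta$ (well defined because $|M_\eta| = \kappa$ and $\kappa^+$ is regular), and take closure points of $F$ of cofinality $\kappa$. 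So your middle step can be fixed by a closing-off argument of this kind.

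The unrepairable part is your ``upgrade''. A guess $x^* \in M_\xi$ with $x^* \cap M_\xi = x \cap M_\xi$ carries no information whatsoever about agreement on $M \setminus M_\xi$: nothing prevents $x^*$ and $x$ from already differing at an element of $M_{\xi+1} \setminus M_\xi$, and no choice of a single $\xi$, however carefully closed off, can rule this out, since $M_\xi$ has size $\kappa$ while $M$ has size $\kappa^+$. Your proposed fixes do not address this: elementarity does not ``transfer agreement upward'' along the chain, and the fact that a guessed subset of size ${<}\kappa_M$ belongs to $M$ is irrelevant because $x$ need not be small. The paper's proof instead works with all stages simultaneously: at stationarily many $\xi$ of cofinality $\kappa$ it produces a guess $A_\xi \in M_\xi$; by continuity of the chain $A_\xi \in M_{g(\xi)}$ for some $g(\xi) < \xi$, so Fodor's lemma yields a stationary set of stages whose guesses all lie in one fixed $M_\eta$; since $|M_\eta| = \kappa < \kappa^+$, a pigeonhole argument then yields a single set $A^*$ that is the guess at stationarily many (hence unboundedly many) stages $\xi \in T$, and finally $A^* \cap M = \bigcup_{\xi \in T}(A^* \cap M_\xi) = \bigcup_{\xi \in T}(x \cap M_\xi) = x \cap M$. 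This stabilization of the guesses across stages, via Fodor plus counting, is the key idea missing from your proposal.
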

	\begin{proof}
		Suppose that $M$ is a strongly $\delta$-\gesm~that is witnessed by a sequence $\langle M_\xi : \xi <\kappa^+\rangle $.
		Suppose that $A$ is bounded in $M$, we may assume that $A$ is bounded in each $M_\xi$.
		Since the sequence $(M_\xi : \xi <\kappa^+)$ is closed at ordinals of cofinality $\kappa$, 
		a standard closure argument shows that the following set, modulo $S^\kappa_{\kappa^+}$, is a club in $\kappa^+$.
		
		$$C=\{\xi<\kappa^+: A \text{ is } \delta\text{-approximated in } M_\xi\}$$
		
		Thus for each $\xi\in C$, there is $A_\xi\in M_\xi$ such that $A_\xi\cap M=A\cap M_\xi$.
		By Fodor's lemma, there is some $\eta<\kappa^+$ and a stationary set $S\subseteq C$ such that 
		for each $\xi\in S$, $A_\xi$ is in $M_\eta$. On the other hand $|M_\eta|<\kappa^+$, and hence 
		there is a stationary set $T\subseteq S$, such that for every $\xi,\xi^\prime \in T$, $A_\xi=A_{\xi^\prime}$.
		Let $A^*=A_\xi$, for some $\xi\in T$.
		It is easy to see that
		$A^*\cap M=A\cap M$.
		
	\end{proof}

	As before, for a powerful model $\mathcal H$ and regular cardinals $\delta\in \mathcal H$ and $\kappa$, we let 
	
	\[
	\mathfrak G^+_{\kappa^{++},\delta}(\mathcal H) = \{ M\in {\mathcal P}_{\kappa^{++}}(\mathcal H): M\prec \mathcal H  \mbox{ is strongly $\delta$-\gesm} \}. 
	\]
	
	\begin{definition}[Mohammadpour--Veličković \cite{MV}]
		Let ${\rm GMP}^+(\kappa^{++},\delta)$ states that for every sufficiently large regular cardinal $\theta$,  $\mathfrak G^+_{\kappa^{++},\delta}(H_\theta)$ is stationary in $\mathcal P_{\kappa^{++}}(H_\theta)$.
	\end{definition}

	We have the following immediate fact.
	\begin{fact}
		Assume  ${\rm GMP}^+(\kappa^{++},\delta)$. Then 
		both ${\rm GMP}(\kappa^+,\delta)$ and ${\rm GMP}(\kappa^{++},\delta)$ hold.
	\end{fact}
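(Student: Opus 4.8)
The plan is to unwind the definitions and show that each of the two conclusions follows from the stationarity of $\mathfrak G^+_{\kappa^{++},\delta}(H_\theta)$ by exhibiting suitable subfamilies of genuine $\delta$-\gesms. For the statement ${\rm GMP}(\kappa^{++},\delta)$, I would observe that by \cref{strong guessing is guessing}, every strongly $\delta$-\gesm~is already a $\delta$-\gesm, so $\mathfrak G^+_{\kappa^{++},\delta}(H_\theta)\subseteq\mathfrak G_{\kappa^{++},\delta}(H_\theta)$. Since a superset of a stationary subset of $\mathcal P_{\kappa^{++}}(H_\theta)$ is stationary, the stationarity of the strongly guessing models immediately gives stationarity of $\mathfrak G_{\kappa^{++},\delta}(H_\theta)$, which is exactly ${\rm GMP}(\kappa^{++},\delta)$ for each large $\theta$, hence the principle.

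The more substantial half is ${\rm GMP}(\kappa^+,\delta)$, since here the target size is $\kappa^+$ rather than $\kappa^{++}$. The natural approach is to start from a strongly $\delta$-\gesm~$M=\bigcup_{\xi<\kappa^+}M_\xi$, witnessed by its $\in$-increasing chain $\langle M_\xi:\xi<\kappa^+\rangle$ of $\delta$-\gesms~of size $\kappa$, and harvest the members $M_\xi$ themselves: each $M_\xi\prec H_\theta$ is a $\delta$-\gesm~of size $\kappa<\kappa^+$, so each lies in $\mathfrak G_{\kappa^+,\delta}(H_\theta)$. First I would fix, for a given $F:\mathcal P_\omega(H_\theta)\to\mathcal P_{\kappa^+}(H_\theta)$, a strongly $\delta$-\gesm~$M$ closed under $F$ (obtained by adjoining $F$ to the relevant structure before invoking ${\rm GMP}^+$). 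Then along the chain witnessing $M$, a standard closure argument shows that the set of $\xi<\kappa^+$ for which $M_\xi$ is closed under $F$ contains a club, so in particular there is such a $\xi$ with $M_\xi\in\mathfrak G_{\kappa^+,\delta}(H_\theta)$ closed under $F$. This produces, for every $F$, a $\delta$-\gesm~of size ${<}\kappa^+$ closed under $F$, which is precisely the stationarity of $\mathfrak G_{\kappa^+,\delta}(H_\theta)$.

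The main obstacle I anticipate is the closure bookkeeping in the second half: one must arrange that the chain witnessing $M$ produces cofinally many $M_\xi$ closed under the prescribed function $F$, and that these $M_\xi$ are genuinely of size $<\kappa^+$ while still being elementary in $H_\theta$ and $\delta$-guessing. The cleanest way is to absorb $F$ (and a well-ordering or Skolem functions of $H_\theta$) into the parameters so that the $M_\xi$ inherit closure under $F$ from the elementarity and the closure points of the filtration; since the filtration is continuous at points of cofinality $\kappa$ and each initial segment has size $\kappa$, the usual Fodor/continuity argument yields a club of good $\xi$. Once this is set up, both conclusions are formal consequences of \cref{strong guessing is guessing} and the definition of stationarity, and no further combinatorial input is needed.
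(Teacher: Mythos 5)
Your proposal is correct and takes exactly the route the paper intends: the paper states this Fact without proof as immediate, and both halves go as you describe — ${\rm GMP}(\kappa^{++},\delta)$ from \cref{strong guessing is guessing} together with the observation that a superset of a stationary set is stationary, and ${\rm GMP}(\kappa^{+},\delta)$ by extracting, for a given $F$, a member $M_\xi$ of the chain witnessing strong guessing that is closed under $F$. The only minor imprecision is your claim that the good $\xi$ form a club: since the chain is guaranteed to be continuous only at points of cofinality $\kappa$, the set of $\xi$ with $M_\xi$ closed under $F$ need only contain a club relative to $S^{\kappa^+}_{\kappa}$ (exactly the caveat appearing in the paper's proof of \cref{strong guessing is guessing}), which is still stationary and hence suffices.
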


	\begin{theorem}[Mohammadpour--Veličković \cite{MV}]\label{MP from FS }
		${\rm GMP}^+(\kappa^{++},\kappa)$
		implies ${\rm MP}(\kappa^+)$. 
	\end{theorem}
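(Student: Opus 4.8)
The plan is to prove the nontrivial inclusion $I[\kappa^+]\rest S^{\kappa^+}_\kappa\subseteq {\rm NS}_{\kappa^+}\rest S^{\kappa^+}_\kappa$; the reverse holds since ${\rm NS}_{\kappa^+}\subseteq I[\kappa^+]$. As $I[\kappa^+]$ is generated by the sets $B(\bar a)$ together with ${\rm NS}_{\kappa^+}$, and a subset of a finite union of nonstationary sets is nonstationary, it suffices to show that $B(\bar a)\cap S^{\kappa^+}_\kappa$ is nonstationary for every $\kappa^+$-approaching sequence $\bar a$. I would first invoke the standard fact that on $S^{\kappa^+}_\kappa$ the approachability ideal is unchanged if one insists that the witnessing cofinal set have order type exactly $\kappa$: after replacing $\bar a$ by the (definable, hence still in any model containing $\bar a$) sequence enumerating all the sets $a_\eta\cap\beta$, I may assume that every $\delta\in B(\bar a)\cap S^{\kappa^+}_\kappa$ carries a cofinal $c\subseteq\delta$ with ${\rm otp}(c)=\kappa$ and $\{c\cap\beta:\beta<\delta\}\subseteq\{a_\eta:\eta<\delta\}$. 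Fix such a $\bar a$, and using ${\rm GMP}^+(\kappa^{++},\kappa)$ fix a single strongly $\kappa$-\gesm~$M=\bigcup_{\xi<\kappa^+}M_\xi$, as in \cref{Strong guessing}, with $\bar a\in M$.

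Next I would extract a club from the chain. Put $\gamma_\xi:=\sup(M_\xi\cap\kappa^+)$. The map $\xi\mapsto\gamma_\xi$ is strictly increasing, and continuous at every $\xi$ with $\cof(\xi)=\kappa$, since there $M_\xi=\bigcup_{\eta<\xi}M_\eta$. By the geometry of \gesms~established above (the generalisation of the Cox--Krueger corollary for the suitably closed $\kappa$-guessing submodels of size $\kappa$ witnessing ${\rm GMP}^+$, using \cref{Kruegerlemma}), each $M_\xi\cap\kappa^+$ is an ordinal, namely $\gamma_\xi$, one has $\kappa\subseteq M_\xi$, and $\cof(\gamma_\xi)=\kappa$ whenever $\cof(\xi)=\kappa$. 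Let $D$ be the set of limit points of $\{\gamma_\xi:\xi<\kappa^+\}$; then $D$ is a club, every $\gamma\in D$ is of the form $\gamma_\xi$ with $\cof(\xi)=\cof(\gamma)$, and hence $D\cap S^{\kappa^+}_\kappa\subseteq\{\gamma_\xi:\cof(\xi)=\kappa\}$.

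The heart of the argument is the claim that $\gamma_\xi\notin B(\bar a)$ whenever $\cof(\xi)=\kappa$. Suppose not, and let $c\subseteq\gamma_\xi$ be a cofinal witness with ${\rm otp}(c)=\kappa$, every proper initial segment being of the form $a_\eta$ with $\eta<\gamma_\xi$. Since $\gamma_\xi=M_\xi\cap\kappa^+$ is an ordinal we have $c\subseteq M_\xi$ and $c\subseteq\kappa^+\in M_\xi$, so $c$ is bounded in $M_\xi$. I claim $c$ is $\kappa$-approximated in $M_\xi$: given $a\in M_\xi$ with $|a|^{M_\xi}<\kappa$ we may take $a\subseteq\kappa^+$, whence $a\subseteq M_\xi\cap\kappa^+=\gamma_\xi$, and as $|a|<\kappa=\cof(\gamma_\xi)$ there is $\beta\in M_\xi\cap\kappa^+$ with $\sup(a)<\beta$; then $c\cap a=(c\cap\beta)\cap a=a_\eta\cap a$ for the $\eta<\gamma_\xi$ with $c\cap\beta=a_\eta$, and since $\eta\in\gamma_\xi=M_\xi\cap\kappa^+$ and $\bar a\in M_\xi$, this set lies in $M_\xi$. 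As $M_\xi$ is $\kappa$-guessing there is $c^*\in M_\xi$, which we may take to be a subset of $\kappa^+$, with $c^*\cap\gamma_\xi=c^*\cap M_\xi=c$. Now $\cof(\xi)=\kappa$ gives $c^*\in M_{\eta_0}$ for some $\eta_0<\xi$. The elements of $c^*$ below $\gamma_\xi$ are exactly $c$, an initial segment of length ${\rm otp}(c)=\kappa$ of the increasing enumeration $\langle e_i:i<{\rm otp}(c^*)\rangle$ of $c^*$; hence $\gamma_\xi=\sup\{e_i:i<\kappa\}$. But this enumeration and $\kappa$ both belong to $M_{\eta_0}$, so $\gamma_\xi=\sup\{e_i:i<\kappa\}\in M_{\eta_0}\subseteq M_\xi$, contradicting $\gamma_\xi\notin M_\xi$.

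Therefore $B(\bar a)\cap S^{\kappa^+}_\kappa\cap D=\emptyset$, so $B(\bar a)\cap S^{\kappa^+}_\kappa$ is nonstationary, and running over the generators of $I[\kappa^+]$ yields ${\rm MP}(\kappa^+)$. I expect the main obstacle to be the geometric input of the second paragraph, namely pinning down that the witnessing submodels $M_\xi$ satisfy $M_\xi\cap\kappa^+\in\kappa^+$ with $\cof(\gamma_\xi)=\kappa$, together with the preliminary reduction to witnesses of order type exactly $\kappa$: without the exact order type the final supremum $\sup\{e_i:i<\kappa\}$ need not equal $\gamma_\xi$, and the closing contradiction collapses. (By \cref{strong guessing is guessing} the model $M$ is itself $\kappa$-guessing, but it is the internal chain, not $M$ as a whole, that drives the argument.)
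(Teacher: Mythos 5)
Your proof follows the paper's route: from ${\rm GMP}^+(\kappa^{++},\kappa)$ take a chain $\langle M_\xi:\xi<\kappa^+\rangle$ of $\kappa$-\gesms~containing $\bar a$, extract from the ordinals $\gamma_\xi=\sup(M_\xi\cap\kappa^+)$ a club in $\kappa^+$, and show that no $\gamma_\xi$ of cofinality $\kappa$ lies in $B(\bar a)$ by proving the putative witness $c$ is $\kappa$-approximated, hence guessed, in $M_\xi$ and deriving $\gamma_\xi\in M_\xi$. That skeleton, the club extraction, and the approximation-and-guessing computation are correct (and usefully more explicit than the paper, which leaves the club step implicit). The genuine gap is your preliminary reduction to witnesses of order type exactly $\kappa$. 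The justification you offer --- replace $\bar a$ by the sequence of all truncations $a_\eta\cap\beta$ --- does not work: if $c$ has order type $\mu>\kappa$ and $c'\subseteq c$ is cofinal of order type $\kappa$, then an initial segment $c'\cap\gamma$ is a \emph{sparse} subset of $c\cap\gamma=a_\eta$ (the elements of $a_\eta$ sitting at the positions of some thinning set $X\subseteq\mu$), not a truncation of any $a_{\eta'}$. Concretely, if below $\delta$ the sequence $\bar a$ enumerates exactly the proper initial segments of a single cofinal $c\subseteq\delta$ of order type $\kappa+\kappa$, then truncation produces nothing new, and any cofinal $c'\subseteq\delta$ all of whose proper initial segments lie in that family is an increasing union of initial segments of $c$, hence equals $c$ itself; so this $\delta$ lies in $B(\bar a)\cap S^{\kappa^+}_\kappa$ but has no order-type-$\kappa$ witness with respect to your modified sequence. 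The fact you want to invoke is true, but its proof is not a definable re-enumeration: one needs the thinning pattern to be chosen \emph{inside} an elementary submodel that already contains the initial segments of $c$ --- an internally-approachable-chain argument of roughly the same depth as the theorem being proved. As written, this step is a hole.

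The irony is that the reduction is unnecessary, and your closing diagnosis (``without the exact order type\dots the closing contradiction collapses'') is backwards. You have already established the one thing that matters, namely that $\gamma_\xi=M_\xi\cap\kappa^+$ is an \emph{ordinal}; hence $\mu={\rm otp}(c)<\gamma_\xi$ gives $\mu\in M_\xi$, and your final step runs verbatim with $\mu$ in place of $\kappa$: $c=c^*\cap\gamma_\xi$ is the initial segment of $c^*$ of order type $\mu$, so $\gamma_\xi=\sup\{e_i:i<\mu\}$ is definable in $H_\theta$ from $c^*$ and $\mu$, both in $M_\xi$, whence $\gamma_\xi\in M_\xi$ --- contradiction. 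This is exactly how the paper handles arbitrary order type: $\rho$, the $\mu$-th element of the guessed set $d$, lies in $M$, so $c=d\cap\rho\in M$ and then $\sup(c)=\delta\in M$. A secondary caveat: the geometric input you cite (the closure-based generalisation of the Cox--Krueger corollary together with \cref{Kruegerlemma}) is not actually available for an arbitrary regular $\kappa$, since nothing in ${\rm GMP}^+(\kappa^{++},\kappa)$ makes the chain models ${<}\delta$-closed or $\kappa$ a successor; but you do not need it: choosing $M$ with $\kappa^+\subseteq M$ and refining your club so that $\gamma_\zeta\subseteq M_\xi$ for all $\zeta<\xi$, continuity of the chain at cofinality-$\kappa$ stages yields $M_\xi\cap\kappa^+=\gamma_\xi$ there, which is all the argument requires (the paper is equally implicit on this point).
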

	
	\begin{proof}
		Let $\bar a=\langle a_\xi : \xi <\kappa^+\rangle $ be a $\kappa^+$-approaching sequence which belongs
		to $H_{\kappa^{++}}$. 
		Let $\mathcal G$ be the (stationary) set of $\kappa$-\gesms~containing $\bar a$.
		We  show that  $M\cap \kappa^+$ is not in  $B(\bar a)$, for any $M\in \mathcal G$ such that $\cof(M\cap \kappa^+)=\kappa$. 
		Fix one such $M\in \mathcal G$. Let $\delta = M\cap \kappa^+$ and suppose that $c\subseteq \delta$ satisfies 
		(1) and (2) of \cref{appseq}. Let $\mu = {\rm otp}(c)$. Note that $\mu< \delta$, hence $\mu \in M$. 
		Since $\bar a\in M$, we have that $c\cap \gamma \in M$, for all $\gamma <\delta$, and hence $c\cap Z\in M$, for all $Z\in M$ with $|Z|<\kappa$.
		Since $M$ is a $\kappa$-\gesm,  there must be $d\in M$ such that $c = d\cap \delta$. We may assume that $d\subseteq \kappa^+$.
		Then $c$ is an initial segment of $d$, so if $\rho$ is the $\mu$-th element of $d$ then $d\cap \rho = c$.
		Since $\mu, d \in M$, we have $\rho\in M$ as well, and hence $c=d \cap \rho \in M$. 
		But then $\delta= \sup (c)$ belongs to $M$, a contradiction. 
	\end{proof}

	\begin{theorem}[Mohammadpour--Veličković \cite{MV}]
		Suppose $\kappa$ is a regular cardinal. 
		Assume there are two supercompact cardinals above $\kappa$. Then ${\rm GMP}^+(\kappa^{+++},\kappa^+)$ holds in a ${<}\kappa$-closed generic extension.
	\end{theorem}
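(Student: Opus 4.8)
The plan is to imitate the proof of the single supercompact collapse theorem above, replacing the one-type side-condition forcing of Neeman and Veli\v{c}kovi\'{c} by a two-type forcing that simultaneously collapses the two supercompacts $\lambda_0<\lambda_1$ above $\kappa$ so that $\lambda_0$ becomes $\kappa^{++}$ and $\lambda_1$ becomes $\kappa^{+++}$. As a preliminary normalization I would first force, by a mild ${<}\kappa$-directed closed preparation, so that $\kappa^{<\kappa}=\kappa$, the $\rm GCH$ holds on the interval $[\kappa,\lambda_1]$, and $\lambda_0,\lambda_1$ retain enough supercompactness to run Magidor's characterization \cite{MA1971} below each of them. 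As in the sketch above, the supercompactness is used in the ground model, before the main forcing, to produce the required $0$-\gesms.

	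Next I would define the main forcing $\mathbb P$ as a ${<}\kappa$-closed pure side-condition forcing whose conditions are ${<}\kappa$-sized $\in$-chains of models of two kinds inside the structure $(V_{\lambda_1},\in,\kappa,\lambda_0)$: ``small'' ${<}\kappa$-closed elementary submodels of size $\kappa$, and ``large'' ${<}\kappa$-closed elementary submodels of size $\kappa^+$, together with the transitive models $V_\alpha\prec V_{\lambda_1}$ with ${\rm cof}(\alpha)\geq\kappa$ (in particular $V_{\lambda_0}$). The two families are required to form a coherent matrix, so that each large model is itself filtered by the small models lying below it. I would then verify the usual package: $\mathbb P$ is ${<}\kappa$-closed, is $\lambda_1$-c.c., and is strongly proper for a club of small and of large models. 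Consequently $\mathbb P$ collapses $\lambda_0$ to $\kappa^{++}$ and $\lambda_1$ to $\kappa^{+++}$ while preserving $\kappa$ and $\kappa^+$. The decisive consequence of strong properness is that, for a model $M$ on the generic of either type, the quotient $\mathbb P/(G\cap M)$ has the $\kappa^+$-approximation property.

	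With the forcing in hand I would analyse the generic strongly guessing models. Using supercompactness of $\lambda_1$ I obtain in $V$ stationarily many $0$-\gesms $M\prec V_\eta$ of size $\lambda_0$ with $\kappa_M=\lambda_1$, and using supercompactness of $\lambda_0$ I arrange each such $M$ to carry an $\in$-increasing filtration $\langle M_\xi:\xi<\lambda_0\rangle$ by $0$-\gesms of size ${<}\lambda_0$ with $\kappa_{M_\xi}=\lambda_0$, continuous at every $\xi$ of cofinality ${\geq}\kappa^+$; by \cref{0-g} all of these models are genuine $0$-\gesms. For a $V$-generic $G$ I set $N=M[G]$ and $N_\xi=M_\xi[G]$. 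Exactly as in the single-cardinal argument---via the $\kappa^+$-approximation property of the quotients, \cref{CK-formulation}, \cref{transitivity under covering} and \cref{Kruegerlemma}---each $N_\xi$ is a $\kappa^+$-\gesm of size $\kappa^+$ with $\kappa_{N_\xi}=\kappa^{++}$, while $N$ is a $\kappa^+$-\gesm of size $\kappa^{++}$ with $\kappa_N=\kappa^{+++}$. Since $\mathbb P$ preserves $\kappa^+$ and adds no ${<}\kappa$-sequences, and since cofinality can only decrease under forcing, every point of $V[G]$-cofinality $\kappa^+$ below $\kappa^{++}$ is among the continuity points of the ground filtration; hence $N_\xi=\bigcup_{\eta<\xi}N_\eta$ holds at every such $\xi$. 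Thus $\langle N_\xi:\xi<\kappa^{++}\rangle$ is an $\in$-increasing chain as in \cref{Strong guessing}, and $N$ is strongly $\kappa^+$-\gesm.

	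Finally, stationarity of the collection of such $N$ in $\mathcal P_{\kappa^{+++}}(H_\theta)$ is inherited from the stationarity of the ground-model family of large models $M$ together with the strong genericity of $\mathbb P$ for those models, which closes the argument. I expect the main obstacle to lie in the simultaneous control of the two guessing levels: one must show that the single generic object delivers the $\kappa^+$-approximation property not only for $N$ itself but uniformly for all the pieces $N_\xi$, and that the coherent matrix of the two types of side-condition models keeps the ground filtration both cofinal and continuous after collapsing. This is exactly the point at which the coherence of the two model types, and hence both supercompacts, are indispensable.
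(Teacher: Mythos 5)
Your proposal follows the right general outline, and indeed the same outline as \cite{MV}: a two-type chain-of-models forcing which is strongly proper for both types, Magidor's characterization supplying $0$-\gesms~in the ground model, and then the quotient analysis through \cref{CK-formulation}, \cref{transitivity under covering} and \cref{Kruegerlemma}, exactly as in the one-supercompact sketch. The first genuine problem is your choice of the large type. You take arbitrary ${<}\kappa$-closed elementary submodels of size $\kappa^+$, whereas the proof in \cite{MV} (see the remark following the theorem) uses $\lambda$-Magidor models, i.e.\ models $W$ of size ${<}\lambda$ with $W\cap V_\lambda\in V_\lambda$. This is not cosmetic: the residue and amalgamation lemmas behind strong properness for the large type need the large models to be locally transitive --- when a condition $q$ lying inside a large model $Q$ is amalgamated with the models sitting above $Q$ in a chain, it is the hypothesis $Q\cap V_\lambda\in V_\lambda$ (playing the role that transitivity plays in Neeman's two-type forcing) that keeps the required intersections inside the poset. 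Note also that the stationary abundance of $\lambda$-Magidor models \emph{is} Magidor's characterization of the supercompactness of $\lambda$; in your poset the supercompactness of $\lambda_0$ never enters the definition of the forcing at all, so the ``usual package'' you defer to is genuinely unavailable, not merely tedious.

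The second gap is the decisive one, and it sits exactly where ${\rm GMP}^+$ is harder than ${\rm GMP}$: the continuity points of the filtration. You ask the ground model to provide, for each big $0$-\gesm~$M$, a filtration $\langle M_\xi:\xi<\lambda_0\rangle$ by $0$-\gesms~which is continuous at every $\xi$ of cofinality ${\geq}\kappa^+$. Supercompactness of $\lambda_0$ gives a \emph{stationary} family of $0$-\gesms, which lets you pick $M_{\xi+1}$ freely at successor stages; it gives no control at limit stages, where $M_\xi=\bigcup_{\eta<\xi}M_\eta$ is already determined and must be \emph{proved}, not chosen, to be $0$-guessing. Typically it is not: setting $\delta_\xi=\sup_{\eta<\xi}(M_\eta\cap\lambda_0)$, the union contains $V_{\delta_\xi}$, so $0$-guessing of the union requires every subset of $V_{\delta_\xi}$ to be the trace of a member of $\bigcup_{\eta<\xi}M_\eta$, and a K\"onig-type cofinality count shows that some single $M_\eta$ with $\eta<\xi$ would already need cardinality at least $2^{|V_{\delta_\xi}|}$, i.e.\ the chain would have to anticipate its own limit widths. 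So the existence of your filtration is itself a ``strongly $0$-guessing'' hypothesis one level up --- precisely the kind of statement the theorem is meant to produce --- and the proposal is circular at its key step. This is exactly why \cite{MV} takes Magidor models, and not $0$-\gesms, as side conditions: the property $W\cap V_\lambda\in V_\lambda$, unlike $0$-guessing, is preserved under unions of increasing $\in$-chains, so the filtration of the final model is produced by the generic chain itself, its continuity at cofinality-$\kappa^+$ points comes from this closure property, and strong properness applies uniformly to the union models. (Two smaller slips: a $0$-\gesm~with $\kappa_M=\lambda_1$ and $V_{\lambda_0}\subseteq M$ has size at least $2^{\lambda_0}$, since its transitive collapse is some $V_{\bar\gamma}$ with $\bar\gamma>\lambda_0$; consequently a filtration of length $\lambda_0$ by models of size ${<}\lambda_0$ cannot have union $M$, so the strongly guessing model would in any case have to be defined as the union of the chain $\langle N_\xi\rangle$ rather than as $M[G]$.)
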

	\begin{proof}[\nopunct]
	\end{proof}
	
	Assume $\lambda<\mu$ are supercompact cardinals above $\kappa$. The proof of the above theorem uses a forcing with ${<}\kappa$-sized decorated chains of virtual models of two types: countable and $\lambda$-Magidor models.
	The forcing is $\mu$-Knaster and strongly proper for the both collections of models.
	
	\begin{definition}
		Let   $\delta \leq \kappa$  be regular uncountable cardinals. For $n\in\mathbb N$, a model  $M$ of cardinality $\kappa^{+(n+1)}$
		is called an ${\bm n}$\textbf{-strongly $\bm{\delta}$-guessing model}  if it is the union of an $\in$-increasing sequence $\langle M_\xi : \xi <\kappa^+\rangle$
		of  $(n-1)$-strongly $\delta$-guessing models of cardinality $\kappa^{+n}$ such that  for every $\xi$ of cofinality $\kappa^{+n}$,  $M_\xi= \bigcup \{ M_\eta : \eta < \xi\}$, 
	\end{definition}
	In the above definition, if $n=1$, then an inductively strongly $\delta$-guessing model is just a strongly $\delta$-guessing model. 
	The principle  ${\rm GMP}^{+n}(\kappa^{+n},\delta)$ is defined in the obvious meaning.
	Let also $\omega$-strongly $\delta$-guessing models be defined in the obvious way with ${\rm GMP}^{\infty}(\kappa^{+\omega},\delta)$ for the associated principle.
	\subsection{Open problems}

	\begin{problem}
		Does ${\rm GMP}(\omega_3,\omega_1)$ imply ${\rm MP}(\omega_2)$?
	\end{problem} 
	\begin{problem}
		Given $n\geq 2$. Is ${\rm GMP}^{+n}(\kappa^{+n},\delta)$ consistent?
	\end{problem}
	
	\begin{problem}
		Is ${\rm GMP}^{\infty}(\aleph_{\omega},\omega_1)$ consistent?
	\end{problem}

	\[
	\adjustbox{scale=.97,center}{%
		\begin{tikzcd}
			\neg {\rm CH}&  & &  & &2^{\aleph_0}\geq \aleph_3  \\
			\neg w{\rm KH} \arrow[u]&   \neg \square(\omega_2,\lambda)& \rm TP(\omega_2) & \rm SCH &\neg {\rm AP}(\omega_1) & {\rm MP}(\omega_2)\arrow[u] \\
			& & & &  \\
			& & & &\\
			& &  & &\\
			&  &  {\rm GMP}(\omega_2,\omega_1)\arrow[uuuu]\arrow{uuuul}[sloped]{ Weiss} \arrow{uuuur}[sloped]{ Krueger+Viale}\arrow{uuuurr}[sloped]{ Viale-Weiss}\arrow{uuuull}[sloped]{ Cox-Krueger}\arrow[uuuurrr, red, "\textcolor{red}{ \textbf{/}}" marking]  & {\rm GMP}(\omega_3,\omega_1)
			\arrow[uuuurr, violet, dotted, "\textcolor{violet}{ \textbf{?}}" ] & \\
			& & & &  \\
			&   {\rm PFA} \arrow{uur}  & & {\rm GMP}^+(\omega_3,\omega_1) \arrow{uul} \arrow{uu} & \\
			& & & &  \\
			& & & &  \\ 
			& & & &  \\
			&   \exists~\mbox{1 supercompact} \arrow[blue,dashed]{uuuu} & & 
			\exists~\mbox{2 supercompacts} \arrow[blue,dashed]{uuuu}[sloped,black]{M.Velickovic}  & 
		\end{tikzcd}
	}
	\]

	\section{IGMP}

	The indestructible version of guessing models was first discovered and studied by Cox and Krueger \cite{CK2017}.
	A $\omega_1$-guessing set is called \textbf{indestructible} if it remain  $\omega_1$-guessing  in any   $\omega_1$-preserving forcing extension. 
	More generally, we can require more robustness. For example, by requiring the size does not collapse, etc., or by considering  some other kind of extensions.
	
	\begin{definition}
		Suppose $M$ is a $\delta$-guessing model. Let $\mathcal C$ be a class of $\delta$-preserving forcing notions. Then $M$ is called $\mathcal C$-indestructible if for every $\mathbb P\in\mathcal C$, $M$ satisfies the  $\delta$-guessing property in generic extensions by $\mathbb P$.
	\end{definition}
	
	By $\mathbb P$-indestructible, we shall mean $\{\mathbb P\}$-indestructible. We omit $\mathcal C$ whenever it is the class of all $\delta$-preserving forcings.
	
	\begin{definition}
		Let $\mathcal C$ be a class of $\delta$-preserving forcings.
		Let $P$ be one of the above principles about $\delta$-guessing models. Then by $\mathcal C-{\rm IP}$, we mean that $P$ is witnessed by $\mathcal C$-indestructible models.
	\end{definition}
	Notice that the above definition is not  a priori equivalent to the statement that $\mathbb P$-$\rm IP$ holds, for every $\mathbb P\in\mathcal C$.

	\begin{definition}
		Let ${\rm IGMP}(\omega_2,\omega_1)$ state that for all sufficiently large regular cardinal $\theta$, the set of indestructible $\omega_1$-\gesms~of $H_\theta$ is stationary in $\mathcal P_{\omega_2}(H_\theta)$. 
	\end{definition}
	
	Let us start with the consistency of ${\rm IGMP}(\omega_2,\omega_1)$.
	
	\begin{theorem}[Cox--Krueger \cite{CK2017}]
		$\rm PFA$ implies ${\rm IGMP}(\omega_2,\omega_1)$.
	\end{theorem}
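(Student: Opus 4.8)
The plan is to obtain the indestructible models by applying $\PFA$ to a single proper forcing that builds one such model over prescribed parameters, and then read off stationarity. Fix a sufficiently large regular $\theta$ and a function $F\colon [H_\theta]^{<\omega}\to H_\theta$. It suffices to produce, \emph{inside} $V$, an indestructible $\omega_1$-\gesm~$M\prec H_\theta$ that is closed under $F$, has $\omega_1\subseteq M$ and $M\cap\omega_2\in\omega_2$ (so that $\kappa_M=\omega_2$); since $F$ is arbitrary, this gives that the indestructible $\omega_1$-\gesms~form a stationary subset of $\mathcal P_{\omega_2}(H_\theta)$.

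First I would recall, via \cite{VW2011} and the preceding theorem, that under $\PFA$ the internally club $\omega_1$-\gesms~already form a stationary set, and that by \cref{Kruegerlemma} such a model is internally unbounded with $(M,V)$ enjoying the covering property. Thus the issue is not the existence of guessing models but the upgrade from guessing to \emph{indestructibility}, so I would first isolate the combinatorial meaning of the latter. Using \cref{CK-formulation} together with $\pi_M(\omega_1)=\omega_1$, the model $M$ is $\omega_1$-guessing in a forcing extension $V[G]$ if and only if $(\overline M,V[G])$ has the $\omega_1$-approximation property. Tracing the proof of \cref{wKHlemma} in $V[G]$, a bounded, $\omega_1$-approximated $x\in V[G]$ that is \emph{not} guessed in $M$ produces, through the internally club filtration $M=\bigcup_{i<\omega_1}N_i$, a cofinal branch through a tree $T\in M$ of height $\omega_1$ and size ${<}\kappa_M=\omega_2$, a branch lying outside $M$; conversely a guessed $x$ corresponds to a branch already in $M$ (this is exactly the mechanism already exploited in \cref{TP}). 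Hence $M$ is indestructibly guessing precisely when no tree $T\in M$ of this shape whose only cofinal branches in $V$ lie in $M$ can acquire a \emph{new} cofinal branch in any $\omega_1$-preserving extension.

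This reduces the theorem to killing potential generic branches. I would therefore design the forcing $\mathbb P=\mathbb P_F$ to build, by countable approximations, a $\subseteq$-continuous $\in$-chain $\langle N_i:i<\omega_1\rangle$ of countable elementary submodels of $H_\theta$ closed under $F$, continuity making $M=\bigcup_i N_i$ internally club, while \emph{simultaneously} adjoining specializing maps designed to witness that the guessing trees associated with $M$ can gain no cofinal branch over $M$ in any further $\omega_1$-preserving extension. With countable elementary submodels as side conditions in the usual way, $\mathbb P$ is proper. Applying $\PFA$ to $\mathbb P$ together with the $\omega_1$ dense sets $D_\alpha$ (``the chain reaches a level above $\alpha$''), the dense sets enforcing $F$-closure, and the dense sets making the specializations total, yields a filter $G\in V$. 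As each $N_i\in V$ and the chain is read off from $G$, the union $M\in V$ is internally club, closed under $F$, with $\omega_1\subseteq M$ and $M\cap\omega_2\in\omega_2$. That $M$ is $\omega_1$-guessing in $V$ follows from \cref{wKHlemma} (all $V$-branches of its guessing trees lie in $M$), while indestructibility follows because the generically added specializing maps witness, in \emph{every} $\omega_1$-preserving extension, that the dangerous trees admit no cofinal branch beyond those already coming from $M$.

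The hard part will be this last step: arranging, inside a proper forcing and meeting only $\omega_1$ dense sets, that the specialization applies to \emph{all} the relevant guessing trees at once, and verifying that a tree so specialized remains branchless over $M$ in any $\omega_1$-preserving extension while retaining exactly the branches supplied by $M$ itself. The delicate point is to organize the bookkeeping of these trees (which are named objects, one for each potential counterexample) so that specializing them is compatible with the properness of $\mathbb P$ and does not disturb the elementarity or the continuity of the chain. It is precisely here that the full strength of $\PFA$, rather than $\mathrm{GMP}(\omega_2,\omega_1)$ alone, is used, which is consistent with the known fact \cite{CK2017} that $\mathrm{IGMP}(\omega_2,\omega_1)$ is strictly more informative about the continuum than ordinary guessing.
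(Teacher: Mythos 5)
Your overall mechanism --- indestructibility via specialization of a tree associated to $M$ --- is the right one, but the proposal has two genuine gaps. First, your reduction is stated as an equivalence that is only half true. If $M$ is guessing in an $\omega_1$-preserving extension $W$, then indeed no tree $T\in M$ of height $\omega_1$ and size ${<}\kappa_M$ acquires a new cofinal branch in $W$ (run the argument of \cref{wKHlemma} inside $W$). But the converse, which your construction hinges on (``indestructibly guessing precisely when no tree $T\in M$ of this shape can acquire a new cofinal branch''), fails: a counterexample to guessing in $W$ is a bounded, $\omega_1$-approximated set that is not guessed, and it yields a cofinal branch not through any tree that is an \emph{element} of $M$, but through the Cox--Krueger tree $T(M)$ of countable approximations --- a branchless tree of height and size $\omega_1$ assembled from $M$ (and a filtration of $M$), which is \emph{not} a member of $M$. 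The trees inside $M$ do not capture the guessing property of $M$, so specializing them does not yield indestructibility; it is precisely this external tree $T(M)$, with the property that $T(M)$ is special if and only if the guessing property of $M$ is indestructible, that must be specialized.

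Second, the step you defer as ``the hard part'' is not a technical afterthought; it is the entire content, and it conceals a circularity. A specializing function can exist only on a branchless tree; branchlessness of $T(M)$ is equivalent to $M$ being $\omega_1$-guessing; and $M$ --- hence $T(M)$, hence dense sets of the form ``the specializing map is defined at node $t$'' --- comes into existence only after the filter is chosen, whereas $\PFA$ requires the dense sets to be specified in advance. The actual proof avoids designing any bespoke forcing: it combines two black-box consequences of $\PFA$, namely ${\rm GMP}(\omega_2,\omega_1)$ witnessed by internally club models (Viale--Wei\ss{}) and Baumgartner's theorem that every branchless tree of height and size $\omega_1$ is special, through the lemma that $T(M)$ has a cofinal branch in an $\omega_1$-preserving outer model $W$ if and only if $M$ fails to be $\omega_1$-guessing in $W$. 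Since $M$ guessing in $V$ makes $T(M)$ branchless, $\PFA$ makes $T(M)$ special, and a special tree remains branchless in every $\omega_1$-preserving extension; indestructibility of stationarily many models follows --- ``a true conjunction under $\PFA$,'' as the paper puts it.
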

	\begin{proof}[\nopunct]
	\end{proof}
	In the same paper, Cox and Krueger also showed that ${\rm IGMP}(\omega_2,\omega_1)$ is  consistent with the continuum being arbitrarily large. 
	It is also worth mentioning that the ${\rm IGMP}(\omega_2,\omega_1)$ obtained by Cox and Kruger has the property that every indestructible $\omega_1$-guessing model remains $\omega_1$-guessing in any outer transitive extension with the same $\omega_1$.
	The main idea to make a guessing set  $M$ indestructible is to specialise a  branchless $\omega_1$-tall, $\omega_1$-sized tree $T(M)$ that is naturally associated to $M$ so that  $T(M)$   is special if and only if   the guessing property of $M$ is indestructible. Now if one successes to find a model in which every branchless tree of size and height $\omega_1$ is special and that there are $\omega_1$-guessing models of size $\omega_1$, then the guessing models are indestructible. A true conjunction  under $\rm PFA$.

	\subsection{Consequences}
	
	\subsubsection*{Indestructibility and the approximation property}
	One of the tight interaction of the above indestructibility is with the $\omega_1$-approximation property of forcings. Recall that a forcing notion has the 
	$\delta$-approximation property in $V$ if for every $V$-generic filter $G\subseteq\mathbb P$, $(V,V[G])$ has the $\delta$-approximation property.  Note that ${\rm GMP}(\omega_2,\omega_1)$ is consistent with the failure of the Suslin Hypothesis: In \cite{CK-Quotient}, Cox and Krueger obtained the consistency of ${\rm GMP}(\omega_2,\omega_1)$ with arbitrary large continuum in a way that the forcing adds Cohen reals over the ground model, and hence by a well-known result due to Shelah, there is a Suslin tree in the final model. There are also other models  witnessing this, see e.g. \cite{CK2017}.

	\begin{proposition}[Cox--Krueger \cite{CK2017}]\label{Special implies SH}
		$\rm IGMP(\omega_2,\omega_1)$ implies  the Suslin Hypothesis ($\rm SH$).
	\end{proposition}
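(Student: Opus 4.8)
The plan is to argue by contradiction, using the indestructibility of a guessing model to manufacture a cofinal branch through a putative Suslin tree that ends up lying in the ground model, contradicting Suslinity. So suppose $T$ is a Suslin tree. First I would recall the standard facts about the forcing $T$ (ordered by reverse tree extension): it is ccc, hence $\omega_1$-preserving, and a $V$-generic filter $G$ is a cofinal branch $b_G$ through $T$ which is \emph{new}, i.e.\ $b_G\notin V$, precisely because $T$ has no cofinal branch in $V$.

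Next I would use $\rm IGMP(\omega_2,\omega_1)$ to fix, for a large regular $\theta$ with $T\in H_\theta$, an indestructible $\omega_1$-\gesm{} $M\prec H_\theta$ with $T\in M$; such an $M$ exists because the witnessing set is stationary in $\mathcal P_{\omega_2}(H_\theta)$ and hence meets the club $\{M:T\in M\}$. Since $\rm IGMP(\omega_2,\omega_1)$ implies $\rm GMP(\omega_2,\omega_1)$, which in turn implies $\neg{\rm CH}$, the model $M$ has size $\omega_1<2^{\aleph_0}$; so by the Cox--Krueger corollary $\omega_1\subseteq M$ and $M\cap\omega_2$ is an ordinal below $\omega_2$, whence $\kappa_M=\omega_2$. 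In particular $|T|=\omega_1<\kappa_M$, so $T\subseteq M$ by the opening observation in the proof of \cref{wKHlemma}.

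Now I would pass to $V[G]$. By indestructibility $M$ still has the $\omega_1$-guessing property there (Suslin forcing lies in the class of $\omega_1$-preserving forcings). I then rerun the computation of \cref{wKHlemma} for the \emph{generic} branch $b_G$: it is bounded in $M$ (witnessed by $T$), and it is $\omega_1$-approximated in $M$. Indeed, for a countable $a\in M$ there is $\delta'<\omega_1$ with $a\cap T\subseteq T_{<\delta'}$; taking the node $t\in b_G$ of height $\delta'$, which lies in $T\subseteq M$, we get $b_G\cap a=b_t\cap a$, a set computed in $V$ from $b_t,a\in M$ and therefore belonging to $M$. Hence $b_G$ is guessed in $M$: there is $b^*\in M$ with $b^*\cap M=b_G\cap M=b_G$ (using $b_G\subseteq T\subseteq M$). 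Replacing $b^*$ by $b^*\cap T\in M$, I may assume $b^*\subseteq T\subseteq M$; then $b^*=b^*\cap M=b_G$, so $b_G=b^*\in M\subseteq V$, contradicting $b_G\notin V$. Thus no Suslin tree exists.

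The only genuinely delicate point, and the step I would scrutinize, is the approximation argument carried out in $V[G]$: although $M$ need not remain elementary in $V[G]$, every intersection invoked ($b_t\cap a$ and $b^*\cap T$) is formed from sets already lying in $V$, so it is computed inside $M$ exactly as in $V$ and membership in $M$ is preserved. The entire nonelementary content is the single implication ``$\omega_1$-approximated $\Rightarrow$ guessed'' in $V[G]$, and this is exactly what the indestructibility of $M$ supplies.
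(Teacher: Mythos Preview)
Your proof is correct and follows essentially the route the paper intends. The paper records this proposition without proof, treating it as immediate from the surrounding machinery (and later presents \cref{theorem-gen} as its generalisation: under ${\rm IGMP}(\omega_2,\omega_1)$ every $\omega_1$-preserving forcing of size $\omega_1$ has the $\omega_1$-approximation property, which applied to a Suslin tree forces its generic branch into $V$). Your argument is simply the direct version of this for a single indestructible model $M$: you rerun the branch-approximation computation of \cref{wKHlemma} in $V[G]$, using indestructibility in place of elementarity for the one nontrivial step, and conclude $b_G\in M\subseteq V$. Your care about which intersections are computed in $V$ versus $V[G]$ is exactly right, and the invocation of $\neg{\rm CH}$ to secure $\omega_1\subseteq M$ (hence $T\subseteq M$) via the Cox--Krueger corollary is the appropriate way to justify that step.
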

	\begin{proof}[\nopunct]
	\end{proof}
	
	More generally, if one regards a tree of height and size $\omega_1$ with the opposite ordering as a forcing notion, then under $\rm IGMP(\omega_2,\omega_1)$, a nontrivial $T$ collapses $\omega_1$.
	\begin{lemma}\label{Lemma0}
		Suppose   $\mathbb P$ has the $\delta$-approximation property, and that $M\prec H_\theta$ is a $\delta$-gesm, for some $\theta\geq\omega_2$. 
		Assume that $(M,V)$ has the $\delta$-covering property.
		Then $\mathbb P$ forces  $M$ to  be $\delta$-guessing.
	\end{lemma}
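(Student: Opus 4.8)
The plan is to read this off as a single instance of the transitivity principle \cref{transitivity under covering}, applied to the chain $M\subseteq V\subseteq V[G]$, where $G$ is an arbitrary $V$-generic filter for $\mathbb P$. Since ``$\mathbb P$ forces $M$ to be $\delta$-guessing'' means precisely that, for every such $G$, the set $M$ has the $\delta$-guessing property relative to $V[G]$, I would fix one $G$, pass to $V[G]$, and establish the guessing property there.

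With the identifications $M_0=M$, $M_1=V$, $M_2=V[G]$, I would verify the three hypotheses of \cref{transitivity under covering}. First, $M_0=M$ has the $\delta$-guessing property in $M_1=V$: this is exactly the assumption that $M$ is a $\delta$-\gesm. Second, $M_1=V$ has the $\delta$-guessing property in $M_2=V[G]$: unwinding the definitions, this is the statement that the pair $(V,V[G])$ has Hamkins' $\delta$-approximation property, which is precisely what it means for $\mathbb P$ to have the $\delta$-approximation property, using the reformulation of the approximation property in terms of guessing recorded earlier. Third, $(M_0,M_1)=(M,V)$ has the $\delta$-covering property, which is assumed outright. The cardinal bookkeeping is routine: $\delta\leq\kappa_M$ holds because $M$ is a $\delta$-\gesm, and as $\mathbb P$ preserves $\delta$ and leaves the elements of the set $M$ unchanged, the ordinal $\kappa_M$---defined through the absolute relation $\subseteq$---is computed the same way in $V[G]$, so $\delta\leq\kappa_M$ survives.

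Once the hypotheses are in place, \cref{transitivity under covering} delivers that $M_0=M$ has the $\delta$-guessing property in $M_2=V[G]$, which is exactly the conclusion; as $G$ was arbitrary, $\mathbb P$ forces $M$ to be $\delta$-guessing.

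I expect the only point requiring care to be the legitimacy of plugging the class models $V$ and $V[G]$ into the roles of $M_1$ and $M_2$ in a lemma first stated for an abstract chain of models, together with making the equivalence between ``$\mathbb P$ has the $\delta$-approximation property'' and ``$V$ is $\delta$-guessing in $V[G]$'' fully explicit. Because the paper's notion of model admits proper classes and $\mathbb P$ is $\delta$-preserving, this is harmless, and the substantive content of the argument has already been carried out inside \cref{transitivity under covering}.
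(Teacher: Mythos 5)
Your proposal is correct, and in substance it is the same argument as the paper's: the paper proves \cref{Lemma0} directly by exactly the chain you delegate to \cref{transitivity under covering}. Given $x\in V[G]$ bounded and $\delta$-approximated in $M$ (say $x\subseteq X\in M$), the paper uses the $\delta$-covering property of $(M,V)$ to see that $x\cap M$ is $\delta$-approximated in $V$, then the $\delta$-approximation property of $\mathbb P$ to conclude $x\cap M\in V$, and finally the guessing property of $M$ in $V$ to guess $x\cap M$, hence $x$. The difference is only packaging: you invoke \cref{transitivity under covering} as a black box with $M_0=M$, $M_1=V$, $M_2=V[G]$, reading the approximation property of $\mathbb P$ as ``$V$ is $\delta$-guessing in $V[G]$'' via the Hamkins reformulation, which is exactly right; the class-model concern is indeed harmless, since the paper's notion of model admits classes, and the paper itself applies \cref{transitivity under covering} to a forcing extension in its sketch of the supercompact consistency theorem. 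Two small remarks. First, $\delta$-preservation of $\mathbb P$ is neither assumed in \cref{Lemma0} nor needed for your bookkeeping: the cardinality computations in ``$\delta$-approximated'' and ``guessed'' are internal to $M$, hence absolute between $V$ and $V[G]$. Second, what the paper's inline proof buys is explicit care about a point that the written proof of \cref{transitivity under covering} glosses over --- covering applies only to sets bounded in $M$, so one must cover $a\cap X$ rather than $a$ itself; by citing the lemma you inherit its (true and well-known) statement rather than its slightly imprecise written proof, which is legitimate, but worth noticing.
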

	\begin{proof}
		Let $G\subseteq\mathbb P$ be a $V$-generic filter.
		Fix  $x\in V[G]$ and assume that $x\subseteq X\in M$ is
		$\delta$-approximated in $M$. We claim that $x\cap M$ is $\delta$-approximated in $V$, which in turn implies that $x\cap M\in V$. Then, since $M$ is a $\delta$-gesm in $V$, $x$ is guessed in $M$. 
		To see that $x\cap M$ is $\delta$-approximated in $V$, fix a set $a\in V$ of size less than $\delta$. By  the $\delta$-covering property,
		there is a  set $b\in M$ of size less than $\delta$ with $a\cap M\cap X\subseteq b$. Thus
		$a\cap x\cap M=a\cap x\cap b\in V$, since $a\in V$ and $x\cap b\in M\subseteq V$. 
	\end{proof}

	\begin{corollary}
		Suppose that  $\mathbb P$ has the $\delta^+$-approximation property. Assume that $M\prec H_\theta$ is $\delta^+$-guessing with $\delta^+\subseteq M$, for some $\theta\geq\omega_2$. 
		Then $\mathbb P$ forces  $M$ to  be $\delta^+$-guessing.
	\end{corollary}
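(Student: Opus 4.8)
The plan is to re-run the argument of \cref{Lemma0} with $\delta^+$ in the role of $\delta$, along the tower $M\subseteq V\subseteq V[G]$, where $G\subseteq\mathbb P$ is $V$-generic; conceptually this is just the transitivity of the guessing property across these three models. Three things must be checked: that $\mathbb P$ has the $\delta^+$-approximation property, that $M$ is a $\delta^+$-\gesm, and that $(M,V)$ carries enough covering to push $\delta^+$-approximation from $M$ into $V$. The first is a hypothesis. For the second, $\delta^+\subseteq M$ gives $\delta^+\le\kappa_M$, and $\delta^+$ is regular, so the $\delta^+$-guessing model $M\prec H_\theta$ is indeed a $\delta^+$-\gesm. (Recall also that, by Hamkins' reformulation, the $\delta^+$-approximation property of $\mathbb P$ says exactly that $V$ is $\delta^+$-guessing in $V[G]$.)

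First I would fix $x\in V[G]$ with $x\subseteq X\in M$ that is $\delta^+$-approximated in $M$, and show $x\cap M\in V$. By the $\delta^+$-approximation property of the pair $(V,V[G])$ it is enough to verify $(x\cap M)\cap a\in V$ for each $a\in V$ with $|a|^V\le\delta$. This is where covering enters: choosing $b\in M$ with $|b|^M\le\delta$ and $a\cap M\cap X\subseteq b$, the facts that $|b|^M<\delta^+$ and that $x$ is $\delta^+$-approximated in $M$ yield $x\cap b\in M\subseteq V$; since $x\subseteq X$ forces $a\cap x\cap M\subseteq b$, we may rewrite $a\cap x\cap M=a\cap(x\cap b)\cap M$, which lies in $V$ because $a,M\in V$. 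Hence $x\cap M$ is $\delta^+$-approximated in $V$, and so $x\cap M\in V$. Finally, since $\delta^+\subseteq M$ makes every $c\in M$ of $M$-size $\le\delta$ a subset of $M$, one checks that $x\cap M$ is bounded and $\delta^+$-approximated in $M$; as $M$ is $\delta^+$-guessing in $V$, there is $x^*\in M$ with $x^*\cap M=x\cap M$, i.e. $x$ is guessed in $M$.

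The only real work is to supply the cover $b$, and I expect this to be the main obstacle. Note first a subtlety that dictates the whole argument: the cover must have $M$-size strictly below $\delta^+$, i.e. $\le\delta$ (otherwise $x\cap b$ need not return to $M$), so what is actually consumed is the $\delta$-covering property of $(M,V)$, not the $\delta^+$-covering property; for this reason one cannot simply quote \cref{Lemma0} at parameter $\delta^+$, but must re-prove it using this finer covering of small sets. The point of the hypothesis $\delta^+\subseteq M$ is precisely that it activates \cref{Kruegerlemma}, which delivers the $\delta$-covering property of $(M,V)$—the internal-covering phenomenon of \cref{K-nice-result}, whereby the guessing property fabricates $M$-internal covers of small external sets. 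In the headline case $\delta^+=\omega_1$ the closure $M^{<\omega}\subseteq M$ required by \cref{Kruegerlemma} is automatic, so the corollary holds with no further assumption; for larger $\delta$ one carries the closure $M^{<\delta}\subseteq M$ along.
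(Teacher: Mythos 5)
Your proof is essentially identical to the paper's: the paper's entire argument for this corollary is the one\nobreakdash-liner ``We only need the covering property that holds by \cref{K-nice-result}'', i.e.\ rerun the proof of \cref{Lemma0} at parameter $\delta^{+}$ with the covering supplied by Krueger's lemma, activated by the hypothesis $\delta^{+}\subseteq M$. The subtleties you flag are genuine but are glossed over rather than resolved by the paper: what is actually consumed is covering of ${\le}\delta$-sized sets by ${\le}\delta$-sized elements of $M$, and \cref{K-nice-result}/\cref{Kruegerlemma} deliver this with no extra hypotheses only when $\delta=\omega$ (the case $\delta^{+}=\omega_{1}$), so for $\delta>\omega$ the corollary as stated implicitly needs $\delta$ regular and $M^{<\delta}\subseteq M$, exactly as you observe.
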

	\begin{proof}
		We only need the covering property that holds by \cref{K-nice-result}.
	\end{proof}
	
	\begin{corollary}
		Suppose   $\mathbb P$ has the $\omega_1$-approximation property, and that $M\prec H_\theta$ is $\omega_1$-guessing for some $\theta\geq\omega_2$. 
		Then $\mathbb P$ forces  $M$ to  be $\omega_1$-guessing.
	\end{corollary}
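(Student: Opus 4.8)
The plan is to apply \cref{Lemma0} with $\delta=\omega_1$; for that I must feed it the covering property of $(M,V)$, and that covering will come from \cref{Kruegerlemma} as soon as I know $\omega_1\subseteq M$. So, although the statement drops the hypothesis ``$\omega_1\subseteq M$'' present in the preceding corollary, the real content of the proof is the observation that an $\omega_1$-\gesm~contains $\omega_1$ automatically; once this is established the corollary is simply the instance $\delta=\omega$ of that preceding corollary.

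First I would show $\omega_1\subseteq M$. Since $\omega_1$ is definable we have $\omega_1\in M$, and $M\cap\omega_1$ is an ordinal $\gamma\leq\omega_1$: for $\alpha\in M\cap\omega_1$, elementarity supplies a surjection $\omega\to\alpha$ lying in $M$, whence $\alpha\subseteq M$, so $M\cap\omega_1$ is downward closed. Suppose toward a contradiction that $\gamma<\omega_1$. Then every ordinal of $M$ below $\omega_1$ is contained in $M$, while $\omega_1$ itself is not, so by the definition of $\kappa_M$ we get $\kappa_M=\omega_1$. Consequently $M$ is a $\kappa_M$-\gesm, and \cref{TP} says that $M$ contains no $\kappa_M$-Aronszajn, i.e.\ no $\omega_1$-Aronszajn, tree. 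But $\ZFC$ proves that an $\omega_1$-Aronszajn tree exists, so $H_\theta\models$ ``there is an $\omega_1$-Aronszajn tree'', and by elementarity such a tree belongs to $M$ — a contradiction. Hence $\gamma=\omega_1$, that is, $\omega_1\subseteq M$.

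With $\omega_1\subseteq M$ secured, I note that $M^{<\omega}\subseteq M$ holds trivially, so \cref{Kruegerlemma} applied with its regular cardinal taken to be $\omega$ (so that $\delta^+=\omega_1$) shows that $(M,V)$ has the $\omega$-covering property: every countable set bounded in $M$ has its intersection with $M$ covered by a countable member of $M$. Inspecting the proof of \cref{Lemma0} with $\delta=\omega_1$, the covering it actually invokes is precisely of this countable-by-countable kind — one covers a set $a$ of size ${<}\omega_1$ by some $b\in M$ of size ${<}\omega_1$, and such a $b$ both satisfies $b\subseteq M$ and is small enough that $x\cap b\in M$ whenever $x$ is $\omega_1$-approximated in $M$. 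Thus the hypotheses of \cref{Lemma0} are met with $\delta=\omega_1$, and it yields that $\mathbb P$ forces $M$ to remain $\omega_1$-guessing.

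The only genuine step here is the first one, ruling out $\kappa_M=\omega_1$; everything afterward is a direct appeal to results already proved. I expect that step to be the crux, and it is exactly where the strength of the hypothesis ``$\omega_1$-guessing'' is spent: being $\kappa_M$-guessing at $\kappa_M=\omega_1$ is incompatible, via \cref{TP}, with the $\ZFC$-provable existence of $\omega_1$-Aronszajn trees, and this is what forces $\omega_1\subseteq M$ and thereby removes the extra hypothesis needed in the $\delta^+$ version.
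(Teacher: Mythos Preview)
Your proof is correct. The paper leaves this corollary without proof, evidently regarding it as immediate from the preceding corollary (the $\delta^+$ version) together with \cref{K-nice-result}/\cref{Kruegerlemma}; your reduction is exactly that, so the overall strategy matches.

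Where you go beyond the paper is in justifying the dropped hypothesis $\omega_1\subseteq M$. The paper never states that every $\omega_1$-\gesm~$M\prec H_\theta$ satisfies $\omega_1\subseteq M$ in general --- the Cox--Krueger corollary earlier only gives this under the extra assumption $|M|<2^{\aleph_0}$ --- yet the hypothesis is needed to invoke \cref{Kruegerlemma}. Your argument via \cref{TP} (if $\kappa_M=\omega_1$ then $M$ would contain no $\omega_1$-Aronszajn tree, contradicting their $\ZFC$-existence and elementarity) is a clean way to close this gap, and it is not made explicit anywhere in the paper. So your write-up is in fact more careful than the paper's treatment: same route, but with the one nontrivial missing ingredient supplied.
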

	\begin{proof}[\nopunct]
	\end{proof}

	\begin{proposition}\label{prop0}
		Assume that $\mathbb P$ is an $\omega_1$-preserving forcing. Suppose that for every sufficiently large regular cardinal $\theta$, $\mathbb P$ is proper for a stationary set $\mathfrak{G}_\theta\subseteq\mathcal P_{\omega_2}(H_\theta)$ of
		$\delta$-\gesms~of $H_\theta$ which are $\mathbb P$-indestructible.\footnote{i.e., for every $M\in\mathfrak G_\theta$ with $\delta\subseteq M$, and every $p\in M$, there is an $(M,\mathbb P)$-generic condition $q\leq p$.} Then 
		$\mathbb P$ has the $\omega_1$-approximation property.
	\end{proposition}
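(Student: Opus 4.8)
The plan is to argue by contradiction. Suppose $\mathbb{P}$ fails to have the $\omega_1$-approximation property. Then there are a condition $p\in\mathbb{P}$, an ordinal $\lambda$, and a $\mathbb{P}$-name $\dot A$ such that $p$ forces that $\dot A\subseteq\check\lambda$ is $\omega_1$-approximated in $V$ but $\dot A\notin V$; here I use that the failure of the approximation property may be witnessed by a set of ordinals, and that ``$\dot A\notin V$'' is really the forcing statement $\dot A\notin\check{\mathcal P(\lambda)}$. Fix a sufficiently large regular $\theta$ with $\mathbb P,\dot A,\lambda\in H_\theta$. Since $\mathfrak G_\theta$ is stationary, the set of $M\in\mathfrak G_\theta$ with $\{\mathbb P,\dot A,\lambda,p\}\subseteq M$ and $\delta\subseteq M$ is stationary, so I may choose such an $M$; note that $\delta\leq\omega_1$, since $\delta\subseteq M$ and $|M|<\omega_2$. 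As $\mathbb P$ is proper for $\mathfrak G_\theta$, fix an $(M,\mathbb P)$-generic condition $q\leq p$, and let $G\ni q$ be $V$-generic. Write $A=\dot A_G$. By the choice of $p$, in $V[G]$ the set $A\subseteq\lambda$ is $\omega_1$-approximated in $V$ and $A\notin V$, while by $\mathbb P$-indestructibility of $M$ the model $M$ is still $\delta$-guessing in $V[G]$.

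The first key step is the standard genericity fact that $M[G\cap M]\cap V=M$ on sets of ordinals: if $y\in V$ is a set of ordinals with $y\in M[G\cap M]$, then $y\in M$. I would prove this by taking a name $\dot y\in M$ for $y$, bounded by some $\gamma\in M$; since $\mathcal P(\gamma)^V\in M$ by elementarity, the dense set of conditions deciding ``$\dot y\in\check{\mathcal P(\gamma)}$'' lies in $M$ and meets $G\cap M$, forcing $\dot y\in\check{\mathcal P(\gamma)}$, after which a further density-and-elementarity argument inside $M$ produces $z\in M$ and $q'\in G\cap M$ with $q'\Vdash\dot y=\check z$, so $y=z\in M$. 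Using this, for each $a\in M$ with $|a|^M<\delta$ (hence $a$ countable, as $\delta\leq\omega_1$) I compute $A\cap a=\dot A_{G\cap M}\cap a\in M[G\cap M]$, because membership in $A$ of the ordinals of $a\subseteq M$ is decided by $G\cap M$; on the other hand $A\cap a\in V$ by $\omega_1$-approximation of $A$ in $V$, so $A\cap a\in M[G\cap M]\cap V=M$. Thus $A$ is bounded (by $\lambda\in M$) and $\delta$-approximated in $M$ as computed in $V[G]$.

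Since $M$ is $\delta$-guessing in $V[G]$, the set $A$ is guessed in $M$: there is $A^*\in M$ with $A^*\cap M=A\cap M$, and in particular $A^*\in V$. To reach a contradiction I would exploit that $p\Vdash\dot A\neq\check{A^*}$ (this holds because $A^*\in V$, while $p$ forces $\dot A\notin V$). As $p,\dot A,A^*\in M$, the dense-below-$p$ set $E=\{r\leq p:\exists\xi<\lambda\ r\Vdash\check\xi\in\dot A\,\triangle\,\check{A^*}\}$ belongs to $M$, so by genericity there is $r\in G\cap M\cap E$, and by elementarity the witnessing ordinal $\xi$ may be taken in $M\cap\lambda$. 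Then $r\in G$ forces $\xi\in\dot A\,\triangle\,\check{A^*}$, so $\xi\in A\,\triangle\,A^*$ in $V[G]$; but $\xi\in M$ together with $A\cap M=A^*\cap M$ forces $\xi\in A\Leftrightarrow\xi\in A^*$, a contradiction. This shows that $\mathbb P$ has the $\omega_1$-approximation property.

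I expect the main obstacle to be the second step, namely transferring the ground-model approximation of $A$ into a genuine $\delta$-approximation of $A$ inside the extension $V[G]$: this is precisely where the genericity identity $M[G\cap M]\cap V=M$ is needed, and it is what allows the indestructible guessing property of $M$ to ``see'' $A$ and produce the guess $A^*$. The remaining pieces---choosing $M$ generically and the final density argument pinning a disagreement between $A$ and its guess at an ordinal inside $M$---are routine once this transfer is in place.
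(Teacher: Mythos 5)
Your proof is correct, and it reaches the conclusion by a genuinely different endgame than the paper's. The shared core is the middle step: both arguments use an $(M,\mathbb{P})$-generic condition to obtain the identity $M[G]\cap H_\theta^V=M$ on sets of ordinals, transfer the forced $\omega_1$-approximation of $\dot A$ over $V$ into $\delta$-approximation of $A=\dot A^G$ in $M$, and then invoke $\mathbb{P}$-indestructibility to get a guess $A^*\in M\subseteq V$ with $A^*\cap M=A\cap M$. The difference lies at both ends. The paper does not argue by contradiction: taking $\dot A$ forced by the maximal condition to be approximated, it works in $V[G]$ with the stationary set $\mathcal S$ of \emph{all} $M\in\mathfrak G_\theta$ containing the parameters and satisfying $M[G]\cap H^V_\theta=M$ (stationarity of $\mathcal S$ in $V[G]$ being itself extracted from properness for $\mathfrak G_\theta$), produces a guess $A^*_M$ for each $M\in\mathcal S$, and then finishes with a pressing-down argument on $\mathcal P_{\omega_2}(H^V_\theta)$: the regressive map $M\mapsto A^*_M$ is constant, with value $A^*$ say, on a stationary $\mathcal S^*\subseteq\mathcal S$, and since $\bigcup\mathcal S^*$ covers $H^V_\theta$ this forces $A=A^*\in V$ outright. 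You instead fix a single condition $p$ forcing $\dot A\notin\check{\mathcal P(\lambda)}$, a single model $M$, and one generic $q\leq p$, and you close with a density-and-elementarity step pinning a disagreement ordinal $\xi\in M\cap\lambda$ between $\dot A$ and $\check{A^*}$, contradicting $A\cap M=A^*\cap M$. Your route consumes exactly what the footnoted hypothesis provides---one generic condition below a given $p\in M$---and needs neither preservation of the stationarity of $\mathfrak G_\theta$ in $V[G]$ nor Fodor's lemma for $\mathcal P_{\omega_2}$ in the extension, so it is more local and elementary; the paper's route, in exchange, reconstructs $A$ inside $V$ explicitly rather than refuting a putative counterexample, which is the natural shape when no single offending condition has been isolated. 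One presentational point: your choice of $M$ with $\delta\subseteq M$ silently presupposes $\delta\leq\omega_1$ (for $\delta\geq\omega_2$ no $M\in\mathcal P_{\omega_2}(H_\theta)$ contains $\delta$ and the properness hypothesis is vacuous), whereas you derive $\delta\leq\omega_1$ only \emph{after} choosing $M$; you should say up front that the hypothesis is nonvacuous exactly when $\delta\leq\omega_1$, in which case $\{M\in\mathcal P_{\omega_2}(H_\theta):\delta\subseteq M\}$ is club and your choice is legitimate. The paper is equally silent on this, so it is a quibble, not a gap.
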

	\begin{proof}
		Fix  a $\delta$-preserving forcing $\mathbb P$ and assume that the maximal condition of $\mathbb P$ forces $\dot A$ is a  $\delta$-approximated subset of an ordinal $\gamma$. Pick a regular $\theta$, with $\gamma,\dot{A},\mathcal P(\mathbb P)\in H_\theta$.
		We shall show that $\mathbb P\Vdash ``\dot{A}\in V"$. Let $G\subseteq\mathbb P$ be a $V$-generic filter, and
		set 
		\[
		\mathcal S\coloneqq\{M\in\mathfrak G_\theta: p,\gamma,\dot A,\mathbb P\in M \mbox{ and } M[G]\cap H_\theta^V=M \}.
		\]
		In $V[G]$, $\mathcal S$ is stationary in $\mathcal P_{\omega_2}(H_\theta^V)$. To see this, let $F:\mathcal P_{\omega}(H_\theta^V)\rightarrow \mathcal P_{\omega_2}(H_\theta^V)$ be defined by $F(x)=\{\dot{y}^G\}$ if $x=\{\dot{y}\}$ for some $\mathbb P$-name $\dot{y}$  with $\dot{y^G}\in H_\theta^V$, and otherwise let $F(x)=\{p,\gamma,\dot{A},\mathbb P\}$. The set of models in $\mathfrak G$ which are closed under $F$ is stationary. Observe that
		a model $M\in\mathfrak{G}$ is closed under $F$ if and only if $M\in\mathcal S$.
		
		Let  $A=\dot{A}^G$ and fix $M\in\mathcal S$.
		We claim that $A$ is  $\delta$-approximated in $M$. Let $a\in M$ be a ${<\delta}$-sized subset of $\gamma$. 
		Let $D_a$ be the set of conditions deciding $\dot A \cap a$. Then $D_a$ belongs to $M$ and is dense in $\mathbb P$, as the maximal condition forces that $\dot{A}$ is 
		$\delta$-approximated in $V$. By the elementarity of $M[G]$ in $H_\theta[G]$,
		there is $p\in G\cap D_a\cap M[G]$. But then $p\in M$, as $D_a\in H_\theta^V$.
		Working in $V$,  the elementarity of $M$ in $H_\theta$ implies that there is some $b\in M$ such that, $p\Vdash`` \check b=\dot A\cap a"$. Since $p\in G$, we have 
		$A\cap a=b\in M$. Thus $A$ is  $\delta$-approximated in $M$.  By our assumption, $M$ is an $\delta$-guessing  in $V[G]$. Thus there is $A^*$ in $M$, and hence in $V$,
		such that $A^*\cap M=A\cap M$. 
		
		Working  in $V[G]$ again, for every $M\in\mathcal S$, there is, by the previous paragraph, a set $A^*_M\in M$ such that $A^*_M\cap M=A\cap M$.
		This defines a regressive function $M\mapsto A^*_M$  on $\mathcal S$. As $\mathcal S$ is stationary in $H^V_\theta$, there are a set $A^*\in H^V_\theta$ and
		a stationary set $\mathcal S^*\subseteq\mathcal S$ such that for every $M\in\mathcal S^*$, we have
		$A^*\cap M=A\cap M$. Since $A\subseteq\bigcup\mathcal S^*$, we have $A^* =A$, which in turn implies that $A\in V$.
	\end{proof}
	The following follows from \cref{Lemma0,prop0}.
	\begin{corollary}\label{Lemma1}
		Assume that $\mathbb P$ is an $\delta$-preserving forcing. Suppose that for every sufficiently large regular cardinal $\theta$, $\mathbb P$ is proper for a stationary set $\mathfrak{G}_\theta\subseteq\mathcal P_{\omega_2}(H_\theta)$ of $\omega_1$-gesm of $H_\theta$ with the $\delta$-covering property. Then the following are equivalent.
		\begin{enumerate}
			\item $\mathbb P$ has the $\delta$-approximation property.
			\item  Every $\delta$-guessing model is indestructible by $\mathbb P$.
		\end{enumerate}
	\end{corollary}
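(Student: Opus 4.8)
The plan is to prove the two implications separately, each reducing directly to one of the two results that precede it; the substance lies entirely in checking that the hypotheses line up, since $(1)\Rightarrow(2)$ is an instance of \cref{Lemma0} and $(2)\Rightarrow(1)$ is an instance of \cref{prop0}.

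For $(1)\Rightarrow(2)$ I would fix a $\delta$-guessing model $M$ drawn from the relevant class, that is, $M\in\mathfrak G_\theta$ for a sufficiently large regular $\theta$, so that by hypothesis $(M,V)$ enjoys the $\delta$-covering property. Granting $(1)$, the forcing $\mathbb P$ has the $\delta$-approximation property, and then \cref{Lemma0} applies verbatim: $\mathbb P$ forces $M$ to remain $\delta$-guessing. Since $M$ ranged over the models under consideration, every such $\delta$-guessing model is $\mathbb P$-indestructible, which is $(2)$.

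For $(2)\Rightarrow(1)$ I would simply verify that the hypotheses of \cref{prop0} hold. The forcing $\mathbb P$ is $\delta$-preserving and, by assumption, proper for the stationary set $\mathfrak G_\theta\subseteq\mathcal P_{\omega_2}(H_\theta)$; each $M\in\mathfrak G_\theta$ is an $\omega_1$-\gesm, hence, by the monotonicity that an $\omega_1$-guessing model is $\delta$-guessing whenever $\omega_1\le\delta$, a $\delta$-\gesm; and granting $(2)$ each such $M$ is $\mathbb P$-indestructible. Thus $\mathbb P$ is proper for a stationary set of $\mathbb P$-indestructible $\delta$-\gesms, and \cref{prop0} delivers the $\delta$-approximation property.

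I do not expect a genuine obstacle beyond bookkeeping. The one point to be careful about is the reading of the quantifier ``every $\delta$-guessing model'' in $(2)$: because \cref{Lemma0} cannot dispense with the covering hypothesis, this must be understood as ranging over the models with the $\delta$-covering property that actually populate $\mathfrak G_\theta$. I would also record the elementary passage from $\omega_1$-guessing to $\delta$-guessing used to feed $\mathfrak G_\theta$ into \cref{prop0}, and note that although \cref{prop0} is phrased for the $\omega_1$-approximation property, its argument in fact establishes the $\delta$-approximation property, which is exactly what clause $(1)$ demands.
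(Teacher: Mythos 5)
Your proposal is correct and follows exactly the paper's route: the paper derives this corollary immediately from \cref{Lemma0} (giving $(1)\Rightarrow(2)$) and \cref{prop0} (giving $(2)\Rightarrow(1)$), which is precisely your decomposition. Your bookkeeping remarks --- that clause $(2)$ must be read as quantifying over the guessing models with the $\delta$-covering property, and that the proof of \cref{prop0} actually yields the $\delta$-approximation property despite its statement saying $\omega_1$ --- correctly resolve the imprecisions in the paper's formulation.
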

	\begin{proof}[\nopunct]
	\end{proof}
	
	In particular, we have the following theorem.
	\begin{corollary}
		Assume that $\mathbb P$ is an $\omega_1$-preserving forcing. Suppose that for every sufficiently large regular cardinal $\theta$, $\mathbb P$ is proper for a stationary set $\mathfrak{G}_\theta\subseteq\mathcal P_{\omega_2}(H_\theta)$ of $\omega_1$-\gesms~of $H_\theta$. Then the following are equivalent.
		\begin{enumerate}
			\item $\mathbb P$ has the $\omega_1$-approximation property.
			\item  Every $\omega_1$-guessing model is indestructible by $\mathbb P$.
		\end{enumerate}
	\end{corollary}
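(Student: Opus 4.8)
The plan is to read this corollary as the instance $\delta=\omega_1$ of \cref{Lemma1}, and to verify that the covering hypothesis imposed there on the models in $\mathfrak G_\theta$ becomes automatic, so that it can be dropped from the statement. The key observation is that every $\omega_1$-\gesm~is uncountable, so the relevant models are members of $\mathcal P_{\omega_2}(H_\theta)$ of size exactly $\omega_1$, and by \cref{Kruegerlemma}(1) any such model is internally unbounded. Internal unboundedness says precisely that every countable subset of $M$ is contained in a countable member of $M$, and this is the only covering information the arguments below consume.

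For the implication $(2)\Rightarrow(1)$ I would appeal directly to \cref{prop0} with $\delta=\omega_1$. By hypothesis $\mathbb P$ is $\omega_1$-preserving and proper for the stationary set $\mathfrak G_\theta\subseteq\mathcal P_{\omega_2}(H_\theta)$ of $\omega_1$-\gesms, and by $(2)$ each member of $\mathfrak G_\theta$, being an $\omega_1$-guessing model, is $\mathbb P$-indestructible. Thus the hypotheses of \cref{prop0} are met verbatim, and its conclusion gives that $\mathbb P$ has the $\omega_1$-approximation property. Note that \cref{prop0} itself requires no covering assumption, so this direction is immediate.

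For $(1)\Rightarrow(2)$ I would fix an arbitrary $\omega_1$-guessing model $M$ (necessarily uncountable, hence of size $\omega_1$) and apply \cref{Lemma0} with $\delta=\omega_1$. The only nonformal ingredient there is the covering property of $(M,V)$, and inspecting the proof of \cref{Lemma0} one sees it is invoked solely for sets $a\in V$ of size $<\delta=\omega_1$, i.e.\ to cover a countable set $a\cap M\cap X\subseteq M$ by a countable member of $M$. This is exactly internal unboundedness, which is supplied by \cref{Kruegerlemma}(1). Granting this, \cref{Lemma0} yields that $\mathbb P$ forces $M$ to remain $\omega_1$-guessing, i.e.\ $M$ is $\mathbb P$-indestructible, which is $(2)$.

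The one point to be careful about is this covering step: one must confirm that for an $\omega_1$-\gesm~the covering actually used in \cref{Lemma0} is covering of \emph{countable} sets (the case $|a|<\delta=\omega_1$), so that internal unboundedness—rather than the full $\omega_1$-covering property—already suffices to run the argument. Beyond that, the only remaining checks are the routine size and support facts ($\omega_1\subseteq M$ and $|M|=\omega_1$) that make \cref{Kruegerlemma}(1) applicable to the models under consideration.
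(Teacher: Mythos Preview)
Your approach is the paper's own: read the corollary as the $\delta=\omega_1$ case of \cref{Lemma1}, get $(2)\Rightarrow(1)$ from \cref{prop0}, and get $(1)\Rightarrow(2)$ from \cref{Lemma0} once the requisite covering is supplied via \cref{Kruegerlemma}. The direction $(2)\Rightarrow(1)$ is fine as written.

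The gap is in $(1)\Rightarrow(2)$. You assert that an arbitrary $\omega_1$-guessing model $M$ is ``necessarily uncountable, hence of size $\omega_1$,'' and later list $|M|=\omega_1$ among the ``routine size and support facts.'' But item~(2) quantifies over \emph{all} $\omega_1$-guessing models, with no size restriction, and uncountable certainly does not entail size $\omega_1$; so \cref{Kruegerlemma}(1), which is stated only for models of size $\omega_1$, does not apply. The repair is to invoke the main clause of \cref{Kruegerlemma} (equivalently \cref{K-nice-result}) with $\delta=\omega$ rather than item~(1): this yields internal unboundedness for any $\omega_1$-\gesm\ $M$ with $\omega_1\subseteq M$, the closure hypothesis $M^{<\omega}\subseteq M$ being free. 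What then remains is to check that $\omega_1\subseteq M$ holds for \emph{every} $\omega_1$-\gesm, not only those of size ${<}2^{\aleph_0}$ as in the Cox--Krueger corollary. This is true but not quite routine: if $\alpha=M\cap\omega_1<\omega_1$, then any increasing $\omega$-sequence $c$ cofinal in $\alpha$ is $\omega_1$-approximated in $M$ (each countable $a\in M$ with $a\subseteq\omega_1$ has $\sup a\in M\cap\omega_1=\alpha$, so $c\cap a$ is finite), and the resulting guess $c^*\in M$ must be unbounded in $\omega_1$; but then the $\omega$-th element of $c^*$ lies in $M\cap\omega_1=\alpha$, contradicting that $c^*\cap\alpha=c$ has order type $\omega$ and is cofinal in $\alpha$. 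With this in hand your argument goes through unchanged.
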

	\begin{proof}[\nopunct]
	\end{proof}

	\begin{corollary}\label{cor1.2}
		Assume $\rm GMP(\omega_2,\omega_1)$. Suppose that $\mathbb P$ is an $\omega_1$-preserving forcing which is also proper for models of size $\omega_1$.  Then the following are equivalent.
		\begin{enumerate}
			\item  $\mathbb P{-\rm IGMP}(\omega_2,\omega_1)$ holds.
			\item $\mathbb P$ has the $\omega_1$-approximation property.
		\end{enumerate}
	\end{corollary}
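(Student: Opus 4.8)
The plan is to derive the two implications directly from \cref{Lemma0} and \cref{prop0}, using $\rm GMP(\omega_2,\omega_1)$ only to produce a stationary supply of $\omega_1$-\gesms~and the properness hypothesis on $\mathbb P$ only to feed those two results.

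First I would fix the standing data. Let $\theta$ be sufficiently large and set $\mathfrak G_\theta:=\mathfrak G_{\omega_2,\omega_1}(H_\theta)$, which by $\rm GMP(\omega_2,\omega_1)$ is stationary in $\mathcal P_{\omega_2}(H_\theta)$. Each $M\in\mathfrak G_\theta$ is an $\omega_1$-\gesm~of size exactly $\omega_1$, since $\delta$-\gesms~are uncountable and $|M|<\omega_2$. As $\rm GMP(\omega_2,\omega_1)$ implies $\neg\rm CH$ (through the failure of $w{\rm KH}$), we have $\omega_1<2^{\aleph_0}$, whence by the corollary of Cox--Krueger \cite{CK2017} every such $M$ satisfies $\omega_1\subseteq M$. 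Consequently \cref{Kruegerlemma} applies and yields that $(M,V)$ has the covering property demanded by \cref{Lemma0}. Finally, since $\mathbb P$ is proper for models of size $\omega_1$, it is proper for the whole of $\mathfrak G_\theta$.

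For $(1)\Rightarrow(2)$ I would invoke \cref{prop0} with $\delta=\omega_1$. Assuming $\mathbb P{-\rm IGMP}(\omega_2,\omega_1)$, for each large $\theta$ there is a stationary set of $\mathbb P$-indestructible $\omega_1$-\gesms~for which $\mathbb P$ is proper, and \cref{prop0} then delivers the $\omega_1$-approximation property. For $(2)\Rightarrow(1)$ I would push the fixed stationary set $\mathfrak G_\theta$ through \cref{Lemma0}: granting the $\omega_1$-approximation property, every $M\in\mathfrak G_\theta$ is an $\omega_1$-\gesm~with the requisite covering property, so $\mathbb P$ forces $M$ to remain $\omega_1$-guessing; that is, $M$ is $\mathbb P$-indestructible. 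Hence $\mathfrak G_\theta$ is itself a stationary set of $\mathbb P$-indestructible $\omega_1$-\gesms, which is exactly $\mathbb P{-\rm IGMP}(\omega_2,\omega_1)$.

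The one step that requires care, and which I would single out as the crux, is the verification that the $\omega_1$-\gesms~produced by $\rm GMP$ carry the covering property needed in \cref{Lemma0}; this is precisely where $\neg\rm CH$ and Krueger's internal-unboundedness result enter, and it is what lets \cref{Lemma0} be applied to arbitrary members of $\mathfrak G_\theta$ rather than to specially chosen ones. Once this is in place, the equivalence is a direct assembly of \cref{prop0} and \cref{Lemma0}.
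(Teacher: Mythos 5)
Your proposal is correct and takes essentially the same route as the paper, which obtains \cref{cor1.2} by assembling \cref{prop0} (for $(1)\Rightarrow(2)$) and \cref{Lemma0} (for $(2)\Rightarrow(1)$) through the corollaries preceding it. Your verification that the models in $\mathfrak G_{\omega_2,\omega_1}(H_\theta)$ satisfy the covering hypothesis of \cref{Lemma0} (via $\neg\mathrm{CH}$, the Cox--Krueger corollary giving $\omega_1\subseteq M$, and \cref{Kruegerlemma}) is precisely the detail the paper leaves implicit, and it is the right reading, since the proof of \cref{Lemma0} only uses covering of sets of size ${<}\omega_1$.
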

	\begin{proof}[\nopunct]
	\end{proof}

	The following is a generalisation of  \cref{Special implies SH}.
	
	\begin{theorem}\label{theorem-gen}
		Assume  ${\rm IGMP}(\omega_2,\omega_1)$. Then every $\omega_1$-preserving forcing which is proper for models of size $\omega_1$ has the $\omega_1$-approximation property.
		In particular, under ${\rm IGMP}(\omega_2,\omega_1)$ every $\omega_1$-preserving forcing of size $\omega_1$ has the $\omega_1$-approximation property.
	\end{theorem}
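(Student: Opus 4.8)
The plan is to deduce the theorem from \cref{prop0}: once we know that $\mathbb P$ is proper for a stationary set of $\omega_1$-\gesms~of $H_\theta$ that are $\mathbb P$-indestructible, the $\omega_1$-approximation property follows immediately. So the entire argument consists of manufacturing such a stationary set out of ${\rm IGMP}(\omega_2,\omega_1)$ together with the properness assumption.

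Concretely, I would fix a sufficiently large regular $\theta$ and use ${\rm IGMP}(\omega_2,\omega_1)$ to obtain a stationary set $\mathfrak I_\theta\subseteq\mathcal P_{\omega_2}(H_\theta)$ of indestructible $\omega_1$-\gesms~of $H_\theta$. Each $M\in\mathfrak I_\theta$ is an $\omega_1$-\gesm, hence uncountable, and lies in $\mathcal P_{\omega_2}(H_\theta)$, so $|M|=\omega_1$. Therefore the hypothesis that $\mathbb P$ is proper for models of size $\omega_1$ applies to these $M$; discarding a nonstationary set if necessary, I obtain a stationary $\mathfrak G_\theta\subseteq\mathfrak I_\theta$ of $\omega_1$-\gesms~for which $\mathbb P$ is proper. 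Finally, because $\mathbb P$ is $\omega_1$-preserving and each $M\in\mathfrak G_\theta$ is \emph{indestructibly} $\omega_1$-guessing, $M$ remains $\omega_1$-guessing in every $\mathbb P$-extension, i.e.\ $M$ is $\mathbb P$-indestructible. The hypotheses of \cref{prop0} (taken with $\delta=\omega_1$) are thus met, and $\mathbb P$ has the $\omega_1$-approximation property.

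For the ``in particular'' clause I would verify that an $\omega_1$-preserving forcing $\mathbb P$ of size $\omega_1$ is automatically proper for models of size $\omega_1$, so that the first part applies. Since ${\rm IGMP}(\omega_2,\omega_1)$ yields ${\rm GMP}(\omega_2,\omega_1)$ (indestructible \gesms~are in particular \gesms), which in turn gives $\neg{\rm CH}$, we have $2^{\aleph_0}\geq\omega_2$; hence, by the Cox--Krueger corollary, every $\omega_1$-\gesm~$M$ of size $\omega_1<2^{\aleph_0}$ satisfies $\omega_1\subseteq M$. If moreover $\mathbb P\in M$, then fixing in $M$ a surjection $e\colon\omega_1\to\mathbb P$ gives $\mathbb P=e[\omega_1\cap M]\subseteq M$, and once $\mathbb P\subseteq M$ every maximal antichain of $\mathbb P$ lies entirely in $M$, so every condition is $(M,\mathbb P)$-generic and $\mathbb P$ is (trivially) proper for $M$. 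Invoking the first part then finishes the proof.

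The only genuinely delicate point---and where I would be most careful---is the compatibility of the two notions of properness: one must check that the indestructible \gesms~delivered by ${\rm IGMP}$ have size exactly $\omega_1$ (so that ``proper for models of size $\omega_1$'' truly applies to them, and $\omega_1\subseteq M$ holds as required by the properness clause of \cref{prop0}), and that the full indestructibility asserted by ${\rm IGMP}$ specializes to $\mathbb P$-indestructibility. Both are immediate from the uncountability of $\omega_1$-\gesms~and the $\omega_1$-preservation of $\mathbb P$, so no combinatorics beyond \cref{prop0} is needed.
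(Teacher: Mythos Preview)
Your proposal is correct and takes essentially the same approach as the paper: the paper's proof is a one-line appeal to \cref{Lemma1} (which is itself an immediate corollary of \cref{prop0}), while you invoke \cref{prop0} directly and spell out why the stationary set of indestructible $\omega_1$-\gesms\ supplied by ${\rm IGMP}(\omega_2,\omega_1)$ satisfies its hypotheses. You also supply the verification of the ``in particular'' clause (forcings of size $\omega_1$ are trivially proper for $\omega_1$-\gesms\ $M$ with $\omega_1\subseteq M$, since then $\mathbb P\subseteq M$), which the paper leaves implicit.
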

	\begin{proof}
		Let $\mathbb P$ be an $\omega_1$-preserving function which is proper for models of size $\omega_1$. By
		${\rm IGMP}(\omega_2,\omega_1)$,   \cref{Lemma1} implies that $\mathbb P$ has the $\omega_1$-approximation property.
	\end{proof}
	
	Note that in the above proposition, it is enough to assume the nondiagonal version of ${\rm IGMP}(\omega_2,\omega_1)$.
	
	For a class  $\mathfrak K$ of forcing notions and a cardinal $\kappa$, we let 
	${\rm FA}(\mathfrak K,\kappa)$ state that for every $\mathbb P\in\mathfrak K$, and every $\kappa$-sized family $\mathcal D$ of dense subsets of $\mathbb P$, there is a $\mathcal D$-generic filter $G\subseteq\mathbb P$.
	
	\begin{lemma}\label{Lemma2}
		Assume ${\rm FA}(\{\mathbb P\},\kappa)$, for some
		forcing notion $\mathbb P$. Suppose that  $M$ is a $\delta$-guessing set of size $\kappa\geq\delta$.
		Then $\mathbb P$ forces that $M$ is $\delta$-guessing.
	\end{lemma}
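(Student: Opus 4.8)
The plan is to argue by contradiction and use the forcing axiom to reflect a generic ``guess'' back into the ground model, where it can be recognised by the $\delta$-guessing property that $M$ already enjoys in $V$. So suppose, toward a contradiction, that some condition $p_1\in\mathbb P$ forces that $\dot x\subseteq\check X$ (for some $X\in M$) is $\delta$-approximated in $M$ but not guessed in $M$. Since $M\in V$ has size $\kappa$, the set of ground-model candidate guesses $\{u_i:i<\kappa\}\coloneqq\{x^*\cap M:x^*\in M\cap\mathcal P(X)\}$ has size at most $\kappa$, and because $p_1$ forces that $\dot x$ is not guessed we have $p_1\Vdash\dot x\cap\check M\neq\check u_i$ for every $i$.

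Next I would write down at most $\kappa$ dense sets below $p_1$: for each $\xi\in M\cap X$ a set $E_\xi$ of conditions deciding ``$\check\xi\in\dot x$''; for each $a\in M\cap\mathcal P_\delta(X)$ a set $F_a$ of conditions forcing $\dot x\cap\check a=\check b$ for some $b\in M$ (dense, since $p_1$ forces $\dot x\cap a\in M$); and for each $i<\kappa$ a set $D_i$ of conditions forcing $\check\xi\in\dot x\triangle\check u_i$ for some $\xi\in M\cap X$ (dense, since $p_1\Vdash\dot x\cap M\neq u_i$). Applying ${\rm FA}(\{\mathbb P\},\kappa)$ below $p_1$ yields a filter $H\ni p_1$ meeting all of these. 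From $H$ I read off, in $V$, the set $y\coloneqq\{\xi\in M\cap X:\text{some }q\in H\text{ forces }\check\xi\in\dot x\}$; the sets $E_\xi$ guarantee that $y$ is well defined and that $y\subseteq M$.

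Now I would verify that $y$ is $\delta$-approximated in $M$ in the ground model: given $a\in M\cap\mathcal P_\delta(X)$, a witness $q\in H\cap F_a$ forces $\dot x\cap a=b\in M$, and since each relevant $E_\xi$-witness also lies in the filter $H$ and is therefore compatible with $q$, a short compatibility argument gives $y\cap a=b\in M$. The $\delta$-guessing property of $M$ in $V$ then produces $x^*\in M\cap\mathcal P(X)$ with $x^*\cap M=y$; in particular $y=u_j$ for some $j$. Finally I would derive the contradiction inside $V$: picking $q\in H\cap D_j$ with $q\Vdash\check\xi\in\dot x\triangle\check u_j=\dot x\triangle\check y$, and the $E_\xi$-witness $q'\in H$, the very definition of $y$ forces $q'\Vdash\check\xi\notin\dot x\triangle\check y$; but $q$ and $q'$ are compatible members of the filter $H$, an outright contradiction. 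As the name $\dot x$ and condition $p_1$ were arbitrary, $\mathbb P$ forces $M$ to be $\delta$-guessing.

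The main obstacle is twofold. First, one must apply the forcing axiom \emph{below} the fixed condition $p_1$ (equivalently, use the robust ``for every condition'' form of ${\rm FA}$); this is genuinely needed, since a name can be $\delta$-approximated-but-not-guessed only below a condition, and this failure cannot be propagated to the maximal condition by a mixing argument. Second, the whole contradiction must be engineered to live in $V$: the filter $H$ delivered by ${\rm FA}$ is \emph{not} $V$-generic, so it cannot be confronted with an actual generic extension; instead all of the work — extracting $y$, recognising it as some $u_j$ via ground-model guessing, and colliding the decisions made by conditions of $H$ — takes place in $V$, and the careful point is the coherence bookkeeping ensuring that the approximations $y\cap a$ fall back into $M$.
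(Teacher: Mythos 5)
Your proposal is correct and takes essentially the same approach as the paper's proof: assume some condition forces a counterexample name, lay down $\kappa$ many dense sets of the same three kinds (conditions deciding membership of each $\xi\in M$, conditions forcing each $\delta$-small trace $\dot x\cap a$ to equal a member of $M$, and conditions forcing disagreement with each candidate guess), apply ${\rm FA}(\{\mathbb P\},\kappa)$ to the resulting family, and read off from the filter a ground-model set that is $\delta$-approximated in $M$ yet collides with the $\delta$-guessing property of $M$ in $V$. The differences are purely organisational --- you index the non-guessing dense sets by the traces $u_i=x^*\cap M$ and first invoke guessing in $V$ to identify $y=u_j$ before contradicting $D_j$, whereas the paper concludes directly that its extracted set $A^*$ is approximated but unguessed; and your explicit appeal to FA below the condition $p_1$ is the same move the paper compresses into ``we may assume $p_0$ is the maximal condition''.
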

	\begin{proof}
		Assume towards a contraction that for some $p_0\in\mathbb P$, some ordinal $\eta\in M$, and some $\mathbb P$-name $\dot{A}$, $p_0$ forces that $\dot{A}\subseteq\eta$  is  $\delta$-approximated in $M$, but is not guessed in $M$. We may assume that $p_0$ is the maximal condition of $\mathbb P$.
		
		\begin{itemize}
			\item For every $\alpha\in M\cap\eta$, let $D_\alpha\coloneqq\{p\in\mathbb P: p  \mbox{ decides } \alpha\in \dot{A}\}$. 
			\item For every $x\in M\cap \mathcal P_{\delta}(\eta)$, let $E_x\coloneqq\{p\in\mathbb P:~  \exists y\in M~  p\Vdash ``\dot{A}\cap x=\check{y}"\}$.
			\item For every $B\in M\cap \mathcal P(\eta)$, let $F_B\coloneqq\{p\in\mathbb P: \exists\xi\in M, (p\Vdash``\xi\in\dot{A}" )\Leftrightarrow \xi\notin B\}$.
		\end{itemize}
		By our assumptions, it is easily seen that the above sets are dense in $\mathbb P$.
		Let \[
		\mathcal D=\{D_\alpha,E_x,F_B: \alpha,x,B \mbox{ as above }\}.
		\]
		We have $|\mathcal D|=\kappa$.
		By ${\rm FA}(\{\mathbb P\},\kappa)$, there is a $\mathcal D$-generic filter $G\subseteq\mathbb P$.
		Let $A^*\subseteq\eta$ be defined by 
		\[
		\alpha\in A^* \mbox{ if and only if }~ \exists p\in G \mbox{ with } p\Vdash ``\alpha\in\dot{A}."
		\]
		By the $\mathcal D$-genericity of $G$, $A^*$ is a well-defined subset of $\eta$ which is  $\delta$-approximated but not guessed in $M$, a contradiction!
	\end{proof}
	
	The following theorem is  immediate from \cref{cor1.2,Lemma2}.
	
	\begin{theorem}
		Let $\mathfrak K$ be a class of forcings which are proper for models of size $\omega_1$.
		Assume that  ${\rm FA}(\mathfrak K,\omega_1)$  and ${\rm GMP}(\omega_2,\omega_1)$ hold. Then, for every forcing $\mathbb P\in\mathfrak K$, $\mathbb P{\rm-IGMP}(\omega_2,\omega_1)$ holds, and $\mathbb P$ has the $\omega_1$-approximation property.
	\end{theorem}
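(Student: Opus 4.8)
The plan is to fix an arbitrary $\mathbb P\in\mathfrak K$ and proceed in two moves: first use $\mathrm{FA}(\mathfrak K,\omega_1)$ together with the stationary set supplied by $\mathrm{GMP}(\omega_2,\omega_1)$ to establish $\mathbb P\text{-}{\rm IGMP}(\omega_2,\omega_1)$ via \cref{Lemma2}, and then invoke \cref{cor1.2} to upgrade this to the $\omega_1$-approximation property. Since nothing about $\mathbb P$ is used beyond membership in $\mathfrak K$, proving it for one such $\mathbb P$ proves it for all.

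First I would record that $\mathbb P$ preserves $\omega_1$, so that the standing hypotheses of \cref{cor1.2} are in force. This is the standard consequence of $\mathrm{FA}(\{\mathbb P\},\omega_1)$ (which is a special case of $\mathrm{FA}(\mathfrak K,\omega_1)$): were $\mathbb P$ to collapse $\omega_1$, there would be a name for a surjection $\dot f:\omega\to\omega_1$, and meeting the $\omega_1$-many dense sets that drive each $\alpha<\omega_1$ into the range of $\dot f$ together with the countably many dense sets deciding the values $\dot f(n)$ would produce such a surjection already in $V$, a contradiction. Thus $\mathbb P$ is $\omega_1$-preserving, and it is proper for models of size $\omega_1$ by the hypothesis $\mathbb P\in\mathfrak K$.

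Next I would fix a sufficiently large regular $\theta$ and consider the set $\mathfrak G_{\omega_2,\omega_1}(H_\theta)$, which $\mathrm{GMP}(\omega_2,\omega_1)$ guarantees is stationary in $\mathcal P_{\omega_2}(H_\theta)$. Every $M$ in it is an $\omega_1$-\gesm~lying in $\mathcal P_{\omega_2}(H_\theta)$, so $|M|<\omega_2$; since every $\delta$-\gesm~is uncountable, in fact $|M|=\omega_1$. Applying \cref{Lemma2} with $\delta=\kappa=\omega_1$ and using $\mathrm{FA}(\{\mathbb P\},\omega_1)$, each such $M$ is forced by $\mathbb P$ to remain $\omega_1$-guessing, i.e.\ each $M\in\mathfrak G_{\omega_2,\omega_1}(H_\theta)$ is $\mathbb P$-indestructible. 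Hence the entire stationary set consists of $\mathbb P$-indestructible $\omega_1$-\gesms, which is precisely the assertion $\mathbb P\text{-}{\rm IGMP}(\omega_2,\omega_1)$.

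Finally, since $\mathbb P$ is $\omega_1$-preserving and proper for models of size $\omega_1$ and $\mathrm{GMP}(\omega_2,\omega_1)$ holds, \cref{cor1.2} yields the equivalence of $\mathbb P\text{-}{\rm IGMP}(\omega_2,\omega_1)$ with the $\omega_1$-approximation property for $\mathbb P$; the previous step established the former, so $\mathbb P$ has the $\omega_1$-approximation property. The only mildly delicate points are the derivation of $\omega_1$-preservation from the forcing axiom and the observation that the $\mathrm{GMP}$ models have size exactly $\omega_1$ (so that \cref{Lemma2} genuinely applies with $\kappa=\omega_1$); every remaining step is a direct citation of \cref{Lemma2} and \cref{cor1.2}.
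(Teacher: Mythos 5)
Your proof is correct and is essentially the paper's own argument: the paper obtains the theorem exactly as you do, by combining \cref{Lemma2} (under ${\rm FA}(\{\mathbb P\},\omega_1)$, every model in the stationary set $\mathfrak G_{\omega_2,\omega_1}(H_\theta)$ provided by ${\rm GMP}(\omega_2,\omega_1)$ has size $\omega_1$ and is therefore $\mathbb P$-indestructible, which is $\mathbb P$-${\rm IGMP}(\omega_2,\omega_1)$) with \cref{cor1.2} (which converts this into the $\omega_1$-approximation property). The one point where you are more careful than the paper---deriving $\omega_1$-preservation from the forcing axiom---is fine in substance, but note that with the paper's condition-free formulation of ${\rm FA}$ your dense sets are only dense below the condition forcing the collapse, so strictly speaking one should apply the axiom to the cone below that condition (or extract preservation from \cref{Lemma2} itself, applied to a guessing model containing $\omega_1$).
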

	\begin{proof}[\nopunct]
	\end{proof}

	\subsubsection*{Indestructibility and maximality}

	In his PhD thesis \cite{AvrahamPhD}, Abraham asked if there is a  forcing notion $\mathbb P$ in $\rm ZFC$ such that 
	it does not add new reals, adds a new subset of some ordinal whose initial segments belong to the ground model and that
	the forcing does not collapse any cardinal. Notice that if $\rm CH$ holds, then $\rm Add(\omega_1,1)$ is countably closed and
	$\omega_2$-c.c while adding a new subset of $\omega_1$.
	Recall that also  Foreman's Maximality  Principle (see \cite{FMSH86})
	sates that every nontrivial forcing notion either adds a new real or collapses some cardinals. 
	Using the forcing with initial segments of an uncountable cardinal $\kappa$ ordered with the reverse inclusion,
	one can show that Foreman's Maximality principle violates $\rm GCH$
	and the existence of  inaccessible cardinals.
	\begin{definition}
		For a regular cardinal $\kappa$, the {\em Abraham--Todorčević Maximality Principle} at $\kappa^+$, denoted by $\rm{ ATMP(\kappa^+)}$, states
		that if $2^\kappa<\aleph_{\kappa^+}$, then every forcing which adds a new subset of $\kappa^+$ whose initial segments are in the 
		ground model,  collapses
		some cardinal $\leq 2^{\kappa}$.
	\end{definition} 
	Towards answering the above-mentioned question of Abraham, Todorčević showed in \cite{Todorcevic82} that ${\rm ATMP}(\omega_1)$ is true
	if every tree of size and height $\omega_1$ with at most $\omega_1$ cofinal branches  is weakly special.  This principle was further studied 
	by Golshani and Shelah in \cite{GolShe}, where they showed that ${\rm ATMP}(\kappa^+)$ is consistent for 
	every prescribed regular cardinal $\kappa$.
	Cox and Krueger \cite{CK2017} proved the following.
	
	\begin{proposition}[Cox--Krueger \cite{CK2017}]
		$\rm IGMP(\omega_2,\omega_1)$ implies  $\rm ATMP(\omega_1)$.
	\end{proposition}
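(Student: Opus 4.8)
The plan is to deduce the statement from the approximation-theoretic reformulation already available, namely \cref{theorem-gen} together with its underlying mechanism \cref{prop0}, and then to convert the resulting failure of properness into a genuine collapse by passing to the tree of initial segments and invoking Todorčević's criterion.

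First I would record two reductions. Since ${\rm IGMP}(\omega_2,\omega_1)$ implies ${\rm GMP}(\omega_2,\omega_1)$, which implies $\neg{\rm CH}$, we have $\omega_1<2^\omega$. Let $\mathbb P$ add a new subset $A\subseteq\omega_1$ all of whose initial segments lie in $V$, and assume $2^\omega<\aleph_{\omega_1}$. For subsets of $\omega_1$, having all initial segments in $V$ is the same as being $\omega_1$-approximated in $V$: given a countable $a\in V$, boundedness of $a$ below $\omega_1$ (using regularity and, in the extension, preservation of $\omega_1$) gives $A\cap a=(A\cap\beta)\cap a\in V$ for $\beta=\sup(a)$. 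Thus $\mathbb P$ witnesses the failure of the $\omega_1$-approximation property through a subset of $\omega_1$. If $\mathbb P$ collapses $\omega_1$ we are done, as $\omega_1\le 2^\omega$; so from now on assume $\mathbb P$ preserves $\omega_1$.

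Next I would run the guessing argument to locate where a collapse must hide. Fix a large regular $\theta$ and, using ${\rm IGMP}(\omega_2,\omega_1)$, an indestructible $\omega_1$-\gesm~$M\prec H_\theta$ of size $\omega_1$ with $\mathbb P$ and a name $\dot A$ in $M$. As $|M|=\omega_1<2^\omega$, the Cox--Krueger corollary gives $\omega_1\subseteq M$, so $A\subseteq M$ and $A\cap M=A$; by indestructibility and preservation of $\omega_1$, $M$ remains $\omega_1$-guessing in $V[G]$. The point is that if $\mathbb P$ were proper for $M$ in the strong sense of \cref{prop0} (so that $M[G]\cap H_\theta^V=M$), then for every $\beta<\omega_1$ we would have $A\cap\beta=(\dot A\cap\beta)^G\in M[G]\cap V=M$; hence $A$ would be $\omega_1$-approximated in $M$ in $V[G]$, therefore guessed, yielding $A^*\in M\subseteq V$ with $A=A^*\cap M\in V$ and contradicting that $A$ is new. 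Consequently $\mathbb P$ is \emph{not} proper for these stationarily many size-$\omega_1$ guessing models, which is exactly the contrapositive of \cref{theorem-gen}.

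The hard part will be the last step: turning ``$\mathbb P$ preserves $\omega_1$ but is not proper for size-$\omega_1$ guessing models'' into the collapse of a cardinal ${\le}2^\omega$. Here I would argue through the tree of initial segments $T=\{s:\exists\beta<\omega_1,\ s\subseteq\beta,\ [\![\dot A\cap\beta=\check s]\!]\neq 0\}$, ordered by end-extension, which has height $\omega_1$ and size ${\le}2^\omega$ and through which $\langle A\cap\beta:\beta<\omega_1\rangle$ is a cofinal branch that is new (it codes $A\notin V$). By Todorčević's criterion quoted above, ${\rm ATMP}(\omega_1)$ follows once one knows that every such tree with at most $\omega_1$ cofinal branches is weakly special, and this weak-specialness is what ${\rm IGMP}(\omega_2,\omega_1)$ delivers via indestructible guessing models, the same branch-collapsing phenomenon behind \cref{Special implies SH}, whereby a nontrivial tree of this kind, regarded as a forcing, collapses $\omega_1$. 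Thus I expect either that $T$ carries more than $\omega_1$ branches, so that the new branch compresses the ground-model branches and collapses a cardinal ${\le}2^\omega$, or that $T$ is weakly special, so that a new cofinal branch appears only at the cost of collapsing a cardinal ${\le}|T|\le 2^\omega$; either way the $\omega_1$-preserving $\mathbb P$ collapses some cardinal ${\le}2^\omega$. The main obstacle, and the place requiring genuine work, is precisely this conversion of abstract non-properness into a collapse: establishing under ${\rm IGMP}(\omega_2,\omega_1)$ the weak-specialness of trees of height $\omega_1$ with few branches, and tying the arbitrary forcing $\mathbb P$ to the tree $T$ while keeping its size bounded by $2^\omega$.
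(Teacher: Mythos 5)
Your proposal has a genuine gap, and it is exactly at the point you flag as ``the hard part.'' The reductions in your first paragraph are fine, and your middle paragraph is correct as far as it goes, but it is a dead end: fixing an indestructible \gesm~$M$ \emph{in advance} so that it contains $\mathbb P$ and $\dot A$ only lets you conclude that $\mathbb P$ is not proper for such models (the contrapositive of \cref{theorem-gen}), because without an $(M,\mathbb P)$-generic condition the initial segments $A\cap\beta$, although they lie in $V$, need not lie in $M$ --- $M$ has size $\omega_1$ while $\mathcal P(\beta)$ has size $2^\omega>\omega_1$. No route from this non-properness to a collapse is given. The proposed repair via the tree of decided initial segments does not close the gap: Todor\v{c}evi\'c's criterion concerns trees of \emph{size and height} $\omega_1$ with at most $\omega_1$ cofinal branches, whereas your tree $T$ has size up to $2^\omega$; neither this paper nor \cite{CK2017} establishes that ${\rm IGMP}(\omega_2,\omega_1)$ implies the weak-specialness hypothesis; and the first horn of your dichotomy (``more than $\omega_1$ branches, so the new branch compresses the ground-model branches and collapses a cardinal'') is a non sequitur. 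A telling symptom is that your argument never actually uses the hypothesis $2^{\omega}<\aleph_{\omega_1}$, which is essential to the theorem.

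The missing idea is the one the paper runs one cardinal higher in \cref{nice application}, and it is Cox--Krueger's: choose the guessing model \emph{after} locating, inside $V$, a small set that captures cofinally many initial segments of the new set. Work in $W=V[G]$, let $\mathfrak X=\{A\cap\gamma:\gamma<\omega_1\}$, and assume toward a contradiction that every $V$-cardinal $\leq 2^\omega$ remains a cardinal in $W$ (so, since $2^\omega<\aleph_{\omega_1}$, also their cofinalities are preserved). Let $\mu$ be the least cardinal such that some $M\in V$ with $|M|^V=\mu$ has $|M\cap\mathfrak X|^W=\omega_1$; such $\mu\leq 2^\omega$ exists since the set of bounded subsets of $\omega_1$ computed in $V$ is a candidate. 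A pigeonhole/cofinality argument (splitting $M$ as an increasing union of $\mu$-many pieces of smaller size, and using ${\rm cof}^W(\mu)={\rm cof}^V(\mu)\neq\omega_1$) forces $\mu=\omega_1$. Now cover a witness $M$ by an indestructible $\omega_1$-\gesm~$N\prec H_\theta$ of size $\omega_1$ in $V$, using stationarity of $\mathfrak G_\theta$. Then $N$ contains cofinally many members of $\mathfrak X$, and \emph{this} --- not properness --- is what makes $A$ $\omega_1$-approximated in $N$ inside $W$; indestructibility (in the outer-model form noted after the Cox--Krueger consistency theorem, applicable since $\omega_1^W=\omega_1^V$) keeps $N$ $\omega_1$-guessing in $W$, and since $\omega_1\subseteq N$, guessing yields $A\in V$, a contradiction. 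Your proposal is missing precisely this minimal-$\mu$ covering step, and the tree/weak-specialness detour cannot substitute for it.
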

	\begin{proof}[\nopunct]
	\end{proof}

	\subsection{${\rm IGMP}^{+}$}

	\begin{theorem}[Mohammadpour--Veličković \cite{RahmanThesis}]
		${\rm IGMP}^+(\omega_3,\omega_1)$ is consistent modulo the consistency of two supercomapct cardinals..
	\end{theorem}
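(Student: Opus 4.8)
The plan is to build, from two supercompact cardinals $\lambda<\mu$, a single side-condition forcing $\mathbb P$ that simultaneously (i) generates the stationarily many strongly $\omega_1$-guessing models needed for ${\rm GMP}^+(\omega_3,\omega_1)$, in the manner of the Mohammadpour--Veli\v{c}kovi\'c forcing \cite{MV,RahmanThesis}, and (ii) specializes, \emph{en route}, the branchless trees that the Cox--Krueger analysis attaches to the guessing pieces, thereby forcing those pieces — and hence the strong models built from them — to be indestructible. Recall that in the base construction $\lambda$ is turned into $\omega_2$ and $\mu$ into $\omega_3$ while $\omega_1$ is preserved; the conditions are finite decorated $\in$-chains of virtual models of two types, countable models and $\lambda$-Magidor models, and the forcing is $\mu$-Knaster and strongly proper for both families. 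A strongly $\omega_1$-guessing model $M$ of size $\omega_2$ appears as the union of a generic $\in$-chain $\langle M_\xi:\xi<\omega_2\rangle$ (see \cref{Strong guessing}) whose members $M_\xi$ are $\omega_1$-guessing models of size $\omega_1$.

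First I would enrich each condition with a finite partial function specializing, for each model $N$ occurring as a side condition, the size-$\omega_1$ branchless tree $T(N)$ naturally associated with $N$: a cofinal branch of $T(N)$ in an $\omega_1$-preserving extension is exactly a set that is $\omega_1$-approximated in $N$ but not guessed, so that $T(N)$ being special is equivalent to $N$ remaining $\omega_1$-guessing, as in the discussion preceding \cref{Special implies SH}. The specialization coordinate is to be added so that the two basic operations survive: the amalgamation of two conditions over a common model (which underlies strong properness) and the $\Delta$-system thinning (which underlies $\mu$-Knasterness). Concretely, when amalgamating over $N$ one must merge the two finite specializing functions into a single consistent promise on $T(N)$; this is possible precisely because $T(N)$ is branchless and finite conditions specialize such a tree without adding branches, the point at the heart of the Cox--Krueger and Abraham--Todor\v{c}evi\'c specialization technology.

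Next I would verify the global properties of $\mathbb P$. Finiteness of the conditions, together with the merged specialization coordinate, keeps $\mathbb P$ $\mu$-Knaster, so $\mu$ is preserved and becomes $\omega_3$; strong properness for both the countable and the $\lambda$-Magidor families (now re-checked with the extra coordinate) preserves $\omega_1$ and collapses $\lambda$ to $\omega_2$. Since the model-generating part of $\mathbb P$ is unchanged in spirit, the generic chains still produce a stationary set of strongly $\omega_1$-guessing models, so ${\rm GMP}^+(\omega_3,\omega_1)$ holds in $V^{\mathbb P}$; in particular ${\rm GMP}(\omega_2,\omega_1)$ holds. The new content is that the generic specializing function is now \emph{total} on each $T(M_\xi)$, so every such tree is special in $V^{\mathbb P}$ and remains branchless in any further $\omega_1$-preserving extension. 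Hence each piece $M_\xi$ is an indestructible $\omega_1$-guessing model, and applying the (relativized) pressing-down argument of \cref{strong guessing is guessing} in an arbitrary $\omega_1,\omega_2$-preserving extension — where the preservation of $\omega_2$ is exactly what keeps the chain $\langle M_\xi:\xi<\omega_2\rangle$ a legitimate witness — shows that $M=\bigcup_\xi M_\xi$ is likewise indestructibly guessing. This yields ${\rm IGMP}^+(\omega_3,\omega_1)$. As a consistency check, indestructibility of the pieces is exactly the $\omega_1$-approximation property phrased through \cref{cor1.2,theorem-gen,Lemma1}.

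The main obstacle is the amalgamation step of the enriched forcing: the tree $T(N)$ whose specialization we demand depends on the very model $N$ that the side conditions are generically building, so the specialization promises must be woven into the model-amalgamation without circularity, and one must show that the merged promise is still realizable — i.e.\ that adding the specialization coordinate does not spoil strong properness for either family of models nor introduce a cofinal branch into any $T(N)$. Carrying out this amalgamation simultaneously for countable and for $\lambda$-Magidor models, and proving that the resulting generic functions are total and genuinely witness speciality, is the technical crux; everything else reduces to the already established behaviour of the Mohammadpour--Veli\v{c}kovi\'c forcing and the equivalences recorded in \cref{cor1.2,theorem-gen}.
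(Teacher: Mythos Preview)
The paper gives no proof of this theorem: the proof environment is empty and the result is simply attributed to \cite{RahmanThesis}. So there is nothing in the present paper against which to compare your argument directly.

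On its own terms, your sketch names a plausible strategy --- decorate the Mohammadpour--Veli\v{c}kovi\'c side-condition forcing with finite specialisation data for the Cox--Krueger trees $T(N)$ --- and you correctly isolate the decisive difficulty in your last paragraph. But you do not resolve it. The tree $T(N)$ encodes failures of the $\omega_1$-guessing property of $N$ \emph{in the final extension}; at the moment $N$ enters a condition, that tree is not yet an object of the ground model on which one can place finite specialising promises, and the models $N$ that eventually become the guessing pieces $M_\xi$ are themselves only of the form $M[G]$ for suitable $M$ in $V$. So ``enrich each condition with a finite partial specialising function for $T(N)$'' is not, as stated, a well-defined move, and the amalgamation you describe (merging two finite promises on $T(N)$ over $N$) presupposes exactly the object whose existence is in question. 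Your closing paragraph acknowledges this circularity but stops short of breaking it.

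What is missing is either (a) a reformulation in which the specialisation targets are ground-model trees determined by names, with a proof that the decorated forcing retains strong properness for both the countable and the $\lambda$-Magidor families and still has the $\omega_1$-approximation over the relevant intermediate models, or (b) an argument that the undecorated forcing already forces enough of a specialisation principle (e.g.\ that every branchless tree of height and size $\omega_1$ is special) so that indestructibility of the pieces $M_\xi$ comes for free, as it does under $\rm PFA$ in the Cox--Krueger argument. Either route is substantial; as written, your proposal is a programme rather than a proof.
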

	\begin{proof}[\nopunct]
	\end{proof}
	
	We shall prove that $\rm IGMP^+(\omega_3,\omega_1)$ implies $\rm ATMP(\omega_2)$, and since $\rm IGMP(\omega_2,\omega_1)$ follows from
	$\rm IGMP^+(\omega_3,\omega_1)$, we obtain the consistency of $\rm ATMP(\omega_1)$ and $\rm ATMP(\omega_2)$ simultaneously.

	\begin{theorem}[Mohammadpour--Veličković \cite{RahmanThesis}]\label{nice application}
		Suppose that $V\subseteq W$ are transitive models of $\rm ZFC$. 
		Assume  that $\rm SGM^+(\omega_3,\omega_1)$ 
		and $2^{\omega_1}<\aleph_{\omega_{2}}$ hold in $V$. 
		Suppose that $W$ has a  subset of $\omega^V_2$ which does not belong to $V$.
		Then either $\mathcal P^V(\omega_1)\neq \mathcal P^W(\omega_1)$ or  some $V$-cardinal  $\leq 2^{\omega_1}$ is no longer a cardinal in $W$.
	\end{theorem}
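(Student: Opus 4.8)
The plan is to argue the contrapositive: assuming that $\mathcal P^V(\omega_1)=\mathcal P^W(\omega_1)$ and that no $V$-cardinal $\leq 2^{\omega_1}$ is collapsed in $W$, I will show that every $A\subseteq\omega_2^V$ lying in $W$ already belongs to $V$, so that $W$ adds no new subset of $\omega_2^V$. The first hypothesis has two consequences I would record at the outset. First, $W$ adds no new reals and $\omega_1$ is preserved. Second, for every $\gamma<\omega_2^V$ the initial segment $A\cap\gamma$ is, via a fixed bijection $\gamma\to\omega_1$ in $V$, coded by a subset of $\omega_1$ lying in $W$, hence in $V$; thus $A$ is \emph{coherent}, i.e.\ $A\cap\gamma\in V$ for all $\gamma<\omega_2$. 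This reduces the theorem to an $\omega_2$-level approximation statement: a coherent new subset of $\omega_2$ cannot exist over $V$ once cardinals up to $2^{\omega_1}$ are preserved.

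Next I would invoke ${\rm SGM}^+(\omega_3,\omega_1)$ to fix, in $V$, an indestructible strongly $\omega_1$-\gesm\ $M=\bigcup_{\xi<\omega_2}M_\xi\prec H_\theta^V$ of size $\omega_2$, with all relevant parameters in $M$. Since $M$ is a union of a chain of $\omega_1$-\gesms\ of size $\omega_1$ with $\omega_1\subseteq M_\xi$, a supremum computation gives $\omega_2\subseteq M$, so $\kappa_M=\omega_3$ and $A\subseteq\omega_2\subseteq M$; moreover each $M_\xi$ is internally unbounded by \cref{Kruegerlemma}, and $M$ is genuinely $\omega_1$-guessing by \cref{strong guessing is guessing}. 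The point of indestructibility is that, because $W$ preserves $\omega_1$ and collapses no cardinal $\leq 2^{\omega_1}$, the model $M$ (and each $M_\xi$) remains $\omega_1$-guessing in $W$; this is exactly the robustness built into the $+$-principle and is where the no-collapse hypothesis is consumed. By \cref{card1} one has $2^{\omega_1}\geq\omega_3$, so the cardinals in play genuinely sit in the interval $(\omega_2,2^{\omega_1}]\subseteq(\omega_2,\aleph_{\omega_2})$ singled out by the hypothesis $2^{\omega_1}<\aleph_{\omega_2}$.

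The heart of the argument is a capture step modelled on \cref{strong guessing is guessing} and \cref{MP from FS }. Writing $\gamma_\xi=M_\xi\cap\omega_2$, coherence gives $A\cap\gamma_\xi\in V$, an $\omega_1$-sized subset of $\gamma_\xi\subseteq M_\xi$. Using the continuity of the chain at points of cofinality $\omega_1$ together with the internal structure of the $M_\xi$, I would show that for a club of $\xi$ the piece $A\cap\gamma_\xi$ is $\omega_1$-approximated in $M_\xi$; since $M_\xi$ is $\omega_1$-guessing in $W$, each such piece is then guessed, yielding $A_\xi\in M_\xi$ with $A_\xi\cap M_\xi=A\cap\gamma_\xi$. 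Two applications of Fodor's lemma along the chain --- first to pin the guesses inside a single $M_\eta$, then to make them constant on a stationary set, exactly as in \cref{strong guessing is guessing} --- produce a single $A^*\in M$ with $A^*\cap M=A$. As $A\subseteq\omega_2\subseteq M$ and $A^*\in M\subseteq V$, this gives $A=A^*\cap\omega_2\in V$, completing the contrapositive.

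The main obstacle is precisely the approximation claim inside the capture step, namely that the countable traces $A\cap b$ (for $b\in M_\xi$ countable) land in $M_\xi$. This does not follow from coherence alone, since $M_\xi$ cannot contain all reals of $V$ (indeed $2^{\aleph_0}\geq\omega_3$ here by \cref{0-g}); it is exactly here that indestructibility must do its work. The mechanism I expect to need is the one underlying \cref{prop0} and the Cox--Krueger principle that a nontrivial tree of size and height $\omega_1$ collapses $\omega_1$ under indestructible guessing, now lifted to the $\omega_2$-level via the strong-guessing chain: a coherent $A$ whose pieces \emph{failed} to be approximated in the $M_\xi$ would thread a branchless $V$-tree of size $\leq 2^{\omega_1}$, and the indestructibility of the strongly guessing models forces any such new branch to collapse a $V$-cardinal $\leq 2^{\omega_1}$ --- the disjunct we have excluded. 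Thus, under the no-collapse hypothesis, approximation cannot fail, the capture goes through, and $A\in V$. Making this branch-counting argument precise, and isolating the exact branchless tree attached to a strongly $\omega_1$-\gesm, is the step I would expect to be most delicate.
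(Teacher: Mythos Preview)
Your approach has a real gap precisely at the point you flag as ``the main obstacle,'' and the resolution you sketch does not close it. You fix the strongly guessing model $M=\bigcup_\xi M_\xi$ \emph{first} and then try to argue that the coherent set $A$ is $\omega_1$-approximated in $M$ (or in the $M_\xi$). But coherence only gives $A\cap\gamma\in V$; there is no reason these initial segments lie in $M$, since $|M|=\omega_2$ while $|\mathcal P^V(\gamma)|=2^{\omega_1}\geq\omega_3$. Your proposed workaround via a branchless tree and a collapse argument is not made precise, and it is unclear what tree you would attach or why a new branch through it would force a collapse below $2^{\omega_1}$. You also never actually use the hypothesis $2^{\omega_1}<\aleph_{\omega_2}$; you only remark on it.

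The paper avoids this difficulty by reversing the order of operations. Before choosing any guessing model, it runs a minimisation in $W$: let $\mu$ be least so that some $V$-set of size $\mu$ meets $\mathfrak X=\{x\cap\gamma:\gamma<\omega_2\}$ in a set of size $\omega_2$. Trivially $\mu\le 2^{\omega_1}$, and the hypotheses $2^{\omega_1}<\aleph_{\omega_2}$ together with ``no $V$-cardinal $\le 2^{\omega_1}$ is collapsed'' force $\mathrm{cof}^W(\mu)=\mathrm{cof}^V(\mu)\neq\omega_2$, so a splitting/pigeonhole argument gives $\mu=\omega_2$. Only \emph{then} does one pick an indestructible strongly $\omega_1$-guessing $N$ of size $\omega_2$ covering a witness $M$. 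Now $\omega_2$-many initial segments of $x$ already sit inside $N$, and that alone yields $\omega_1$-approximation of $x$ in $N$ (for countable $a\in N$ with $a\subseteq\omega_2$, pick $\gamma'>\sup a$ with $x\cap\gamma'\in N$ and intersect). Indestructibility is used only to say $N$ is still guessing in $W$; the no-collapse hypothesis is consumed in the minimisation, not in the indestructibility step as you suggest. The filtration $\langle M_\xi\rangle$ and the Fodor argument you propose are not needed at all.
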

	\begin{proof}
		Let $x\in W\setminus V$ be a  subset of $\omega^V_2$. Assume that $\mathcal P^V(\omega_1)=\mathcal P^W(\omega_1)$.
		We shall show that  some cardinal  $\leq 2^{\omega_1}$ is no longer a cardinal in $W$.
		Since $\mathcal P^V(\omega_1)= \mathcal P^W(\omega_1)$, 
		every initial segment of $x$ belongs to $V$. Letting now $\mathfrak X=\{x\cap\gamma:\gamma<\omega_2\}$, we have that
		$\mathfrak X$ is bounded in $V$.
		Assume towards a contradiction that every $V$-cardinal  $\leq 2^{\omega_1}$ remains  cardinal in $W$.
		Work in $W$, and let $\mu\geq\omega_2$ be the least cardinal
		such that there is a set $M$ in $V$ of cardinality $\mu$ such that $M\cap \mathfrak X$ is of size $\omega_2$. Thus $\mu\leq 2^{\omega_1}$.
		We claim that $\mu=\omega_2$. Suppose that $\mu>\omega_2$ and $M$ is a witness for that,  then one can work in $V$ and write 
		$M$ as the union of an increasing sequence $\langle M_\xi:\xi<{\rm cof^{V}}(\mu)\rangle$ of subsets of $M$ in $V$ whose size are less than $\mu$.
		Since $\mu\leq 2^{\omega_1}<\aleph_{\omega_2}$
		and every cardinal $\leq 2^{\omega_1}$ is a cardinal in $W$, ${\rm cof}^W(\mu)={\rm cof}^V(\mu)\neq\omega_2$. 
		Thus either $\mu$ is of cofinality at most $\omega_1$,
		which then by the pigeonhole principle, there is $\xi<{\rm cof}(\mu)$ such that $|M_\xi\cap\mathfrak X|=\omega_2$, or $\mu$
		is regular, and thus  there is some $\xi<{\rm cof}(\mu)$ such that  $M\cap \mathfrak X\subseteq M_\xi$, but in either case we
		obtain a  contradiction since $|M_\xi|<\mu$.
		Therefore, $\mu=\omega_2$. Let $M$ be a witness for $\mu=\omega_2$, and let $\mathfrak X'=M\cap\mathfrak X$.
		Notice that $V\models |M|=\omega_2$.
		Since $M$ is in $V$ and that $V$ satisfies  $\rm SGM^{+}(\omega_3,\omega_1)$, one can
		cover $M$ with an indestructibly strongly $\omega_1$-guessing model $N$ of size $\omega_2$.
		Working in $W$, 
		$x$ is countably approximated in $N$, since if $\gamma\in N\cap \omega_2$, then there is
		$\gamma'>\gamma$ in $N$ such that $x\cap\gamma'\in\mathfrak X'\subseteq N$, and hence $x\cap\gamma\in N$.
		On the other hand
		$N$ is a guessing model in $W$ by $\rm SGM^+(\omega_3,\omega_1)$ in $V$ and that  both $\omega_1$ and $\omega_2$ are cardinals in $W$.
		Thus $x$ is guessed in $N$, but then $x$ must be in $N$ as $|x|\leq |N|$.
		Therefore, $x$ is in $V$, which is a contradiction!
	\end{proof} 
	
	The following corollaries are immediate.
	\begin{corollary}
		Suppose that $V\subseteq W$ are transitive models of $\rm ZFC$. Assume in $V$, $\rm SGM^+(\omega_3,\omega_1)$ 
		, $2^{\aleph_0}<\aleph_{\omega_{1}}$ and $2^{\aleph_1}<\aleph_{\omega_{2}}$ hold.
		Suppose that $W$ has a new subset of $\omega^V_2$.
		Then either $\mathbb R^{W}\neq\mathbb R^V$ or some $V$-cardinal $\leq 2^{\omega_1}$  is collapsed in $W$. 
	\end{corollary}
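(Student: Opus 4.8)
The plan is to reduce everything to \cref{nice application} and then run its proof a second time, one cardinal lower. First I would note that the hypothesis $2^{\aleph_1}<\aleph_{\omega_2}$ is exactly the hypothesis $2^{\omega_1}<\aleph_{\omega_2}$ of \cref{nice application}; together with $V\subseteq W$, the principle $\rm SGM^{+}(\omega_3,\omega_1)$ holding in $V$, and $W$ possessing a new subset of $\omega_2^V$, all hypotheses of that theorem are in place. Applying it gives the dichotomy: either $\mathcal P^V(\omega_1)\neq\mathcal P^W(\omega_1)$, or some $V$-cardinal $\le 2^{\omega_1}$ ceases to be a cardinal in $W$. In the second case the desired conclusion holds immediately, so I may assume the first, namely that $W$ contains a new subset $y$ of $\omega_1^V$.

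In this case I would re-run the proof of \cref{nice application} shifted down by one, replacing $\omega_2$ by $\omega_1$, the role of $\mathcal P(\omega_1)$ by that of $\mathbb R$ (equivalently $\mathcal P(\omega)$), and the arithmetic hypothesis by $2^{\aleph_0}<\aleph_{\omega_1}$. Concretely, suppose $\mathbb R^W=\mathbb R^V$; I must produce a collapse. Each initial segment $y\cap\gamma$ with $\gamma<\omega_1$ is a subset of a countable ordinal, hence coded by a real, hence in $V$; thus $\mathfrak Y\coloneqq\{y\cap\gamma:\gamma<\omega_1\}$ is bounded in $V$, and since a bounded subset of $\omega_1$ would itself be coded by a real, $y$ is unbounded in $\omega_1$ and $\mathfrak Y$ has size $\omega_1$. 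Assuming toward a contradiction that every $V$-cardinal $\le 2^{\aleph_0}$ stays a cardinal in $W$, I work in $W$ and take the least $\mu\ge\omega_1$ for which there is $M\in V$ with $|M|^V=\mu$ and $|M\cap\mathfrak Y|=\omega_1$. As $\mathfrak Y$ lives inside the $V$-set of bounded subsets of $\omega_1$, which has $V$-cardinality $2^{\aleph_0}<\aleph_{\omega_1}$, we get $\mu\le 2^{\aleph_0}$, and the cofinality computation of \cref{nice application} — now invoking that $\omega_1$ is the unique cardinal below $\aleph_{\omega_1}$ of cofinality $\omega_1$ — forces $\mu=\omega_1$.

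Finally, fixing a witness $M$ of $V$-size $\omega_1$, I would use $\rm SGM^{+}(\omega_3,\omega_1)$ in $V$ to cover $M$ by an indestructible $\omega_1$-guessing model $N$ of size $\omega_1$ with $\omega_1\subseteq N$; such models are exactly the size-$\omega_1$ building blocks of the indestructible strongly guessing models employed in \cref{nice application}, and they are stationarily many by $\rm GMP(\omega_2,\omega_1)$. Since by assumption every $V$-cardinal $\le 2^{\aleph_0}$ is preserved and $2^{\aleph_0}\ge\omega_2$ (as $\rm GMP(\omega_2,\omega_1)$ refutes $\rm CH$), in particular $\omega_1$ and $\omega_2$ remain cardinals in $W$, so indestructibility keeps $N$ an $\omega_1$-guessing model in $W$. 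Working in $W$, the set $y$ is then countably approximated in $N$: for $\gamma\in N\cap\omega_1$ one uses that $\{\beta<\omega_1:y\cap\beta\in M\cap\mathfrak Y\}$ is cofinal in $\omega_1$ to find $\gamma'>\gamma$ in $N$ with $y\cap\gamma'\in N$, whence $y\cap\gamma=(y\cap\gamma')\cap\gamma\in N$, and since countable subsets of $\omega_1$ are bounded this yields $y\cap a\in N$ for every countable $a\in N$. Hence $y$ is guessed in $N$; as $y\subseteq\omega_1\subseteq N$ and $|y|=\omega_1\in N\cap\kappa_N$ (here $N\cap\omega_2\in\omega_2$ by the Cox--Krueger corollary, so $\kappa_N=\omega_2$), the fact that a guessed subset of $N$ of size below $\kappa_N$ already belongs to $N$ gives $y\in N\subseteq V$, contradicting $y\notin V$. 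Thus either $\mathbb R^W\neq\mathbb R^V$ or some $V$-cardinal $\le 2^{\aleph_0}\le 2^{\omega_1}$ is collapsed, as required. The hard part will be checking that these size-$\omega_1$ guessing models genuinely inherit indestructibility from $\rm SGM^{+}(\omega_3,\omega_1)$ and that the cofinality and countable-approximation bookkeeping transfers verbatim from \cref{nice application}; everything else is a line-by-line translation of that proof.
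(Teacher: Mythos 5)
Your overall strategy --- apply \cref{nice application} to reduce to the case of a new subset of $\omega_1^V$ whose initial segments lie in $V$, then re-run that theorem's proof one cardinal lower --- is exactly what the paper intends (it records no proof and calls the corollary immediate), and most of your bookkeeping is right: the coding of initial segments by reals, the unboundedness of $y$, the minimization of $\mu$, the observation that $\omega_1$ is the only cardinal below $\aleph_{\omega_1}$ of cofinality $\omega_1$ (which is precisely where the extra hypothesis $2^{\aleph_0}<\aleph_{\omega_1}$ enters, playing the role that $2^{\omega_1}<\aleph_{\omega_2}$ plays in the theorem), and the approximation-and-guessing endgame.

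The one genuine gap is the covering step. You cover the witness $M$ by an \emph{indestructible} $\omega_1$-guessing model $N$ of size $\omega_1$ and assert such models are stationarily many ``by ${\rm GMP}(\omega_2,\omega_1)$''. That principle carries no indestructibility whatsoever, and ${\rm SGM}^+(\omega_3,\omega_1)$ attaches its indestructibility clause to the strongly guessing models of size $\omega_2$; nothing in the paper's definitions guarantees that the size-$\omega_1$ building blocks of such a model inherit indestructibility --- you flag this yourself as ``the hard part'', and it is not a formality, since a model can remain guessing in $W$ while the members of a chain witnessing its strong guessing do not. The repair is to avoid small models entirely and do exactly what the proof of \cref{nice application} does: cover $M$ (which has size $\omega_1\leq\omega_2$) by a single indestructibly strongly $\omega_1$-guessing model $N$ of size $\omega_2$ supplied directly by ${\rm SGM}^+(\omega_3,\omega_1)$. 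Your approximation argument then goes through verbatim for this $N$ (since $M\cap\mathfrak Y\subseteq N$ still yields cofinally many initial segments of $y$ inside $N$), the guessing property of $N$ in $W$ is exactly what the hypothesis provides once $\omega_1$ and $\omega_2$ survive in $W$, and the final step no longer needs the computation $\kappa_N=\omega_2$: once $y$ is guessed by some $y^*\in N$, then because $\omega_1\in N$ and $\omega_1\subseteq N$ (the building blocks have size $\omega_1<2^{\aleph_0}$, hence contain $\omega_1$), one gets $y=y^*\cap\omega_1\in N\subseteq V$, the desired contradiction. With that substitution your argument is correct and coincides with the paper's intended derivation.
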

	\begin{proof}[\nopunct]
	\end{proof}
	\begin{corollary}
		$\rm SGM^+(\omega_3,\omega_1)$ implies ${\rm ATMP}(\omega_1)$ and ${\rm ATMP}(\omega_2)$.
	\end{corollary}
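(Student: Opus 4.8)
The plan is to treat the two instances of ${\rm ATMP}$ separately, reducing each to material already established. For ${\rm ATMP}(\omega_1)$ there is essentially nothing new to prove: as recorded in the discussion preceding \cref{nice application}, $\rm SGM^+(\omega_3,\omega_1)$ (that is, $\rm IGMP^+(\omega_3,\omega_1)$) implies ${\rm IGMP}(\omega_2,\omega_1)$, and the Cox--Krueger result \cite{CK2017} that ${\rm IGMP}(\omega_2,\omega_1)$ implies ${\rm ATMP}(\omega_1)$ then finishes this half.

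The content lies in ${\rm ATMP}(\omega_2)$, which I would obtain as the forcing instance of \cref{nice application}. Assume $2^{\omega_1}<\aleph_{\omega_2}$, let $\mathbb P$ add a new $x\subseteq\omega_2$ all of whose initial segments lie in $V$, and put $W=V[G]$ for a $V$-generic $G$. If $\mathbb P$ collapses some cardinal ${\leq}2^{\omega_1}$ we are done, so suppose toward a contradiction that every $V$-cardinal ${\leq}2^{\omega_1}$—in particular $\omega_1$ and $\omega_2$—remains a cardinal in $W$. Then $x\in W\setminus V$ is a subset of $\omega_2^V$ whose initial segments already belong to $V$ by the hypothesis on $\mathbb P$. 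The crucial point is that the only use of the equality $\mathcal P^V(\omega_1)=\mathcal P^W(\omega_1)$ in the proof of \cref{nice application} is to guarantee exactly this, namely that each $x\cap\gamma$ lies in $V$; since that is now given outright, the rest of the argument runs verbatim. One forms $\mathfrak X=\{x\cap\gamma:\gamma<\omega_2\}$, takes the least $\mu\leq 2^{\omega_1}$ such that some $M\in V$ of size $\mu$ meets $\mathfrak X$ in a set of size $\omega_2$, shows $\mu=\omega_2$ from the preservation of cardinals ${\leq}2^{\omega_1}$, covers such an $M$ by an indestructibly strongly $\omega_1$-guessing model $N$ of size $\omega_2$ furnished by $\rm SGM^+(\omega_3,\omega_1)$, and checks that $x$ is countably approximated in $N$ because its relevant initial segments lie in $\mathfrak X\cap M\subseteq N$. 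As $N$ is indestructibly guessing and $\omega_1,\omega_2$ are still cardinals in $W$, $N$ remains $\omega_1$-guessing in $W$; hence $x$ is guessed in $N$, and since $|x|\leq|N|$ this forces $x\in N\subseteq V$, contradicting $x\notin V$. Therefore a cardinal ${\leq}2^{\omega_1}$ is collapsed, which is ${\rm ATMP}(\omega_2)$.

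The step I expect to require care—and the real obstacle—is the decoupling just invoked. In \cref{nice application} the assumption $\mathcal P^V(\omega_1)=\mathcal P^W(\omega_1)$ silently did two jobs: it pinned down the initial segments of $x$, and it kept the covering model guessing in $W$. In the ${\rm ATMP}(\omega_2)$ formulation the first job is handled by the standing hypothesis on $\mathbb P$, so one must confirm that the second job is carried entirely by indestructibility together with the mere preservation of $\omega_1$ and $\omega_2$, and that no other clause—especially the minimal-$\mu$ computation and the covering step—secretly relied on $\mathcal P^V(\omega_1)=\mathcal P^W(\omega_1)$ rather than on cardinal preservation alone. Once this is verified the deduction is immediate, and in particular the ``new real'' alternative that appears in the general outer-model corollary never arises here, since the initial-segment hypothesis is imposed directly on $x$.
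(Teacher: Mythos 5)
Your proof is correct and follows exactly the route the paper intends: ${\rm ATMP}(\omega_1)$ via ${\rm SGM}^+(\omega_3,\omega_1)\Rightarrow{\rm IGMP}(\omega_2,\omega_1)$ together with the Cox--Krueger proposition, and ${\rm ATMP}(\omega_2)$ by rerunning the proof of \cref{nice application} with the initial-segment hypothesis on $x$ standing in for $\mathcal P^V(\omega_1)=\mathcal P^W(\omega_1)$. Your key observation—that the theorem's statement alone does not suffice (its first disjunct is not excluded by the ${\rm ATMP}(\omega_2)$ hypotheses) and that its proof uses the equality $\mathcal P^V(\omega_1)=\mathcal P^W(\omega_1)$ only to place the initial segments of $x$ in $V$—is precisely the verification that makes the corollary ``immediate'' in the paper's sense.
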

	\begin{proof}[\nopunct]
	\end{proof}

	\[
	\adjustbox{scale=1.1,center}{
		\begin{tikzcd}
			\neg {\rm CH}&  & &  & &2^{\aleph_0}\geq \aleph_3  \\
			\neg w{\rm KH} \arrow[u]&   \neg \square(\omega_2,\lambda)& \rm TP(\omega_2) & \rm SCH &\neg {\rm ATMP}(\omega_1) & {\rm MP}(\omega_2)\arrow[u] \\
			& & & &  \\
			& & & &\\
			& &  & &\\
			&  &  {\rm GMP}(\omega_2,\omega_1)\arrow[uuuu]\arrow{uuuul}[sloped]{ Weis} \arrow{uuuur}[sloped]{ Krueger+Viale}\arrow{uuuurr}[sloped]{ Viale-Weis}\arrow{uuuull}[sloped]{ Cox-Krueger}\arrow[uuuurrr, red, "\textcolor{red}{ \textbf{/}}" marking] \arrow[red,  "\textbf{/}" marking]{drr}[sloped, near start]{  \textcolor{black}{Cox-Krueger}} \arrow[red,  "\textbf{/}" marking]{dll}[sloped, near start]{  \textcolor{black}{Cox-Krueger}} &  & \\
			{\rm ATMP}(\omega_1)\arrow[rrrr]& & &   &{\rm SH}\\
			&   &{\rm IGMP}(\omega_2,\omega_1) \arrow[uu, crossing over] \arrow{ull}[sloped]{\textcolor{black}{Cox-Krueger}} \arrow{urr}[sloped]{ \textcolor{black}{Cox-Krueger}}& &\\
			& & & &  \\
			& & & &  \\
			& & & &  \\
			& &  {\rm PFA} \arrow{uuuu}[sloped]{Cox-Krueger}   & & 
	\end{tikzcd}}
	\]
	
	\subsection{Open problems}

	\begin{problem}
		It is consistent to have $\rm IGMP^+(\omega_{n+2},\omega_1)$ , for every $n\geq 1$. 
	\end{problem}
	
	\begin{problem}
		Is ${\rm IGMP}^{\infty}$ consistent?
	\end{problem}

	\begin{problem}
		Assume $\rm PFA$. For which forcing notions $\mathbb P$-special $\omega_1$-guessing models of size $\omega_1$ form a stationary set.
	\end{problem}
	
	\begin{problem}
		Is ${\rm IGMP}(\omega_2,\omega_1)$ consistent with ${\rm MA}_{\omega_1}+2^{\aleph_0}>\aleph_2$? 
	\end{problem}

	\[
	\adjustbox{scale=1.0,center}{
		\begin{tikzcd}
			\neg \rm CH&  & &  & &2^{\aleph_0}\geq \aleph_3  \\
			\neg w{\rm KH}(\omega_1) \arrow[u]&   \neg \square(\omega_2,\lambda)& \rm TP(\omega_2) & \rm SCH &\neg {\rm AP}(\omega_2) &
			\rm MP(\omega_2)\arrow[u] \\
			& & & &  \\
			& & & &\\
			& &  & &\\
			\neg{\rm AP}(\omega_3)&  \neg \square(\omega_3,\lambda) &  \rm GMP(\omega_2,\omega_1)\arrow[uuuu]\arrow{uuuul}[sloped]{ Weiss} \arrow{uuuur}[sloped]{ Krueger+Viale}\arrow{uuuurr}[sloped]{ Viale-Weiss}\arrow{uuuull}[sloped]{ Cox-Krueger}\arrow[uuuurrr, red, "\textcolor{red}{ \textbf{/}}" marking]& & \\
			{\rm TP}(\omega_3)& & &\\
			\neg w{\rm KH}(\omega_2) & {\rm GMP}(\omega_3,\omega_2)
			\arrow{l}\arrow{ul}
			\arrow{uur}[sloped, near start]{Trang}
			\arrow[uur, red, "\textcolor{red}{ \textbf{/}}" marking]
			\arrow[ bend right, red, "\textcolor{red}{ \textbf{/}}" marking]{uuuuuurrrr}
			[sloped,black]{~~~~~~~~~~~~~~~Trang} 
			\arrow{uu}[sloped]{Weiss}
			\arrow{uul}
			&&  &  &\\
			& & & &   \\
			& &  {\rm GMP}^+(\omega_3,\omega_1) \arrow[uuuu,crossing over] 
			\arrow[uul]  \arrow[ bend right]{uuuuuuuurrr}[sloped]{M.-Velickovic}   & & \\
			& &{\rm GMP}^{\infty}\arrow[u] & {\rm SH}&  \\
			& &{\rm IGMP^{\infty}}\arrow[u]\arrow[ur] &  & \\
			& &\textcolor{violet}{\textbf{?}} \arrow[violet,dotted]{u}&  & 
		\end{tikzcd}
	}
	\]
	
	\clearpage
	\[
	\adjustbox{scale=1,center}{
		\begin{tikzcd}
			\neg {\rm CH}&  & &  & &2^{\aleph_0}\geq \aleph_3  \\
			\neg w{\rm KH} \arrow[u]&   \neg \square(\omega_2,\lambda)& \rm TP(\omega_2) & \rm SCH &\neg {\rm AP}(\omega_1) & \rm MP(\omega_2)\arrow[u] \\
			& & & & &  \\
			& & & &&\\
			& &  & &{\rm GMP}(\omega_3,\omega_2)&\\
			&  &  {\rm GMP}(\omega_2,\omega_1)\arrow[uuuu]\arrow{uuuul}[sloped]{ Weiss } \arrow{uuuur}[sloped]{ Krueger+Viale}\arrow{uuuurr}[sloped]{Viale-Weiss }\arrow{uuuull}[sloped]{\textcolor{black}{Cox-Krueger}}
			\arrow[uuuurrr, red, " \textbf{/}" marking] \arrow[red, "\textbf{/}" marking]{drr}[sloped, near start]{  \textcolor{black}{Cox-Krueger}} \arrow[red, "\textbf{/}" marking]{dll}[sloped, near start]{ \textcolor{black}{Cox-Krueger}} &  & &\\
			{\rm ATMP}(\omega_1)& & &   &{\rm SH}&  \\
			{\rm ATMP}(\omega_2)&   &{\rm IGMP}(\omega_2,\omega_1) \arrow{uu} \arrow{ull}[sloped]{Cox-Krueger} \arrow{urr}[sloped]{Cox-Krueger}&  & &{\rm GMP}^+(\omega_3,\omega_1)
			\arrow{uuuuuu}[sloped]{M.-Velickovic} \arrow[uulll, bend right]\arrow[uuul]\\
			&&&&& \\
			& &  {\rm IGMP}^+(\omega_3,\omega_1) \arrow[uu] \arrow[uull] \arrow[uurrr]  & & & \\
			&&&&& \\
			&&&&& \\
			&&&&& \\
			&&&&& \\
			& &{\exists~{\rm 2~supercompacts} }\arrow[blue,dashed]{uuuuu}[sloped]{\textcolor{black}{M.-Velickovic}} & & &
		\end{tikzcd}
	}
	\]
	\section{wGMP}\label{secwGMP}
	
	This last section of this survey is devoted to an interesting weakening of  guessing property introduced by Cox and Krueger \cite{CK2017}.   All the following results are due to Cox and Krueger.
	
	\begin{definition}
		Suppose $\kappa$ is a regular uncountable cardinal.
		A set $M$ has the \textbf{weak $\bm \kappa$-guessing property} if for every function $f:\kappa\rightarrow {\rm ORD}$ such that $f\restriction \alpha\in M$, for every $\alpha\in M\cap \kappa$, there is $f^*\in M$ with $f^*\restriction M=f\restriction M$.
	\end{definition}
	
	\begin{definition}
		A set $M$ of size $\omega_1$ is called \textbf{weakly guessing} if for every uncountable regular $\kappa\in M$, $M$ has the weak
		$\kappa$-guessing property.
	\end{definition}
	
	Similarly, one can define the weak approximation property.
	It must be now easy to state the weak-guessing model principles, denoting as before following a \emph{w}.
	Interestingly, many  consequences of guessing model principles follow from this, however the structure of the models are not quite the same as before.
	Similarly, \emph{IwGMP} denotes the indestructible version of the weak guessing model principle.
	
	\begin{theorem}
		\begin{enumerate}
			\item ${\rm wGMP}$ implies $\neg w{\rm KP}$, ${\rm TP}(\omega_2)$, $\neg \square_\kappa$ for all $\kappa\geq\omega_1$,  $\neg \square(\lambda)$ for all $\lambda\geq\omega_2$,  and ${\rm AP}(\omega_1)$ hold.
			\item 	 ${\rm IwGMP}$ implies ${\rm ATMP(\omega_2)}$.
		\end{enumerate}
	\end{theorem}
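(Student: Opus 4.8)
The plan is to prove each clause by replaying the corresponding guessing-model argument from the earlier sections, with the full $\delta$-guessing property replaced by the weak $\kappa$-guessing property and the relevant combinatorial object coded as a function $f\colon\kappa\to\ORD$. The unifying point is that for all of these objects the hypothesis of the weak $\kappa$-guessing property is automatic: every initial segment $f\restriction\alpha$ with $\alpha\in M\cap\kappa$ is coded by a single element of $M$ (a tree-node, a coherent club, or a set $a_\eta$), hence already lies in $M$. Two preliminaries streamline the bookkeeping. First, since $\{M:\omega_1\subseteq M\}$ is club in $\mathcal P_{\omega_2}(H_\theta)$, $\mathrm{wGMP}$ gives stationarily many weakly guessing $M$ with $\omega_1\subseteq M$; for such $M$ elementarity alone (transport a bijection $\omega_1\to\gamma$ for $\gamma\in M\cap\omega_2$ and use $\omega_1\subseteq M$) shows $M\cap\omega_2$ is downward closed, hence an ordinal. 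Second, as in \cref{wKHlemma} and \cref{TP}, if $\alpha,T\in M$ and $|T_\alpha|<\omega_2$ then $T_\alpha\subseteq M$.

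For the combinatorial consequences I would argue as follows. For $\neg w\mathrm{KH}$, take weakly guessing $M$ with $\omega_1\subseteq M$ and a weak Kurepa tree $T\in M$; as $|T|=\omega_1$ we get $T\subseteq M$, so any branch $b$ (a function $\omega_1\to T$) has each initial segment $b\restriction\alpha$, namely the predecessors of the node $b(\alpha)\in T\subseteq M$, in $M$. Weak $\omega_1$-guessing then yields $b^*\in M$ with $\dom{b^*}=\omega_1$ and $b^*\restriction M=b\restriction M$; since $\omega_1\subseteq M$ this forces $b^*=b$, so $b\in M$, whence $T$ has at most $\omega_1$ branches. For $\mathrm{TP}(\omega_2)$, given an $\omega_2$-tree $T\in M$ put $\delta=M\cap\omega_2$, pick $t\in T_\delta$, and code the branch below $t$ as $b\colon\omega_2\to T$; each $b\restriction\alpha$ ($\alpha<\delta$) is the set of predecessors of $t\restriction\alpha\in T_\alpha\subseteq M$, so weak $\omega_2$-guessing produces $b^*\in M$ which, by elementarity exactly as in \cref{TP}, is a cofinal branch, so $T$ is not Aronszajn. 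For $\neg\mathrm{AP}(\omega_1)$ I would replay \cref{MP from FS }: given an approaching sequence $\bar a$ and weakly guessing $M\ni\bar a$ with $\omega_1\subseteq M$, suppose $\delta=M\cap\omega_2\in B(\bar a)$ with witness $c$; since each $c\cap\gamma=a_\eta$ with $\eta<\delta$ lies in $M$, the characteristic function $\chi_c\colon\omega_2\to 2$ satisfies the weak $\omega_2$-guessing hypothesis and is guessed by some $d^*\in M$ with $d^*\cap\delta=c$. As $M\cap\omega_2=\delta$ is an ordinal, $\mu=\mathrm{otp}(c)\in M$ and the $\mu$-th element $\rho$ of $d^*$ lies in $M$, giving $c=d^*\cap\rho\in M$ and hence $\delta=\sup c\in M$, a contradiction. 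Thus $\{\sup(M\cap\omega_2):M\text{ weakly guessing},\ \bar a\in M\}$ projects to a stationary subset of $\omega_2$ disjoint from $B(\bar a)$, so $B(\bar a)$ contains no club; as $\bar a$ was arbitrary, $\omega_2\notin I[\omega_2]$. The square clauses $\neg\square_\kappa$ and $\neg\square(\lambda)$ follow the pattern of the earlier $\square(\kappa,\lambda,S)$ theorem: with the coherent sequence $\vec C\in M$ and $\delta=\sup(M\cap\lambda)$ one guesses $C_\delta$ (coded by $\chi_{C_\delta}$) and takes the closure of the guess to obtain a thread, contradicting non-threadability.

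For part 2 I would adapt the Cox–Krueger implication $\mathrm{IGMP}(\omega_2,\omega_1)\Rightarrow\mathrm{ATMP}(\omega_1)$ together with the covering argument of \cref{nice application}, one level higher and with indestructibility doing the essential work. Assume $2^{\omega_1}<\aleph_{\omega_2}$ and that $\mathbb P$ adds a new $x\subseteq\omega_2$ all of whose initial segments lie in the ground model while preserving every cardinal ${\le}\,2^{\omega_1}$ (so $\omega_1,\omega_2$ survive); I aim for a contradiction. Put $\mathfrak X=\{x\cap\gamma:\gamma<\omega_2\}$, a bounded family contained in the ground model. Because $|x|=\omega_2$, no single indestructible weakly guessing model (of size $\omega_1$) can contain $x$, so instead of capturing $x$ in one model I would reconstruct it across a stationary family: take the stationary set of indestructible weakly guessing $N$ which, in the extension, capture cofinally many initial segments $x\cap\gamma$ with $\gamma<\sup(N\cap\omega_2)$. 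By indestructibility each such $N$ is still weakly guessing, so weak $\omega_2$-guessing applied to $\chi_x$ yields $d^*_N\in N$ (hence in the ground model) with $d^*_N\cap\sup(N\cap\omega_2)=x\cap\sup(N\cap\omega_2)$. Pressing down the regressive map $N\mapsto d^*_N$ produces a single ground-model $d^*$ agreeing with $x$ below $\sup(N\cap\omega_2)$ for stationarily many $N$; as those suprema are cofinal in $\omega_2$, we get $d^*=x$, so $x$ is in the ground model — a contradiction. Therefore $\mathbb P$ must collapse a cardinal ${\le}\,2^{\omega_1}$, which is $\mathrm{ATMP}(\omega_2)$.

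The main obstacle is the verification of the weak-approximation hypothesis in each case. In part 1 this is transparent for the tree and approachability clauses, but for the square principles it requires using coherence to reduce each initial segment $C_\delta\cap\alpha$ ($\alpha\in M\cap\lambda$) to a sequence-member $C_\beta\in M$, which in turn demands control of $\sup(C_\delta\cap\alpha)$ and is the delicate computation. In part 2 the crux is twofold: first, securing stationarily many indestructible weakly guessing models that capture cofinally many initial segments of $x$ in the extension — the substitute, available only through indestructibility, for the size-$\omega_2$ covering used in \cref{nice application}; and second, ensuring that this family remains stationary in the extension so that the pressing-down step can be carried out there. This is precisely where the indestructibility built into $\mathrm{IwGMP}$ is indispensable.
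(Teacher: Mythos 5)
First, a point of reference: the paper states this theorem without proof (it is a survey item attributed to Cox--Krueger), so your attempt can only be measured against its correctness and against the paper's proofs of the analogous full-guessing results, which is indeed what you replay. Your arguments for $\neg w{\rm KH}$, ${\rm TP}(\omega_2)$ and $\neg{\rm AP}(\omega_1)$ are correct: in each case the initial segments of the relevant object really do lie in $M$ (tree levels of size $\leq\omega_1$ are contained in $M$, and the sets $c\cap\gamma=a_\eta$ have index $\eta<\delta=M\cap\omega_2$, hence $\eta\in M$ because $M\cap\omega_2$ is an ordinal), and the rest is the paper's own argument from \cref{wKHlemma}, \cref{TP} and \cref{MP from FS } verbatim.

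The square clauses, however, contain a genuine gap, not merely a ``delicate computation''. Your plan requires $C_\delta\cap\alpha\in M$ for \emph{every} $\alpha\in M\cap\lambda$, where $\delta=\sup(M\cap\lambda)$. Writing $\beta=\max({\rm Lim}(C_\delta)\cap\alpha)$, one has $C_\delta\cap\alpha=C_\beta\cup\{\beta\}\cup F$ with $F\subseteq(\beta,\alpha)$ finite; coherence only helps if $\beta\in M$ and $F\subseteq M$, and nothing forces either, since the elements of $C_\delta$ have no reason to meet $M$ at all. When $\lambda=\omega_2$ this is rescued by the fact that $M\cap\omega_2$ is an ordinal, so \emph{every} ordinal below $\delta$ lies in $M$ and the decomposition lands in $M$; this proves $\neg\square(\omega_2)$ and $\neg\square_{\omega_1}$. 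But for $\lambda\geq\omega_3$ (equivalently $\square_\kappa$ with $\kappa\geq\omega_2$), $M\cap\lambda$ is not downward closed, the verification fails, and no repair along the lines you sketch (controlling $\sup(C_\delta\cap\alpha)$) exists: the obstruction is not the supremum but membership of $\beta$ and of the tail $F$ in $M$. A different idea is needed here, and your proposal does not contain it.

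Part 2 has a similar structural problem. The two steps you yourself identify as the crux --- (a) that in the extension $W$ there are models $N$ from the ground-model family capturing cofinally many initial segments $x\cap\gamma$ below $\sup(N\cap\omega_2)$, and (b) that this family is stationary \emph{in $W$} so Fodor's lemma can be applied there --- do not follow from indestructibility. Indestructibility is a property of each individual model (its weak guessing property survives $\omega_1$-preserving forcing); it says nothing about preservation of the stationarity of the family $\mathfrak G$ in $\mathcal P_{\omega_2}(H^V_\theta)$, and an arbitrary cardinal-preserving $\mathbb P$ need not preserve such stationarity (the transfer of stationarity to $W$ in the paper's \cref{prop0} is obtained from \emph{properness} of $\mathbb P$ for the models, which is not available here). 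Compare with the paper's own proof of the analogous \cref{nice application}: there the models have size $\omega_2$, so a suitable $V$-set of size $\omega_2$ meeting $\omega_2$-many initial segments of $x$ can be covered \emph{inside $V$} by a single strongly guessing $N$ with $\omega_2\subseteq N$, and indestructibility is invoked only to keep that one $N$ guessing in $W$; no stationarity in $W$ is ever needed. With size-$\omega_1$ models that covering is impossible, your pressing-down substitute rests on an unproved (and in general unprovable) stationarity-preservation claim, so the proof of item 2 as proposed does not go through.
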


	\begin{proposition}
		The set of weakly $\omega_1$-guessing submodels of $H_\theta$ is closed under countable union of $\subseteq$-increasing sequences.
	\end{proposition}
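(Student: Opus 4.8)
The plan is to settle the easy structural claims first and then reduce the weak guessing property of the union to the individual levels of the chain. Write $M=\bigcup_{n<\omega}M_n$, with $\langle M_n\rangle$ $\subseteq$-increasing and each $M_n$ a weakly guessing submodel of $H_\theta$ (so of size $\omega_1$). Since an increasing union of elementary submodels is elementary, $M\prec H_\theta$, and $|M|=\omega_1$ because $M$ is a countable union of sets of size $\omega_1$. It therefore suffices to verify that $M$ has the weak $\kappa$-guessing property for every uncountable regular $\kappa\in M$. Fix such a $\kappa$; discarding finitely many terms we may assume $\kappa\in M_n$ for all $n$. Fix $f\colon\kappa\to\ORD$ with $f\restriction\alpha\in M$ for every $\alpha\in M\cap\kappa$, and let us produce $f^*\in M$ with $f^*\restriction M=f\restriction M$.

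The basic device is the family of coherence sets $A_n=\{\alpha\in M_n\cap\kappa:f\restriction\alpha\in M_n\}$, for which I would record two elementary facts. (i) Each $A_n$ is downward closed in $M_n\cap\kappa$: if $\beta\in A_n$ and $\alpha\in M_n\cap\kappa$ with $\alpha<\beta$, then $f\restriction\alpha=(f\restriction\beta)\restriction\alpha\in M_n$, so $\alpha\in A_n$. (ii) $\bigcup_n A_n=M\cap\kappa$: given $\alpha\in M\cap\kappa$ we have $f\restriction\alpha\in M$, hence $f\restriction\alpha\in M_n$ for some $n$, and since $\alpha$ is the domain of $f\restriction\alpha$ we also get $\alpha\in M_n$, so $\alpha\in A_n$. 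Thus $\langle A_n\rangle$ is an increasing $\omega$-chain of initial segments exhausting $M\cap\kappa$.

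The clean case is when $\gamma\coloneqq\sup(M\cap\kappa)$ has uncountable cofinality (automatic for $\kappa=\omega_1$ once $\omega_1\subseteq M$, since then $\gamma=\omega_1$ is regular). Then, by (i)--(ii), the initial segments $A_n$ increase to $M\cap\kappa$; if each were bounded in $M\cap\kappa$ their suprema would form an $\omega$-sequence cofinal in $\gamma$, contradicting uncountable cofinality. Hence some $A_N=M\cap\kappa$. In particular $M\cap\kappa\subseteq M_N$, so the chain has already stabilized, $M_N\cap\kappa=M\cap\kappa$, and $f\restriction\alpha\in M_N$ for every $\alpha\in M_N\cap\kappa$. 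The weak $\kappa$-guessing property of $M_N$ now yields $f^*\in M_N\subseteq M$ with $f^*\restriction M_N=f\restriction M_N$, which, since $M_N\cap\kappa=M\cap\kappa$, is exactly $f^*\restriction M=f\restriction M$.

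I expect the main obstacle to be the complementary case, in which $\gamma$ has cofinality $\omega$ and the chain $\langle M_n\cap\kappa\rangle$ never stabilizes. Here no single level is coherent for $f$, and a level-$n$ guess only matches $f$ on the proper piece $M_n\cap\kappa$, so the required $f^*$ cannot be read off from one model. The plan is to fix a strictly increasing $\omega$-sequence $\langle\gamma_k\rangle$ from $M\cap\kappa$ cofinal in $\gamma$, observe by (ii) that each honest restriction $f\restriction\gamma_k$, and hence each padded function $(f\restriction\gamma_k)\cup(0\!\restriction\![\gamma_k,\kappa))$, lies in $M$, and then to promote this countable approximating sequence to a single guess $f^*\in M$ by feeding the successive increments through the weak guessing property of later models $M_m$ (each increment $f\restriction\gamma_k$ is eventually absorbed into some $M_m$, over which the padded function becomes coherent). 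Making this limit step rigorous, so that the object produced is a genuine \emph{element} of $M$ rather than merely a function defined on $M\cap\kappa$, is the delicate point; it is exactly here that the hypothesis that \emph{every} $M_n$, and not just one of them, is weakly guessing must be used.
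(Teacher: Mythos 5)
The paper states this proposition without proof (it is quoted from Cox--Krueger), so your argument must stand on its own; it has two problems, one repairable and one fatal to the route you chose. The repairable one sits in your ``clean case''. From unboundedness of $A_N$ you infer ``$A_N=M\cap\kappa$, so the chain has already stabilized''. That is a non sequitur: $A_N$ is downward closed only \emph{inside} $M_N\cap\kappa$, so unboundedness yields $A_N=M_N\cap\kappa$ and nothing more; the traces $M_n\cap\kappa$ need not stabilize even when $\cof(\sup(M\cap\kappa))>\omega$, since later models may keep inserting new ordinals below a supremum that $M_N\cap\kappa$ already reaches cofinally. The conclusion you want survives, but by a different argument: for each $\beta\in A_N$ the functions $f\restriction\beta$ and $f^*\restriction\beta$ both lie in $M_N$, have the same domain $\beta\in M_N$, and agree on $M_N\cap\beta$; by elementarity of $M_N$ they are therefore equal, and since $A_N$ is cofinal in $\sup(M\cap\kappa)$ it follows that $f^*$ agrees with $f$ below $\sup(M\cap\kappa)$, hence on all of $M\cap\kappa$. (Your stabilization claim is literally true only when $\kappa=\omega_1$ and $\omega_1\subseteq M_N$, for then $M_N\cap\omega_1=\omega_1=M\cap\omega_1$.)

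The fatal one is Case 2, which you leave as a plan; no rigorous version of it can exist, because the statement you are attempting there is false. If $M\prec H_\theta$, $\kappa\in M$, $|M|<2^{\aleph_0}$, and $\gamma=\sup(M\cap\kappa)$ has cofinality $\omega$, then $M$ fails the weak $\kappa$-guessing property outright, already for $2$-valued functions: fix $\gamma_k\in M\cap\kappa$ increasing and cofinal in $\gamma$, build by elementarity functions $h_s\in M$ for $s\in 2^{<\omega}$ with domain $\gamma_{|s|}$, $h_{s^\frown i}\supseteq h_s$ and $h_{s^\frown i}(\gamma_{|s|})=i$; for each $x\in 2^\omega$ the function $f_x=\bigcup_k h_{x\restriction k}$, padded by $0$ on $[\gamma,\kappa)$, has all its restrictions at points of $M\cap\kappa$ inside $M$, while distinct $x,y$ give $f_x,f_y$ differing at some $\gamma_k\in M\cap\kappa$; guessing them all would inject $2^{\aleph_0}$ into $M$. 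Consequently, whenever the suprema $\sup(M_n\cap\kappa)$ fail to stabilize, the union is not weakly $\kappa$-guessing; and such chains of weakly guessing models do exist for $\kappa=\omega_2$ in any model of ${\rm wGMP}$ (stationarity lets you pick $M_{n+1}\supseteq M_n\cup\{\sup(M_n\cap\omega_2)\}$, and ${\rm wGMP}$ implies $\neg{\rm CH}$). So the closure statement is simply false for arbitrary regular uncountable $\kappa\in M$, which is how you set the problem up; the proposition has to be read as asserting closure of the weak $\omega_1$-guessing property only. For $\kappa=\omega_1$ your Case 2 is vacuous --- by the same tree argument (or by the convention $\omega_1\subseteq M_n$), weakly $\omega_1$-guessing models of size $\omega_1<2^{\aleph_0}$ have $\sup(M_n\cap\omega_1)=\omega_1$, so $\sup(M\cap\omega_1)=\omega_1$ has uncountable cofinality --- and the repaired Case 1 is the entire proof.
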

	
	\begin{theorem}
		${\rm GMP}(\omega_2,\omega_1,H_{\omega_2})$ is equivalent to ${\rm wGMP}(\omega_2,\omega_1,H_{\omega_2})$
	\end{theorem}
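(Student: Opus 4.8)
The plan is to prove the two principles equivalent by showing that, for $M\prec H_{\omega_2}$, being an $\omega_1$-\gesm{} and being weakly guessing are essentially the same condition, so that the two asserted stationary subsets of $\mathcal P_{\omega_2}(H_{\omega_2})$ coincide. The guiding observation is that $\omega_1$ is the unique uncountable regular cardinal of $H_{\omega_2}$; hence for $M\prec H_{\omega_2}$ the clause ``for every uncountable regular $\kappa\in M$'' in the definition of weak guessing collapses to the single instance $\kappa=\omega_1$, so $M$ is weakly guessing iff it has the weak $\omega_1$-guessing property. The key lemma I would isolate first matches the two approximation hypotheses: for any function $f$ with $\dom{f}=\omega_1$ and any $M\prec H_{\omega_2}$, the condition ``$f\rest_\alpha\in M$ for all $\alpha\in M\cap\omega_1$'' is equivalent to ``$f$ is $\omega_1$-approximated in $M$''. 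The nontrivial implication uses that every countable $a\in M$ is, by elementarity, bounded by some $\beta\in M\cap\omega_1$ (countable subsets of $\omega_1$ are bounded), whence $f\rest_a=(f\rest_\beta)\rest_a\in M$. Thus the hypotheses that trigger the guessing conclusion in the two notions are literally the same; this is the arithmetic heart of the whole argument and is special to $H_{\omega_2}$.

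This lemma immediately yields the easy direction ${\rm wGMP}(\omega_2,\omega_1,H_{\omega_2})\Rightarrow{\rm GMP}(\omega_2,\omega_1,H_{\omega_2})$. Given a weakly guessing $M$ and a bounded, $\omega_1$-approximated $x\subseteq X$ with $X\in M$: if $X$ is countable then $x=x\cap X\in M$ outright; otherwise I fix a bijection $e\colon\omega_1\to X$ in $M$, transport $x$ to $x'=e^{-1}[x]\subseteq\omega_1$, apply weak $\omega_1$-guessing to the characteristic function $\chi_{x'}$ (whose initial segments lie in $M$ by the key lemma), and push the resulting guess back through $e$. Hence every weakly guessing $M\prec H_{\omega_2}$ is an $\omega_1$-\gesm, so any stationary witness for ${\rm wGMP}$ is a witness for ${\rm GMP}$.

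The hard direction ${\rm GMP}\Rightarrow{\rm wGMP}$ is where the real content lies: I must show that an $\omega_1$-\gesm{} $M\prec H_{\omega_2}$ guesses an arbitrary \emph{ordinal-valued} $f\colon\omega_1\to\ORD$ with $f\rest_\alpha\in M$ for all $\alpha\in M\cap\omega_1$. Since ${\rm GMP}$ refutes ${\rm CH}$, every $\omega_1$-\gesm{} of size $<2^{\aleph_0}$ contains $\omega_1$, so $\omega_1\subseteq M$, giving $f\rest_M=f$ and $\ran(f)\subseteq M\cap\omega_2$. The difficulty is that when $\ran(f)$ is cofinal in $\gamma^\ast=\sup(M\cap\omega_2)$ the set $f$ is \emph{unbounded} in $M$, so the guessing property of $M$ does not apply to $f$ directly. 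My resolution is to pass to the transitive collapse: by \cref{CK-formulation}, $\overline M$ is $\pi_M(\omega_1)=\omega_1$-guessing, and $\bar f=\pi_M\circ f\colon\omega_1\to\pi_M(\omega_2)$ now has \emph{bounded} range, its initial segments $\bar f\rest_\alpha=\pi_M(f\rest_\alpha)$ lie in $\overline M$, so (by the key lemma applied inside the transitive model $\overline M$) $\bar f$ is $\omega_1$-approximated and bounded there. As $\overline M$ is transitive, being guessed means being a member, so $\bar f\in\overline M$; pulling back, $f^\ast=\pi_M^{-1}(\bar f)\in M$ satisfies $f^\ast\rest_{\omega_1}=f$. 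In particular $f\in M$, which is exactly the weak-guessing conclusion.

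The main obstacle is precisely this ordinal-valued step: set-guessing only constrains bounded subsets, whereas weak guessing quantifies over all $f\colon\omega_1\to\ORD$, and such an $f$ can be unbounded in $M$ exactly when its range is cofinal in $\sup(M\cap\omega_2)$. Collapsing $M$ is the device that converts this unbounded datum into a bounded one inside the still-$\omega_1$-guessing transitive model $\overline M$, turning ``guessed'' into ``is a member'' and thereby dissolving the boundedness issue automatically. Having verified both directions pointwise on the club of $M\prec H_{\omega_2}$ with $\omega_1\subseteq M$, I would conclude that the two witnessing families agree modulo a club, so ${\rm GMP}(\omega_2,\omega_1,H_{\omega_2})$ and ${\rm wGMP}(\omega_2,\omega_1,H_{\omega_2})$ are equivalent.
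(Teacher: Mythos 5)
Your first direction is sound: for $M\prec H_{\omega_2}$, coding a bounded set $x\subseteq X\in M$ through a bijection $e:\omega_1\rightarrow X$ lying in $M$ correctly translates ``$\omega_1$-approximated in $M$'' into ``all proper initial segments lie in $M$'', so every weakly guessing $M\prec H_{\omega_2}$ is $\omega_1$-guessing. The fatal problem is the collapse step in your hard direction. The map $\pi_M$ is an $\in$-isomorphism, so it cannot convert an unbounded set into a bounded one. Note first that $\omega_2\notin H_{\omega_2}$, hence $\omega_2\notin M$ and your expression $\pi_M(\omega_2)$ is undefined: the ordinals of $\overline{M}$ are exactly $\bar\gamma={\rm otp}(M\cap\omega_2)$, and $\bar\gamma\notin\overline{M}$. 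If $\ran(f)$ is cofinal in $\sup(M\cap\omega_2)$ --- precisely the problematic case you yourself isolated --- then $\ran(\pi_M\circ f)$ is cofinal in $\bar\gamma$, and no $y\in\overline{M}$ can contain $\pi_M\circ f$: by elementarity of $M$ in $H_{\omega_2}$, the supremum of the ordinals in the transitive closure of any $y\in M$ is an element of $M\cap\omega_2$, so after collapsing, the ordinals occurring in any $y\in\overline{M}$ are bounded strictly below $\bar\gamma$. Thus $\pi_M\circ f$ is still unbounded in $\overline{M}$, the guessing property of $\overline{M}$ says nothing about it, and ``guessed equals member'' never comes into play. Your device dissolves nothing.

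The deeper point is that the pointwise claim you are aiming at --- every $\omega_1$-guessing $M\prec H_{\omega_2}$ is weakly guessing --- is exactly obstructed by the possible existence of an \emph{approaching function}: an $f:\omega_1\rightarrow M\cap\omega_2$ with all proper initial segments in $M$ and range cofinal in $\sup(M\cap\omega_2)$. Such an $f$ can never be weakly guessed, since a guess $f^*\in M$ yields $f=f^*\rest\omega_1\in M$ and hence $\sup(M\cap\omega_2)=\sup(\ran(f))\in M$, which is absurd; and the $\omega_1$-guessing property of $M$, which speaks only of sets bounded in $M$, does not rule such an $f$ out (compare the paper's proof that ${\rm GMP}^+(\kappa^{++},\kappa)$ implies ${\rm MP}(\kappa^+)$, which needs models of $H_{\kappa^{++}}$, where $\kappa^+\in M$ supplies the missing bound). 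So the equivalence must be proved at the level of stationary sets, not model by model; this is why the paper records, immediately before this theorem, that weakly guessing models are closed under countable $\subseteq$-increasing unions. The correct mechanism is: given $F$, build a chain $M_0\subseteq M_1\subseteq\cdots$ of $\omega_1$-guessing models closed under $F$ with $\omega_1\cup M_n\cup\{\sup(M_n\cap\omega_2)\}\subseteq M_{n+1}$, and let $M_\omega=\bigcup_n M_n$. Then $\cof(\sup(M_\omega\cap\omega_2))=\omega$, which rules out approaching functions vacuously (for any $f$ with all initial segments in $M_\omega$, the map $\alpha\mapsto\sup(\ran(f\rest_\alpha))$ is a nondecreasing $\omega_1$-sequence of elements of $M_\omega\cap\omega_2$, and such a sequence converging to an ordinal of cofinality $\omega$ is eventually constant, a contradiction); while any $f$ with bounded range is handled by a pigeonhole argument placing all initial segments of $f$, together with a bound $\gamma$ on its range, inside a single $M_n$, where the ordinary guessing property of $M_n$ applies. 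Your proposal is missing this entire mechanism, and its concluding claim that the two witnessing families ``agree modulo a club'' is unjustified.
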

	The main theorem of Cox and Kruger reads as follows.
	\begin{theorem}
		Assume the existence of a supercompact cardinal with infinitely many measurable cardinals above it. Then in a generic extension,  there exist stationarily many 
		$N\in \mathcal P_{\omega_2}(H_{\aleph_{\omega+1}})$
		such that $N$ is indestructibly weakly guessing, has uniform cofinality $\omega_1$, and is not internally unbounded.
	\end{theorem}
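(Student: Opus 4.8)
The plan is to obtain the model by a Mitchell--Neeman style collapse of the supercompact $\kappa$ to $\omega_2$, interleaved with collapses driven by the measurables $\langle\lambda_n:n<\omega\rangle$ so that, in the extension $V[G]$, $\kappa=\omega_2$, each $\lambda_n$ becomes $\aleph_{n+2}$, and $\lambda=\sup_n\lambda_n$ becomes $\aleph_\omega$ with $\aleph_{\omega+1}=\lambda^{+}$. I would design the iteration to have two structural features simultaneously: the $\omega_1$-approximation property (so that guessing is cheap to verify and survives further forcing), and enough ${<}\omega_1$-closure in its collapse components that generic elementary submodels reach each regular $\gamma\in(\omega_1,\aleph_{\omega+1})$ with $\cof(\sup(N\cap\gamma))=\omega_1$. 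The supercompactness of $\kappa$ is used, via a Laver-type reflection of its embedding, to produce \emph{stationarily many} $N\prec H_{\aleph_{\omega+1}}^{V[G]}$ of size $\omega_1$ whose transitive collapses are correctly guessed; the measurables above $\kappa$ are used to supply, level by level, the branch/thread arguments needed for the weak $\aleph_{n}$-guessing property (a measurable's narrow-tree compactness threads the functions $f\colon\aleph_{n}\to\ORD$ that the definition of weak guessing must capture).

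First I would verify the weak guessing property in $V[G]$. Fix such an $N$ and an uncountable regular $\kappa'\in N$ together with $f\colon\kappa'\to\ORD$ with $f\rest\alpha\in N$ for all $\alpha\in N\cap\kappa'$. The hypothesis says exactly that $f$ is $\omega_1$-approximated by the initial segments available in $N$; combined with the $\omega_1$-approximation property of the iteration (and the covering furnished by \cref{K-nice-result} and \cref{Kruegerlemma} for the ${<}\omega_1$-closed components), an argument in the style of \cref{Lemma0} shows $f\rest N\in V$, after which the reflected guessing of $N$ produces the desired $f^{*}\in N$ with $f^{*}\rest N=f\rest N$. Running the same computation over the quotient of any further $\omega_1$-preserving forcing --- the quotient still has the $\omega_1$-approximation property, exactly as in \cref{prop0} --- yields that $N$ is \emph{indestructibly} weakly guessing; alternatively one arranges, as in the $\omega_1$-guessing indestructibility engine discussed in the \emph{IGMP} section, that the trees naturally coding potential non-approximated branches of $N$ are specialized by the iteration, which makes indestructibility automatic.

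Uniform cofinality $\omega_1$ then falls out of the closure design: for every $\gamma\in N$ with $N\models\cof(\gamma)\ge\omega_1$ the component collapses guarantee $N$ is $\omega_1$-close to $\gamma$, and the top ordinal $\sup(N\cap\aleph_{\omega+1})$ is reached with cofinality $\omega_1$ as well. The delicate point is the simultaneous \emph{failure} of internal unboundedness, and here the singular $\aleph_\omega$ is essential, since internal unboundedness would itself force all such sups to have cofinality $\omega_1$ and, conversely, only a genuinely $\omega$-indexed obstruction can break covering. I would witness the failure through a scale $\langle g_\xi:\xi<\aleph_{\omega+1}\rangle$ in $\prod_n\aleph_n$ (the measurables give good control of $\prod_n\aleph_n$ and of its scale): the characteristic function $\chi_N(n)=\sup(N\cap\aleph_n)$ has, by uniform cofinality, $\cof(\chi_N(n))=\omega_1$ and $\chi_N(n)\notin N$ for cofinally many $n$. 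Choosing $\xi_k\in N$ increasing and cofinal in $N\cap\aleph_{\omega+1}$ and setting $x=\{g_{\xi_k}:k<\omega\}\subseteq N$, any countable $y\in N\cap[N]^{\omega}$ with $x\subseteq y$ would let $N$ compute the pointwise supremum of $y$, which by the scale property is $=^{*}\chi_N$; but then a tail of $\chi_N$ would be an element of $N$, contradicting that $\chi_N(n)\notin N$ for cofinally many $n$. Hence no such $y$ exists and $N$ is not internally unbounded.

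The main obstacle I expect is reconciling the two antagonistic demands in a single iteration: weak guessing (and its indestructibility) is a compactness/covering phenomenon that the ordinary guessing property of size-$\omega_1$ models already forces into internal unboundedness by \cref{Kruegerlemma}, whereas the theorem insists on the opposite structural feature. The resolution exploits that weak guessing constrains only \emph{functions} $\kappa'\to\ORD$ and never demands covering of arbitrary countable subsets, so the $\omega$-cofinal scale obstruction at $\aleph_\omega$ can coexist with it; engineering the collapses and the measurable-driven threadings so that the approximation property, the level-by-level weak guessing, the uniform $\omega_1$-cofinality, and the surviving scale gadget all hold of the \emph{same} stationary family of models --- and remain so after every $\omega_1$-preserving forcing --- is the technical heart of the construction.
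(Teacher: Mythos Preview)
The paper is a survey and does not supply a proof of this theorem; it merely records the Cox--Krueger result and immediately moves on (the very next sentence recalls Krueger's theorem that $\omega_1$-guessing models are internally unbounded, to highlight the contrast). So there is no ``paper's own proof'' to compare against, and your proposal must be judged on its own.

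As a plan your sketch points in plausible directions --- a Mitchell/Neeman-type collapse of the supercompact to $\omega_2$, approximation-property bookkeeping for indestructibility, and exploiting the singular $\aleph_\omega$ for the failure of internal unboundedness --- but there is at least one concrete error and several places where the outline does not carry real content. The concrete error is in your non-internal-unboundedness argument: you first arrange that $N$ has uniform cofinality $\omega_1$, which by the paper's definition entails $\cof(\sup(N\cap\aleph_{\omega+1}))=\omega_1$, and then you ``choose $\xi_k\in N$ increasing and cofinal in $N\cap\aleph_{\omega+1}$'' for $k<\omega$. A countable sequence cannot be cofinal in a set whose supremum has cofinality $\omega_1$, so the witness $x=\{g_{\xi_k}:k<\omega\}$ you build does not exist as described, and the scale argument as written collapses. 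You need a genuinely different countable $x\subseteq N$ (one not tied to cofinality in $\aleph_{\omega+1}$) to defeat covering; identifying it is precisely where the construction does work.

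Beyond that, the role you assign to the measurables (``narrow-tree compactness threads the functions $f:\aleph_n\to\ORD$'') is too vague to be an argument: weak $\kappa'$-guessing must be verified for every regular $\kappa'\in N$, and a single measurable per level does not by itself provide that. Your appeal to \cref{K-nice-result} and \cref{Kruegerlemma} for covering is also in tension with your goal, since those results are exactly what force internal unboundedness for full $\omega_1$-guessing models; invoking them inside the weak-guessing verification risks proving more than you want. In short, the high-level picture is reasonable, but the specific mechanism for producing non-internally-unbounded models and the precise use of the measurables both need to be supplied rather than gestured at.
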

	
	Recall Krueger's theorem that most $\omega_1$-guessing models are internally unbounded.
	\subsection{Open Problems}
	\begin{problem}[Cox--Krueger]
		Does the existence of stationarily many indestructibly weakly guessing models which are not internally unbounded follow from Martin’s maximum?
	\end{problem}
	\begin{problem}[Cox--Krueger]
		Does $\rm wGMP$ imply $\rm GMP$, or $\rm wIGMP$ imply $\rm IGMP$?
	\end{problem}
	

	\bigskip
	\footnotesize
	\noindent\textit{Acknowledgements.}
	The author thanks the Austrian Science Fund (FWF) for their support through the Lise Meitner project M 3024 (P.I. the author) and  Elise Richter Project V844 (P.I. Sandra Müller).

	\normalsize
	\baselineskip=17pt

	\printbibliography[heading=bibintoc]

\end{document}